\renewcommand{\tocsection}[3]{%
  \indentlabel{\@ifnotempty{#2}{\bfseries\ignorespaces#1 #2\quad}}\bfseries#3}
\renewcommand{\tocsubsection}[3]{%
  \indentlabel{\@ifnotempty{#2}{\ignorespaces#1 #2\quad}}#3}
\renewcommand{\tocsubsubsection}[3]{%
  \indentlabel{\@ifnotempty{#2}{\ignorespaces#1 #2\quad}}#3}
\newcommand\@dotsep{4.5}
\def\@tocline#1#2#3#4#5#6#7{\relax
  \ifnum #1>\c@tocdepth 
  \else
    \par \addpenalty\@secpenalty\addvspace{#2}%
    \begingroup \hyphenpenalty\@M
    \@ifempty{#4}{%
      \@tempdima\csname r@tocindent\number#1\endcsname\relax
    }{%
      \@tempdima#4\relax
    }%
    \parindent\z@ \leftskip#3\relax \advance\leftskip\@tempdima\relax
    \rightskip\@pnumwidth plus1em \parfillskip-\@pnumwidth
    #5\leavevmode\hskip-\@tempdima{#6}\nobreak
    \leaders\hbox{$\m@th\mkern \@dotsep mu\hbox{.}\mkern \@dotsep mu$}\hfill
    \nobreak
    \hbox to\@pnumwidth{\@tocpagenum{\ifnum#1=1\bfseries\fi#7}}\par
    \nobreak
    \endgroup
  \fi}
\renewcommand\csname r@tocindent0\endcsname{0pt}
\def\l@subsection{\@tocline{2}{0pt}{2.5pc}{5pc}{}}
\def\l@subsubsection{\@tocline{3}{0pt}{4pc}{5pc}{}}
\newcommand\N{{\mathbb N}}
\newcommand\R{{\mathbb R}}
\newcommand\T{{\mathbb T}}
\newcommand\C{{\mathbb C}}
\newcommand\Z{{\mathbb Z}}
\newcommand\Sp{{\mathbb S}}
\def\AA{{\mathcal A}}
\def\BB{{\mathcal B}}
\def\DD{{\mathcal D}}
\def\EE{{\mathcal E}}
\def\GG{{\mathcal G}}
\def\HH{{\mathcal H}}
\def\LL{{\mathcal L}}
\def\NN{{\mathcal N}}
\def\SS{{\mathcal S}}
\def\BBB{{\mathscr B}}
\def\R{\mathbb R}\def\C{\mathbb C}\def\N{\mathbb N}\def\Z{\mathbb Z}
  \def\S{\mathbb S} \def\T{\mathbb T}   
\def\D{\partial}\def\eps{\varepsilon}\def\phi{\varphi}
\def\d{{\rm d}}
\def\norm#1{\left\Vert#1\right\Vert}
\def\abs#1{\left\vert#1\right\vert}
\def\seq#1{\left<#1\right>}
\def\sep#1{\left(#1\right)}
\def\Re{{\operatorname {Re}\,}}
\def\wick{{\rm Wick}}
\newcommand{\dv}{{{\rm d}v}}
\def\exp{\textrm{e}}
\newcommand{\ddt}{{\frac{{{\rm d}}}{{{\rm d}t}}}}
\def\eps{{\varepsilon}}
\def\Nt{|\hskip-0.04cm|\hskip-0.04cm|}
\newtheorem{theo}{Theorem}
\newtheorem{prop}[theo]{Proposition}
\newtheorem{lem}[theo]{Lemma}
\newtheorem{cor}[theo]{Corollary}
\newtheorem{rem}[theo]{Remark}
\newtheorem{defin}[theo]{Definition}
\newcommand{\beqn}{\begin{equation}}
\newcommand{\eeqn}{\end{equation}}
\newcommand{\bal}{\begin{aligned}}
\newcommand{\eal}{\end{aligned}}
\newcommand{\bear}{\begin{eqnarray}}
\newcommand{\eear}{\end{eqnarray}}
\newcommand{\bean}{\begin{eqnarray*}}
\newcommand{\eean}{\end{eqnarray*}}
\newcommand{\e}{{\varepsilon}}
\newcommand{\la}{\langle}
\newcommand{\ra}{\rangle}
\title[Boltzmann equation without cut-off]{Regularization estimates and Cauchy theory for inhomogeneous Boltzmann equation for hard potentials without cut-off}
\begin{document}
\author{{\sc Fr\'{e}d\'{e}ric H\'{e}rau}}
\address{LMJL - UMR6629,
Universit\'{e} de
Nantes, CNRS, 2 rue de la Houssini\`ere, BP 92208, F-44322 Nantes cedex 3,
France. E-mail: {\tt frederic.herau@univ-nantes.fr}}

\author{{\sc Daniela Tonon}}
\address{CEREMADE, UMR 7534, Universit\'e Paris-Dauphine, PSL Research University,
Place du Mar\'echal de Lattre de Tassigny, 75775 Paris Cedex 16, France.
E-mail: {\tt tonon@ceremade.dauphine.fr }}

\author{{\sc Isabelle Tristani}}
\address{D\'epartement de math\'ematiques et applications, \'Ecole normale sup\'erieure, CNRS, PSL University, 75005 Paris, France
E-mail: {\tt isabelle.tristani@ens.fr}}

\date\today

\begin{abstract}
In this paper, we investigate the problems of Cauchy theory and exponential stability for the inhomogeneous Boltzmann equation without angular cut-off.
We only deal with the physical case of hard potentials type interactions (with a moderate angular singularity). We prove a result of existence and uniqueness of solutions in a close-to-equilibrium regime for this equation
in weighted Sobolev spaces with a polynomial weight, contrary to previous works on the subject, all developed with a weight prescribed by the equilibrium. It is the first result in this more physically relevant framework
for this equation. Moreover, we prove an exponential stability for such a solution, with a rate as close as we want to the optimal rate given by the semigroup decay of the linearized equation. Let us highlight the fact that a key point of the development of our Cauchy theory is the proof of new regularization estimates in short time for the linearized operator thanks to pseudo-differential tools.
\end{abstract}

\maketitle

%

\vspace{0.5cm}
\tableofcontents


\section{Introduction}
\label{sec:intro}
\setcounter{equation}{0}
\setcounter{theo}{0}


\subsection{The model}
In the present paper, we investigate the Cauchy theory and the asymptotic behavior of solutions to the spatially inhomogeneous
Boltzmann equation without angular cut-off, that is, for long-range interactions. Previous works have shown that there exist solutions in a close-to-equilibrium regime
but in spaces of type $H^q(e^{|v|^2/2})$ which are very restrictive. Here, we are interested in improving this result in the following sense:
we enlarge the space in which we develop a Cauchy theory in several ways, we do not require any assumption on the derivatives in velocity and more importantly, our weight is polynomial.
We thus only require a condition of finite moments on our data, which is more physically relevant.
Moreover, we jointly obtain a convergence to equilibrium for the solutions that we construct with an exponential and explicit rate.

We consider a system of particles described by its space inhomogeneous distribution density $f = f(t,x,v)$ with $t\in \R^+$ the time, $x \in \T^3$ the position and $v \in \R^3$ the velocity.
We hence study the so-called spatially inhomogeneous Boltzmann equation:
	\beqn \label{eq:Bol}
		\partial_t f + v \cdot \nabla_x f = Q(f,f).
	\eeqn
The Boltzmann collision operator is defined as
	$$
	Q(g,f):=\int_{\R^3 \times \Sp^2} B(v-v_*,\sigma) \left(g'_*f' - g_* f \right) \,\d \sigma \,\dv_*.
	$$
Here and below, we are using the shorthand notations $f=f(v)$, $g_*=g(v_*)$, $f'=f(v')$ and $g'_*=g(v'_*)$.
In this expression, $v$, $v_*$ and $v'$, $v'_*$ are the velocities of a pair of particles after and before collision.
We make a choice of parametrization of the set of solutions to the conservation of momentum and energy (physical law of elastic collisions):
$$
	v+v_*=v'+v'_*,
$$
$$
	|v|^2+|v_*|^2=|v'|^2+ |v'_*|^2,
$$
so that the pre-collisional velocities are given by:
$$
	v'=\frac{v+v_*}{2} + \frac{|v-v_*|}{2} \sigma, \quad v'_*=\frac{v+v_*}{2}- \frac{|v-v_*|}{2} \sigma, \quad \sigma \in \Sp^2.
$$
The Boltzmann collision kernel $B(v-v_*,\sigma)$ only depends on the relative velocity $|v-v_*|$ and on the deviation angle $\theta$ through $\cos \theta = \langle \kappa, \sigma \rangle$
where $\kappa = (v-v_*)/|v-v_*|$ and $\langle \cdot, \cdot \rangle$ is the usual scalar product in $\R^3$.
By a symmetry argument, one can always reduce to the case where $B(v-v_*, \sigma)$ is supported on $ \langle \kappa, \sigma \rangle \geq 0$ i.e. $0 \leq \theta \leq \pi/2$.
So, without loss of generality, we make this assumption.

In this paper, we shall be concerned with the case when the kernel $B$ satisfies the following conditions:
	\begin{itemize}[leftmargin=*]
	\item It takes product form in its arguments as
		\beqn \label{eq:product}
			B(v-v_*,\sigma) = \Phi(|v-v_*|) \, b(\cos \theta);
		\eeqn
	\item The angular function $b$ is locally smooth, and has a nonintegrable singularity for $\theta \rightarrow 0$:
	it satisfies for some $c_b>0$ and $s \in (0,1/2)$ (moderate angular singularity)
		\beqn \label{eq:angularsing}
			\forall \, \theta \in (0, \pi/2], \quad \frac{c_b}{\theta^{1+2s}} \leq \sin \theta \, b(\cos \theta) \leq \frac{1}{c_b \,  \theta^{1+2s}};
		\eeqn
	\item The kinetic factor $\Phi$ satisfies
		\beqn \label{eq:Phi}
			\Phi(|v-v_*|)= |v-v_*|^\gamma \quad \text{with} \quad \gamma \in (0,1),	
\eeqn
		this assumption could be relaxed to assuming only that $\Phi$ satisfies $\Phi(\cdot) = C_\Phi \, |\cdot|^\gamma$ for some $C_\Phi>0$.
	\end{itemize}
{ Note that, since we restrict ourselves to the case $s \in (0,1/2)$, the estimate \eqref{eq:angularsing} implies that $\int_{\Sp^2} \sin \theta \, b(\cos \theta) \,\d \sigma<\infty$, this will be used often in the following.}

Our main physical motivation comes from particles interacting according to a repulsive potential of the form
	\beqn \label{eq:potential}
	\phi(r)=r^{-(p-1)}, \quad p \in (2,+\infty).
	\eeqn

The assumptions made on $B$ throughout the paper include the case of potentials of the form~(\ref{eq:potential}) with $p>5$.
Indeed, for repulsive potentials of the form~(\ref{eq:potential}), the collision kernel cannot be computed explicitly but Maxwell~\cite{Max} has shown
that the collision kernel can be computed in terms of the interaction potential~$\phi$.
More precisely, it satisfies the previous conditions (\ref{eq:product}), (\ref{eq:angularsing}) and (\ref{eq:Phi}) in dimension~$3$ (see~\cite{BookCer,CIP,BookVill})
with $ s:=\frac{1}{p-1} \in (0,1)$ and $\gamma:= \frac{p-5}{p-1} \in (-3,1)$.

One traditionally calls \emph{hard potentials} the case $p>5$ (for which $0<\gamma<1$), \emph{Maxwell molecules} the case $p=5$ (for which $\gamma=0$)
and \emph{soft potentials} the case $2<p<5$ (for which $-3 < \gamma <0$).
We can hence deduce that our assumptions made on $B$ include the case of hard potentials.

\smallskip
Let us give a weak formulation of the collision operator $Q$. For any suitable test function~$\varphi=\varphi(v)$, we have:
\beqn \label{eq:weak}
\begin{aligned}
&\int_{\R^3} Q(f,f)(v) \, \varphi(v) \,\dv \\
&\quad = \frac{1}{4} \int_{\R^3 \times \R^3 \times \Sp^2} B(v-v_*,\sigma) \,
\left(f'_* f' - f_* f\right) \, \left(\varphi + \varphi_* - \varphi' - \varphi'_*\right)
\,\d \sigma \,\dv_* \,\dv.
\end{aligned}
	\eeqn
From this formula, we can deduce some features of equation~(\ref{eq:Bol}): It preserves mass, momentum and energy. Indeed, at least formally, we have:
	$$
	\int_{\R^3} Q(f,f)(v) \, \varphi(v) \,\dv = 0 \quad \text{for} \quad \varphi(v) = 1,  v,  |v|^2;
	$$
from which we deduce that a { solution $f$} to equation (\ref{eq:Bol}) is conservative, meaning that for any $t \ge 0$,
	\beqn \label{eq:conserv}
		\int_{\T^3 \times \R^3} f(t,x,v) \, \varphi(v) \,\dv \,\d x = \int_{\T^3 \times \R^3} f_0(x,v) \, \varphi(v) \,\dv \,\d x \quad \text{for} \quad \varphi(v) = 1,  v,  |v|^2.
	\eeqn

We introduce the entropy $H(f) = \int_{\T^3 \times \R^3} f \, \log(f) \,\dv \,\d x$ as well as the entropy production~$D(f)$ defined through:
	\beqn \label{eq:entroprod}
		\begin{aligned}
			D(f) &:= -\frac{\d}{\d t} H(f)\\
			&= \frac{1}{4} \int_{\T^3 \times \R^3 \times \R^3 \times \Sp^2} B(v-v_*, \sigma) \, (f' f'_* - f f_*) \log\frac{f' f'_*}{f f_*} \,\d \sigma \,\dv_* \,\dv \,\d x.
		\end{aligned}
	\eeqn
Boltzmann's $H$ theorem asserts that
	\beqn \label{eq:entroprod2}
		\frac{\d}{\d t} H(f) = -D(f) \leq 0
	\eeqn
and states that any equilibrium (i.e. any distribution which maximizes the entropy) is a Maxwellian distribution.
Moreover, it is known that global equilibria of~\eqref{eq:Bol} are global Maxwellian distributions that are independent of time $t$ and position $x$, { since we are working on the torus}.
In this paper, we shall only consider the case of an initial datum satisfying
	\beqn \label{eq:initialdatum}
		\int_{\T^3 \times \R^3} f_0 \,\dv \,\d x=1, \quad \int_{\T^3 \times \R^3} f_0 \, v \,\dv \,\d x = 0, \quad \int_{\T^3 \times \R^3} f_0 \,|v|^2\dv \,\d x=3,
	\eeqn
and therefore consider $\mu$ the Maxwellian with same mass, momentum and energy as $f_0$:
	\beqn \label{eq:mu}
	\mu(v):=(2 \pi)^{-3/2} e^{-|v|^2/2}.
	\eeqn

\medskip
\subsection{Notations and function spaces}
Let $X,Y$ be Banach spaces and consider a linear operator $\Lambda : X \to X$. When defined, we shall denote by $\SS_{\Lambda}(t) = e^{t\Lambda}$ the semigroup generated by $\Lambda$. Moreover we denote by $\BBB(X,Y)$ the space of bounded linear operators from $X$ to $Y$ and by $\| \cdot \|_{\BBB(X,Y)}$ its norm operator and we shall use the usual simplification~$\BBB(X) = \BBB(X,X)$.

\smallskip
For simplicity of notations, hereafter, we denote $\langle v \rangle=(1+|v|^{2})^{1/2}$; $a\approx b$ means that there exist constants $c_{1}, c_{2}>0$ depending only on fixed numbers such that $c_{1}b\leq a \leq c_{2}b$;
we shall use the same notation $C$ for positive constants that may change from line to line or abbreviate ``{ $\leq C$} " to ``{ $ \lesssim$ }",
where $C$ is a positive constant depending only on fixed number.

\smallskip
In what follows, we denote $m(v) := \la v \ra^k$ with $k\ge0$, the range of admissible $k$ will be specified throughout the paper. We also introduce $\chi \in \DD(\R)$ a truncation function which satisfies $\mathds{1}_{[-1,1]} \le \chi \le \mathds{1}_{[-2,2]}$ and we denote $\chi_{a} (\cdot) := \chi(\cdot/a)$ for $a>0$.

\smallskip
{ Throughout} the paper, we shall consider functions~$f=f(x,v)$ with~$x \in \T^3$ and $v \in \R^3$.
Let $\nu=\nu(v)$ be a positive Borel weight function and $1\leq p \leq \infty$.
We then define the space~$L^p_{x,v} (\nu) $ as the Lebesgue space associated to the norm, for $f=f(x,v)$,
	$$
	\bal
		\| f \|_{L^p_{x,v} (\nu)} &:= \big\| \|  f \|_{L^p_v(\nu)}   \big\|_{L^p_{x}}
		:= \big\| \| \nu \, f \|_{L^p_v}   \big\|_{L^p_{x}}  \\
	\eal
	$$
which writes if $p<\infty$:
	$$
	\bal
		\| f \|_{L^p_{x,v} (\nu)}  & = \left(\int_{\T^3_x}  \|  f (x, \cdot )\|^p_{L^p_v(\nu)} \,\d x \right)^{1/p} \\
		& = \left(\int_{\T^3_x}  \int_{\R^3_v} |f(x,v)|^p \, \nu(v)^p \,\d v  \,\d x\right)^{1/p} .
	\eal
	$$
We define the high-order Sobolev spaces $ H^{n}_{x} H^{\ell}_{v} (\nu)$, for $n,\ell \in \N$:

	\beqn \label{eq:HnL2}
	\| f \|^2_{ H^{n}_{x} H^{\ell}_{v} (\nu)} := \sum_{\substack{|\alpha| \leq \ell, \,  |\beta| \leq n\\ |\alpha| + |\beta| \leq \max(\ell,n)}}
	\| \partial^\alpha_v \partial^\beta_x (f \nu) \|^2_{L^2_{x,v}}.
	\eeqn
This definition reduces to the usual weighted Sobolev space $H^n_{x,v}(\nu)$ when $\ell=n$. {We use Fourier transform to define the general space $H^r_{x,v}(\nu)$ for $r \in \R^+$:
	\beqn \label{eq:Hnfrac}
	\|f\|^2_{H^r_{x,v}(\nu)} := \|f \nu\|^2_{H^r_{x,v}} = \sum_{\xi \in \Z^3} \int_{\R^3_\eta} (1+|\xi|^2+|\eta|^2)^r \,|\widehat{f\nu}(\xi,\eta)|^2 \, \d\eta
	\eeqn
 where the hat corresponds to the Fourier transform in both $x$ (with corresponding variable~$\xi \in \Z^3$) and $v$ (with corresponding variable $\eta \in \R^3$).
In this case, the norms given by~\eqref{eq:HnL2} and~\eqref{eq:Hnfrac} are equivalent. We won't make any difference in the notation and will use one norm or the other at our convenience. It won't have any impact on our estimates since it will only add multiplicative universal constants.}

{Let us remark that by classical results of interpolation (see for example~\cite{BookBL}), for every~$r \in \R^+$, one can write
$$
H^r_{x,v}(m) = \left[H^{\lfloor r \rfloor}_{x,v}(m), H^{\lfloor r \rfloor+1}_{x,v}(m)\right]_{r-\lfloor r \rfloor,2}.
$$
The notation used above is the classical one of real interpolation. For sake of completeness, we briefly recall the meaning of this notation. For $C$ and $D$ two Banach spaces which are both embedded in the same topological separating vector space, for any { $z \in C+D$}, we define the $K$-function by
$$
K(t,z) := \inf_{z=c+d} \left(\|c\|_C+t\|d\|_D\right), \quad \forall \, t>0.
$$
We then give the definition of the space $[C,D]_{\theta,p}$ for $\theta \in (0,1)$ and $p \in [1,+\infty]$:
$$
[C,D]_{\theta,p} := \left\{z \in C+D, \, \, t \mapsto K(t,z)/t^\theta \in L^p\left(dt/t^{1/p}\right)\right\}.
$$}

We also introduce the fractional Sobolev space $H^{r,\varsigma}_{x,v}(\nu)$  for $r, \, \varsigma \in \R^+$
associated to the norm:
	\beqn \label{eq:frac}
	\|f\|^2_{H^{r,\varsigma}_{x,v}(\nu)} := \|f  \nu\|^2_{H^{r,\varsigma}_{x,v}} = \sum_{\xi \in \Z^3} \int_{\R^3_\eta} (1+|\xi|^2)^r \, (1+|\eta|^2)^{\varsigma} \,|\widehat{f\nu}(\xi,\eta)|^2 \,\d\eta.
	\eeqn
When $r \in \N$, we can also define the space $H^{r,\varsigma}_{x,v}(\nu)$ through the norm:
	\beqn \label{eq:Hn(L2)}
		\bal
		&\|f\|^2_{H^{r,\varsigma}_{x,v}(\nu)} := \sum_{0 \le j \le r} \int_{\T^3_x} \|\nabla^j_x f\|^2_{H^\varsigma_v(\nu)} \d x=  \sum_{0 \le j \le r} \|\nabla^j_x f\|^2_{L^2_xH^\varsigma_v(\nu)}.
		\eal
	\eeqn
As previously, when $r \in \N$, the norms given by~\eqref{eq:frac} and~\eqref{eq:Hn(L2)} are equivalent and we will use one norm or the other at our convenience.
Finally, denoting for $\varsigma \in \R^+$,
	$$
	\|f\|^2_{\dot{H}^\varsigma_v(\nu)} := \|f  \nu\|^2_{\dot{H}^\varsigma_v}  = \int_{\R^3_\eta} {|\eta|^{2\varsigma}} \, |\widehat{f\nu}(\eta)|^2 \,\d\eta,
	$$
we introduce the space $\dot H^{n,\varsigma}_{x,v}(\nu)$ for $(n,\varsigma) \in \N \times \R^+$ defined through the norm:
	\beqn \label{eq:hom}
		\|f\|^2_{\dot H^{n,\varsigma}_{x,v}(\nu)} := \sum_{0 \le j \le n} \int_{\T^3_x} \|\nabla^j_x f\|^2_{\dot H^\varsigma_v(\nu)} \,\d x=  \sum_{0 \le j \le n} \|\nabla^j_x f\|^2_{L^2_x\dot H^\varsigma_v(\nu)}.
	\eeqn
Notice also that in the case $\varsigma=0$, the spaces $H^n_x L^2_v (\nu)$ and $H^{n,0}_{x,v} (\nu)$ associated respectively to the norms given by~\eqref{eq:HnL2} and~\eqref{eq:Hn(L2)} are the same.

We now introduce some ``twisted'' Sobolev spaces (useful for the development of our Cauchy theory in Section~\ref{sec:nonlinear}), we denote them $\HH^{n,\varsigma}_{x,v}(\nu)$ for $(n,\varsigma) \in \N \times \R^+$ and they are associated to the norm:
	\beqn \label{eq:HHnxL2v}
		\bal
			\| f \|_{\HH^{n,\varsigma}_{x,v}(\nu)}^2
			&:= \sum_{0 \le j \le n} \int_{\T^3_x} \|\nabla^j_x f\|^2_{H^\varsigma_v(\la v \ra^{-2js}\nu)} \, \d x=  \sum_{0 \le j \le n} \|\nabla^j_x f\|^2_{L^2_xH^\varsigma_v(\la v \ra^{-2js}\nu)}
		\eal
	\eeqn
where $s$ is the angular singularity of the Boltzmann kernel introduced in~\eqref{eq:angularsing}. For the case $\varsigma=0$, since the notation is consistent, we will use the notation~$\HH^n_x L^2_v(\nu)$ or $\HH^{n,0}_{x,v}(\nu)$ indifferently.

{ Moreover, we introduce the spaces $H^n_{x}\HH^\ell_{v}(\nu)$ and $\HH^n_x \HH^\ell_v(\nu)$, $(n,\ell) \in \N^2$ which are respectively associated to the following norms:
	\beqn \label{eq:norm}
		\|f\|^2_{H^n_{x}\HH^\ell_{v}(\nu)} := \sum_{|\alpha|\le \ell, \, \, |\beta|
\le n, \, \, |\alpha| + |\beta| \le \max(\ell,n)} \| \partial^\alpha_v \partial^\beta_x
 f\|^2_{L^2_{x,v}(\nu {\langle v \rangle^{- 2 |\alpha| s}})},
\eeqn
and
\beqn \label{eq:norm2}
		\|f\|^2_{{\HH^n_{x}\HH^\ell_{v}}(\nu)} := \sum_{|\alpha|\le \ell, \, \, |\beta| \le n, \, \, |\alpha| + |\beta| \le \max(\ell,n)} \| \partial^\alpha_v \partial^\beta_x  f\|^2_{L^2_{x,v}(\nu {\langle v \rangle^{- 2 |\alpha| s - 2 |\beta| s}})}.
	\eeqn
Note that those spaces are only needed to state our main result on the linearized problem (see~\eqref{def:EE2} and Theorem~\ref{theo:main2}). 	
}

{Finally, following works from Alexandre et al. (see~\cite{AMUXY-CMP}), we introduce an anisotropic norm that we denote $\|\cdot\|_{\dot H^{s,*}_v}$ (the notation will be explained by Lemma~\ref{lem:anis}) and which is defined through
\begin{equation} \label{eq:Hs*}
\|f\|_{\dot H^{s,*}_{v}}^2:= \int_{\R^3 \times \R^3 \times \Sp^2} b_\delta(\cos \theta) \mu_* \langle v_* \rangle^{-\gamma} (f'\langle v' \rangle^{\gamma/2}-f \langle v \rangle^{\gamma/2})^2 \,\d \sigma \,\dv_* \,\dv.
\end{equation}
In this definition, $\gamma$ is the power of the kinetic factor in~\eqref{eq:Phi} and $\mu$ is given by~\eqref{eq:mu}.
Moreover, we recall that $b$ is the angular function of the Boltzmann kernel which satisfies~\eqref{eq:angularsing} and we define $b_\delta$ as the following truncation of $b$:
$b_\delta(\cos \theta) := \chi_\delta(\theta) b(\cos \theta)$ with~$\delta$ fixed so that the conclusion of Lemma~\ref{lem:coercive} holds. Since the constant $\delta$ is fixed, we do not mention the dependency of the norm defined above with respect to $\delta$.
Let us also introduce the space $H^{s,*}_v(\nu)$ associated with the norm
\beqn \label{eq:anis0}
\|f\|^2_{H^{s,*}_v(\nu)} := \|f\|^2_{L^2_v(\langle v \rangle^{\gamma/2}\nu)} + \|f \nu\|^2_{\dot H^{s,*}_{v}}.
\eeqn
For $n \in \N$, we also define the space $\HH^{n,s,*}_{x,v} (\nu)$ associated with the norm
\beqn \label{eq:anis}
\|f\|^2_{\HH^{n,s,*}_{x,v} (\nu)} :=  \sum_{0 \le j \le n} \int_{\T^3_x}\|\nabla^j_x f \|_{H^{s,*}_v(\langle v \rangle^{-2js} \nu)}^2 \,\d x
\eeqn
where $s$ is still the angular singularity in~\eqref{eq:angularsing}. }

\medskip
In what follows, we shall state our main results as well as some known results on the subject.
\subsection{Cauchy theory and convergence to equilibrium}

We state now the main result on the fully nonlinear problem~\eqref{eq:Bol}. {Let $m(v)= \langle v \rangle^k$ with
{$$
k>{21 \over 2} + \gamma+22s.
$$}We then denote $\mathbf X := \HH^3_x L^2_v(m)$ and we introduce $\mathbf Y^*:=\HH^{3,s,*}_{x,v}(m)$
(see~\eqref{eq:HHnxL2v} and~\eqref{eq:anis} for the definition of the spaces).
 }

\begin{theo}\label{main1}
	We assume that $f_0$ has same mass, momentum and energy as $\mu$ (i.e. satisfies~\eqref{eq:initialdatum}).
	There is a constant $\e_0 >0$ such that if $\| f_0 - \mu \|_{\mathbf X} \le \e_0$,
	then there exists a unique global weak solution $f$ to the Boltzmann equation~\eqref{eq:Bol}, which satisfies, for some constant $C>0$,
		$$
		\| f - \mu\|_{L^\infty ([0,\infty); \mathbf X)} + \| f - \mu \|_{L^2([0,\infty); \mathbf Y^*)} \le C \e_0.
		$$
	Moreover, this solution satisfies the following estimate: For any $0<\lambda_2 < \lambda_1$ there exists~$C>0$ such that
		$$
		\forall\, t \ge 0, \quad
		\| f(t) - \mu  \|_{\mathbf X} \le C \, e^{- \lambda_2 t} \, \| f_0 -\mu \|_{\mathbf X},
		$$
	where $\lambda_1 >0$ is the optimal rate given by the semigroup decay of the associated linearized operator in Theorem~\ref{theo:extension}.
\end{theo}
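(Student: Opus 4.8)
The plan is to argue by perturbation around $\mu$. Setting $f = \mu + h$, the conservation identities~\eqref{eq:initialdatum}--\eqref{eq:conserv} ensure that $h_0 := f_0 - \mu$ and $h(t) := f(t) - \mu$ stay, for all $t$, in the closed subspace of $X$ on which the collision invariants $1, v, |v|^2$ (paired against $\mu$) have vanishing average; write $\Pi$ for the spectral projection onto $\ker\LL$ and work on $(\mathrm{Id}-\Pi)X$. There $h$ solves
\[
\partial_t h = \LL h + Q(h,h), \qquad h(0) = h_0, \qquad \LL h := -v\cdot\nabla_x h + Q(h,\mu) + Q(\mu,h),
\]
so the theorem reduces to global existence, uniqueness and sharp exponential decay for this equation with $\|h_0\|_X$ small.

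The linear ingredient is Theorem~\ref{theo:extension}: $\SS_\LL(t)(\mathrm{Id}-\Pi)$ decays in $\BBB(X)$ like $e^{-\lambda_2 t}$ for every $\lambda_2 < \lambda_1$. The proof also uses the hypodissipative structure underlying this estimate, namely a splitting $\LL = \AA + \BB$ with $\BB$ strongly dissipative and carrying a gain of the anisotropic $Y^*$-regularity --- this is exactly where the coercivity estimates (Lemma~\ref{lem:coercive}, Lemma~\ref{lem:anis}) and the twisted spaces $\HH^{n,s,*}_{x,v}$ enter --- and $\AA$ bounded and velocity-localized. Concretely: along the linear flow on $(\mathrm{Id}-\Pi)X$, for an equivalent norm on $X$, $\tfrac{d}{dt}\|h\|_X^2 \le -\kappa\|h\|_{Y^*}^2 + C\|\chi_R h\|_{L^2_{x,v}}^2$ for some $\kappa, R > 0$, the velocity-localized remainder being absorbed at the semigroup level through the extension theorem.

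The nonlinear ingredient is the trilinear estimate established earlier, of the schematic form $\|Q(g,h)\|_{(Y^*)'} \lesssim \|g\|_X\|h\|_{Y^*}$ (hence $|(Q(h,h),h)_X| \lesssim \|h\|_X\|h\|_{Y^*}^2$): the three $x$-derivatives in $X = \HH^3_x L^2_v(m)$ allow one factor to be placed in $L^\infty_x$ by Sobolev embedding, while the $\langle v\rangle^{-2js}$ weight shifts built into $\HH^{3,s,*}_{x,v}$ absorb the velocity-weight loss of $Q$ at each order of $x$-differentiation. Combining this with the linear dissipation gives, for a solution with $\|h\|_X$ small enough (say $\le\e_0$),
\[
\tfrac12\tfrac{d}{dt}\|h\|_X^2 + \tfrac{\kappa}{2}\|h\|_{Y^*}^2 \le C\|\chi_R h\|_{L^2_{x,v}}^2;
\]
this, together with the exponential decay of $\SS_\LL(\mathrm{Id}-\Pi)$ from Theorem~\ref{theo:extension} applied through a Duhamel representation of $h$, yields the a priori bound $\|h\|_{L^\infty_t X}^2 + \|h\|_{L^2_t Y^*}^2 \lesssim \|h_0\|_X^2$. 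Feeding it into a standard approximation scheme (truncated/regularized equations, uniform estimates, compactness) produces a global weak solution with the stated norm control; applying the same estimate to the difference of two solutions gives uniqueness.

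Finally, the sharp rate. The crude argument above only yields decay governed by the dissipativity constants, not by $\lambda_1$; to reach any $\lambda_2 < \lambda_1$ one bootstraps. From the a priori bound $\|h(t)\|_X \to 0$, so we fix $t_0$ with $\|h(t_0)\|_X$ as small as needed and use, for $t \ge t_0$, $h(t) = \SS_\LL(t - t_0)h(t_0) + \int_{t_0}^t \SS_\LL(t-\tau)Q(h,h)(\tau)\,d\tau$: the first term is bounded in $X$ by $C e^{-\lambda_2(t-t_0)}\|h(t_0)\|_X$ via Theorem~\ref{theo:extension}, and the Duhamel term is controlled using the $\AA$--$\BB$ decomposition (which is what lets $\SS_\LL$ act on a source lying only in $(Y^*)'$), the estimate $\|Q(h,h)\|_{(Y^*)'}\lesssim\|h\|_X\|h\|_{Y^*}$, and the uniform smallness of $\|h\|_X$ on $[t_0,\infty)$ to absorb the nonlinear contribution. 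This gives $\|h(t)\|_X \le C e^{-\lambda_2(t-t_0)}\|h(t_0)\|_X$ for $t\ge t_0$, hence the claimed bound after adjusting $C$ over the compact interval $[0,t_0]$. The genuine difficulties are not in this assembly but in its two quantitative pillars, both carried out in the earlier sections: the hypodissipativity of $\LL$ in the polynomial-weight space $X$ expressed through the anisotropic norm $Y^*$ (the non-cut-off substitute for a spectral-gap estimate, which is what forces the twisted spaces $\HH^{n,s,*}_{x,v}$), and the matching trilinear estimate for $Q$. The most delicate point internal to the present proof is the sharp-rate bootstrap, where $\SS_\LL$ must be applied to a nonlinearity that only lies in $(Y^*)'$ --- precisely what the regularizing part $\AA$ of the decomposition is there to accommodate.
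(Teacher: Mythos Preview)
Your high-level outline is right, and you correctly identify the two pillars (hypodissipativity in $X$ with $Y^*$-gain, and the matching trilinear estimate). But the energy estimate you write down does not close, and the gap is exactly where the paper's main construction lives. The inequality $\tfrac{d}{dt}\|h\|_X^2 \le -\kappa\|h\|_{Y^*}^2 + C\|\chi_R h\|_{L^2}^2$ holds for the standard (or $\BB$-adapted) norm along the $\Lambda$-flow, but the localized remainder $C\|\chi_R h\|_{L^2}^2 \le C\|h\|_X^2$ carries no smallness, so Gr\"onwall gives growth, not boundedness. Saying it is ``absorbed at the semigroup level through the extension theorem'' is not a mechanism: Theorem~\ref{theo:extension} is a semigroup bound, not an energy identity, and the two do not combine in the way you suggest. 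The paper's resolution is the \emph{dissipative norm}
\[
\Nt h\Nt_X^2 := \eta\|h\|_X^2 + \int_0^\infty\|\SS_\Lambda(\tau)e^{\lambda_1\tau}h\|_{X_1}^2\,d\tau
\]
(Proposition~\ref{prop:dissipative}): differentiating the integral in $t$ produces a boundary term $-\tfrac12\|h\|_{X_1}^2$ that exactly absorbs the $\AA$-contribution $+\eta C\|h\|_{X_1}^2$ for $\eta$ small, and the construction already carries the sharp rate $\lambda_1$---no bootstrap is needed.

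The second gap concerns the nonlinearity and the regularization you invoke. In the dissipative-norm approach, the integral part generates $\int_0^\infty\langle \SS_\Lambda(\tau)h,\SS_\Lambda(\tau)Q(h,h)\rangle_{X_1}\,d\tau$, which requires $\SS_\Lambda$ to act on $Q(h,h)\in Y'_0$. The paper controls this via the quantified smoothing $\|\SS_\Lambda(\tau)g\|_{X_1}\lesssim \tau^{-1/2}\|g\|_{Y'_0}$ for $\tau\in(0,1]$ (Lemma~\ref{lem:regSlambda}), imported from the pseudodifferential analysis of~\cite{HTT2*}. This is \emph{not} a property of $\AA$: the operator $\AA=M\chi_R$ is a bounded multiplication with no smoothing whatsoever; the gain of $s$ velocity derivatives comes from the grazing-collision part of $Q(\mu,\cdot)$ inside $\BB$. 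In the paper's argument the $\tau^{-1/2}$ singularity is harmless because $\tau$ is the auxiliary norm variable and $\int_0^\infty e^{-c\tau}\tau^{-1/2}\,d\tau$ converges. In your Duhamel bootstrap, by contrast, the singularity sits at the upper endpoint of the \emph{solution-time} integral and must be paired with $\|h(\cdot)\|_{Y^*}$, which is only in $L^2_t$; Cauchy--Schwarz then produces $\int_0^1(t-\tau)^{-1}\,d\tau$, which diverges. So both the a priori bound and the sharp-rate step rely on the dissipative-norm machinery of Propositions~\ref{prop:dissipative}--\ref{prop:stab} together with Lemma~\ref{lem:regSlambda}, rather than the direct energy-plus-Duhamel route you sketch.
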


We refer to Remark~\ref{rem:weights} in which the imposed condition on the power $k$ of our weight is explained.

\smallskip
Let us now comment our result and give an overview on the previous works on the Cauchy theory for the inhomogeneous Boltzmann equation.
For general large data, we refer to the paper of DiPerna-Lions~\cite{DPL-AM} for global existence of the so-called renormalized solutions in the case of the Boltzmann equation with cut-off. This notion of solution has been extended to the case of long-range interactions by Alexandre-Villani~\cite{AV-CPAM} where they construct global renormalized solutions with a defect measure. We also mention the work of Desvillettes-Villani~\cite{DV-IM} that proves the convergence to equilibrium of a priori smooth solutions for both Boltzmann and Landau equations for large initial data. { Let us point out the fact that a consequence of our result combined with the one of Desvillettes and Villani is a proof of the exponential H-theorem:  We can show  exponential decay in time of solutions to the fully nonlinear Boltzmann equation, conditionally  to  some  regularity and moment  bounds (the assumption on the exponential lower bound can be removed thanks to the work of Mouhot~\cite{Mou-CPDE1}). As noticed in~\cite{GMM*} for example in Theorem 5.19, the result of Desvillettes and Villani which is expressed in terms of relative entropy can be translated into stronger norms. This fact allows to do the link between their result and ours. }

In a close-to-equilibrium framework, Gressman and Strain~\cite{GS-JAMS} in parallel with Alexandre et al.~\cite{AMUXY-CMP} have developed a Cauchy theory in spaces of type $H^n_xH^\ell_v(\mu^{-1/2})$. One of the famous difficulty of the Boltzmann equation without cut-off is to well understand coercivity estimates. In both papers~\cite{AMUXY-CMP} and~\cite{GS-JAMS}, the gain induced is seen and understood through a non-isotropic norm. {Our strategy uses this type of approach but we also exploit the fact that the linearized Boltzmann operator can be seen as a pseudo-differential operator in order to understand the gain of regularity induced by the linearized operator.} It allows us to obtain regularization estimates (quantified in time) on the semigroup associated to the linearized operator (see Theorem~\ref{thm:LIBthm}).
To end this brief review, we also refer to a series of papers by Alexandre et al.~\cite{AMUXY-ARMA1,AMUXY-AAS,AMUXY-ARMA2,AMUXY-CMP,AMUXY-JFA} in which the Boltzmann equation without cut-off is studied in various aspects (different type of collision kernels, Cauchy theory in exponentially weighted spaces, regularity of the solutions etc...).

\smallskip
Let us underline the fact that Theorem~\ref{main1} largely improves previous results on the Cauchy theory associated to the Boltzmann equation without cut-off for hard potentials in a perturbative setting. Indeed, we have enlarged the space in which the Cauchy theory has been developed in the sense that the weight of our space is much less restrictive (it is polynomial instead of the inverse Maxwellian equilibrium) and we also require few assumptions on the derivatives, in particular no derivatives in the velocity variable. However, we need three derivatives in the space variable (Gressman and Strain only require two derivatives in $x$ in~\cite{GS-JAMS}): This is the counterpart of the gain in weight we have obtained. Indeed, our framework is less favorable  and needs more attention due to the lack of symmetry of the operator in our spaces to obtain nonlinear estimates on the Boltzmann collision operator. And thus, to close our estimates, we require regularity on three derivatives in~$x$. { Let us also mention that it would be interesting to obtain our results in a space of type~$L^2_vL^\infty_x(m)$. However, even at the linear level, we are not able to get satisfying estimates. More precisely, dissipativity and regularization estimates seem unreachable at the moment in spaces of type $L^2_vL^p_x(m)$ with $p \neq 2$. We mention anyway that in a series of recent works \cite{IMS19}, \cite{49}, \cite{38},\cite{39}, the authors have developed a general approach concerning estimates in $L^1_vL^\infty_x(m)$ spaces, which are naturally associated to the standard macroscopic quantities (mass, energy and entropy). In \cite{IMS19} in particular, some results were obtained about $L^\infty$ control of the solution, assuming only a priori positive bounds from below and above of these macroscopic quantities on  a given interval $[0,T]$. Some regularization estimates were also proven in this context in \cite{38} (see also \cite{IM19} for a toy model). Neither existence nor decay for large time are at the center of these works, but they surely will provide tools for a deeper understanding and advances in the study of the inhomogeneous Boltzmann without cutoff equation.  }

\medskip
\subsection{Strategy of the proof}
 Our strategy is based on the study of the linearized equation. And then, we go back to the fully nonlinear problem. This is a standard method to develop a Cauchy theory in a close-to-equilibrium regime. However, we point out that both studies of the linear and the nonlinear problems are very tricky.

Usually, for example in the case of the non-homogeneous Boltzmann equation for hard spheres in~\cite{GMM*}, the gain induced by the linear part of the equation is quite easy to understand and directly controls the loss due to the nonlinear part of the equation so that the linear part is dominant and thus dictates the dynamics of the equation. In our case, it is more difficult because the gain induced by the linear part is at first sight not strong enough to control the nonlinear loss and it is not possible to conclude using only rough estimates on the Boltzmann collision operator (this fact was for example pointed out by Mouhot and Neumann in~\cite{MN-Nonlinearity}).
 As a consequence:
 \begin{itemize}[leftmargin=*]
\item We establish some new very accurate nonlinear estimates on the Boltzmann collision operator (see Lemma~\ref{lem:nonlinearnonhom}) (notice that in the spirit of what was done in~\cite{CTW-ARMA} by Carrapatoso, Wu and the third author, we work in Sobolev spaces in which the weights depend on the order of the derivative in the space variable).

\item We analyze precisely the gain induced by the linear part of the equation in both $x$ and~$v$ variables. It is crucial for two reasons: First, to get the large time behavior of the semigroup associated to the linearized operator in our large Banach space in which we want to develop our Cauchy theory (Theorem~\ref{theo:main2}); Secondly, to be sure that the linear gain exactly compensates the nonlinear loss identified in Lemma~\ref{lem:nonlinearnonhom}.
This analysis is based on two different points of view: The one already adopted by Alexandre et al. in~\cite{AMUXY-ARMA2} using the anisotropic norm defined in~\eqref{eq:Hs*} (we use it in our dissipativity estimates in Lemma~\ref{lem:coercive} and in our nonlinear estimates in Lemma~\ref{lem:nonlinearhom}); But also a new one which is detailed in the next paragraph and consists in new short time regularization estimates for the linearized operator (we use it in Section~\ref{sec:nonlinear} to conclude the proof of Theorems~\ref{theo:main2} and~\ref{main1}).
 \end{itemize}
 Those key elements allow us to close our estimates and thus, to develop our Cauchy theory in our ``twisted'' Sobolev spaces.

 {Concerning the above second point, notice that one could probably improve our analysis in the sense that we do not clearly make the link between the regularization properties studied in Section~\ref{sec:reg} and the gain of regularity provided by the norm~\eqref{eq:Hs*}. Doing the link between those two type of estimates would require to be more accurate in Section~\ref{sec:reg}. Indeed, in the latter section, we authorize ourselves not to be optimal in our estimates in terms of weights because we have some leeway in the use of Theorem~\ref{thm:LIBthm} that we make in Subsections~\ref{subsec:reg} and~\ref{subsec:reg2}. Conversely, we have to get sharp estimates on the gain of regularity in the coercivity estimates because it has to match exactly the loss of regularity and weights coming from the nonlinear part of the equation (see Subsection~\ref{subsec:conclusion}).  }

\medskip
\subsection{Regularization properties}

In this paragraph, we state our main result about the short time regularization properties of the linearized Boltzmann operator. A key point is that the linearized operator is seen as a pseudo-differential operator, following the framework introduced in~\cite{AHL*} by Alexandre, Li and the first author.

The linearized operator around equilibrium is defined at first order through
	$$
	\Lambda h := Q(\mu,h) + Q(h,\mu) - v \cdot \nabla_x h
	$$
	and we denote $\SS_\Lambda(t)$ the semigroup associated with $\Lambda$. In the following statement, we denote $(H^{r,s}_{x,v}(\langle v \rangle^{k}))'$ (resp. $(H^{r+s,0}_{x,v}(\langle v \rangle^{k}))'$) the dual space of $H^{r,s}_{x,v}(\langle v \rangle^{k})$ (resp. $H^{r+s,0}_{x,v}(\langle v \rangle^{k})$) with respect to $H^{r,0}_{x,v}(\langle v \rangle^{k})$.
	Here is our main regularization result (the condition on the weights in this result are made in order to be sure that our operator $\Lambda$ generates a semigroup in the spaces that we consider - see the conditions in Theorem~\ref{theo:main2}).

\begin{theo} \label{thm:LIBthm}
Let { $r \in \N$,  $k' \ge 0$, $k >\max(\gamma/2 + 3+2(\max(1,r)+1)s, k'+\gamma+5/2)$}. Consider~{$h_0 \in H^{r,0}_{x,v}(\langle v \rangle^k)$, resp. $h_0 \in (H^{r,s}_{x,v}
(\langle v \rangle^{k})'$}. Then, there exists $C_r>0$ independent of~$h_0$ such that for any $t \in (0,1]$,
$$
 \|\SS_\Lambda(t)h_0\|_{H^{r,s}_{x,v}(\langle v \rangle^{k'})}
 \le \frac{C_r}{t^{1/2}} \|h_0\|_{H^{r,0}_{x,v}(\langle v \rangle^{k})},
$$
respectively
$$
\|\SS_\Lambda(t)h_0\|_{H^{r,0}_{x,v}(\langle v \rangle^{k'})} \leq \frac{C_r}{t^{1/2}} \|h_0\|_{(H^{r,s}_{x,v}(\langle v \rangle^{k}))'}.
$$
Consider $h_0 \in H^{r,0}_{x,v}(\langle v \rangle^k)$, resp. $h_0 \in (H^{r+s,0}_{x,v}(\langle v \rangle^{k}))'$. Then, there exists $C'_r>0$ independent of $h_0$ such that for any $t \in (0,1]$,
$$
\|\SS_\Lambda(t)h_0\|_{H^{r+s,0}_{x,v}(\langle v \rangle^{k'})}
\le \frac{C'_r}{t^{1/2+s}} \|h_0\|_{H^{r,0}_{x,v}(\langle v \rangle^{k})},
$$
respectively
$$
 \|\SS_\Lambda(t)h_0\|_{H^{r,0}_{x,v}(\langle v \rangle^{k'})} \leq \frac{C'_r}{t^{1/2+s}} \|h_0\|_{(H^{r+s,0}_{x,v}(\langle v \rangle^{k}))'}.
 $$

\end{theo}

First, we have to underline that it is the first result of regularization quantified in time on the Boltzmann equation without cutoff.
It is well-known that the singularity of the Boltzmann kernel in the non cutoff case implies that the Boltzmann operator without cutoff (that we will describe later on) roughly behaves as a fractional Laplacian in velocity:
$$
Q(g,h) \approx - C_g (-\Delta_v)^s h + \, \text{lower order terms}
$$
with $C_g$ depending only on the physical properties of $g$.
 This type of result has already been studied in the homogeneous and non-homogeneous cases. As mentioned above, the gain in velocity is quite obvious to observe even if it is complicated to understand it precisely: Up to now, the most common way to understand it is through an anisotropic norm (see~\cite{GS-JAMS} by Gressman and Strain and~\cite{AMUXY-ARMA2} by Alexandre et al.). It is then natural to expect that the transport term allows to transfer the gain in velocity to the space variable. We refer to the references quoted in \cite{AHL*} for a review of this type of hypoelliptic properties. Let us mention that the paper~\cite{AHL*} by Alexandre et al. is the first one in which the hypoellipticity features of the operator have been deeply analyzed.

Our strategy here is to use the same method as for Kolmogorov type equations introduced in~\cite{Herau-JFA} by the first author. In short, except from the fact that the use of pseudodifferential tools is required and thus there are many additional technical difficulties, the spirit of the method is the same as for the fractional Kolmogorov equation in~{\cite{HTT2*}}.
For purposes of comparison, we can also mention that this type of strategy has also been applied successfully to the Landau equation in \cite{CTW-ARMA} by Carrapatoso et al..
However, the study of this kind of properties is much harder in the case of the Boltzmann equation without cutoff since the gain in regularity is less clear and consists in an anisotropic gain of fractional derivatives: We have to exploit the fact that one can write a part the Boltzmann linearized operator as a pseudo-differential operator, in the spirit of what has been done in~\cite{AHL*}.

 Indeed, we adapt here some ideas from there allowing to do computations for operators - including the Boltzmann one - whose symbols are in an adapted class called here $S_K$, where  $K$ is a large parameter. Let us point out that those classes are complicated partly because the order of the symbols does not decrease with derivation, which induces some great technical difficulties.
The computations are done using the Wick quantization, widely studied in particular by Lerner (see \cite{LernerBook1} and \cite{LernerBook2}), which has very nice positivity properties. This allows to adapt  to the Boltzmann case the Lyapunov strategy already introduced in~\cite{Herau-JFA} for the Kolmogorov case and in~{\cite{HTT2*}} for the fractional Kolmogorov one.

 It is also important to underline the fact that this pseudo-differential study is not done on the whole linearized operator but only on a well-chosen part of it (this is the object of Subsection~\ref{subsec:thm:LIBthm}). Indeed, thanks to Duhamel formula, we will then be able to recover an estimate on the whole semigroup, the one associated to $\Lambda$ (see Lemma~\ref{lem:lambda_1/lambda}).

 {Even if we do not investigate this problem in this paper, let us finally mention that we believe that the solution that we construct in Theorem~\ref{main1} immediately becomes smooth. Indeed, we think that the regularization estimates on the linearized operator performed thanks to the Lyapunov functional introduced in Paragraph~\ref{subsec:propduallambda1} could be propagated to the whole nonlinear equation: The additional nonlinear terms would be treated using our nonlinear estimates and the fact that our solutions are close to the equilibrium. This may be the aim of a future work. }

\medskip
\subsection{Exponential decay of the linearized semigroup}
We study spectral properties of the linearized operator~$\Lambda$ in various weighted Sobolev spaces of type $H^n_xH^\ell_v(\langle v \rangle^k)$ up to~$L^2_{x,v}(\langle v \rangle^k)$ for $k$ large enough. It will provide us the large time behavior of the semigroup in all those spaces and in particular in the one in which we want to develop our Cauchy theory.
It is important to highlight the fact that, in order to take advantage of symmetry properties,
most of the previous studies have been made in Sobolev weighted spaces of type~$H^q_{x,v}(\mu^{-1/2})$. We largely improve theses previous results in the sense that we are able to get similar spectral estimates in
larger Sobolev spaces, with a polynomial weight and with less assumptions on the derivatives.

{ To be more precise, we establish exponential decay of the semigroup $\SS_{\Lambda}(t)$ in various Lebesgue and Sobolev spaces that we will denote $\EE$:
	\begin{equation} \label{def:EE2}
	\begin{gathered}
		\EE :=
		\left\{ \bal
			&H^n_x \HH^\ell_v(\langle v \rangle^k), \, \, (n, \ell) \in \N^2, \; n \ge \ell \\
			&\HH^n_x \HH^\ell_v(\langle v \rangle^k),  \, \,  (n, \ell) \in \N^2, \; n \ge \ell 		\eal \right.
			\quad \text{with}\quad
	{ k>{\gamma\over 2}+3+2( \max(1,n)+1)s .}
	\end{gathered}
	\end{equation}
Notice that those definitions include the case $L^2_{x,v}(\langle v \rangle^k)$ which can be obtained in one or the other type of space taking $n=\ell=0$. See \eqref{eq:norm}, \eqref{eq:norm2} for the definition of the spaces above.
}

Here is a rough version of the main result (Theorem~\ref{theo:extension}) that we obtain on the linearized operator $\Lambda$:
\begin{theo} \label{theo:main2}
	Let $\EE$ be one of the admissible spaces defined { above.} 
	 Then, there exist explicit constants $\lambda_1 >0$ and $C\ge1$ such that
		$$
		\forall \, t \ge 0,  \quad \forall\, h_0 \in \EE, \quad
		\| \SS_{\Lambda}(t) h_0 - \Pi_0 h_0 \|_{\EE} \le C \, e^{-\lambda_1 t} \, \| h_0 - \Pi_0 h_0 \|_{\EE},
		$$
	where $\Pi_0$ the projector onto the null space of~$\Lambda$ defined by~(\ref{eq:Pi0}).
\end{theo}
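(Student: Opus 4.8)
The plan is to deduce Theorem~\ref{theo:main2} (hence Theorem~\ref{theo:extension}) from the known spectral picture in a small ``Gaussian'' reference space, by an enlargement-and-factorization argument following the method of~\cite{GMM*} for hard spheres. First I would fix a reference space $\EE_0:=H^q_{x,v}(\mu^{-1/2})$ with $q$ large, for which the conclusion is essentially available (combining the non-cut-off coercivity and hypocoercivity estimates of~\cite{GS-JAMS,AMUXY-CMP} with the companion paper~\cite{HTT2*}, and using that on the torus $v\cdot\nabla_x$ is skew-adjoint while the collision part is non-positive, so that any element of $\ker\Lambda$ is $x$-independent, hence a collision invariant times $\mu$): $\Lambda$ generates a $C_0$-semigroup on $\EE_0$,
$$\ker\Lambda=\mathrm{Span}\{\mu,\ v_1\mu,\ v_2\mu,\ v_3\mu,\ |v|^2\mu\},$$
the associated spectral projector is the $\Pi_0$ of~\eqref{eq:Pi0}, and there is $\lambda_0>0$ with $\|\SS_\Lambda(t)h-\Pi_0 h\|_{\EE_0}\le C\,e^{-\lambda_0 t}\,\|h-\Pi_0 h\|_{\EE_0}$ for all $t\ge0$. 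The goal is to transfer this to every admissible space of~\eqref{def:EE}, with a rate $\lambda_1$ that may be taken arbitrarily close to $\lambda_0$.

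\textbf{Splitting.} Next I would introduce the decomposition $\Lambda=\AA+\BB$ with $\AA\,h:=M\,\chi_R(v)\,h$ a bounded operator (composed if needed with a velocity mollifier so that $\AA:\EE\to\EE_0$ is bounded on every admissible $\EE$), $M,R>0$ large parameters, and $\BB:=\Lambda-\AA$. Using the coercivity of the non-cut-off collision operator (the anisotropic lower bound of Lemma~\ref{lem:coercive} and the identification of the gain as the $\dot H^{s,*}_v$-norm in Lemma~\ref{lem:anis}), the skew-symmetry of $v\cdot\nabla_x$, and the fact that for hard potentials the collision frequency behaves like $\langle v\rangle^\gamma$ with $\gamma>0$ and thus \emph{helps} at large velocities, I would show that for $M$ and $R$ large enough $\BB-\lambda$ is dissipative in every admissible $\EE$ for some $\lambda$ as close to $\lambda_0$ as desired, i.e. $\|\SS_\BB(t)\|_{\BBB(\EE)}\lesssim e^{-\lambda t}$. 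The polynomial character of the weight $m=\langle v\rangle^k$ is essential here: the weighted commutators that appear are controlled purely in terms of $\langle v\rangle^\gamma$-moments, which is precisely why a condition of the form $k>\frac{21}{2}+\gamma+22s$ is enough (cf. Remark~\ref{rem:weights}).

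\textbf{Regularization and factorization.} The core step is an iterated time-regularization estimate: I would prove that there are $N\in\N$ and $\vartheta\in[0,1)$ such that, for every admissible $\EE$,
$$\big\|\big(\SS_\BB*(\AA\SS_\BB)^{(*N)}\big)(t)\big\|_{\BBB(\EE,\EE_0)}\lesssim t^{-\vartheta}\,e^{-\lambda t},\qquad t>0,$$
so that after finitely many convolutions one reaches the Gaussian space $\EE_0$ with a time-integrable singularity and the good exponential prefactor. This is exactly where the pseudo-differential viewpoint of~\cite{HTT2*} (following~\cite{AHL*}) is used: the linearized operator is a $\Psi$DO whose symbol produces, at each application of $\AA\SS_\BB$, an anisotropic gain of fractional velocity regularity and of $\langle v\rangle$-weight; iterating upgrades the polynomial weight to the Maxwellian one and yields arbitrary velocity smoothness, while the truncation $\chi_R$ keeps the ``loss'' confined to bounded velocities where it is harmless. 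Then, from the Duhamel--Dyson factorization
$$\SS_\Lambda(t)=\sum_{j=0}^{N}\big(\SS_\BB*(\AA\SS_\BB)^{(*j)}\big)(t)+\big(\SS_\BB*(\AA\SS_\BB)^{(*N)}*\AA\SS_\Lambda\big)(t),$$
the first $N+1$ terms are bounded on $\EE$ by $e^{-\lambda t}$ up to a polynomial in $t$ that is absorbed by decreasing $\lambda$ slightly, and in the last term $\AA\SS_\Lambda$ acts on functions that the preceding factor has already placed in $\EE_0$; invoking the decay of $\SS_\Lambda-\Pi_0$ on $\EE_0$ and convolving $t^{-\vartheta}e^{-\lambda t}$ with $e^{-\lambda_0 t}$, this term equals $\Pi_0$ plus a remainder bounded in $\BBB(\EE)$ by $C\,e^{-\lambda_1 t}$ for any $\lambda_1<\min(\lambda,\lambda_0)$. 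Since $\Pi_0\EE\subset\EE_0$, the projector is the one in~\eqref{eq:Pi0} and $\SS_\Lambda(t)h=\Pi_0 h$ on $\ker\Lambda$, so $\|\SS_\Lambda(t)h-\Pi_0 h\|_\EE\le C\,e^{-\lambda_1 t}\|h-\Pi_0 h\|_\EE$; letting $\lambda\uparrow\lambda_0$ makes $\lambda_1$ as close to the optimal rate as we wish.

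\textbf{Main obstacle.} The hard part will be the regularization estimate: establishing a genuine simultaneous gain of $\langle v\rangle$-weight and of anisotropic velocity regularity for the non-cut-off linearized operator in merely polynomially weighted, low-regularity spaces, with a time-integrable singularity and the sharp exponential prefactor. This forces the use of the $\Psi$DO calculus of~\cite{HTT2*,AHL*} and a careful accounting of how much weight and regularity is recovered at each convolution, together with the verification --- already needed for the splitting --- that the dissipativity constant $\lambda$ of $\BB$ can indeed be pushed up to $\lambda_0$ by enlarging $M$ and $R$.
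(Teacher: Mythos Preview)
Your proposal is correct and follows essentially the same route as the paper: small reference space $E=H^{\max(1,n)}_{x,v}(\mu^{-1/2})$ with decay from Theorem~\ref{theo:gapE}, splitting $\Lambda=\AA+\BB$ with $\AA=M\chi_R$, dissipativity of $\BB$ in every admissible $\EE$, iterated regularization of $(\AA\SS_\BB)^{(*p)}$ via the $\Psi$DO estimates of~\cite{HTT2*}, and then the abstract enlargement theorem of~\cite{GMM*}. Two minor clarifications: no mollifier is needed in $\AA$, since $\chi_R$ is compactly supported in $v$ and hence $\AA$ already maps any polynomially weighted space into the Gaussian one---the iteration is used \emph{only} to gain Sobolev regularity, not weight; and the dissipativity of $\BB$ actually holds for any rate $a<0$ (not merely up to $\lambda_0$), while the weight condition governing the linear result is the one in~\eqref{def:EE}, not the stronger nonlinear condition $k>\tfrac{21}{2}+\gamma+22s$ you cite.
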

	
As mentioned above, the non homogeneous linearized operator $\Lambda$ (and its homogeneous version $\LL h := Q(\mu,h) + Q(h,\mu)$) has already been widely studied. Let us first briefly review the existing results concerning spectral gap estimates for the homogeneous case. Pao~\cite{Pao} studied spectral properties of the linearized operator $\LL$ for hard potentials by non-constructive and very technical means. This article was reviewed by Klaus~\cite{Klaus}. Then, Baranger and Mouhot gave the first explicit estimate on this spectral gap in~\cite{BM-RMI} for hard potentials ($\gamma>0$). If we denote $\DD$ the Dirichlet form associated to $-\LL$:
$$
\DD(h):=\int_{\R^3}
(-\LL h) \, h \, \mu^{-1} \, \d v,
$$
and $\NN(\LL)^\perp$ the orthogonal of the null space of $\LL$, $\NN(\LL)$ which is given by
$$
\NN(\LL) = \mathrm {Span} \{ \mu , v_1 \mu, v_2 \mu, v_3 \mu , |v|^2 \mu   \},
$$
the Dirichlet form $\DD$ satisfies
\beqn \label{eq:entropyL2}
\forall \, h \in \NN(\LL)^\perp, \quad \DD(h) \geq \lambda_0 \, \|h\|^2_{L^2(\mu^{-1/2})},
\eeqn
for some constructive constant $\lambda_0>0$. This result was then improved by Mouhot~\cite{Mou-CPDE2} and later by Mouhot and Strain~\cite{MS-JMPA}. In the last paper, it was conjectured that a spectral gap exists if and only if $\gamma+2s \geq 0$. This conjecture was finally proven by Gressman and Strain in~\cite{GS-JAMS}. Finally, let us point out that the analysis that we carry on can be seen as the sequel of the one handled in~\cite{Trist-JSP} by the third author which focuses on the homogeneous linearized operator $\LL$.
We improve it in several aspects: We are able to deal with the spatial dependency and we are able to do computations in $L^2$ (only the $L^1$-case was treated in the latter).

Concerning the non-homogeneous case, we  state here a result coming from Mouhot and Neumann~\cite{MN-Nonlinearity} (which takes advantage of the results proven in~\cite{BM-RMI} by Baranger and Mouhot),
it gives us a spectral gap estimate in $H^q_{x,v}(\mu^{-1/2})$, $q \in \N^*$, thanks to hypocoercivity methods.
Let us underline the fact that it provides us the existence of spectral gap and an estimate on the semigroup decay associated to $\Lambda$ in the ``small'' space $E = H^q_{x,v}(\mu^{-1/2})$,
which is a crucial point in view of applying the enlargement theorem of~\cite{GMM*}. It is also important to precise that Mouhot and Neumann~\cite{MN-Nonlinearity} only { obtained} a result on the linearized operator,
they { were} not able to go back to the nonlinear problem.

\begin{theo}[\cite{MN-Nonlinearity}] \label{theo:gapE}
Consider $E:=H^{q}_{x,v}(\mu^{-1/2})$ with $q \in \N^*$. Then, there exists a constructive constant $\lambda_0>0$ (spectral gap) such that $\Lambda$ satisfies on $E$:
	\begin{enumerate}[leftmargin=1.1cm]
		\item[(i)] The spectrum $\Sigma(\Lambda) \subset \left\{ z \in \C : \Re  \, z \leq -\lambda_0 \right\} \cup \left\{0\right\}$;
		\item[(ii)] The null space $N(\Lambda)$ is given by
			\beqn \label{eq:kerlambda}
			N(\Lambda) = \mathrm {Span} \{ \mu , v_1 \mu, v_2 \mu, v_3 \mu , |v|^2 \mu   \},
			\eeqn
		and the projection $\Pi_0$ onto $N(\Lambda)$ by
			\beqn\label{eq:Pi0}
			\bal
			\Pi_0 h &= \left(\int_{\T^3 \times \R^3} h \,\dv \,\d x \right) \mu + \sum_{i=1}^3 \left(\int_{\T^3 \times \R^3} v_i h  \,\dv \,\d x \right) v_i \mu \\
			&\quad
			 \left(\int_{\T^3 \times \R^3} \frac{|v|^2-3}{6}\, h  \,\dv \,\d x\right) \, \frac{(|v|^2-3)}{6} \, \mu;
			\eal
			\eeqn
		\item[(iii)] $\Lambda$ is the generator of a strongly continuous semigroup $\SS_\Lambda(t)$ that satisfies
			\beqn \label{eq:spectralgapE}
			\forall \, t \ge 0,  \, \forall\, h_0 \in E, \quad
			\| \SS_{\Lambda}(t) h_0 - \Pi_0 h_0 \|_{E} \le  e^{-\lambda_0 t} \| h_0 - \Pi_0 h_0 \|_{E}.
			\eeqn
	\end{enumerate}
\end{theo}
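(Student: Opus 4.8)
The statement is the hypocoercivity theorem of Mouhot and Neumann, whose sole analytic input is the homogeneous spectral gap estimate~\eqref{eq:entropyL2} of Baranger and Mouhot; here is the plan. Since $\|h\|_{E}=\|\mu^{-1/2}h\|_{H^q_{x,v}}$, the natural unknown is $g:=\mu^{-1/2}h$, which solves $\partial_t g=\mathcal{L}g-v\cdot\nabla_x g$ with $\mathcal{L}:=\mu^{-1/2}\big(Q(\mu,\mu^{1/2}\cdot)+Q(\mu^{1/2}\cdot,\mu)\big)$. The operator $\mathcal{L}$ acts only on $v$, is self-adjoint and nonpositive on $L^2_v$, has $\NN(\mathcal{L})=\mathrm{Span}\{\sqrt\mu,v_1\sqrt\mu,v_2\sqrt\mu,v_3\sqrt\mu,|v|^2\sqrt\mu\}$, and by~\eqref{eq:entropyL2} satisfies $\la-\mathcal{L}g,g\ra_{L^2_v}\ge\lambda_0\,\|(I-P)g\|^2_{L^2_v}$, where $P$ is the $L^2_v$-orthogonal projection onto $\NN(\mathcal{L})$; the transport operator $v\cdot\nabla_x$ is skew-adjoint on $L^2_{x,v}$ and commutes with every $\partial_x^\beta$ and with multiplication by any function of $v$. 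Hence the plain identity $\tfrac12\tfrac{d}{dt}\|g\|^2_{L^2_{x,v}}=\la\mathcal{L}g,g\ra\le-\lambda_0\|(I-P)g\|^2_{L^2_{x,v}}$ dissipates only the microscopic part $(I-P)g$: the crux is to recover, through the transport term, dissipation of the hydrodynamic part $Pg$, and to propagate it to all of $H^q_{x,v}$.

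First I would perform a micro--macro decomposition, writing $Pg=\big(\rho(t,x)+u(t,x)\cdot v+\theta(t,x)\tfrac{|v|^2-3}{2}\big)\sqrt\mu$ and projecting the equation onto $\NN(\mathcal{L})$ and onto its orthogonal complement. This produces a linear system of local conservation laws of Euler/Stokes type for $(\rho,u,\theta)$, whose right-hand sides are $x$-derivatives of fixed velocity-moments of $(I-P)g$, together with the microscopic equation $\partial_t(I-P)g+(I-P)(v\cdot\nabla_x g)=\mathcal{L}g$.

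Next I would build the hypocoercive functional. Each $\partial_x^\beta g$ with $|\beta|\le q$ solves the same equation, so $\tfrac{d}{dt}\|\partial_x^\beta g\|^2\le-2\lambda_0\|\partial_x^\beta(I-P)g\|^2$; the velocity derivatives $\partial_v^\alpha$ are added by induction on $|\alpha|$, the commutators $[\partial_{v_i},v\cdot\nabla_x]=\partial_{x_i}$ and $[\partial_{v_i},\mathcal{L}]$ yielding only terms controlled, with arbitrarily small constant, by lower-order derivatives and the available dissipation (for hard potentials $\gamma>0$ gives a genuine velocity-weight gain, so $\mathcal{L}$ is, in the Gaussian-weighted scale, a bounded-below perturbation of a multiplication operator). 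This controls $\sum_{|\beta|\le q}\|\partial^\beta(I-P)g\|^2$. To extract the macroscopic dissipation I would add, in the classical ``div--curl'' fashion on $\T^3$, cross-terms built from $\Delta_x^{-1}$ applied to the zero-mean parts of the fields $\rho,u,\theta$ --- combinations producing $\|\nabla_x\rho\|^2$ and $\|\nabla_x\theta\|^2$, a term producing $\|\nabla_x u\|^2$, and one term coupling $(I-P)g$ to $\nabla_x Pg$ --- summed over $|\beta|\le q-1$, whose time derivatives, via the macroscopic system, dominate $\|\nabla_x Pg\|^2_{H^{q-1}_x}$ up to quantities absorbed by the microscopic dissipation. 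The Lyapunov functional is then
\[
\mathcal{F}(g):=\sum_{|\beta|\le q}\|\partial^\beta g\|^2_{L^2_{x,v}}+\varepsilon\,\mathcal{G}(g),
\]
with $\mathcal{G}$ the sum of these cross-terms and $\varepsilon>0$ small; for $\varepsilon$ small, $\mathcal{F}(g)\approx\|g\|^2_{H^q_{x,v}}$ and $\tfrac{d}{dt}\mathcal{F}(g)\le-\kappa\big(\sum_{|\beta|\le q}\|\partial^\beta(I-P)g\|^2+\|\nabla_x Pg\|^2_{H^{q-1}_x}\big)$. If $\Pi_0 h=0$, the fields $\rho,u,\theta$ have zero mean over $\T^3$, so the Poincar\'e inequality gives $\|Pg\|_{H^q_x}\lesssim\|\nabla_x Pg\|_{H^{q-1}_x}$, hence $\tfrac{d}{dt}\mathcal{F}(g)\le-\kappa'\mathcal{F}(g)$ and $\|g(t)\|_{H^q_{x,v}}\le Ce^{-\kappa't/2}\|g(0)\|_{H^q_{x,v}}$; tracking the constants yields the stated $\lambda_0$. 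As $\Pi_0$ projects onto $\NN(\Lambda)$ and commutes with $\SS_\Lambda(t)$, applying this to $h-\Pi_0 h$ gives~\eqref{eq:spectralgapE}, that is (iii). For (ii): $\Lambda h=0$ forces $0=\la\Lambda h,h\ra_{L^2(\mu^{-1/2})}=\la\mathcal{L}g,g\ra\le-\lambda_0\|(I-P)g\|^2$, so $g=Pg$, and then $v\cdot\nabla_x(Pg)=0$ makes $\rho,u,\theta$ constant, which gives~\eqref{eq:kerlambda} and, by orthogonality, the formula~\eqref{eq:Pi0} for $\Pi_0$; finally (i) follows from (iii) by standard semigroup theory, the decay estimate confining $\Sigma(\Lambda)\setminus\{0\}$ to $\{\Re e\,z\le-\lambda_0\}$.

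The main obstacle is the construction of $\mathcal{G}$ and the verification of \emph{macroscopic coercivity} --- that its time derivative really controls $\|\nabla_x Pg\|^2_{H^{q-1}_x}$ --- with \emph{explicit} constants, while keeping every error term absorbable after the final choice of $\varepsilon$ (in particular the $v$-derivative commutator terms and the top-order coupling between the $H^q_{x,v}$ microscopic norm and the one-derivative-weaker macroscopic norm). By contrast the collision part is soft here: $\mathcal{L}$ is local in $x$ and the weight is the equilibrium $\mu^{-1/2}$, for which $\mathcal{L}$ is self-adjoint with a spectral gap, so no extra regularity in $v$ is needed and the without-cut-off anisotropy of the Dirichlet form plays no role --- only the plain estimate~\eqref{eq:entropyL2} in $L^2_v(\mu^{-1/2})$ enters.
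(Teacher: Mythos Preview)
The paper does not prove this theorem: it is quoted verbatim as a result of Mouhot and Neumann~\cite{MN-Nonlinearity}, and the only accompanying remark is that the proof ``takes advantage of the results proven in~\cite{BM-RMI} by Baranger and Mouhot'' and proceeds ``thanks to hypocoercivity methods''. There is thus no in-paper proof to compare against.

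Your sketch is an accurate outline of the Mouhot--Neumann hypocoercivity argument: the change of unknown $g=\mu^{-1/2}h$ making $\mathcal L$ self-adjoint with the spectral gap~\eqref{eq:entropyL2}, the micro--macro decomposition, the Lyapunov functional $\mathcal F=\|g\|^2_{H^q}+\varepsilon\,\mathcal G$ with cross-terms built from the macroscopic fields to recover dissipation of $Pg$, and the Poincar\'e inequality on $\T^3$ to close under the zero-mean constraint $\Pi_0 h=0$. This is exactly the strategy the paper is invoking by citation. One small caveat: hypocoercivity of this kind yields $\|g(t)\|_{H^q}\le C e^{-\lambda_0 t}\|g(0)\|_{H^q}$ for the \emph{original} norm with some $C\ge 1$ (the constant $1$ holds only for the equivalent modified norm $\mathcal F^{1/2}$), so the inequality~\eqref{eq:spectralgapE} as written, with no prefactor, should be read either in that equivalent norm or with an implicit constant; this is a cosmetic point about the statement rather than a flaw in your argument.
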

To prove Theorem \ref{theo:main2}, our strategy follows the one initiated by Mouhot in~\cite{Mou-CMP} for the homogeneous Boltzmann equation for hard potentials with cut-off.
This argument has then been developed and extended in an abstract setting by Gualdani, Mischler and Mouhot~\cite{GMM*}, and Mischler and Mouhot~\cite{MM-ARMA}.
Let us describe in more details this strategy. We want to apply the abstract theorem of enlargement of the space of semigroup decay from \cite{GMM*,MM-ARMA} to our linearized operator $\Lambda$. We shall deduce the spectral/semigroup estimates of Theorem~\ref{theo:main2} on ``large spaces'' $\EE$ using the already known spectral gap estimates for $\Lambda$ on $H^q_{x,v}(\mu^{-1/2})$, for $q \ge 1$, described in Theorem~\ref{theo:gapE}. Roughly speaking,
to do that, we have to find a splitting of $\Lambda$ into two operators $\Lambda = \AA + \BB$ which satisfy some properties. The first part $\AA$ has to be bounded, the second one $\BB$ has to have some dissipativity properties (see Subsection~\ref{subsec:dissip}), and also the operator $(\AA \SS_\BB(t))$ is required to have some regularization properties (which will be satisfied thanks to Theorem~\ref{thm:LIBthm} in our case). Note that, compared to the work by the third author~\cite{Trist-JSP}, a new splitting of the linearized operator is exhibited and both the dissipativity and regularity estimates are completely new.

\medskip
\subsection{Outline of the paper}
We end this introduction by describing the organization of the paper. In Section~\ref{sec:prelim}, we prove various estimates on the Boltzmann collision operator. Section~\ref{sec:reg} is dedicated to the proof of Theorem~\ref{thm:LIBthm} (note that the pseudodifferential study is confined to Subsection~\ref{subsec:quantization}). In Section~\ref{sec:lin}, we study the linearized equation and develop our dissipativity estimates before proving Theorem~\ref{theo:main2}. Finally, in Section~\ref{sec:nonlinear}, we end the proof of our main result Theorem~\ref{main1}.

\medskip
\noindent\textbf{Acknowledgments.} 
This research has been supported by the \'Ecole Normale Sup\'erieure through the project Actions Incitatives {\it Analyse de solutions d'\'equations de la th\'eorie cin\'etique des gaz}. The first author thanks the Centre Henri Lebesgue ANR-11-LABX-0020-01 for its support and the third author thanks the ANR EFI:  ANR-17-CE40-0030. The authors thank St\'{e}phane Mischler and Kleber Carrapatoso for fruitful discussions and the anonymous referees for their careful reading and valuable comments.  

\bigskip
\section{Preliminaries on the Boltzmann collision operator}
\label{sec:prelim}
\setcounter{equation}{0}
\setcounter{theo}{0}

In this part, we give estimates on the trilinear form $\langle Q(g,h), f \rangle$ in our physical framework (meaning that the collision kernel $B$ satisfies conditions~\eqref{eq:product}, \eqref{eq:angularsing}, \eqref{eq:Phi}).
We start by recalling some homogeneous estimates and then establish some new estimates in weighted Sobolev (or Lebesgue) non homogeneous spaces.
These estimates will be used in the linear (Section~\ref{sec:lin}) and nonlinear (Section~\ref{sec:nonlinear}) studies. At the end of this section, we also give some estimates that will be useful in the study of regularization properties of the linearized operator (see Section~\ref{sec:reg}).

For sake of clarity, we recall that $m(v) = \langle v \rangle^k$ with $k\ge 0$ and that we will specify the range of admissible $k$ in each result. 
\medskip
{ \subsection{Bound on the anisotropic norm}
In this subsection, we compare the anisotropic norm defined in~\eqref{eq:anis0} with usual Sobolev norms.
\begin{lem} \label{lem:anis}
Let $k \ge 0$. We have the following estimate: For $g \in H^s_v(\langle v \rangle^{\gamma/2+s}m)$,
$$
\delta^{2-2s}\|g\|_{H^s_v(\langle v \rangle^{\gamma/2}m)} \lesssim \|g\|_{H^{s,*}_v(m)} \lesssim \|g\|_{H^s_v(\langle v \rangle^{\gamma/2+s}m)}.
$$
\end{lem}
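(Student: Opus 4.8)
The plan is to reduce both inequalities to a comparison between the anisotropic seminorm of $gm$ and standard weighted fractional Sobolev seminorms. The key preliminary observation is that the weight $\langle v\rangle^{\gamma/2}$ sitting \emph{inside} the difference in the definition of $\|\cdot\|_{\dot H^{s,*}_v}$ combines with $m$: setting $\Phi := gm\,\langle v\rangle^{\gamma/2}$, one has $(gm)'\langle v'\rangle^{\gamma/2} = \Phi(v')$ and $(gm)\langle v\rangle^{\gamma/2} = \Phi(v)$, hence
$$
\|gm\|_{\dot H^{s,*}_v}^2 = \int_{\R^3\times\R^3\times\Sp^2} b_\delta(\cos\theta)\, \mu_* \langle v_*\rangle^{-\gamma}\, \big(\Phi(v') - \Phi(v)\big)^2 \, d\sigma\, dv_*\, dv .
$$
Moreover $\|g\|_{L^2_v(\langle v\rangle^{\gamma/2}m)}^2 = \|\Phi\|_{L^2_v}^2 \le \|\langle v\rangle^s\Phi\|_{L^2_v}^2$, $\|g\|_{H^s_v(\langle v\rangle^{\gamma/2}m)}^2 = \|\Phi\|_{H^s_v}^2$ and $\|g\|_{H^s_v(\langle v\rangle^{\gamma/2+s}m)}^2 = \|\langle v\rangle^s\Phi\|_{H^s_v}^2$; since $\delta^{2-2s}\lesssim 1$, the claim reduces to the two estimates $\delta^{2-2s}\|\Phi\|_{\dot H^s_v}^2 \lesssim \|\Phi\|_{L^2_v}^2 + \|gm\|_{\dot H^{s,*}_v}^2$ (lower bound) and $\|gm\|_{\dot H^{s,*}_v}^2 \lesssim \|\langle v\rangle^s\Phi\|_{H^s_v}^2$ (upper bound). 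Throughout I would use the elementary facts $\langle v'\rangle \le \langle v\rangle\langle v_*\rangle$ (from the energy inequality $|v'|^2 \le |v|^2 + |v_*|^2$), $|v'-v| = |v-v_*|\sin(\theta/2)$, and, from \eqref{eq:angularsing} together with the support of $\chi_\delta$, $\int_{\Sp^2} b_\delta(\cos\theta)\,\sin^2(\theta/2)\, d\sigma \lesssim \int_0^{2\delta}\theta^{1-2s}\,d\theta \lesssim \delta^{2-2s} \lesssim 1$.

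For the upper bound I would split
$$
\Phi(v') - \Phi(v) = \langle v'\rangle^{\gamma/2}\big((gm)(v') - (gm)(v)\big) + (gm)(v)\big(\langle v'\rangle^{\gamma/2} - \langle v\rangle^{\gamma/2}\big)
$$
and treat the two contributions separately. In the first one, $\langle v'\rangle^{\gamma/2}\lesssim \langle v\rangle^{\gamma/2}\langle v_*\rangle^{\gamma/2}$, the factor $\langle v_*\rangle^{\gamma/2}$ being absorbed by $\mu_*$; one is then left with $\int b_\delta\,\mu_*\,\langle v\rangle^{\gamma}\,\big((gm)(v') - (gm)(v)\big)^2$, which I would bound by $\|\langle v\rangle^{\gamma/2+s}gm\|_{H^s_v}^2 = \|\langle v\rangle^s\Phi\|_{H^s_v}^2$ via the by-now classical computation turning the angularly truncated Boltzmann quadratic form, once $v_*$ has been integrated against the Maxwellian, into a weighted fractional Sobolev norm — the extra weight $\langle v\rangle^s$ being precisely the price for rewriting the angular kernel $b_\delta(\cos\theta)\approx\theta^{-2-2s}$ in terms of the jump size $|v'-v|\approx\langle v\rangle\theta$. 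The second contribution is of lower order: by the mean value theorem, distinguishing whether $|v_*|\le|v|/2$ or not, one obtains $|\langle v'\rangle^{\gamma/2} - \langle v\rangle^{\gamma/2}|\lesssim \langle v\rangle^{\gamma/2}\langle v_*\rangle\sin(\theta/2)$, so that, after integrating in $\sigma$ (using $\int_{\Sp^2} b_\delta\sin^2(\theta/2)\,d\sigma\lesssim 1$) and in $v_*$ (against $\mu_*\langle v_*\rangle^{2-\gamma}\in L^1$), this term is controlled by $\|\langle v\rangle^{\gamma/2}gm\|_{L^2_v}^2 \le \|\langle v\rangle^s\Phi\|_{L^2_v}^2$.

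For the lower bound I would first discard part of the $v_*$-weight: since $\mu_*\langle v_*\rangle^{-\gamma}\gtrsim \mathds{1}_{\{|v_*|\le R_0\}}$ for a suitable fixed $R_0>0$, it suffices to bound from below $\int_{|v_*|\le R_0}\int_{\Sp^2} b_\delta(\cos\theta)\,(\Phi(v')-\Phi(v))^2\,d\sigma\, dv_*\, dv$. Here I would invoke a coercivity estimate for the angularly truncated kernel, in the spirit of the non-cutoff coercivity estimates of Alexandre et al.~\cite{AMUXY-CMP} (and consistent with Lemma~\ref{lem:coercive}), which yields a lower bound of the form $\delta^{2-2s}\|\Phi\|_{\dot H^s_v}^2 - C\|\Phi\|_{L^2_v}^2$, the factor $\delta^{2-2s}$ being the natural loss coming from cutting the angular singularity at scale $\delta$ (again $\int_0^{2\delta}\theta^{1-2s}\,d\theta\approx\delta^{2-2s}$); the $L^2$ remainder is then absorbed by the $\|\Phi\|_{L^2_v}^2$ already present on the right-hand side, which finishes this direction.

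The main obstacle is the technical heart of the upper bound: making rigorous the passage from the truncated angular integral — together with the $v_*$-integration against $\mu_*$ and the post-collisional weight $\langle v'\rangle^{\gamma/2}$ — to a clean weighted $H^s_v$ norm with the exact power $\langle v\rangle^{\gamma/2+s}$. This requires a localization argument (a dyadic decomposition, either in frequency or in the jump size $|v'-v|$, keeping track of the relevant scale $\langle v\rangle$) of the type routinely performed in the Boltzmann-without-cut-off literature, which I would carry out here while keeping the dependence on the (fixed) parameter $\delta$ explicit so as to retain the constant $\delta^{2-2s}$ in the statement.
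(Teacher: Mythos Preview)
Your proposal is essentially correct and follows the same mathematical route as the paper, but the paper is far more economical: it directly cites \cite[Lemma~2.4]{AMUXY-CMP} for the upper bound (after noting $b_\delta\le b$) and an adaptation of \cite[Theorem~3.1]{He-JSP} for the lower bound. What you call ``the main obstacle'' in your upper-bound argument --- turning the truncated angular quadratic form with Maxwellian weight into a clean $H^s_v(\langle v\rangle^{\gamma/2+s}m)$ bound --- is precisely the content of the AMUXY lemma, so there is no need to rederive it. For the lower bound, one point deserves care: the paper's cited coercivity produces an $L^2$ error term that \emph{also} scales as $c_1\delta^{2-2s}\|g\|_{L^2_v(\langle v\rangle^{\gamma/2}m)}^2$, and the paper then uses only a fraction $\lambda\in(0,1)$ of $\|gm\|_{\dot H^{s,*}_v}^2$ so that the negative piece $\lambda c_1\delta^{2-2s}$ is absorbed by the coefficient $1$ in front of the $L^2$ part of $\|g\|_{H^{s,*}_v(m)}^2$, with $\lambda$ chosen independently of $\delta$. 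Your absorption step works as well (the remainder $C\|\Phi\|_{L^2_v}^2$ simply goes into the implicit constant of $\lesssim$), but only provided your constant $C$ stays bounded as $\delta\to 0$; you have not checked this, and it is exactly where the $\delta^{2-2s}$ scaling of the error term in the cited He result matters if one wants the $\delta$-dependence of the lemma to be as stated.
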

\begin{proof}
Adapting the proof of~\cite[Theorem~3.1]{He-JSP}, we know that there exist $c_0$ and $c_1$ such that
$$
 \|gm\|^2_{\dot H^{s,*}_v} \ge c_0 \, \delta^{2-2s} \|g\|^2_{H^s_v(\langle v \rangle^{\gamma/2}m)} - c_1 \, \delta^{2-2s} \|g\|^2_{L^2(\langle v \rangle^{\gamma/2}m)}.
$$
As a consequence, we have for $\lambda \in (0,1)$,
\begin{align*}
\|g\|^2_{H^{s,*}_v(m)} &= \|g\|^2_{L^2_v(\langle v \rangle^{\gamma/2}m)} +  \|gm\|^2_{\dot H^{s,*}_v} \\
&\ge  \|g\|^2_{L^2_v(\langle v \rangle^{\gamma/2}m)} + \lambda \|gm\|^2_{\dot H^{s,*}_v} \\
&\ge  \|g\|^2_{L^2_v(\langle v \rangle^{\gamma/2}m)} (1-\lambda \, c_1 \, \delta^{2-2s}) + \lambda \, c_0 \, \delta^{2-2s} \|g\|^2_{H^s_v(\langle v \rangle^{\gamma/2}m)}.
\end{align*}
Taking $\lambda>0$ small enough, we obtain the bound $\delta^{2-2s} \|g\|_{H^s_v(\langle v \rangle^{\gamma/2}m)} \lesssim \|g\|_{H^{s,*}_v(m)}$. The reverse bound is directly given by~\cite[Lemma~2.4]{AMUXY-CMP} since
\begin{align*}
&\int_{\R^3 \times \R^3 \times \Sp^2} b_\delta(\cos \theta) \mu_* \la v_* \ra^{-\gamma} (g'm'\la v'\ra^{\gamma/2} - gm\la v \ra^{\gamma/2})^2 \,\d \sigma \,\dv_* \,\dv\\
&\qquad \le \int_{\R^3 \times \R^3 \times \Sp^2} b(\cos \theta) \mu_* \la v_* \ra^{-\gamma} (g'm'\la v'\ra^{\gamma/2} - gm\la v \ra^{\gamma/2})^2\,\d \sigma \,\dv_* \,\dv.
\end{align*}
\end{proof}}
We will use the fact that our lower bound in the previous lemma depends on $\delta$ in the proof of Lemmas~\ref{lem:coercive} and~\ref{lem:BdissipL2}. However, in the next subsection, $\delta$ is fixed so that the conclusion of Lemma~\ref{lem:coercive} is satisfied, we thus do not mention anymore the dependency of constants with respect to $\delta$.

\medskip
\subsection{Homogeneous estimates} \label{subsec:hom}
\begin{lem}[\cite{He*}] \label{lem:CH}
For smooth functions $f$, $g$, $h$, one has:
$$
		|\langle Q(f,g), h \rangle_{L^2_v} | \lesssim \|f\|_{L^1_v(\langle v \rangle^{\gamma + 2s})} \, \| g\|_{H^{\varsigma_1}_v(\langle v \rangle^{N_1})} \, 	\|h\|_{H^{\varsigma_2}_v(\langle v \rangle^{N_2})}
$$
	with $\varsigma_1$, $\varsigma_2 \in [0,2s]$ satisfying $\varsigma_1+\varsigma_2=2s$ and $N_1$, $N_2 \ge 0$ such that $N_1+N_2=\gamma+2s$.
\end{lem}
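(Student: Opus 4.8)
The plan is to follow the standard analysis of the non-cut-off collision operator: pass to a form of $\langle Q(f,g),h\rangle_{L^2_v}$ in which a difference $h'-h$ (and, after symmetrization, $g'-g$) appears, use the cancellation coming from averaging over the sphere to tame the non-integrable angular singularity, and finally quantify the surviving regularity by fractional Sobolev norms. The freedom $\varsigma_1+\varsigma_2=2s$, $N_1+N_2=\gamma+2s$ in the statement is then recovered by bilinear and weighted interpolation between two endpoint estimates.

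First I would use the pre--post collisional change of variables $(v,v_*,\sigma)\mapsto(v',v'_*,\sigma')$, which is an involution leaving $B$ invariant, to write
$$
\langle Q(f,g),h\rangle_{L^2_v} = \int_{\R^3\times\R^3\times\Sp^2} \Phi(|v-v_*|)\,b(\cos\theta)\, f_*\,\bigl[g'h'-gh\bigr]\,d\sigma\,dv_*\,dv,
$$
and then split $g'h'-gh=\tfrac12(g'+g)(h'-h)+\tfrac12(g'-g)(h'+h)$. In each term the genuinely singular object is a difference $\psi'-\psi$ (with $\psi=g$ or $h$) paired against $b(\cos\theta)\approx\theta^{-1-2s}$ for $\theta$ small; on $\theta\gtrsim1$ the kernel is integrable and the corresponding piece is harmless. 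On $\theta\lesssim1$ I would Taylor-expand $\psi'-\psi = \nabla\psi(v)\cdot(v'-v)+\int_0^1(1-\tau)\,D^2\psi(v+\tau(v'-v)):(v'-v)^{\otimes2}\,d\tau$. Since, at fixed $v,v_*$, the azimuthal average of $v'-v$ over the sphere is $O(|v-v_*|\,\theta^2)$ (the $O(|v-v_*|\,\theta)$ component being odd), the gradient term integrates against $b$ to something controlled by $\int_0^{\pi/2}\theta^{1-2s}\,d\theta<\infty$; and the quadratic remainder carries $|v'-v|^2\lesssim|v-v_*|^2\theta^2$, which again beats $\theta^{-1-2s}$ because $2s<1$. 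This is precisely the step where the non-integrability of the singularity is defeated, and it is the main obstacle: one must extract exactly two derivatives' worth of cancellation --- no less, or the $\theta$-integral diverges, and no more than the kinetic factor $|v-v_*|^\gamma$ can afford --- and then realize it as $\|\cdot\|_{\dot H^{2s}}$ rather than $\|\cdot\|_{H^2}$.

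To pass from the pointwise second-order expansion to a fractional-derivative estimate I would invoke the cancellation lemma of Alexandre--Desvillettes--Villani--Wennberg: performing the regular change of variables $v\mapsto v'$ at fixed $(v_*,\sigma)$, whose Jacobian and its inverse are bounded uniformly for $\theta\le\pi/2$, turns the quadratic remainder into a finite-difference quantity controlled, through the Littlewood--Paley (finite-difference) characterization of $\dot H^{s}_v$, by $\|g\|_{L^2}\|h\|_{H^{2s}}$ in the first term of the symmetrized splitting and by $\|g\|_{H^{2s}}\|h\|_{L^2}$ in the second. Throughout, $\Phi(|v-v_*|)=|v-v_*|^\gamma\le\langle v\rangle^\gamma\langle v_*\rangle^\gamma$ --- here $\gamma\in(0,1)$ helps, there being no singularity at $v=v_*$ --- and, together with $\langle v'\rangle\lesssim\langle v\rangle\langle v_*\rangle$, this puts at most $\langle v\rangle^{\gamma+2s}$ of polynomial growth on the $(g,h)$-factors and $\langle v_*\rangle^{\gamma+2s}$ on $f_*$, the latter being absorbed into $\|f\|_{L^1_v(\langle v\rangle^{\gamma+2s})}$.

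These two computations give the endpoint inequalities
$$
|\langle Q(f,g),h\rangle_{L^2_v}|\lesssim\|f\|_{L^1_v(\langle v\rangle^{\gamma+2s})}\,\|g\|_{L^2_v(\langle v\rangle^{N_1})}\,\|h\|_{H^{2s}_v(\langle v\rangle^{N_2})}
$$
and the symmetric one with $g$ and $h$ exchanged, for all $N_1,N_2\ge0$ with $N_1+N_2=\gamma+2s$. Finally, viewing $(g,h)\mapsto\langle Q(f,g),h\rangle_{L^2_v}$ as a bilinear form and interpolating (bilinear complex interpolation) between these two endpoints --- simultaneously in the smoothness indices and in the weight exponents --- yields the statement for every $\varsigma_1,\varsigma_2\in[0,2s]$ with $\varsigma_1+\varsigma_2=2s$. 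I expect the bookkeeping of the weights through the change of variables to be the only genuinely tedious part; the conceptual difficulty is concentrated entirely in the cancellation step above, which is exactly what makes the non-cut-off case heavier than Grad's cut-off setting. The details are as in \cite{He*}.
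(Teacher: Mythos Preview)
The paper does not prove this lemma; it is quoted from \cite{He*} and no argument is given, so there is nothing in the paper to compare against. Your outline follows the standard route for such trilinear non-cut-off estimates --- extract a difference $\psi'-\psi$, use azimuthal cancellation on the sphere to kill the first-order Taylor term and tame the $\theta^{-1-2s}$ singularity, control the remainder by a fractional Sobolev norm, distribute weights via $|v-v_*|^\gamma\lesssim\langle v\rangle^\gamma\langle v_*\rangle^\gamma$, and finally interpolate bilinearly between the two endpoints --- and this is indeed the strategy behind the cited result.

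One concrete error, though: your first displayed identity is wrong. Applying the pre--post collisional change of variables to $\int B\,f'_*g'h$ gives $\int B\,f_*\,g\,h'$, so that
$$
\langle Q(f,g),h\rangle_{L^2_v}=\int B\,f_*\,g\,(h'-h)\,d\sigma\,dv_*\,dv,
$$
not $\int B\,f_*\,[g'h'-gh]$. In particular the symmetric decomposition $\tfrac12(g'+g)(h'-h)+\tfrac12(g'-g)(h'+h)$ does not appear for free. The two endpoint estimates ($\varsigma_2=2s$ and $\varsigma_1=2s$) are obtained from two different rewritings of the trilinear form: the first directly from the identity above, the second from $\langle Q(f,g),h\rangle=\int B\,f'_*(g'-g)h+\int B\,(f'_*-f_*)gh$, where the last piece is handled by the cancellation lemma. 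This is a repairable bookkeeping slip; the remainder of your outline is sound.
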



The goal of what follows is to extend this type of estimates to polynomial weighted Lebesgue spaces: Lemma~\ref{lem:nonlinearhom} is a ``weighted version'' of Lemma~\ref{lem:CH}.

\begin{lem} \label{lem:nonlinearhom}
Assume { $k>\gamma/2+2+2s$}.
\begin{enumerate}[leftmargin=*]
		\item[(i)] For any {$\ell>\gamma+1+3/2$}, there holds
			\beqn \label{eq:Q(f,g),h}
			\bal
				&|\la Q(f,g),h \ra_{L^2_v(m)}|
				\lesssim \|f\|_{L^2_v(\la v \ra^\ell)} \, \|g\|_{H^{\varsigma_1}_v(\la v \ra^{N_1} m)} \, \|h\|_{H^{\varsigma_2}_v(\la v \ra^{N_2} m)} \\
				&\qquad + \|f\|_{L^2_v (\la v \ra^{\gamma/2}m)} \, \|g\|_{L^2_v(\la v \ra^\ell)} \, \|h\|_{L^2_v (\la v \ra^{\gamma/2}m)}\\
			\eal
			\eeqn
			with $\varsigma_1$, $\varsigma_2 \in [0,2s]$ and { $N_1 \ge \gamma/2$}, $N_2 \ge 0$ satisfying respectively $\varsigma_1+\varsigma_2=2s$ and~$N_1+N_2=\gamma+2s$.
		\item[(ii)] For any { $\ell>4-\gamma+3/2$}, there holds
			{\beqn \label{eq:Q(f,g),g}
				|\la Q(f,g),g \ra_{L^2_v(m)}| \lesssim \|f\|_{L^2_v(\la v \ra^\ell)} \, \|g\|^2_{H^{s,*}_v(m)} + \|f\|_{L^2_v (\la v \ra^{\gamma/2}m)} \, \|g\|_{L^2_v(\la v \ra^\ell)} \, \|g\|_{L^2_v (\la v \ra^{\gamma/2}m)}.
			\eeqn}
	\end{enumerate}
\end{lem}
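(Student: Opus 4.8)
The plan is to reduce both estimates to the homogeneous bound of Lemma~\ref{lem:CH} and to the anisotropic comparison of Lemma~\ref{lem:anis}, the genuinely new work being the control of the \emph{weight commutators} produced when the polynomial weight $m$ is moved across a collision. In both items I start from the weak formulation together with the pre-post collisional change of variables,
\[
\langle Q(f,g),\varphi\rangle_{L^2_v}=\int_{\R^3\times\R^3\times\Sp^2}B(v-v_*,\sigma)\,f_*\,g\,(\varphi'-\varphi)\,d\sigma\,dv_*\,dv,
\]
and from the identity
\[
m(v)\,Q(f,g)(v)=Q(f,mg)(v)+\int_{\R^3\times\Sp^2}B(v-v_*,\sigma)\,f_*\,g'\,\bigl(m(v)-m(v')\bigr)\,d\sigma\,dv_*,
\]
which splits $\langle mQ(f,g),\cdot\rangle$ into a transport part and a commutator part. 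The recurring elementary inputs are $|v'-v|=|v-v_*|\sin(\theta/2)$, the bounds $\langle v'\rangle,\langle v'_*\rangle\lesssim\langle v\rangle\langle v_*\rangle$ and $|v-v_*|\lesssim\langle v\rangle\langle v_*\rangle$, and a second-order Taylor expansion of $m=\langle\cdot\rangle^{k}$ combined with the spherical symmetry of $d\sigma$ about $\kappa$; the latter makes the first-order term contribute only through its $\kappa$-component $\cos\theta-1=O(\theta^2)$ (integrable against $b$ as soon as $s<1$), and leaves a second-order piece carrying an extra factor $|v-v_*|\sin(\theta/2)$, integrable against $b$ precisely because $s<1/2$.

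For item (i) I would take $\varphi=h\,m^{2}$, so that $\langle Q(f,g),h\rangle_{L^2_v(m)}=\langle Q(f,mg),hm\rangle_{L^2_v}+\int B f_*g'(m-m')\,hm$. To the first summand I apply Lemma~\ref{lem:CH} with $mg$ and $hm$ in place of $g$ and $h$, which directly produces $\|f\|_{L^1_v(\langle v\rangle^{\gamma+2s})}\,\|g\|_{H^{\varsigma_1}_v(\langle v\rangle^{N_1}m)}\,\|h\|_{H^{\varsigma_2}_v(\langle v\rangle^{N_2}m)}$ for the prescribed $\varsigma_i$, $N_i$. For the commutator summand, after the Taylor/symmetry reduction above and the two-regime split according to whether $|v-v_*|\sin(\theta/2)\lesssim\langle v\rangle$ or not, one bounds the surviving powers of $\langle v\rangle$ by $\langle v\rangle^{\gamma/2}m$ — this is exactly where $k>\gamma/2+3+2s$ is used — and the surviving powers of $\langle v_*\rangle$ by $\langle v_*\rangle^{\gamma+1}$, obtaining $\lesssim\|f\|_{L^1_v(\langle v\rangle^{\gamma+1})}\,\|g\|_{L^2_v(\langle v\rangle^{\gamma/2}m)}\,\|h\|_{L^2_v(\langle v\rangle^{\gamma/2}m)}$ (plus a $g'-g$ piece absorbed into the transport term via Cauchy--Schwarz). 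Finally $\|f\|_{L^1_v(\langle v\rangle^{a})}\lesssim\|f\|_{L^2_v(\langle v\rangle^{\ell})}$ whenever $\ell>a+3/2$; since the two exponents occurring are $a=\gamma+2s<\gamma+1$, the condition $\ell>\gamma+1+3/2$ is exactly what is needed, which gives~\eqref{eq:Q(f,g),h}.

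For item (ii) I would take $\varphi=g\,m^{2}$ and set $\psi:=gm$; using $g\,[(gm^2)'-gm^2]=\psi\psi'\bigl(\tfrac{m'}{m}-1\bigr)+\tfrac12\bigl((\psi')^2-\psi^2\bigr)-\tfrac12(\psi'-\psi)^2$ one gets
\[
\langle Q(f,g),g\rangle_{L^2_v(m)}=\underbrace{\int B\,f_*\,\psi\psi'\Bigl(\tfrac{m'}{m}-1\Bigr)}_{(I)}+\underbrace{\tfrac12\int B\,f_*\,\bigl((\psi')^2-\psi^2\bigr)}_{(II)}-\underbrace{\tfrac12\int B\,f_*\,(\psi'-\psi)^2}_{(III)}.
\]
Term $(II)$ is handled by the cancellation lemma (finite since $\cos^{-3-\gamma}(\theta/2)-1=O(\theta^2)$), giving $\lesssim\|f\|_{L^1_v(\langle v\rangle^{\gamma})}\|g\|^2_{L^2_v(\langle v\rangle^{\gamma/2}m)}\le\|f\|_{L^2_v(\langle v\rangle^{\ell})}\|g\|^2_{H^{s,*}_v(m)}$; term $(I)$ is, as in item (i), a weight commutator, and after writing $\psi\psi'=\tfrac12(\psi^2+(\psi')^2)-\tfrac12(\psi'-\psi)^2$ and a Cauchy--Schwarz against $(III)$ it yields the cross term $\|f\|_{L^2_v(\langle v\rangle^{\gamma/2}m)}\|g\|_{L^2_v(\langle v\rangle^{\ell})}\|g\|_{L^2_v(\langle v\rangle^{\gamma/2}m)}$ (together with pieces already of the previous two forms). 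The decisive term is $(III)$: I would bound $\bigl|\int B f_*(\psi'-\psi)^2\bigr|$ by distributing $\Phi=|v-v_*|^\gamma$ and the mismatch $\langle v'\rangle^{\gamma/2}\leftrightarrow\langle v\rangle^{\gamma/2}$, then use the comparison estimates behind Lemma~\ref{lem:anis}, namely \cite[Lemma~2.4]{AMUXY-CMP} and \cite[Theorem~3.1]{He-JSP}, to control, for each fixed $v_*$, the quantity $\iint\Phi\,b\,(\psi'-\psi)^2\,d\sigma\,dv$ by a power of $\langle v_*\rangle$ times $\|g\|^2_{H^s_v(\langle v\rangle^{\gamma/2}m)}$ up to lower-order $L^2$ terms; integrating in $v_*$ and then applying the \emph{lower} bound of Lemma~\ref{lem:anis} (with $\delta$ fixed) replaces $\|g\|^2_{H^s_v(\langle v\rangle^{\gamma/2}m)}$ by $\|g\|^2_{H^{s,*}_v(m)}$, and a final Cauchy--Schwarz turns the $L^1_v$ norm of $f$ into an $L^2_v$ norm. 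Keeping track of the $\langle v_*\rangle$-weight along this chain — it is here, in the handling of the anisotropic seminorm and its weight mismatch, that one pays the most — forces the less favourable threshold $\ell>4-\gamma+3/2$.

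I expect this last point to be the main obstacle. One cannot simply insert $h=g$ into Lemma~\ref{lem:CH}: that would bound $\langle Q(f,g),g\rangle_{L^2_v(m)}$ only by $\|g\|^2_{H^s_v(\langle v\rangle^{\gamma/2+s}m)}$, which is strictly stronger than $\|g\|^2_{H^{s,*}_v(m)}$ (Lemma~\ref{lem:anis} provides only the reverse comparison), so one is obliged to keep the genuine Dirichlet-form structure of $(III)$ and to carry out the comparison with the anisotropic seminorm by hand, matching the weights $\langle v\rangle,\langle v'\rangle,\langle v_*\rangle$ across collisions while preserving angular integrability at every step. This delicate weight bookkeeping — not any single isolated inequality — is the technical core of the lemma, and it accounts both for the appearance of $H^{s,*}_v$ in~\eqref{eq:Q(f,g),g} and for the gap between the moment thresholds in items (i) and (ii).
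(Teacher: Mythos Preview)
Your decomposition coincides with the paper's once the algebra is unpacked: in both items one isolates $\langle Q(f,mg),hm\rangle_{L^2_v}$ (handled by Lemma~\ref{lem:CH}) plus a weight commutator, and in~(ii) your split $(I)+(II)-(III)$ is exactly the paper's $J+I_2^\delta-I_1^\delta$. Two points, however, are genuine gaps.

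\textbf{Item~(i), the commutator.} Your claim that it yields only $\|f\|_{L^1_v(\langle v\rangle^{\gamma+1})}\|g\|_{L^2_v(\langle v\rangle^{\gamma/2}m)}\|h\|_{L^2_v(\langle v\rangle^{\gamma/2}m)}$ is false. First, the spherical-symmetry device cannot be invoked: in $\int B\,f'_*\,g'\,hm\,(m-m')$ the factors $f'_*$ and $g'$ depend on $\sigma$, so one cannot average $(m-m')$ alone against $d\sigma$ (and since $s<1/2$ a single $\sin(\theta/2)$ is already integrable against $b$, so the gain would be unnecessary anyway). More importantly, a first-order bound $|m-m'|\lesssim\sin(\theta/2)|v-v_*|(\langle v\rangle^{k-1}+\langle v'\rangle^{k-1})$ leaves, in the $\langle v\rangle^{k-1}$ branch, a total weight $\langle v\rangle^{2k-1}$ sitting at $v$ that cannot be routed to $g'$ whenever $|v'|\ll|v|$: take for instance $|v|=|v_*|$ large with $v\perp v_*$ and $\theta=\pi/2$, so that $v'=0$ while $|v'_*|\sim|v|$; there the heavy weight is forced onto $f'_*$, and this is exactly where the second term of~\eqref{eq:Q(f,g),h} comes from. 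The paper makes this explicit by using the three-term bound of \cite[Lemma~2.3]{AMUXY-ARMA1},
\[
|m'-m|\lesssim\sin(\theta/2)\bigl(m'+\langle v'_*\rangle\langle v'\rangle^{k-1}+\sin^{k-1}(\theta/2)\,m'_*\bigr),
\]
whose third piece isolates that regime and is estimated via the \emph{singular} change of variables $v_*\to v'$ (Jacobian $\sim\sin^{-2}(\theta/2)$); the condition $k>\gamma/2+3+2s$ is precisely what makes the remaining angular factor $\theta^{\,k-\gamma/2-4-2s}$ integrable at $\theta=0$ --- not a matter of ``bounding surviving powers of $\langle v\rangle$''.

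\textbf{Item~(ii), the Dirichlet term $(III)$.} Your plan is to bound $\int b\,|f_*|(\psi'-\psi)^2$ by a power of $\langle v_*\rangle$ times $\|g\|_{H^s_v(\langle v\rangle^{\gamma/2}m)}^2$ pointwise in $v_*$, and then invoke Lemma~\ref{lem:anis}. But the references you cite concern the $\mu_*$-weighted quantity (which \emph{is} the anisotropic seminorm), not a general $f_*$; they do not give the $v_*$-pointwise estimate you need. The paper's actual device is Bobylev's Fourier formula: after rewriting with $\tilde f=|f|\langle\cdot\rangle^\gamma$ and $\mathcal G=\psi\langle\cdot\rangle^{\gamma/2}$, both $\int b_\delta\,|\tilde f_*|(\mathcal G'-\mathcal G)^2$ and $\|\psi\|_{\dot H^{s,*}_v}^2$ become the \emph{same} Fourier-side expression with $\widehat{|\tilde f|}$ (resp.\ $\widehat{\tilde\mu}$) in one slot, and the comparison is then direct via the difference estimates of \cite[Lemma~2.8]{AMUXY-CMP}. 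The threshold $\ell>4-\gamma+3/2$ enters earlier, through the mismatch bound $(\langle v\rangle^{\gamma/2}-\langle v'\rangle^{\gamma/2})^2\lesssim\sin^2(\theta/2)\langle v\rangle^\gamma\langle v_*\rangle^{4-\gamma}$ used to pass from $(\psi'-\psi)^2$ to $(\mathcal G'-\mathcal G)^2$.
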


\noindent {\it Proof of (i).} We write
		$$
		\bal
			\la Q(f,g),h \ra_{L^2_v(m)}
			&= \int_{\R^3 \times \R^3 \times \Sp^2} B(v-v_*,\sigma) \, (f'_*g' - f_*g) \, h \, m^2 \,\d \sigma \,\dv_* \,\dv \\
			&= \int_{\R^3 \times \R^3 \times \Sp^2} B(v-v_*,\sigma) \, (f'_* g' m' - f_* g\, m) \, h \, m \,\d \sigma \,\dv_* \,\dv \\
			&\quad
			+ \int_{\R^3 \times \R^3 \times \Sp^2} B(v-v_*,\sigma) \, f'_* g' h \, m \, (m-m') \,\d \sigma \,\dv_* \,\dv \\
			&=: I_1+I_2.
		\eal
		$$
	We deal with the first term $I_1$ using Lemma~\ref{lem:CH}:
		$$
		\bal
			I_1 = \la Q(f,gm), hm \ra_{L^2_v} &\lesssim \|f\|_{L^1_v(\la v \ra^{\gamma+2s})} \, \|g\|_{H^{\varsigma_1}_v(\la v \ra^{N_1} m)} \, \|h\|_{H^{\varsigma_2}_v(\la v \ra^{N_2} m)} \\
			&\lesssim  \|f\|_{L^2_v(\la v \ra^\ell)} \, \|g\|_{H^{\varsigma_1}_v(\la v \ra^{N_1} m)} \, \|h\|_{H^{\varsigma_2}_v(\la v \ra^{N_2} m)}
		\eal
		$$
	because $\ell>\gamma+2s+3/2$, with $\varsigma_1$, $\varsigma_2 \in [0,2s]$ satisfying $\varsigma_1+\varsigma_2=2s$, with { $N_1 \ge \gamma/2$} and~$N_2 \ge 0$ such that $N_1+N_2=\gamma+2s$.
	To deal with $I_2$, we use the following estimate on $|m'-m|$ (see the proof in~\cite[Lemma~2.3]{AMUXY-ARMA1}):
		\beqn \label{eq:m'-m}
			|m'-m| \lesssim \sin({\theta/2}) \left(m' + \la v'_* \ra \, \la v'\ra^{k-1} +\sin^{k-1}({\theta/2}) \, m'_* \right).
		\eeqn
	Notice that $|v-v_*| = |v'-v'_*| \lesssim |v-v'_*|$ which implies
		\beqn \label{eq:v-v*1}
			|v-v_*|^{\gamma} \lesssim |v'-v'_*|^{\gamma/2} \, |v-v'_*|^{\gamma/2} \lesssim \la v \ra^{\gamma/2} \, \la v' \ra^{\gamma/2} \, \la v'_* \ra^{\gamma}.
		\eeqn
	Also, we have,
		\beqn \label{eq:v-v*2}
		|v-v_*|^\gamma \lesssim |v'-v|^{\gamma/2} \, \sin^{-\gamma/2}({\theta/2}) \, |v'-v'_*|^{\gamma/2} \lesssim \sin^{-\gamma/2}({\theta/2}) \, \la v' \ra^\gamma \, \la v \ra^{\gamma/2} \, \la v'_* \ra^{\gamma/2}.
		\eeqn
	This bound induces the appearance of a singularity in $\theta$. However, we notice that  in the third term of the estimate~\eqref{eq:m'-m} we have a gain in the power of $\sin ({\theta/2})$ depending on the value of~$k$,
	the power of our polynomial weight. As a consequence, if~$k$ is large enough, we can keep a power of $\sin ({\theta/2})$ that is large enough to remove the singularity of $b(\cos \theta)$ at~$\theta=0$.
	Consequently, we have:
		$$
		\bal
			I_2 &\lesssim  \int_{\R^3 \times \R^3 \times \Sp^2} b(\cos \theta) \, \sin ({\theta/2}) \, |v-v_*|^\gamma \,| f'_*| |g'| |h| \, m  \\
			&\qquad \qquad \left(m' + \la v'_* \ra \, \la v'\ra^{k-1} +\sin^{k-1}({\theta/2}) \, m'_* \right) \,\d \sigma \,\dv_* \,\dv \\
			&=: I_{21} + I_{22} + I_{23}.
		\eal
		$$
	The two first terms $I_{21}$ and $I_{22}$ are treated in the same way using the estimate~\eqref{eq:v-v*1}, we obtain:
		$$
		\bal
			I_{21} + I_{22}
			&\lesssim  \int_{\R^3 \times \R^3 \times \Sp^2} b(\cos \theta) \,\sin ({\theta/2}) \, |f'_*| \la v'_*\ra^{\gamma+1} \, |g'| m' \la v' \ra^{\gamma/2} \, |h| \, m \la v \ra^{\gamma/2} \,\d \sigma \,\dv_* \,\dv \\
			& \lesssim  \left(\int_{\R^3 \times \R^3 \times \Sp^2} b(\cos \theta) \, \sin ({\theta/2}) \, |f'| \la v'\ra^{\gamma+1} \, (g'_*)^2 (m'_*)^2 \la v'_* \ra^{\gamma} \,\d \sigma \,\dv_* \,\dv \right)^{1/2} \\
			&\quad \times \left(\int_{\R^3 \times \R^3 \times \Sp^2} b(\cos \theta) \, \sin ({\theta/2})\, |f'| \la v'\ra^{\gamma+1} \,h_*^2 \, m_*^2 \la v_* \ra^{\gamma} \,\d \sigma \,\dv_* \,\dv\right)^{1/2} \\
			&=: J_1 \times J_2.
		\eal
		$$
	The term $J_1$ is easily handled just using the pre-post collisional change of variable:
		$$
		J_1^2 \lesssim \|f\|_{L^1_v (\la v \ra^{\gamma+1})} \, \|g\|^2_{L^2_v( \la v \ra^{\gamma/2} m)} \lesssim \|f\|_{L^2_v (\la v \ra^\ell)} \, \|g\|^2_{L^2_v( \la v \ra^{\gamma/2} m)}
		$$
	since $\ell > \gamma + 1 + 3/2$. To deal with $J_2$, we use the regular change of variable $v \to v'$ meaning that
	for each $\sigma$, with $v_*$ still fixed, we perform the change of variables $v \rightarrow v'$.
	This change of variables is well-defined on the set $\left\{ \cos \theta > 0 \right\}$. Its Jacobian determinant is
$$
\left| \frac{\d v'}{\d v}\right|
= \frac{1}{8} (1 + \kappa \cdot \sigma) = \frac{(\kappa' \cdot \sigma)^2}{4},
$$
	where $\kappa:=(v-v_*)/|v-v_*|$ and $\kappa':=(v'-v_*)/|v'-v_*|$. We have $\kappa' \cdot \sigma = \cos(\theta/2) \geq 1/\sqrt{2}$.
	The inverse transformation $v' \rightarrow \psi_\sigma(v')=v$ is then defined accordingly. Using the fact that
		$$
		\cos \theta = \kappa \cdot \sigma = 2 (\kappa' \cdot \sigma)^2 -1 \quad \text{and} \quad \sin (\theta/2) = \sqrt{ 1 - \cos^2(\theta/2)} = \sqrt{1 - (\kappa' \cdot \sigma)^2},
		$$
we obtain
$$
		\bal
			&\int_{\R^3 \times \Sp^2} b(\cos \theta) \,\sin ({\theta/2}) \, |f'| \la v'\ra^{\gamma+1} \,\d \sigma \,\dv \\
			&\quad = \int_{\R^3 \times \Sp^2} b(2 (\kappa' \cdot \sigma)^2 -1) \,  \sqrt{ 1 - (\kappa' \cdot \sigma)^2} \, |f'| \la v'\ra^{\gamma+1} \,\d \sigma \,\dv \\
&\quad = \int_{\kappa'\cdot \sigma \ge 1/\sqrt{2}} b(2 (\kappa' \cdot \sigma)^2 -1) \,  \sqrt{ 1 - (\kappa' \cdot \sigma)^2} \, |f'| \la v'\ra^{\gamma+1}
\,\d \sigma \,{{4 \,\dv'}\over  {(\kappa'\cdot \sigma)^2}} \\
			&\quad \lesssim \int_{\Sp^2} b(\cos 2\theta) \, \sin \theta \,\d \sigma \int_{\R^3} |f| \langle v \rangle^{\gamma+1} \,\dv.
		\eal
$$
	We deduce:
		$$
		J_2^2 \lesssim  \|f\|_{L^1_v (\la v \ra^{\gamma+1})} \, \|h\|^2_{L^2_v( \la v \ra^{\gamma/2} m)} \lesssim \|f\|_{L^2_v (\la v \ra^\ell)} \, \|h\|^2_{L^2_v( \la v \ra^{\gamma/2} m)}.
		$$
	In summary, gathering the three previous estimates, we have
		$$
		I_{21} + I_{22} \lesssim \|f\|_{L^2_v (\la v \ra^\ell)} \, \|g\|_{L^2_v( \la v \ra^{\gamma/2} m)} \, \|h\|_{L^2_v( \la v \ra^{\gamma/2} m)}.
		$$
	Concerning $I_{23}$, we take advantage of the bound given by~\eqref{eq:v-v*2}:
		$$
		\bal
			I_{23}
			&\lesssim \int_{\R^3 \times \R^3 \times \Sp^2} b(\cos \theta) \, \sin^{k-\gamma/2}({\theta/2})\, |f'_*| m'_* \la v'_* \ra^{\gamma/2} \, |g'| \la v' \ra^{\gamma} \, |h| \, m \, \la v \ra^{\gamma/2} \,\d \sigma \,\dv_* \,\dv \\
			&\lesssim \left(\int_{\R^3 \times \R^3 \times \Sp^2} b(\cos \theta) \, \sin^{k-\gamma/2} ({\theta/2}) \, |g'| \la v' \ra^{\gamma} \,|f'_*|^2 {m'_*}^2 \la v'_* \ra^{\gamma }\,\d \sigma \,\dv_* \,\dv \right)^{1/2} \\
			&\quad \times \left(\int_{\R^3 \times \R^3 \times \Sp^2} b(\cos \theta) \, \sin^{k-\gamma/2} ({\theta/2}) \, |g'| \la v' \ra^{\gamma} \, h^2 \, m^2 \, \la v \ra^{\gamma} \,\d \sigma \,\dv_* \,\dv \right)^{1/2} \\
			&=: T_1 \times T_2.
		\eal
		$$
	As far as $T_1$ is concerned, a simple pre-post collisional change of variable allows us to get
		$$
		T_1^2 \lesssim \| g\|_{L^1_v(\la v \ra^\gamma)} \, \|f\|^2_{L^2_v(\la v \ra^{\gamma/2} m)} \lesssim \| g\|_{L^2_v(\la v \ra^\ell)} \, \|f\|^2_{L^2_v(\la v \ra^{\gamma/2} m)}
		$$
	since $\ell > \gamma+3/2$. The second term requires more attention since we have to perform a singular change of variable $v_* \to v'$ showed for example in the proof of Lemma 2.4 in~\cite{AMUXY-ARMA1}.
	Recall that the Jacobian of this { transformation} is
		$$
		\left|\frac {\d v_*}{\d v'}\right|= \frac{4}{\sin^2(\theta/2)}\le 16  \, \theta^{-2}, \, \theta \in (0,\pi/2],
		$$
	therefore, this change of variable gives rise to an additional singularity in $\theta$ around $0$. However, we can take advantage of the fact that we have a power $k$ in  $\sin  ({\theta/2})$,
	indeed taking~$k$ large enough allows us to control this singularity.
	Notice that $\theta$ is no longer the good polar angle to consider, we set $\psi = (\pi-\theta)/2$ for $\psi \in [\pi/4,\pi/2]$ so that
		$$
		\cos \psi = {{v'-v} \over |v'-v|} \cdot \sigma \quad \text{and} \quad\d \sigma = \sin \psi \, \d\psi \, \d\phi.
		$$
	This measure does not cancel any of the singularity of $b(\cos \theta)$ unlike in the case of the usual polar coordinates but it will be counterbalanced taking $k$ large enough. We then have { (using the fact that $b$ is supported on $0 \leq \theta \leq \pi/2$)}:
		$$
		\bal
			&\int_{\R^3 \times \Sp^2} b(\cos \theta) \, \sin^{k-\gamma/2} ({\theta/2}) \, |g'| \la v' \ra^{\gamma} \,\d \sigma \,\dv_* \\
			&\quad\lesssim \int_{\R^3 \times \Sp^2} { \mathds{1}_{[\pi/4,\pi/2]}(\psi)}(\pi - 2\psi)^{ k-\gamma/2-3-2s} \, |g'| \la v' \ra^{\gamma} \,\d \sigma \,\dv' \\
			& \quad \lesssim \int_{\pi/4}^{\pi/2} (\pi - 2\psi)^{ k-\gamma/2-3-2s} \, \sin \psi \, \d\psi \, \int_{\R^3} |g| \, \la v \ra^\gamma \,\dv \lesssim \int_{\R^3} |g| \, \la v \ra^\gamma \, \d v
		\eal
		$$
	since ${ k>\gamma/2+2+2s}$. We deduce that
		$$
		T_2^2 \lesssim \| g\|_{L^1_v(\la v \ra^\gamma)} \, \|h\|^2_{L^2_v(\la v \ra^{\gamma/2} m)} \lesssim \| g\|_{L^2_v(\la v \ra^\ell)} \, \|h\|^2_{L^2_v(\la v \ra^{\gamma/2} m)}
		$$
	and thus
		$$
		I_{23} \lesssim  \|f\|_{L^2_v(\la v \ra^{\gamma/2} m)}\,  \| g\|_{L^2_v(\la v \ra^\ell)} \, \|h\|_{L^2_v(\la v \ra^{\gamma/2} m)},
		$$
	which concludes the proof of estimate~\eqref{eq:Q(f,g),h}.
	
\noindent {\it Proof of (ii).}  {We have:
	\begin{align*}
	&\langle Q(f,g),g \rangle_{L^2_v(m)}  \\
	&\quad = \langle Q(f,gm),gm) \rangle_{L^2_v} + \int_{\R^3 \times \R^3 \times \Sp^2} B(v-v_*,\sigma) f'_* g' g \, m \, (m-m')\,\d\sigma \,\dv_* \,\dv \\
	&\quad =: I+J.
	\end{align*}

The term $J$ is done in the first step of the proof, it corresponds to the term $I_2$ replacing~$h$ by $g$, we thus have
$$
J \lesssim\|f\|_{L^2_v (\la v \ra^{\gamma/2}m)} \, \|g\|_{L^2_v(\la v \ra^\ell)} \, \|g\|_{L^2_v (\la v \ra^{\gamma/2}m)} { + \|f\|_{L^2_v (\la v \ra^\ell)} \, \|g\|_{L^2_v(\la v \ra^{\gamma/2}m)} \, \|g\|_{L^2_v (\la v \ra^{\gamma/2}m)}} .
$$

In order to deal with the term $I$, we denote $G:=gm$. We also recall that
$$
	b_\delta (\cos \theta) = \chi_{\delta}(\theta) \, b(\cos \theta)
$$
and we introduce the notations
$$
	b^c_\delta(\cos \theta) := (1-\chi_\delta(\theta)) \, b(\cos \theta),
$$
$$
	B_\delta(v-v_*, \sigma) := b_\delta(\cos \theta) \, |v-v_*|^\gamma \quad
\text{and} \quad B^c_\delta(v-v_*, \sigma) := b^c_	\delta(\cos \theta)\, |v-v_*|^\gamma.
	$$
The two previous kernels correspond respectively to grazing collisions and non grazing collisions (which encodes the cut-off part of the operator). We also denote $Q_\delta$ (resp. $Q_\delta^c$) the operator associated with the kernel $B_\delta$ (resp. $B_\delta^c$).
Note that this splitting of the collision kernel will be used in Section~\ref{sec:lin}.
We have for $G=gm$:
$$
I = \la Q_\delta(f,G),G \ra_{L^2_v} +\la Q_\delta^c(f,G),G \ra_{L^2_v} =:I^\delta + I^{\delta,c}.
$$
We start by dealing with the cut-off part:
\begin{align*}
I^{\delta,c} &= \int_{\R^3 \times \R^3 \times \Sp^2} B_\delta^c(v-v_*,\sigma) f_* \, G (G'-G)
\,\d \sigma \,\dv_* \,\dv \\
&\lesssim \int_{\R^3 \times \R^3 \times \Sp^2} |v-v_*|^\gamma\, b_\delta^c(\cos \theta)\,
|f_*| \, (G^2+(G')^2) \,\d \sigma \,\dv_* \,\dv.
\end{align*}
Using that $b_\delta^c(\cos \theta) \le C_\delta$ on $\Sp^2$ and $|v-v_*|^\gamma \lesssim |v'-v_*|^\gamma$, we get
\begin{align*}
I^{\delta,c} &\lesssim\int_{\R^3 \times \R^3 \times \Sp^2} |f_*| \la v_*\ra^\gamma \, G^2
\,\la v \ra^\gamma \,\d \sigma \,\dv_* \,\dv + \int_{\R^3 \times \R^3 \times \Sp^2} |f_*|
\la v_*\ra^\gamma\, G'^2 \, \la v' \ra^\gamma  \,d\sigma \,\dv_* \,\dv.
\end{align*}
The first term is directly bounded from above by $\|f\|_{L^1_v(\la v \ra^\gamma)} \|G\|^2_{L^2_v(\la v \ra^{\gamma/2})}$ and for the second one, we use the regular change of variable $v \to v'$ explained in the proof of (i). We thus get
$$
I^{\delta,c} \lesssim \|f\|_{L^1_v(\la v \ra^\gamma)} \|G\|^2_{L^2_v(\la v \ra^{\gamma/2})}\lesssim \|f\|_{L^2_v(\la v \ra^\ell)}\|g\|^2_{L^2_v(\la v \ra^{\gamma/2}m)}.
$$
Concerning the grazing collisions part, we write
\begin{align*}
&I^\delta = \int_{\R^3 \times \R^3 \times \Sp^2} B_\delta(v-v_*,\sigma) f_* \, G \,(G'-G) \,\d \sigma \,\dv_* \,\dv \\
&\qquad = -{1 \over 2}  \int_{\R^3 \times\R^3 \times \Sp^2} B_\delta(v-v_*,\sigma) f_* \, (G'-G)^2 \,\d \sigma \, \dv_* \,\dv \\
&\qquad \quad+{1 \over 2} \int_{\R^3 \times\R^3 \times \Sp^2} B_\delta(v-v_*,\sigma) f_* \, ((G')^2-G^2) \,\d \sigma \, \dv_* \,\dv
=: I^\delta_1+I^\delta_2.
\end{align*}
The second term $I^\delta_2$ is treated thanks to the cancellation lemma~\cite[Lemma~1]{ADVW-ARMA} { (recalled in Appendix~\ref{app:canc+Carl})}:
		$$
		I_2^\delta  = \int_{\R^3} (S_\delta*G^2) \, f   \,\dv,
		$$
	where (for details, see~\cite[proof of Lemma~2.2]{Trist-JSP})
		\beqn \label{eq:Sdelta}
			S_\delta(z) 
  			\lesssim \delta^{2-2s} \, |z|^\gamma.
		\eeqn
We deduce that
	$$
	I^\delta_2 \lesssim \|f\|_{L^1_v(\langle v \rangle^\gamma)} \|G\|^2_{L^2_v(\langle v \rangle^{\gamma/2})}\lesssim \|f\|_{L^2_v(\la v \ra^\ell)}\|g\|^2_{L^2_v(\la v \ra^{\gamma/2}m)}.
	$$
It now remains to handle $I^\delta_1$. First, using that $|v-v_*| \lesssim |v'-v_*|$, we have
\begin{align*}
I^\delta_1 &\lesssim \int_{\R^3 \times\R^3 \times \Sp^2} b_\delta(\cos \theta) |v-v_*|^\gamma |f_*| \, (G'-G)^2 \,\d \sigma \, \dv_* \,\dv \\
&\lesssim \int_{\R^3 \times\R^3 \times \Sp^2} b_\delta(\cos \theta) |v'-v_*|^\gamma |f_*| \, (G'-G)^2 \,\d \sigma \, \dv_* \,\dv \\
&\lesssim \int_{\R^3 \times\R^3 \times \Sp^2} b_\delta(\cos \theta)  |f_*| \langle v_* \rangle^\gamma \, (G'\langle v' \rangle^{\gamma/2} -G \langle v \rangle^{\gamma/2})^2 \,\d \sigma \, \dv_* \,\dv \\
&\quad + \int_{\R^3 \times\R^3 \times \Sp^2} b_\delta(\cos \theta)  |f_*| \langle v_* \rangle^\gamma \, G^2 \, (\langle v \rangle^{\gamma/2} - \langle v' \rangle^{\gamma/2})^2\,\d \sigma \, \dv_* \,\dv =: I^\delta_{11} + I^\delta_{12}.
\end{align*}
To deal with $I^\delta_{12}$, we first note that
\begin{align*}
&|\langle v \rangle^{\gamma/2} - \langle v' \rangle^{\gamma/2}| \lesssim |v'-v| \int_0^1 \langle v'+\tau (v-v') \rangle^{\gamma/2-1} \, \d\tau \\
&\qquad \lesssim |v-v_*| \sin(\theta/2) \int_0^1 \la v_\tau\ra^{\gamma/2-1} \, \d\tau
\end{align*}
where $v_\tau := v'+\tau (v-v')$. Moreover, for any $\tau \in [0,1]$, we have
$$
\langle v \rangle \le \la v-v_* \ra + \la v_* \ra \le \sqrt{2} \langle v_\tau - v_* \ra + \la v_* \ra \lesssim \la v_\tau \ra \la v_* \ra
$$
which implies (since $\gamma/2-1 \le 0$)
$$
\la v_\tau \ra^{\gamma/2-1} \lesssim \la v \ra^{\gamma/2-1} \la v_* \ra^{1-\gamma/2}.
$$
Consequently, we deduce
\beqn \label{eq:vgamma/2}
(\langle v \rangle^{\gamma/2} - \langle v' \rangle^{\gamma/2})^2 \lesssim |v-v_*|^2\sin^2(\theta/2) \la v \ra^{\gamma-2} \la v_* \ra^{2-\gamma}
\lesssim \sin^2(\theta/2)\la v \ra^{\gamma} \la v_* \ra^{4-\gamma}
\eeqn
so that
$$
I^\delta_{12} \lesssim \|f\|_{L^1_v(\la v \ra^{4-\gamma})} \|G\|^2_{L^2_v(\la v \ra^{\gamma/2})}\lesssim \|f\|_{L^2_v(\la v \ra^\ell)}\|g\|^2_{L^2_v(\la v \ra^{\gamma/2}m)}.
$$
For the analysis of $I_{11}^\delta$, we introduce the following notations: $\tilde f := f \langle \cdot \ra^\gamma$, $\tilde \mu := \mu \la v \ra^{-\gamma}$ and~$\GG := G \la v \ra^{\gamma/2}$ so that
$$
I^\delta_{11} = \int_{\R^3 \times \R^3 \times \Sp^2} b_\delta(\cos \theta) |\tilde f| (\GG'-\GG)^2 \,\d \sigma \, \dv_* \,\dv.
$$
We then use Bobylev formula~\cite{Bob-SSR} (see also~\cite[Proposition~2]{ADVW-ARMA}), denoting $\xi^{\pm}=(\xi \pm|\xi|\sigma)/2$, we have:
\begin{align*}
I_{11}^\delta &= {1 \over (2\pi)^3} \int_{\R^3 \times \Sp^2} b_\delta\left({\xi \over |\xi|} \cdot \sigma\right) \Bigg(\widehat{|\tilde{f}|}(0) |\widehat{\GG}(\xi) - \widehat{\GG}(\xi^+)|^2 \\
&\quad \qquad \qquad \quad+ 2 \, \Re  \left(\widehat{|\tilde{f}|}(0) - \widehat{|\tilde{f}|}(\xi^-)\right)
\widehat{\GG}(\xi^+) \overline{\widehat{\GG}}(\xi) \Bigg)\,\d \sigma \, d\xi.
\end{align*}
Similarly, we have
\begin{align*}
\|G\|^2_{\dot H^{s,*}_v} &=  {1 \over (2\pi)^3} \int_{\R^3 \times \Sp^2} b_\delta\left({\xi \over |\xi|} \cdot \sigma\right) \Bigg(\widehat{\tilde\mu}(0) |\widehat{\GG}(\xi) - \widehat{\GG}(\xi^+)|^2 \\
&\quad \qquad \qquad \quad+ 2 \, \Re  \left(\widehat{\tilde\mu}(0) - \widehat{\tilde\mu}(\xi^-)\right)
\widehat{\GG}(\xi^+) \overline{\widehat{\GG}}(\xi) \Bigg)\,\d \sigma \, d\xi.
\end{align*}
Since $\widehat{|\tilde{f}|}(0) = \|\tilde f \|_{L^1_v}$ and $\widehat{\tilde \mu}(v) = \|\tilde\mu\|_{L^1_v}$, we deduce that
\begin{align*}
I_{11}^\delta &= {1 \over (2\pi)^3}  \int_{\R^3 \times \Sp^2} b_\delta\left(\xi \over |\xi| \cdot \sigma\right)2 \, \Re \left(\widehat{|\tilde{f}|}(0) - \widehat{|\tilde{f}|}(\xi^-)\right)
\widehat{\GG}(\xi^+) \overline{\widehat{\GG}}(\xi) \,\d \sigma \, d\xi \\
&\quad -{1 \over (2\pi)^3} \frac{\|\tilde f\|_{L^1_v}}{\|\tilde\mu\|_{L^1_v}} \int_{\R^3 \times \Sp^2}   b_\delta\left(\xi \over |\xi| \cdot \sigma\right)2 \, \Re \left(\widehat{\tilde\mu}(0) - \widehat{\tilde\mu}(\xi^-)\right)
\widehat{\GG}(\xi^+) \overline{\widehat{\GG}}(\xi) \,\d \sigma \, d\xi \\
&\quad + \frac{\|\tilde f\|_{L^1_v}}{\|\tilde\mu\|_{L^1_v}} \|G\|^2_{\dot H^{s,*}_v} =: I_{111}^\delta+I_{112}^\delta+I_{113}^\delta.
\end{align*}
Using then results from the proof of~\cite[Lemma~2.8]{AMUXY-CMP} combined with Lemma~\ref{lem:anis}, we get that
$$
I_{111}^\delta \lesssim \|\tilde f\|_{L^1_v(\la v \ra^{2s})} \|\GG\|^2_{H^s_v} \lesssim  \|f\|_{L^2_v(\la v \ra^{\ell})} \|g\|^2_{H^{s,*}_v(m)}
$$
and
$$
I_{112}^\delta \lesssim \|\tilde f\|_{L^1_v}\|\GG\|^2_{H^s_v} \lesssim \|f\|_{L^2_v(\la v \ra^\ell)} \|g\|^2_{H^{s,*}_v(m)}.
$$
We also clearly have
$$
I_{113}^\delta \lesssim \|f\|_{L^2_v(\la v \ra^\ell)} \|g\|^2_{H^{s,*}_v(m)}.
$$
Gathering all the previous estimates, we are able to deduce that~\eqref{eq:Q(f,g),g} holds.
}
\qed

\medskip
\subsection{Non homogeneous estimates} \label{subsec:nonhom}
We now state non homogeneous estimates on the trilinear form $\la Q(f,g),h \ra$ (the proof, which is a consequence of Lemma~\ref{lem:nonlinearhom} and Sobolev embeddings in $x$, is given in Appendix~\ref{app:nonlinear}) in order to get some accurate estimates on the terms coming from the nonlinear part of the equation. Basically, we give a non homogeneous version of Lemma~\ref{lem:nonlinearhom}.
We introduce the spaces
{ \beqn \label{eq:XY}
\left\{
		\bal
			&\mathbf X := \HH^3_x L^2_v(m) \\
			&\mathbf Y := \HH^{3,s}_{x,v}( \la v \ra^{\gamma/2}m) \\
			&\mathbf Y^* := \HH^{3,s,*}_{x,v}(m) \\
			&\bar {\mathbf Y} := \HH^{3,s}_{x,v}(\la v \ra^{\gamma/2+2s} m)
		\eal
\right.
\eeqn}that are defined through their norms by~\eqref{eq:HHnxL2v} and~\eqref{eq:anis}.
We also introduce $\mathbf Y'$ the dual space of $\mathbf Y$ with respect to the pivot space $\mathbf X$, meaning that the $\mathbf Y'$-norm is defined through:
	\beqn \label{eq:Y'}
	\|f\|_{\mathbf Y'} := \sup_{\|\phi\|_{\mathbf Y} \le 1} \langle f, \phi \rangle_{\mathbf X} = \sup_{\|\phi\|_{\mathbf Y} \le 1}  \sum_{0 \le j \le 3} \langle \nabla^j_x f, \nabla^j_x \phi \rangle_{L^2_{x,v}(\langle v \rangle^{-2js}  m)}.
	\eeqn

\begin{lem} \label{lem:nonlinearnonhom}
The following estimates hold:
	\begin{enumerate}[leftmargin=*]
		\item[(i)] For {$k > \gamma/2 +3 + 8s$},
			$$
			\la Q(f,g),h \ra_{\mathbf X} \lesssim \|f\|_{\mathbf X} \, \|g\|_{\bar {\mathbf Y}} \, \|h\|_{\mathbf Y} + \| f\|_{\mathbf Y} \, \|g\|_{\mathbf X} \, \|h\|_{\mathbf Y};
			$$
		therefore,
			$$
			\| Q(f,g) \|_{\mathbf Y'} \lesssim \|f\|_{\mathbf X} \, \|g\|_{\bar {\mathbf Y}} +  \| f\|_{\mathbf Y} \, \|g\|_{\mathbf X}.
			$$
		\item[(ii)] { For {$k > 4-\gamma+3/2+6s$},
			$$
			\la Q(f,g),g\ra_{\mathbf X} \lesssim \|f\|_{\mathbf X} \, \|g\|^2_{\mathbf Y^*} + \|f\|_{\mathbf Y} \,  \|g\|_{\mathbf X} \, \|g\|_{\mathbf Y}.
			$$}
		\item[(iii)] { For { $k > 4-\gamma+3/2+6s$},
			$$
			\la Q(f,f),f\ra_{\mathbf X} \lesssim \|f\|_{\mathbf X} \, \|f\|^2_{\mathbf Y^*}.
			$$}
	\end{enumerate}
\end{lem}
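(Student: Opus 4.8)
The plan is to prove the three estimates in order, deriving (iii) as an immediate consequence of (ii) by setting $g=f$ (note that the condition on $k$ is the same, and the second term $\|f\|_Y\|f\|_X\|f\|_Y$ is dominated by $\|f\|_X\|f\|_{Y^*}^2$ since $\|\cdot\|_Y\lesssim\|\cdot\|_{Y^*}$ by Lemma~\ref{lem:anis}, which gives $\|g\|_{H^s_v(\langle v\rangle^{\gamma/2}m)}\lesssim\|g\|_{H^{s,*}_v(m)}$ applied in each space derivative with the appropriate twisted weight $\langle v\rangle^{-2js}$). So the real content is (i) and (ii). For both, I would first unfold the $X=\HH^3_xL^2_v(m)$ norm according to~\eqref{eq:HHnxL2v}: $\langle Q(f,g),h\rangle_X=\sum_{0\le j\le 3}\langle\nabla_x^j Q(f,g),\nabla_x^j h\rangle_{L^2_{x,v}(\langle v\rangle^{-2js}m)}$. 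Since $Q$ acts only in $v$ and is bilinear, the Leibniz rule gives $\nabla_x^j Q(f,g)=\sum_{j_1+j_2=j}\binom{j}{j_1}Q(\nabla_x^{j_1}f,\nabla_x^{j_2}g)$, so everything reduces to bounding, for each triple $(j_1,j_2,j_3)$ with $j_1+j_2\le 3$ and $j_3=j_1+j_2$,
$$
\int_{\T^3_x}\big\langle Q(\nabla_x^{j_1}f,\nabla_x^{j_2}g),\nabla_x^{j_3}h\big\rangle_{L^2_v(\langle v\rangle^{-j_3 s}m)}\,dx
$$
(writing $\langle v\rangle^{-2j_3 s}m^2=(\langle v\rangle^{-j_3 s}m)^2$ to match the form of the homogeneous estimates). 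The weight seen by the $v$-trilinear form is now $\widetilde m_{j_3}:=\langle v\rangle^{-j_3 s}m=\langle v\rangle^{k-j_3 s}$, still a polynomial weight of exponent $\ge k-3s$, which is why the hypothesis on $k$ must be strengthened by roughly $6s$–$8s$ compared to the homogeneous lemmas.

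For part (i), in each such term I would apply Lemma~\ref{lem:nonlinearhom}(i) with the weight $\widetilde m_{j_3}$ in place of $m$ — this is legitimate because $k-j_3 s>\gamma/2+3+2s$ follows from $k>\gamma/2+3+8s$ and $j_3\le 3$ — choosing $\ell$ large enough (e.g. $\ell=\gamma+1+3/2+\text{small}$, absorbed into the polynomial weights available) and splitting the $2s$ of regularity as $\varsigma_1=\varsigma_2=s$, $N_1=N_2=(\gamma+2s)/2$. This produces, pointwise in $x$,
$$
\lesssim \|\nabla_x^{j_1}f\|_{L^2_v(\langle v\rangle^\ell)}\,\|\nabla_x^{j_2}g\|_{H^s_v(\langle v\rangle^{N_1}\widetilde m_{j_3})}\,\|\nabla_x^{j_3}h\|_{H^s_v(\langle v\rangle^{N_2}\widetilde m_{j_3})}+(\text{lower-order term}).
$$
Then I would distribute the three $x$-derivative orders via Hölder in $x$ — the crucial combinatorial point is that among $j_1,j_2,j_3$ at least two are $\le 1$ (since $j_1+j_2\le 3$ and $j_3=j_1+j_2\le 3$), so the two low-order factors can be put in $L^\infty_x$ using the Sobolev embedding $H^2_x(\T^3)\hookrightarrow L^\infty_x$ (this is exactly where the "three derivatives in $x$" is used, as the authors flag in the introduction), and the remaining top-order factor stays in $L^2_x$. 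Matching the resulting $L^\infty_x H^\cdot_v$ and $L^2_x H^\cdot_v$ norms against the definitions of $X$, $Y$, $\bar Y$ — where $\bar Y=\HH^{3,s}_{x,v}(\langle v\rangle^{\gamma/2+2s}m)$ carries exactly the extra $\langle v\rangle^{\gamma/2+2s}$ weight needed to absorb $\langle v\rangle^{N_1}$ and $\langle v\rangle^{N_2}$ together with the discrepancy between $\widetilde m_{j_3}$ and the twisted weight $\langle v\rangle^{-2js}m$ in the target space — gives the bound $\|f\|_X\|g\|_{\bar Y}\|h\|_Y+\|f\|_Y\|g\|_X\|h\|_Y$. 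The dual estimate $\|Q(f,g)\|_{Y'}\lesssim\|f\|_X\|g\|_{\bar Y}+\|f\|_Y\|g\|_X$ is then just the definition~\eqref{eq:Y'} of the $Y'$-norm.

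Part (ii) follows the same scheme but is more delicate because here we test $Q(\nabla_x^{j_1}f,\nabla_x^{j_2}g)$ against $\nabla_x^{j_3}g$ and want the anisotropic norm $\|g\|_{Y^*}^2$ on the right. When $j_1=j_2=0$ (so $j_3=0$) I would apply Lemma~\ref{lem:nonlinearhom}(ii) directly with weight $m$, producing $\|f\|_{L^2_v(\langle v\rangle^\ell)}\|g\|^2_{H^{s,*}_v(m)}$ plus a lower-order term. For the genuinely "mixed" terms where not all indices vanish, the left factor $Q(\nabla_x^{j_1}f,\nabla_x^{j_2}g)$ no longer has $\nabla_x^{j_3}g$ as its natural third argument in the sense needed for (ii)'s commutator structure; there I would instead fall back on Lemma~\ref{lem:nonlinearhom}(i) (with the twisted weight $\widetilde m_{j_3}$, legitimate since $k>4-\gamma+3/2+6s$ is more than enough to beat $\gamma/2+3+2s$ after losing $3s$), again using $H^2_x\hookrightarrow L^\infty_x$ to park the two low-order factors and controlling the top-order $g$-factor in $Y\supset$-type norms; crucially, whenever $j_2>0$ or $j_3>0$ we have $j_1\le 2$, so $f$ (or its low derivatives) goes in $L^\infty_x L^2_v(\langle v\rangle^\ell)\lesssim\|f\|_X$-ish or, when $f$ carries the top derivative, in $\|f\|_Y$. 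The key point making this close is that the anisotropic gain $\|g\|^2_{Y^*}$ is only needed from the single diagonal term $j_1=j_2=j_3=0$, while all mixed terms can afford the cruder (non-anisotropic) bound $\|f\|_Y\|g\|_X\|g\|_Y$, since there the presence of at least one $x$-derivative frees up enough room.

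\textbf{Main obstacle.} The hard part is not any single estimate but the bookkeeping of weights: one must check that the twisted weights $\langle v\rangle^{-2js}m$ appearing in the target spaces $X,Y,Y^*,\bar Y$ are consistent under the weight losses $\langle v\rangle^{N_1},\langle v\rangle^{N_2}$ (of total order $\gamma+2s$) incurred in Lemma~\ref{lem:nonlinearhom} and under the drop from $j_3 s$ to $2j_3 s$ twisting, and that the definition of $\bar Y$ with its extra $\langle v\rangle^{\gamma/2+2s}$ is exactly calibrated to make (i) close — this is precisely why $\bar Y$ rather than $Y$ must appear on the right of (i), and it is what forces the final condition on $k$. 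The second delicate point is the $x$-derivative combinatorics: verifying that in every term arising from the Leibniz expansion at least two of the three derivative counts are $\le 1$, so that $H^2_x\hookrightarrow L^\infty_x$ always applies to the two non-top factors; this is what pins the regularity requirement at three $x$-derivatives and no more.
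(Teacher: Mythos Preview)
Your overall scheme---Leibniz in $x$, apply the homogeneous lemma in $v$, then H\"older/Sobolev in $x$---is exactly the paper's. However there are two genuine gaps that would prevent the argument from closing.

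\textbf{Part (i): the symmetric split $N_1=N_2=(\gamma+2s)/2$ fails.} After applying Lemma~\ref{lem:nonlinearhom}(i) with weight $\langle v\rangle^{-2j_3 s}m$ (your exponent $-j_3 s$ is off by a factor of two, but that is minor), the $h$-factor carries the weight $\langle v\rangle^{N_2-2j_3 s}m$. Since the $Y$-norm at level $j_3$ only allows $\langle v\rangle^{\gamma/2-2j_3 s}m$, you must take $N_2\le\gamma/2$; with $N_2=\gamma/2+s$ the $h$-factor is \emph{not} bounded by $\|h\|_Y$. The paper takes the asymmetric choice $N_1=\gamma/2+2s$, $N_2=\gamma/2$: the extra $\langle v\rangle^{2s}$ goes entirely onto $g$ and is absorbed by $\bar Y$ (this is precisely why $\bar Y$ carries $\langle v\rangle^{\gamma/2+2s}$). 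Also, your combinatorial claim ``at least two of $j_1,j_2,j_3$ are $\le 1$'' is false (take $j_1=3$, $j_2=0$, $j_3=3$); the correct statement is that $j_3$ always sits in $L^2_x$, and since $j_1+j_2\le 3$ one has $\min(j_1,j_2)\le 1$. Even so, an $L^\infty_x\times L^2_x$ split is not always sufficient in (ii): for certain terms (e.g.\ $j_1=2$, $j_2=0$) the paper uses $L^6_x\times L^3_x$ via $H^1_x\hookrightarrow L^6_x,L^3_x$ to keep the weights compatible with $Y$.

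\textbf{Part (ii): Lemma~\ref{lem:nonlinearhom}(ii) is needed at every level, not only $j=0$.} For each $j=0,\dots,3$ the Leibniz term with $j_1=0$ (hence $j_2=j_3=j$) reads $\langle Q(f,\nabla_x^j g),\nabla_x^j g\rangle_{L^2_{x,v}(\langle v\rangle^{-2js}m)}$, which is exactly of the form treated by Lemma~\ref{lem:nonlinearhom}(ii) and produces $\|\nabla_x^j g\|^2_{H^{s,*}_v(\langle v\rangle^{-2js}m)}$---the contribution to $\|g\|^2_{Y^*}$ at level $j$. Your plan to fall back on (i) for these terms cannot work: the first term of (i) forces $N_1+N_2=\gamma+2s$, so at least one of the two $\nabla_x^j g$-factors gets a weight exceeding $\langle v\rangle^{\gamma/2-2js}m$, and by Lemma~\ref{lem:anis} the anisotropic norm $\|\cdot\|_{H^{s,*}_v(\langle v\rangle^{-2js}m)}$ controls only up to $\langle v\rangle^{\gamma/2-2js}m$. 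Thus this term cannot be placed under either $\|f\|_X\|g\|^2_{Y^*}$ or $\|f\|_Y\|g\|_X\|g\|_Y$. The paper applies (ii) for \emph{all} $\beta_2=\beta$ terms and (i) only for the genuinely off-diagonal terms $j_1\ge 1$; your ``key point'' is the reverse of what actually happens.
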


\medskip
\subsection{Some estimates on the linearized operator} \label{subsec:estimlin}
Let us now introduce another type of splitting for the collision kernel (which will be used in Section~\ref{sec:reg} where we study the regularization properties of the Boltzmann linearized operator).
We denote $\widetilde Q_1$ the operator associated to the kernel:
$$
\widetilde B_1 (v-v_*,\sigma) := \chi(|v'-v|) \, b(\cos \theta) \, |v-v_*|^\gamma
$$
and $\widetilde Q^c_1$ the one associated to the remainder part of the kernel:
$$
\widetilde B^c_1 (v-v_*,\sigma):=(1-\chi(|v'-v|)) \, b(\cos \theta) \, |v-v_*|^\gamma.
$$
In the next lemma, we only give estimates on parts of the linearized Boltzmann operator (one of the variable is the Maxwellian $\mu$) which are ``almost bounded'' in the sense that there is no loss of regularity in terms of derivative. Denote
\begin{align*}
&\Lambda_2 f:=
K \langle v \rangle^{\gamma+2s} f + \int_{\R^3 \times \mathbb{S}^2} \widetilde B_1(v-v_*,\sigma) (\mu'_* - \mu_*)(f'+f) \, \d\sigma \, \dv_*+ \widetilde Q_1^c(\mu,f) + Q(f,\mu)
\end{align*}
where $K$ is a positive parameter to be chosen later on (the notation used here is the one used in Paragraph~\ref{subsubsec:lambda1m*}).

\begin{lem} \label{lem:lambda_2}
Let {$k\ge 0$}. For any $K>0$ and for any $\ell>3/2$, we have the following estimate:
\begin{equation} \label{eq:lambda_2}
\|\Lambda_2 f\|_{H^\varsigma_{x,v}(m)} \lesssim \|f\|_{H^\varsigma_{x,v} (\langle v \rangle^{\gamma + 1 + \ell} m)}, \quad \forall \, \varsigma \in \R^+.
\end{equation}
\end{lem}
\begin{proof}
We only look at the case $\varsigma \in \N$ and conclude that the result also holds for~$\varsigma \in \R^+$ by an interpolation argument. Let us begin with the case $\varsigma=0$ i.e. the $L^2$-case. We have
$$
\begin{aligned}
\Lambda_2 f &= K \langle v \rangle^{\gamma+2s} f + \int_{\R^3 \times \mathbb{S}^2} \widetilde B_1(v-v_*,\sigma) (\mu'_* - \mu_*)f' \, \d\sigma \, \dv_* \\
&\quad +  \int_{\R^3 \times \mathbb{S}^2} \widetilde B_1(v-v_*,\sigma) (\mu'_* - \mu_*)\, \d\sigma \, \dv_* \, f + \int_{\R^3\times \mathbb{S}^2} \widetilde B^c_1(v-v_*,\sigma) \mu'_* f' \, \d\sigma \, \dv_*  \\
&\quad -  \int_{\R^3\times \mathbb{S}^2} \widetilde B^c_1(v-v_*,\sigma) \mu_*\, \d\sigma \, \dv_* \, f + Q(f,\mu)\\
&=: \Lambda_{21}f + \Lambda_{22} f + \Lambda_{23}f + \Lambda_{24} f + \Lambda_{25} f + \Lambda_{26} f.
\end{aligned}
$$
The estimate on $\Lambda_{21}$ is obvious:
$$
\|\Lambda_{21}f\|_{L^2_{x,v}(m)} \lesssim \|f\|_{L^2_{x,v} (\langle v \rangle^{\gamma+2s}m)}.
$$
{ The analysis of $\Lambda_{23}$ is also easy to perform using the cancellation lemma from~\cite{ADVW-ARMA} (see Appendix~\ref{app:canc+Carl}), we have:
$$
\Lambda_{23}f = (\widetilde S * \mu) f
$$
with $\widetilde S$ satisfying the estimate $|\widetilde S(z)| \lesssim |z|^{\gamma+2s-2}$ (see Lemma~2.3 from~\cite{AHL*}). We deduce that $|\widetilde S * \mu|(v) \lesssim \langle v \rangle^{\gamma+2s-2}$ and thus
$$
\|\Lambda_{23}f\|_{L^2_{x,v}(m)} \lesssim \|f\|_{L^2_{x,v} (\langle v \rangle^{\gamma+2s-2}m)}.
$$}
To treat $\Lambda_{24}$ and $\Lambda_{25}$, we use the fact that the kernel $\widetilde B^c_1$ is not singular because the grazing collisions are removed. Since $|v'-v| \sim |v-v_*| \sin(\theta/2)$, we have:
$$
|\widetilde B^c_1(v-v_*,\sigma)| \le b(\cos\theta) |v-v_*|^\gamma \mathds{1}_{|v'-v| \ge 1} \lesssim  b(\cos\theta) |v-v_*|^{\gamma+1} \sin (\theta/2).
$$
Consequently, we obtain using that $m \lesssim m' m'_*$ that for $\ell>3/2$:
$$
\begin{aligned}
&\|\Lambda_{24}f\|^2_{L^2_{x,v}(m)} \lesssim \int_{\T^3 \times \R^3} \left(\int_{\R^3\times \mathbb{S}^2} b(\cos \theta) \sin (\theta/2)|v-v_*|^{\gamma+1} \mu'_* \, |f'| \,  \d\sigma \, \dv_* \right)^2 m^2 \, \dv \, \d x \\
&\lesssim \int_{\T^3 \times \R^3 \times \R^3 \times \mathbb{S}^2} b(\cos \theta) \sin (\theta/2) |v-v_*|^{2(\gamma+1)} (\mu'_* m'_*)^2 \, (f'm')^2 \, \langle v_* \rangle^{2\ell} \, \d\sigma \, \dv_* \,  \dv \, \d x
\end{aligned}
$$
where we have used Jensen inequality with the finite measure $b(\cos \theta) \sin\left(\theta / 2\right){\rm d}\sigma$ and Cauchy-Schwarz inequality with the measure $\langle v_* \rangle^\ell \dv_*$. Then, using the basic inequality~$\langle v_* \rangle \lesssim \langle v' \rangle \langle v'_* \rangle$ and the pre-post collisional change of variable, we get:
$$
\begin{aligned}
&\quad \|\Lambda_{24}f\|^2_{L^2_{x,v}(m)} \\
&\lesssim
\int_{\T^3 \times \R^3 \times \R^3 \times \mathbb{S}^2} b(\cos \theta) \sin (\theta/2) |v-v_*|^{2(\gamma+1)} (\mu_*m_*)^2 \, (fm)^2 \, \langle v \rangle^{2 \ell} \, \langle v_* \rangle^{2\ell} \, \d\sigma \, \dv_* \,  \dv \, \d x \\
&\lesssim
\|f\|^2_{L^2_{x,v} (\langle v \rangle^{\gamma+1+\ell} m)} \quad \text{with} \quad \ell>3/2.
\end{aligned}
$$
The treatment of $\Lambda_{25}$ is easier and we directly obtain:
$$
\|\Lambda_{25}f\|_{L^2_{x,v}(m)} \lesssim \|f\|_{L^2_{x,v} (\langle v \rangle^{\gamma+1} m)}.
$$
Concerning $\Lambda_{26}$, we have for any $\ell>3/2$:
$$
\|Q(f,\mu)\|_{L^2_v(m)} \lesssim  \|f\|_{L^2_v(\langle v \rangle^{\gamma+2s + \ell}m)}
$$
where we used~\cite[Theorem~2.1]{AMUXY-ARMA1}. We deduce that
$$
\|\Lambda_{26}f\|_{L^2_{x,v}(m)} \lesssim \|f\|_{L^2_{x,v} (\langle v \rangle^{\gamma+2s+\ell}m)}, \quad \ell >3/2.
$$
It now remains to deal with $\Lambda_{22}$. We have:
$$
\begin{aligned}
|\Lambda_{22} f| &\le \int_{\R^3 \times \mathbb{S}^2} \widetilde B_1(v-v_*,\sigma) |\sqrt{\mu}'_*-\sqrt{\mu}_*| (\sqrt{\mu}'_* + \sqrt{\mu}_*) |f'| \, \d\sigma \, \dv_* \\
&\lesssim  \int_{\R^3 \times \mathbb{S}^2} b(\cos \theta) \sin (\theta/2) |v-v_*|^{\gamma+1} (\sqrt{\mu}'_* + \sqrt{\mu}_*) |f'| \, \d\sigma \, \dv_*
\end{aligned}
$$
where we used that the gradient of {$\sqrt \mu$} is bounded on $\R^d$. Then we use that $m \lesssim m' m'_*$ and $m \lesssim  \langle v-v_* \rangle^k m_*$ to get:
$$
\begin{aligned}
\|\Lambda_{22} f\|^2_{L^2_{x,v}(m)} &\lesssim \int_{\T^3 \times \R^3} \left(  \int_{\R^3 \times \mathbb{S}^2} b(\cos \theta) \sin (\theta/2) |v-v_*|^{\gamma+1} \sqrt{\mu}'_*|f'| \, \d\sigma \, \dv_* \right)^2 m^2 \dv \, \d x\\
&\quad+  \int_{\T^3 \times \R^3} \left(  \int_{\R^3 \times \mathbb{S}^2} b(\cos \theta) \sin (\theta/2) |v-v_*|^{\gamma+1} \sqrt{\mu}_*|f'| \, \d\sigma \, \dv_* \right)^2 m^2 \dv \, \d x\\
&\lesssim \int_{\T^3 \times \R^3} \left(  \int_{\R^3 \times \mathbb{S}^2} b(\cos \theta) \sin (\theta/2) |v-v_*|^{\gamma+1} \sqrt{\mu}'_* m'_* |f'| m' \, \d\sigma \, \dv_* \right)^2 \dv \, \d x \\
&\quad + \int_{\T^3 \times \R^3} \left(  \int_{\R^3 \times \mathbb{S}^2} b(\cos \theta) \sin (\theta/2) \langle v-v_* \rangle^{\gamma+1+k} \sqrt{\mu}_* m_*|f'| \, \d\sigma \, \dv_* \right)^2 \dv \, \d x \\
&=: I_1 + I_2.
\end{aligned}
$$
Using Jensen inequality and H\"{o}lder inequality as previously, we obtain for $\ell>3/2$:
$$
\begin{aligned}
&\|\Lambda_{22} f\|^2_{L^2_{x,v}(m)} \\
&\quad\lesssim \int_{\T^3 \times\R^3\times\R^3 \times \mathbb{S}^2} b(\cos \theta) \sin (\theta/2) |v-v_*|^{2(\gamma+1)} \mu'_* (m'_*)^2|f'|^2 (m')^2 \langle v_* \rangle^{2\ell} \, \d\sigma \, \dv_* \, \dv \, \d x\\
&\qquad+  \int_{\T^3 \times\R^3\times\R^3 \times \mathbb{S}^2} b(\cos \theta) \sin (\theta/2) \langle v-v_* \rangle^{2(\gamma+1+k)} \mu_* m_*^2 |f'|^2  \langle v_* \rangle^{2\ell} \,  \d\sigma \, \dv_* \, \dv \, \d x\\
&\quad =: I_1 + I_2.
\end{aligned}
$$
The first term $I_1$ is treated as $\Lambda_{24}$ and we thus have:
$$
I_1 \lesssim \|f\|^2_{L^2_{x,v} (\langle v \rangle^{\gamma+1+\ell} m)} .
$$
Concerning $I_2$, we first look at the integral
$$
J :=  \int_{\R^3 \times \mathbb{S}^2} b(\cos \theta) \sin (\theta/2)  \langle v-v_* \rangle^{2(\gamma+1+k)} |f'|^2 \, \d\sigma \, \dv.
$$
and we use the regular change of variable $v \to v'$ explained in the proof of Lemma~\ref{lem:nonlinearhom}-(i). We get
$$
J \lesssim \int_{\mathbb{S}^2} b(\cos (2\theta)) \sin(\theta) \, \d\sigma \int_{\R^3} f^2 m^2 \langle v \rangle^{2(\gamma+1)} \, \dv \, \langle v_* \rangle^{2(\gamma+1)} m_*^2
$$
and thus
$$
I_2 \lesssim  \|f\|^2_{L^2_{x,v} (\langle v \rangle^{\gamma+1} m)},
$$
which concludes the proof in the case $\varsigma=0$.

Let us now explain briefly how to treat higher order derivatives: We only deal with the $H^1$-case, the other cases being handled similarly. For the derivative in $x$, we have immediately that for any $\ell>3/2$,
$$
\|\nabla_x \Lambda_2 f\|_{L^2_{x,v}(m)} \lesssim \|\nabla_xf\|_{L^2_{x,v} (\langle v \rangle^{\gamma + 1 + \ell} m)}
$$
since the operators $\nabla_x$ and $\Lambda_2$ commute ($\Lambda_2$ acts only in velocity). Concerning the derivative in $v$, we have to be more careful and in what follows, we only give the key points to obtain the final estimate. For the first term, we have:
$$
|\nabla_v \Lambda_{21} f| \lesssim \langle v \rangle^{\gamma+2s-1} |f| + \langle v \rangle^{\gamma+2s} |\nabla_v f|.
$$
{For $\Lambda_{23}$, using the cancellation lemma, we have
$$
\nabla_v (\Lambda_{23} f) = (\widetilde S*\nabla_v \mu) f + (\widetilde S*\mu) \nabla_v f
$$
and we also have $|\widetilde S*\nabla_v \mu| \lesssim \langle v \rangle^{\gamma+2s-2}$.}
For $\Lambda_{26}$ we can use the classical result (see~\cite{Vill-JMPA}) that tells us
$$
\nabla_v Q(f,\mu) = Q(\nabla_v f, \mu) + Q(f,\nabla_v \mu).
$$
In the same spirit that the latter formula is proven, one can show that
$$
\nabla_v \Lambda_{22}f =  \int_{\R^3 \times \mathbb{S}^2} \widetilde B_1(v-v_*,\sigma) ((\nabla_v\mu)'_* - (\nabla_v\mu)_*)f' \, \d\sigma \, \dv_* + \Lambda_{22} (\nabla_vf),
$$
$$
\nabla_v \Lambda_{24} f = \int_{\R^3\times \mathbb{S}^2} \widetilde B_1^c(v-v_*,\sigma) (\nabla_v\mu)'_* f' \, \d\sigma \, \dv_*   + \Lambda_{24} (\nabla_v f)
$$
and
$$
\nabla_v \Lambda_{25} f = -  \int_{\R^3\times \mathbb{S}^2} \widetilde B_1^c(v-v_*,\sigma) (\nabla_v\mu)_*\, \d\sigma \, \dv_* \, f + \Lambda_{25}(\nabla_v f).
$$
The key elements to prove those relations are that $\nabla_v \widetilde B_1 = -\nabla_{v_*} \widetilde B_1$ and that we have for any suitable function $f$:
$$
(\nabla_v + \nabla_{v_*})(f')=\left(\nabla_v f\right)' \quad \text{and} \quad (\nabla_v + \nabla_{v_*})(f'_*)=\left(\nabla_v f\right)'_*.
$$
Gathering the previous remarks, we are then able to obtain that for any $\ell>3/2$:
$$
\|\nabla_v \Lambda_2 f\|_{L^2_{x,v}(m)} \lesssim \|f\|_{L^2_{x,v} (\langle v \rangle^{\gamma + 1 + \ell} m)} + \|\nabla_vf\|_{L^2_{x,v} (\langle v \rangle^{\gamma + 1 + \ell} m)},
$$
which allows us to conclude.
\end{proof}

\bigskip
\section{Regularization properties} \label{sec:reg}
\setcounter{equation}{0}
\setcounter{theo}{0}
This section is devoted to the proof of Theorem~\ref{thm:LIBthm}.
We start by making a few comments on this theorem:
\begin{itemize}[leftmargin=*]

\item As already mentioned, the result is not optimal in the sense that there is a loss in weight in our estimates. But we strongly believe that one could obtain a better estimate (concerning the weights) carrying out a more careful study of the operator~$\Lambda$. Indeed, in our proof, we perform a rough splitting of it and we  use Duhamel formula to recover an estimate on the whole semigroup $S_{\Lambda}(t)$. We could have not split the operator and study it completely, that would certainly provides us a better result. However, the proof would be much more complicated and we are here interested in the gain of regularity in terms of derivatives (not in terms of weights) and in getting quantitative estimates in time.

\item Another important fact is that Theorem~\ref{thm:LIBthm} provides a {``primal'' and a ``dual'' result of regularization, roughly speaking, from $L^2$ into $H^s$ and from $H^{-s}$ into~$L^2$. The fact that we develop a primal and a dual result is directly related to the use of this theorem that we make in Subsections~\ref{subsec:reg} and~\ref{subsec:reg2}. We will only present the proof of the dual result into full details, we just explain how to adapt it in the primal case (which is easier to handle) in Section~\ref{sec:general}. }

\end{itemize}

\medskip
\subsection{Steps of the proof of the main regularization result} \label{subsec:thm:LIBthm}
In this part, we give the main steps of the proof of Theorem~\ref{thm:LIBthm}.

\smallskip
\subsubsection{Splitting of the operator {for the dual result}} \label{subsec:splitting}
We are going to study the regularization properties only of a part of $\Lambda$, we thus start by splitting it into two parts.
Note that in this paper, we consider two types of splittings to separate grazing and non-grazing collisions cutting the small $\theta$ or the small $|v'-v|$. For our purpose in this part, we will work with the second option which is more adapted to the study of hypoelliptic properties of the linearized Boltzmann operator. We recall that $\widetilde Q_1$ is the operator associated to the kernel:
$$
\widetilde B_1 (v-v_*,\sigma) = \chi(|v'-v|) \, b(\cos \theta) \, |v-v_*|^\gamma
$$
and $\widetilde Q^c_1$ the one associated to the remainder part of the kernel:
$$
\widetilde B^c_1 (v-v_*,\sigma)=(1-\chi(|v'-v|)) \, b(\cos \theta) \, |v-v_*|^\gamma.
$$
We then have:
$$
\begin{aligned}
\Lambda h &= - v \cdot \nabla_x h + \widetilde Q_1 (\mu, h) + \widetilde Q^c_1(\mu,h) + Q(h,\mu) \\
&= \Bigg(- K \langle v \rangle^{\gamma+2s} h- v \cdot \nabla_x h + \int_{\R^3 \times \mathbb{S}^2} \widetilde B_1(v-v_*,\sigma) (\mu_* h' - \mu'_*h)\, \d\sigma \, {\rm d} v_* \Bigg)\\
&\quad+\Bigg(  K \langle v \rangle^{\gamma+2s} h + \int_{\R^3 \times \mathbb{S}^2} \widetilde B_1(v-v_*,\sigma) (\mu'_* - \mu_*)(h'+h) \, \d\sigma \, \dv_*\\
&\hskip 4.5cm+ \int_{\R^3\times \mathbb{S}^2} \widetilde B^c_1(v-v_*,\sigma) (\mu'_* h' - \mu_* h) \, \d\sigma \, \dv_* + Q(h,\mu)\Bigg) \\
&=: \Lambda_1 h + \Lambda_2 h
\end{aligned}
$$
where $K$ is a large positive parameter to be fixed later. Notice that $\Lambda_2$ had already been defined in Subsection~\ref{subsec:estimlin} and recall that Lemma~\ref{lem:lambda_2} tells that this part of the linearized operator do not induce a loss of regularity in terms of derivatives. Note also that in $\Lambda_1$, we have a term which is going to provide us some regularization
$$
\int_{\R^3 \times \mathbb{S}^2} \widetilde B_1(v-v_*,\sigma) (\mu_* h' - \mu'_*h) \, \d\sigma \, \dv_*
$$
and another one which provides us some dissipativity:
$
- K \langle v \rangle^{\gamma+2s} h.
$

\smallskip
\subsubsection{Regularization properties of $\Lambda_1$ {in the dual case}} \label{subsec:regdual}
The main result of this Subsection is Proposition~\ref{prop:duallambda1} and is about the regularization features of the semigroup associated to~$\Lambda_1$. Here, we just state the result and we postpone its proof to Subsection~\ref{subsec:quantization} in which we develop pseudo-differential arguments.

\smallskip
\noindent {\it Functional spaces.} In the remainder part of this section, we consider three weights:
\begin{equation} \label{condweight}
\left\{
\begin{aligned}
&\text{$m(v) = \langle v \rangle^l$ with {$l \ge 0$},}\\
&\text{$m_0(v) = \langle v \rangle^{l_0}$ with {$l_0 >\gamma/2 +3+4s$}}\\
&\text{$m_1(v) = \langle v \rangle^{l_1}$ with {$l_1=l_0 +\gamma + 1 + \ell$ and $\ell>3/2$}.}
\end{aligned}
\right.
\end{equation}
We then denote for $i=\emptyset,0,1$:
$$
\left\{
\begin{aligned}
&F_i=L^2_{x,v}(m_i)\\
&G_i = H^{s,0}_{x,v}({ \langle v \rangle^{\gamma/2}} m_i)\\
&H_i=H^{0,s}_{x,v}( \langle v \rangle^{\gamma/2} m_i)) \cap  L^2_{x,v}( \langle v \rangle^{(\gamma+2s)/2} m_i) \\
&\text{$G'_i$ the dual of $G_i$ w.r.t. $F_i$}\\
&\text{$H'_i$ the dual of $H_i$ w.r.t. $F_i$.}
\end{aligned}
\right.
$$
We also introduce the (almost) flat spaces:
$$
\left\{
\begin{aligned}
&\widetilde F=L^2_{x,v} \\
&\widetilde G=H^{s,0}_{x,v}({ \langle v \rangle^{\gamma/2}}) \\
&\widetilde H=H^{0,s}_{x,v}( \langle v \rangle^{\gamma/2}) \cap  L^2_{x,v}( \langle v \rangle^{(\gamma+2s)/2}) \\
&\text{$\widetilde G'$ the dual of $\widetilde G$ w.r.t. $\widetilde F$}\\
&\text{$\widetilde H'$ the dual of $\widetilde H$ w.r.t. $\widetilde F$.}
\end{aligned}
\right.
$$

\smallskip
\noindent {\it Remark on the dual embeddings.}
First, we notice that
\begin{equation} \label{prop:embed}
\forall \, q_1 \le q_2, \, \varsigma \in \R^+, \quad H^\varsigma_v(\langle v \rangle^{q_2}) \hookrightarrow H^\varsigma_v(\langle v \rangle^{q_1}).
\end{equation}
This property is clear in the case $\varsigma \in \N$. Let us now treat the case $\varsigma \in \R^+ \setminus \N$.
Since the weighted space $H^\varsigma_v(\langle v \rangle^{q_i})$ is defined through
$$
h \in H^\varsigma_v(\langle v \rangle^{q_i}) \Leftrightarrow h \langle v \rangle^{q_i} \in H^\varsigma_v
$$
and that we have, using the standard real interpolation notations (recalled in the introduction):
$$
H^\varsigma_v = \left[H^{\lfloor \varsigma \rfloor}_v,H^{\lfloor \varsigma \rfloor+1}_v\right]_{{\varsigma-\lfloor \varsigma \rfloor},2},
$$
one can prove that
$$
H^\varsigma_v(\langle v \rangle^{q_i}) = \left[H^{\lfloor \varsigma \rfloor}_v(\langle v \rangle^{q_i}),H^{\lfloor \varsigma \rfloor+1}_v(\langle v \rangle^{q_i})\right]_{{\varsigma-\lfloor \varsigma \rfloor},2}, \quad i=1,2.
$$
From this, since $H^{\ell}_v(\langle v \rangle^{q_2}) \hookrightarrow H^{\ell}_v(\langle v \rangle^{q_1})$ for $\ell \in \N$, we deduce the desired embedding result:~$
H^\varsigma_v(\langle v \rangle^{q_2}) \hookrightarrow H^\varsigma_v(\langle v \rangle^{q_1}).
$

We can now prove that the standard inclusions for dual spaces do not hold here. Indeed, we have for example $G_1 \subset G_0$ and also $G'_1 \subset G'_0$ (the same for ``$H$-spaces'' hold). This is due to the fact that the pivot spaces are $F_i$ and not $L^2_{x,v}$ as usually. Indeed, using that~$k_1 \ge k_0$ and~\eqref{prop:embed}, we have
\begin{align*}
\| h\|_{G'_0} &= \sup_{\|\varphi\|_{G_0} \le 1} \langle h, \phi \rangle_{F_0} \\
&=  \sup_{\|\varphi m_0\|_{\widetilde G} \le 1} \bigg\langle h m_1, \phi {m_0^2 \over m_1} \bigg\rangle_{\widetilde F} \\
&= \sup_{\|\psi m_1^2/m_0\|_{\widetilde G} \le 1} \langle h m_1, \psi m_1 \rangle_{\widetilde F} \\
&\le  \sup_{\|\psi m_1\|_{\widetilde G} \le 1} \langle h m_1, \psi m_1 \rangle_{\widetilde F} = \sup_{\|\phi\|_{G_1} \le 1} \langle h, \phi \rangle_{F_1} = \|h\|_{G'_1}.
\end{align*}

\medskip
\noindent {\it Reduction of the problem to a ``simpler'' framework.}
We start by explaining how to avoid some difficulties coming from the spaces in which we are working.
First, in order to simplify the problem, since we work in weighted spaces, we are going to ``include'' the weight in our operator.
For this purpose, we define the operator $\Lambda^{m}_1$ by
$$
\Lambda^{m}_1 g := m \, \Lambda_1 ({m}^{-1} g).
$$
We notice that if $h$ satisfies $\partial_t h = \Lambda_1 h$, then $g := {m}h$ satisfies $\partial_t g = \Lambda^{m}_1 g$ and we thus have $S_{\Lambda_1^{m}}(t) g_0 = m S_{\Lambda_1}(t) h_0$ if $g_0=mh_0$.
Then, in order to avoid having to work in dual spaces, we introduce formal dual operators for which we prove regularization properties in ``positive'' Sobolev spaces.
To this end, we introduce the (formal) adjoint operator (w.r.t. the scalar product of $L^2_{x,v}$) of $\Lambda_1^m$ that we denote $\Lambda^{{m},*}_{1}$  and which is defined by:
\begin{equation*} 
\Lambda_{1}^{{m},*} \phi := \int_{\R^3 \times \mathbb{S}^2} \widetilde B_1(v-v_*,\sigma) \, \mu'_* \, ( \phi' {m}'-\phi {m}) \, \d\sigma \, \dv_*\, {m}^{-1}  - K \langle v \rangle^{\gamma+2s} \, \phi + v \cdot \nabla_x \phi.
\end{equation*}
The advantage of working with this operator is that we can work in flat and positive Sobolev spaces.
We now write our main regularization estimate:

\begin{prop} \label{prop:duallambda1}
For $K$ large enough, we have the following estimates for any $\varphi_0 \in \widetilde F$:
\begin{equation} \label{eq:dual}
\forall \, t \in (0,1], \quad \|S_{\Lambda^{{m},*}_1}(t)\phi_0\|_{\widetilde H}\lesssim {1 \over \sqrt{t}} \|\phi_0\|_{\widetilde F} \quad \text{and} \quad  \|S_{\Lambda^{{m},*}_1}(t)\phi_0\|_{\widetilde G}\lesssim {1 \over t^{1/2+s}} \|\phi_0\|_{\widetilde F}.
\end{equation}
\end{prop}
The proof of Proposition~\ref{prop:duallambda1} is to be compared with the one {developed in the article~\cite{HTT2*} to study regularization properties of the fractional Kolmogorov equation}. Indeed, it is the same strategy of proof: We introduce a functional which is going to be an entropy for our equation for small times. However, it is much more complicated in this case and our approach requires refined pseudo-differential tools, Subsection~\ref{subsec:quantization} is dedicated to its proof. Before that, we explain how to use Proposition~\ref{prop:duallambda1} to get our final result in Theorem~\ref{thm:LIBthm}.

%


\smallskip
\subsubsection{Proof {of the dual result} of Theorem~\ref{thm:LIBthm}}
The goal is first to prove the dual result in Theorem~\ref{thm:LIBthm} in the case $r=0$. The proof will be exactly the same for other values of~$r$ since the operator $( 1 -\Delta_x)^{r/2}$ commutes with the Boltzmann operator. We can thus apply the result obtained for $r=0$ to $( 1 -\Delta_x)^{r/2} h_0$ to recover the result for $r\neq 0$.

From Proposition~\ref{prop:duallambda1}, we can deduce an estimate on the semigroup associated to $\Lambda_1$ in the ``original'' (non flat) spaces:

\begin{cor}
For $K$ large enough, for any $h_0 \in H'$, resp.~$h_0 \in G'$, there holds:
\begin{equation} \label{eq:lambda_1}
\forall \, t \in (0,1], \quad \|S_{\Lambda_1}(t)h_0\|_{F} \lesssim {1 \over \sqrt{t}} \|h_0\|_{H'}, \quad \text{resp.} \quad \|S_{\Lambda_1}(t)h_0\|_{F} \lesssim {1 \over t^{1/2+s}} \|h_0\|_{G'}.
\end{equation}
\end{cor}

\begin{proof}
Let us consider $K$ large enough so that the conclusion of Proposition~\ref{prop:duallambda1} holds. Using~\eqref{eq:dual} and denoting $g_0=mh_0$, we have for any $t \in (0,1]$:
$$
\begin{aligned}
\|S_{\Lambda_1}(t) h_0\|_{F} &{=} \|S_{\Lambda_1^{m}} (t) g_0 \|_{\widetilde F} = \sup_{\|\phi\|_{\widetilde F} \le 1} \langle S_{\Lambda_1^m}(t) g_0, \phi \rangle = \sup_{\|\phi\|_{\widetilde F} \le 1} \langle g_0, S_{\Lambda_1^{{m},*}}(t)\phi \rangle \\
&\lesssim \sup_{\|\phi\|_{\widetilde F} \le 1} \|g_0\|_{\widetilde H'} \|S_{\Lambda_1^{{m},*}}(t)\phi\|_{\widetilde H} \lesssim  {1 \over \sqrt{t}}\|g_0\|_{\widetilde H'} {=} {1 \over \sqrt{t}} \|h_0\|_{H'}
\end{aligned}
$$
which is exactly the first part of~\eqref{eq:lambda_1}. The second one is proven in the same way.
\end{proof}

Let us finally prove that the regularization properties of $\Lambda_1$ are enough to conclude that the whole operator $\Lambda$ has some good regularization properties: Even if we have a loss of weight in the final estimate, $\Lambda$ inherits regularization properties from $\Lambda_1$ in terms of fractional Sobolev norms.

\begin{lem} \label{lem:lambda_1/lambda}
For any $h_0 \in H'_1$, resp.~$h_0 \in G'_1$, we have:
\begin{equation} \label{eq:lambda}
\forall \, t \in (0,1], \quad \|S_{\Lambda}(t)h_0\|_{F_0} \lesssim {1 \over \sqrt{t}} \|h_0\|_{H'_1}, \quad \text{resp.} \quad \|S_{\Lambda}(t)h_0\|_{F_0} \lesssim {1 \over t^{1/2+s}} \|h_0\|_{G'_1}.
\end{equation}
\end{lem}

\begin{proof}
We have:
\begin{equation} \label{eq:lambdaHTT1*}
\forall \, t \in (0,1], \quad \|S_{\Lambda}(t) h_0 \|_{F_0} \lesssim \|h_0\|_{F_0}.
\end{equation}

Then, we write Duhamel formula:
$$
S_{\Lambda}(t) = S_{\Lambda_1}(t) + \int_0^t S_{\Lambda}(\tau) \Lambda_2 S_{\Lambda_1}(t-\tau) \, \d \tau
$$
from which we deduce, combining~\eqref{eq:lambdaHTT1*},~\eqref{eq:lambda_1} and~\eqref{eq:lambda_2} applied with the appropriate weights, that for $t \in (0,1]$,
$$
\begin{aligned}
\|S_{\Lambda}(t)h_0\|_{F_0}
&\lesssim \|S_{\Lambda_1}(t) h_0\|_{F_0} + \int_0^t \|S_{\Lambda} (\tau)\Lambda_2 S_{\Lambda_1}(t-\tau) h_0\|_{F_0} \, \d \tau \\
&\lesssim {1 \over \sqrt{t}} \|h_0\|_{H'_0} + \int_0^t \|\Lambda_2 S_{\Lambda_1}(t-\tau) h_0\|_{F_0} \, \d \tau \\
&\lesssim {1 \over \sqrt{t}} \|h_0\|_{H'_0} + \int_0^t \|S_{\Lambda_1}(t-\tau) h_0\|_{F_1} \, \d \tau \\
&\lesssim {1 \over \sqrt{t}} \|h_0\|_{H'_0} + \int_0^t {1 \over \sqrt{t-\tau}} \|h_0\|_{H'_1} \, \d \tau \\
&\lesssim {1 \over \sqrt{t}} \|h_0\|_{H'_0} + \int_0^1 {1 \over \sqrt{\tau}} \|h_0\|_{H'_1} \, \d \tau \lesssim {1 \over \sqrt{t}} \|h_0\|_{H'_1}.
\end{aligned}
$$
This concludes the proof of the first part of~\eqref{eq:lambda}. Concerning the second one, we proceed as before using that $1/2+s<1$ since $s<1/2$ and we obtain for any $t \in (0,1]$:
$$
\begin{aligned}
\|S_{\Lambda}(t)h_0\|_{F_0}
\lesssim {1 \over t^{1/2+s}} \|h_0\|_{G'_1}.
\end{aligned}
$$
\end{proof}
\begin{rem}
In the previous results, we skipped the proof that the operators we consider generate continuous semigroups. In Proposition~\ref{prop:duallambda1}, the fact that $\Lambda_1^{m,*}$ generates a semigroup in the large space $\widetilde F$ could be either proved directly either using the general strategy of enlargement proposed in~\cite{GMM*}. Similarly, in Lemma~\ref{lem:lambda_1/lambda}, we skipped the proof of the fact that $\Lambda$ also generates a semigroup, let us just note that the conditions on the weights entering in the definitions of the functional spaces in~\eqref{condweight} are here needed to close the enlargement argument.
\end{rem}

\medskip
\subsection{Pseudodifferential study} \label{subsec:quantization}

The aim of this Subsection is the proof of Proposition~\ref{prop:duallambda1} about the regularization properties of the operator
\begin{equation*} 
\Lambda_{1}^{{m},*} \phi = \int_{\R^3 \times \mathbb{S}^2} \widetilde B_1(v-v_*,\sigma) \, \mu'_* \, ( \phi' {m}'-\phi {m}) \, \d\sigma \, \dv_*\, {m}^{-1}  - K \langle v \rangle^{\gamma+2s} \, \phi + v \cdot \nabla_x \phi.
\end{equation*}
This will be done with a pseudodifferential version of the Lyapunov trick developed in the fractional Fokker-Planck case {in~\cite{HTT2*}} and special classes of symbols that we recall in the Appendix {\ref{app:pseudo}}.

\smallskip
\subsubsection{Pseudodifferential formulation of the operator $\Lambda_1^{m,*}$} \label{subsubsec:lambda1m*}

The operator $\Lambda_{1}^{{m},*}$ is very similar to the operator $\mathcal{L}_{1,2,\delta}$ defined in \cite[Proposition 3.1]{AHL*}. We shall thus take advantage of the analysis of the pseudo-differential operator $\mathcal{L}_{1,2,\delta}$ and its symbol in \cite{AHL*}.
If we extract the collision part of the operator $\Lambda_1^{m,*}$ (forgetting the transport one and the addition of the multiplicative term), we obtain
$$
\Lambda_{1}^{{m},*, {\hbox{\footnotesize collision}}} \phi := \int_{\R^3 \times \mathbb{S}^2} \widetilde B_1(v-v_*,\sigma) \, \mu'_* \, ( \phi' {m}'-\phi {m}) \, \d\sigma \, \dv_*\, {m}^{-1}
$$
In the case $m= 1$, this operator is actually the main one studied in~\cite{AHL*}:
$$
\Lambda_{1}^{{1},*, {\hbox{\footnotesize collision}}} = \mathcal{L}_{1,2,1} =: -\tilde{a}_0(v,D_v) ,
$$
where $\tilde{a}_0$ is a real symbol in $(v,\eta)$ defined through
$$
\tilde{a}_0(v,\eta) := \int_{\R^3_\vartheta} {{\d \vartheta}\over |\vartheta|^{3+2s}}  \int_{E_{0,\vartheta}} \d\alpha \, \widetilde{b}(\alpha,\vartheta) \, \mathds{1}_{|\alpha| \ge |\vartheta|} \, \chi(\vartheta) \, \mu(\alpha+v) \, |\alpha+\vartheta|^{\gamma+1+2s} \,
(1-\cos(\eta \cdot \vartheta))
$$
thanks to Carleman representation (see Lemma~\ref{lem:Carleman}).  We  recall below the main result from~\cite{AHL*} concerning the symbol $\tilde{a}_0$ (be careful, this symbol is denoted without tilde there). The notations are those from Appendix~\ref{app:pseudo} where the definitions of objects concerning the pseudo-differential calculus are recalled.

\begin{prop}[Propositions~3.1 and~3.4 in \cite{AHL*}] \label{prop:AHL}
The symbol $\tilde{a}_0$ satisfies the following properties:
\begin{enumerate}[leftmargin=1.1cm]
\item[(i)] $   \tilde{a}_0 \in S( \seq{v}^\gamma (1+ |\eta|^2 + |v\wedge\eta|^2)^s, \Gamma),$
\item[(ii)] $ \forall \,\eps>0, \, \,  \nabla_\eta \tilde{a}_0 \in S( \eps \seq{v}^\gamma (1+ |\eta|^2 + |v\wedge\eta|^2)^s + \eps^{-1} \seq{v}^{\gamma + 2s},\Gamma),$
\item[(iii)]  $\exists \, c>0, \, -c\seq{v}^{\gamma + 2s}+ \seq{v}^\gamma \sep{ 1+ |\eta|^2 + |v\wedge\eta|^2}^s \lesssim \tilde{a}_0 \lesssim \seq{v}^\gamma \sep{ 1+ |\eta|^2 + |v\wedge\eta|^2}^s,$
\end{enumerate}
where $\Gamma:=|\dv|^2+|\d \eta|^2$ is the flat metric.
\end{prop}

For convenience we denote by $a_0$ the Weyl symbol of operator $\tilde{a}_0(v,D_v)$, so that
$$
a_0^w = \tilde{a}_0(v,D_v).
$$
Everywhere in what follows, any symbol with a tilde will refer to a classical quantization, and when no tilde is present, the symbol will refer to the Weyl quantization. Both quantizations are recalled in the beginning of Subsection \ref{pseud} in the Appendix.
 Note that $a_0$ is not real anymore, anyway we shall see later that it conserves good ellipticity properties. Denoting then
$$
a(v,\eta) := \sep{  m^{-1} \sharp {a}_0 \sharp m } (v,\eta)+ K \seq{v}^{\gamma + 2s},
$$
where $\sharp$ denotes the usual Weyl composition and we omit the dependency of $a$ with respect to $K$ in our notation, we have:
$$
\Lambda_{1}^{{m},*} = -a^w + v \cdot \nabla_x.
$$
For sake of simplicity, we introduce the following notation
$$
A := a^w,
$$ so that  the collision part of
operator $\Lambda_{1}^{{m},*}$ writes
$$
 \Lambda_{1}^{{m},*} = -A + v \cdot \nabla_x
$$
(recall that they depend on $K$).
In order to study the symbolic properties of $a$, 
we now introduce
the main weights. We pose for $(v,\eta) \in \R^6$
$$
\lambda_v^2(v,\eta) :=  \seq{\eta}^2 + \seq{v\wedge \eta}^2 +  \seq{v}^2
\quad \text{and} \quad
p(v,\eta) := \seq{v}^\gamma \lambda_v^{2s} + K \seq{v}^{\gamma + 2s}
$$
which will be the main reference symbol of our study (note that this symbol is denoted $\tilde{a}_K$ in~\cite{AHL*}). Although $p$ depends on $K$, we will omit in the following any subscript or reference to this dependence. It will be shown in the next subsection that $p$ is a good weight in the sense of Appendix~\ref{app:pseudo}. The following Lemma shows that
$a$ has good properties in the class $S_K(p)$, the main class of symbols whose definition is recalled in full generality in Appendix~\ref{app:pseudo}.

 \begin{lem} \label{aposell} Let $m(v) = \seq{v}^k$ for $k\ge 0$. Then
 uniformly in $K$ sufficiently large, we have that  $\Re a \geq 0$,  $a \in S_K(p)$ and $\Re a$ is elliptic positive in this class. 
  \end{lem}
\begin{proof} We shall take profit of the estimates from~\cite{AHL*} recalled above in Propostion~\ref{prop:AHL}. We first note that
because of the symbolic estimates on $\tilde{a}_0$ we can take $\eps= K^{-1/2}$ in $(ii)$ and,  using Lemma \ref{allsk}, we get that
$\tilde{a}_0 \in S_K(p)$ and then ${a}_0 \in S_K(p)$.
Adding $K \seq{v}^{\gamma + 2s}$ does not change the computation and we also get
that
$$
a_0 + K \seq{v}^{\gamma + 2s} \in S_K(p).
$$
Now we can do the conjugation with $m$. We first note that clearly, with the same notations as before, we have
$ m \in S_K(m)$ and $m^{-1} \in S_K(m^{-1})$.
This can be checked directly by noticing that the derivatives of $m$ in $\eta$ are zero. The stability of the class $S_K$ from Lemma~\ref{stab} implies then  that
$$
a  = m^{-1} \sharp  a_0   \sharp  m + K \seq{v}^{\gamma + 2s} = m^{-1} \sharp \sep{ a_0 + K \seq{v}^{\gamma + 2s} } \sharp  m \in S_K( p ).
$$
We can also notice that looking at the main terms in the asymptotic development of the~$\sharp$~product { (see in particular Lemma \ref{allsk} and its proof)},  we have
$$
a =  {a}_0  + K \seq{v}^{\gamma + 2s} + r =  \tilde{a}_0  + K \seq{v}^{\gamma + 2s} + r'
$$
 with  $r$ and $r' \in K^{-1/2} S(p)$ (note that $r$ is exactly  the { Weyl} symbol of $m^{-1} [ a_0^w ,  m ]$).
Since from Propostion~\ref{prop:AHL}-$(iii)$, we have   $\tilde{a}_0 + K \seq{v}^{\gamma + 2s} \gtrsim p $ (uniformly in $K$), we get that
 $$
 \Re a \gtrsim p
 $$
 so that $\Re a$ is non-negative and elliptic for $K$ large  (note that this proof is very close to the one of Lemma~\ref{allsk} in Appendix~\ref{app:pseudo}).
\end{proof}

\smallskip
\subsubsection{Reference weights} \label{subsec:refweights}

We now introduce some weights involving the constant $K$ where~$K$ is a large constant to be defined later. Formally, $1/\sqrt{K}$ plays the role of a small semiclassical parameter.
We recall that for $(v,\eta) \in \R^6$
$$
\lambda_v^2(v,\eta) =  \seq{\eta}^2 + \seq{v\wedge \eta}^2 +  \seq{v}^2
\quad \text{and} \quad
p(v,\eta) = \seq{v}^\gamma \lambda_v^{2s} + K \seq{v}^{\gamma + 2s}.
$$
We shall need their counterparts in the $\xi$ variable (considered as a parameter) instead of~$\eta$ and thus also introduce
$$
\lambda_x^2(v,\eta) :=  \seq{\xi}^2 + \seq{v\wedge \xi}^2 +  \seq{v}^2
$$
and
$$
q(v,\eta) := \seq{v}^\gamma \lambda_x^{2s} + K \seq{v}^{\gamma + 2s},
$$
where we omit the dependance on $K$ and $\xi$ again in the notations. We eventually introduce a mixed symbol
$$
\omega (v,\eta): = { - }\langle v\rangle ^\gamma \lambda_x^{s-1} \lambda_v^{s-1}(\eta\cdot \xi+ (v\wedge \eta) \cdot (v\wedge \xi))
$$
which will be crucial in the analysis.
Following Appendix~\ref{app:pseudo}, we have in particular:
\begin{lem}
The symbols $p$, $q$ and more generally $\seq{v}^\zeta p^\varrho q^\varsigma$ for $\zeta$, $\varrho$ and $\varsigma \in \R$ are temperate with respect to $\Gamma$ uniformly w.r.t. $K$ and $\xi$.
\end{lem}
\begin{proof} These computations are done for e.g. in~\cite[Section 3.3]{AHL*}.
\end{proof}

The symbols $p$, $q$, and $\omega$ are then good symbols w.r.t. these classes, as the  following lemma shows.

\begin{lem} \label{stabsymb} We have $p \in S_K(p)$, $q\in S_K(q)$, $\omega \in S_K(\sqrt{pq})$ and more generally, we also have  $\seq{v}^\zeta p^\varrho q^\varsigma \in S_K(\seq{v}^\zeta p^\varrho q^\varsigma )$ for $\zeta$, $\varrho$ and $\varsigma \in \R$, all this uniformly in $K$ and $\xi$. \end{lem}

\begin{proof}
We only do the proof for $p$, the other being similar. We just have to differentiate the symbol $p$. We study first the gradient with respect to~$\eta$. We notice that
\begin{equation*}
\begin{split}
\nabla_\eta p & = s \seq{v}^\gamma \lambda_v^{2s-2} \nabla_\eta (\lambda_v^2).
\end{split}
\end{equation*}
We also have that
$$
\abs{\nabla_\eta (\lambda_v^2)}
 \leq 2 \lambda_v \seq{v}
$$
from which we deduce that
\begin{align*}
\abs{ \nabla_\eta p } &\le 2s \seq{v}^{\gamma+1} \lambda_v^{2s-1} \\
&= 2s K^{-1/2} \left(K^{1/2} \seq{v}^{\gamma/2+s}\right) \seq{v}^{\gamma/2+1-s} \lambda_v^{2s-1} \\
&\le 2s K^{-1/2} p^{1/2} \seq{v}^{\gamma/2} \lambda_v^{s} \le 2s K^{-1/2} p
\end{align*}
which is the desired result.
We skip the other similar computations.
\end{proof}

\smallskip
\subsubsection{Technical lemmas} \label{subsec:technical}

The main idea in the proof of the regularization result in Proposition~\ref{prop:duallambda1} is to
use the positivity preserving property of the Wick quantization.

In what follows, we state a series of lemmas (from~\ref{lemma1} to~\ref{lemma4}) which are crucial to be able to ``compare'' our operator $A$ with quantizations of the simpler symbols $p$ and $q$ we introduced in the preceding subsection. The following statements are given for sufficiently large and fixed $K$ (see~\cite{AHL*} and Appendix~\ref{app:pseudo}).

\begin{lem}\label{lemma1}
There exists $c_a>0$ such that
$$
2 \Re \sep{ Ah, h}\geq c_a
 \sep{ p ^{\rm Wick}h, h}.$$
\end{lem}

\begin{proof}  We first notice that
$$
 \Re \sep{ Ah, h} = \Re \sep{ a^w  h,h}
= \sep{ ( \Re a)^w  h,h}
$$
thanks to the properties of the Weyl quantization.
%
%
%
Using \eqref{mainscalar} for $\Re a$, we therefore get that
$$
 \Re \sep{ Ah, h} = \sep{( \Re a)^w h, h} \simeq    \sep{ (\Re a)^\wick h, h} =
\Re  \sep{ a^\wick h, h}.
 $$
%
%
 Moreover, $\Re a \simeq p$ uniformly in $K$ from Lemma~\ref{aposell}. This implies that there exists $c_a >0$ such that $\Re a - c_a p \geq 0$. Using the positivity property of the Wick quantization gives  $\Re (a)^\wick  - c_a p^\wick \geq 0$ in the sense of operators.
 %
%
  This proves the result.
 \end{proof}
%
%
%
%

\begin{lem}\label{lemma2}
There exists $c_p>0$ such that
$$
\sep{p^{\rm Wick} A h + A^* p^{\rm Wick} h,h}   \geq c_p \ \sep{(p^2)^{\rm Wick}h,h}.
 $$
\end{lem}

\begin{proof} 
We have from the definition of the Wick quantization (see \eqref{wickN2})
 $$
 p^\wick A + A^* p^\wick = \sep{ (p\star  N ) \sharp  a + \bar{a}  \sharp (p\star  N ) }^w.
 $$
Using now Lemma \ref{allsk}, we have that  $p \in S_K(p)$ implies $p\star  N  \in S_K(p)$ and
 from the second point in Lemma \ref{stab}, we get that
 $(p\star  N ) \sharp a + \bar{a} \sharp (p\star  N )$ is elliptic, real and positive (from selfadjointness) in $S_K(p^2)$. We therefore get from \eqref{mainscalar} that
 $$
 \sep{\sep{ (p\star N) \sharp a + \bar{a} \sharp (p\star N) }^w h,h} \simeq
 \sep{\sep{ (p\star N) \sharp a + \bar{a} \sharp (p\star N) }^\wick h,h}
$$
Since $(p\star N) \sharp a + \bar{a} \sharp (p\star N) \simeq p^2$ (uniformly in $K$), the positivity properties of the Wick quantization imply the result.
\end{proof}

\begin{lem}\label{lemma3} There exists $c_q>0$  such that
$$
\sep{q^{\rm Wick} A h + A^* q^{\rm Wick} h,h}\geq c_q \sep{(p q)^{\rm Wick} h,h}.
$$
\end{lem}

\begin{proof} The proof is almost the same as the one of Lemma \ref{lemma2}, the main difference being that
the symbol $q$ now depends on a parameter $\xi$,  with respect to which all estimates have to be uniform. We write
 $$
 q^\wick A + A^* q^\wick = \sep{ (q\star N) \sharp {a} + \bar{{a}} \sharp (q\star N) }^w
 $$
 where again ${a}$ denotes the Weyl symbol of $A$.
 We have that $q \in S_K(q)$ uniformly in~$K$ and~$\xi$ and this implies  $q\star N \in S_K(q)$. From  ${a} \in S_K(p)$
 and the second point in Proposition~\ref{stab}, we get that
 $(q\star N) \sharp {a} + \bar{{a}} \sharp (q\star N)$ is elliptic, real and positive  in $S_K(pq)$. Together with~\eqref{mainscalar},  this implies that there exists $c_q>0$ s.t.
 $$
 \sep{\sep{ (q\star N) \sharp {a} + \bar{{a}} \sharp (q\star N) }^w h,h} \simeq
 \sep{\sep{ (q\star N) \sharp {a} + \bar{{a}} \sharp (q\star N) }^\wick h,h} \geq c_q \sep{ (pq)^\wick h, h}
$$
 where the last inequality comes from the positivity properties of the Wick quantization.
 \end{proof}

\begin{lem} \label{lemma3bis} There exist  $c_\omega>0$ such that
$$
\abs{ \sep{\omega^{\rm Wick} A h + A^* \omega^{\rm Wick} h,h}} \leq c_\omega  \sep{ (p^{3/2} q^{1/2})^\wick h,h}.
$$
\end{lem}

\begin{proof}
We begin by denoting $n := p^{3/4} q^{1/4}$ so that $n^2 = p^{3/2} q^{1/2}$. Using Lemma \ref{stabsymb}, we get that $n$ is elliptic positive in $S_K(n)$. Note also
 that
 $$
 \omega^{\rm Wick} A  + A^* \omega^{\rm Wick} = \sep{ (\omega\star N ) \sharp {a} + \bar{{a}} \sharp (\omega\star N ) }^w
 $$
 using the definitions of the Wick quantization and still denoting again ${a}$ the Weyl symbol of operator $A$. From Lemma \ref{stabsymb}, $\omega \in S_K(\sqrt{pq})$ so that $\omega\star N$ is also in $S_K(\sqrt{pq})$ by Lemma~\ref{allsk}. On the other hand, ${a} \in S_K(p)$ and using the stability Proposition \ref{stab}, we therefore get that
 \begin{equation} \label{theta2}
  (\omega\star N ) \sharp {a} + \bar{{a}} \sharp (\omega\star N ) \in S_K(p^{3/2} q^{1/2}) = S_K(n^2).
 \end{equation}
 We then write
 \begin{equation*}
 \begin{split}
 & \abs{ \sep{\omega^{\rm Wick} A h + A^* \omega^{\rm Wick} h,h}} \\
 & = \bigg| \bigg(
    \underbrace{  (n^{-1})^\wick \sep{ (\omega\star N ) \sharp {a} + \bar{{a}} \sharp (\omega\star N )}^w
    (n^{-1})^\wick}_{ \textrm{ Operator } \Omega } ((n^{-1})^\wick)^{-1} h, ((n^{-1})^\wick)^{-1} h \bigg) \bigg|.
    \end{split}
    \end{equation*}
Let us prove that operator $\Omega$ is bounded. For this, we first note that
  $(n^{-1})^\wick = (n^{-1} \star N)^w$  and recall that $n$ is elliptic positive. Lemma \ref{stab} implies that $n^{-1}$ is positive elliptic in~$S_K(n^{-1})$ too and from Lemma \ref{allsk}, the same is true for $n^{-1} \star N$. The Weyl symbol of~$\Omega$ can be written
  $$
  \hbox{symb}(\Omega) = (n^{-1} \star N) {\sharp} \sep{ (\omega\star N ) \sharp {a} + \bar{{a}} \sharp (\omega\star N )} {\sharp}(n^{-1} \star N)
  $$
    and from the stability Lemma \ref{stab} and \eqref{theta2}, this symbol is in  $S_K(1)$. In particular, the operator $\Omega$ is bounded on $L^2$. We have that
    \begin{equation} \label{secondcomplique}
    \begin{split}
    \left|\sep{\Omega ((n^{-1})^\wick)^{-1}h,((n^{-1})^\wick)^{-1}h}\right| & \leq C \norm{  \big((n^{-1})^\wick\big)^{-1} h}^2 \\
    &\leq C \norm{  n^\wick h}^2 \leq C \sep{ (n^2)^\wick h,h}.
 \end{split}
 \end{equation}
 The first inequality comes from the fact that $\Omega$ is bounded. The last inequality is just a consequence of \eqref{mainnorm}. Let us precise the arguments used for proving the second inequality:
  we have
   \begin{equation} \label{trucdebut}
    \begin{split}
     \norm{  \big((n^{-1})^\wick\big)^{-1} h}^2  =  \norm{  ((n^{-1}\star N)^w)^{-1} h}^2    \simeq \norm{  ((n^{-1}\star N)^{-1})^w h}^2
      \end{split}
      \end{equation}
      using the definition of the Wick quantization and  \eqref{maingamma}. We also check by direct computation that $(n^{-1}\star N)^{-1}$ is elliptic positive in
      in $S_K(n)$ using Lemmas \ref{allsk} (see also Remark~\ref{allskrem} and Lemma~\ref{stab}-$(ii)$. This implies by \eqref{mainequiv} applied with $\tau = (n^{-1}\star N)^{-1}$ that
      \begin{equation} \label{trucmilieu}
      \norm{  ((n^{-1}\star N)^{-1})^w h}^2  \simeq  \norm{  n^w h}^2,
      \end{equation}
      and we get then by \eqref{mainnorm}
      \begin{equation} \label{trucfin}
      \norm{  n^w h}^2 \simeq \sep{ (n^2)^\wick h,h}.
      \end{equation}
      The estimates (\ref{trucdebut})-(\ref{trucfin}) yield the second inequality in \eqref{secondcomplique}.
\end{proof}
      To conclude this subsection, we state a lemma which will be useful in the sequel, and whose proof is direct using positivity properties of the Wick quantization.
\begin{lem} \label{lemma4}
We have the following estimates:
 $$
  \sep{(\langle v\rangle^{2\gamma}\lambda_v^{4s})^{\rm Wick} h,h} \le  \sep{(p^2)^{\rm Wick} h,h} \leq 2(1+K^2)   \sep{(\langle v\rangle^{2\gamma}\lambda_v^{4s})^{\rm Wick} h,h} ,
 $$
 $$
 \sep{p^{\rm Wick} h,h}= \sep{(\langle v\rangle^{\gamma}\lambda_v^{2s})^{\rm Wick} h,h}+K\sep{(\langle v\rangle^{\gamma+2s})^{\rm Wick} h,h},
 $$
  $$
 \sep{(\langle v\rangle^{2\gamma}\lambda_v^{2s}\lambda_x^{2s})^{\rm Wick} h,h}\le  \sep{(pq)^{\rm Wick} h,h} \le (1+K)^2 \sep{(\langle v\rangle^{2\gamma}\lambda_v^{2s}\lambda_x^{2s})^{\rm Wick} h,h}.
 $$
\end{lem}
\medskip

\smallskip
\subsubsection{The Lyapunov functional} 
\label{subsec:propduallambda1}
From now on, we fix once and for all the constant~$K$ so that the conclusions of Lemmas \ref{lemma1} to \ref{lemma4} are true.
We build below a Lyapunov functional corresponding to the following equation
$$
\partial_t \varphi = v \cdot \nabla_x \varphi - A \varphi,
$$
and we consider  $\varphi$ a solution.
Then, since $A$ acts only on the velocity variable, we can take the Fourier transform of our equation in $x \in \T^3$ and see the associated Fourier variable~$\xi\in \Z^3$ as a parameter in our equation. We thus consider $\psi=\mathcal{F}_x \varphi$ to be a solution of
$$
\partial_t \psi { -} i v\cdot \xi \psi + A \psi =0
$$
 with initial data $\psi_0$. We introduce an adapted entropy
functional defined  for all $t\geq 0$ by
\begin{equation} \label{hhh1}
\HH(t) := C\norm{\psi}^2 + D t \sep{p ^{\rm Wick} \psi,\psi}+ E t^{1+s}  \sep{ \omega^{\rm Wick} \psi,  \psi} + t^{1+2s} \sep{q^{\rm Wick} \psi,\psi}
\end{equation}
for large constants $C$, $D$, $E$ to be chosen later, where $\norm{\cdot}$ is the usual $L^2_{x,v}$ norm and $\sep{ \cdot,\cdot}$ is the usual (complex) $L^2_{x,v}$ scalar product.

\begin{lem} \label{hh1bis}
If $E \leq \sqrt{D}$ then for all  $t\geq 0$, we have $\HH(t) \geq 0$. Precisely, we have
\begin{equation*}
0 \leq  C\norm{\psi}^2 + \frac{D}{2} t \sep{p ^{\rm Wick} \psi,\psi}  + \frac{1}{2} t^{1+2s} \sep{q^{\rm Wick} \psi,\psi}
\leq \HH(t).
\end{equation*}
\end{lem}
\begin{proof} The first part of the inequality comes from the positivity property~\eqref{wick2}. For the bound on $\mathcal{H}(t)$, we start by noticing that using Cauchy-Schwarz inequality:
$$
|\eta\cdot \xi+ (v\wedge \eta) \cdot (v\wedge \xi)| \le \lambda_x \lambda_v.
$$
Then,  the time-dependent Cauchy-Schwarz inequality gives
$$
- Et^{s} \langle v\rangle ^\gamma \lambda_x^{s-1} \lambda_v^{s-1}(\eta\cdot \xi+ (v\wedge \eta) \cdot (v\wedge \xi))
\le \frac{E^2}{2} \langle v \rangle^\gamma \lambda_v^{2s} + \frac{1}{2} t^{2s} \langle v \rangle^{\gamma} \lambda_x^{2s}.
$$
The positivity of the Wick quantization and the fact that $E^2 \le D$  imply that
$$
E t^{1+s}  \sep{ \omega^{\rm Wick} \psi,  \psi} \ge -{D \over 2} t \sep{p ^{\rm Wick} \psi,\psi} - {1 \over 2} t^{1+2s} \sep{q^{\rm Wick} \psi,\psi}
$$
which proves the statement.
\end{proof}

We now show that $\HH$ is indeed a Lyapunov function (entropy functional).

\begin{lem} \label{derivB}  For well chosen (arbitrarily large) constants $C$, $D$ and $E$, we have
$$
\ddt \HH(t) \leq 0, \quad \forall \, t \in (0,1].
$$
\end{lem}

\begin{proof}
Let us define
$$
\mathcal P := p ^{\rm Wick} A + A^* p ^{\rm Wick},\quad \mathcal {\it\Omega} := \omega^{\rm Wick} A + A^* \omega^{\rm Wick}, \quad  \mathcal Q := q^{\rm Wick} A + A^* q^{\rm Wick}.
$$
Then, we have
\begin{equation}\label{C}
\ddt C \norm{\psi}^2= - 2C \, \Re \sep{ A\psi, \psi},
\end{equation}

\begin{equation}\label{D}
\ddt \left(D t  \sep{p ^{\rm Wick} \psi,\psi}\right)= D\sep{p^{\rm Wick} \psi,\psi} -   Dt \sep{\mathcal P \psi,\psi}{ +}D t \sep {\{p,v\cdot \xi\}^{\rm Wick} \psi,\psi},
\end{equation}

\begin{equation}\label{E}
\begin{aligned}
&\ddt\left( E t^{1+s} \sep{ \omega^{\rm Wick} \psi,  \psi} \right) \\
&\qquad =  (1+s)Et^{s}\sep{\omega^{\rm Wick} \psi,\psi} - Et ^{1+s}\sep{{\it \Omega} \psi,\psi}{ +} Et^{1+s}\sep{\{\omega,v\cdot \xi\}^{\rm Wick} \psi,\psi},
\end{aligned}
\end{equation}

\begin{equation}\label{F}
\begin{aligned}
&\ddt \left( t^{1+2s} \sep{ q^{\rm Wick} \psi,  \psi} \right)\\
&\qquad =(1+2s) t^{2s}\sep{q^{\rm Wick} \psi,\psi} - t^{1+2s} \sep{\mathcal Q \psi,\psi}{ +} t^{1+2s} \sep {\{q,v\cdot \xi\}^{\rm Wick} \psi,\psi},
\end{aligned}
\end{equation}
where, in the first term we used the skew-adjointness of the transport operator and in the last term of \eqref{D}, \eqref{E}, \eqref{F}, we used \eqref{wick}.

The right hand side in \eqref{C} is non-positive (thanks to the property of positivity of the Wick quantization~\eqref{wick2}) and using Lemma \ref{lemma1} and Lemma \ref{lemma4}, it can be estimated as
\begin{align*}
-2 C\Re \sep{ A\psi, \psi}&\leq - c_a C
 \sep{ p^{\rm Wick} \psi, \psi}\\ &\leq -\underbrace{c_a C\sep{(\langle v\rangle^{\gamma}\lambda_v^{2s})^{\rm Wick} \psi,\psi}}_{I}-\underbrace{ c_a CK\sep{(\langle v\rangle^{\gamma+2s})^{\rm Wick} \psi,\psi}}_{II}.
 \end{align*}

 Analogously, we can  deduce a bound for the first term in \eqref{D}. Indeed, we recover two non-negative terms
 $$
 D\sep{ p^{\rm Wick} \psi, \psi}\leq \underbrace{D\sep{(\langle v\rangle^{\gamma}\lambda_v^{2s})^{\rm Wick} \psi,\psi}}_{i}+\underbrace{D K\sep{(\langle v\rangle^{\gamma+2s})^{\rm Wick} \psi,\psi}}_{ii}.
 $$
 Moreover,  using the positivity of the Wick quantization~\eqref{wick2}, the second  term in \eqref{D} is non-positive and, using Lemma \ref{lemma2} and  Lemma \ref{lemma4}, it can be estimated as
 $$
 -Dt\sep{\mathcal P  \psi,\psi}\leq -c_p D t\sep{(p ^2)^{\rm Wick} \psi,\psi} \leq -\underbrace{c_p D t \sep{(\langle v\rangle^{2\gamma}\lambda_v^{4s})^{\rm Wick} \psi,\psi}}_{III}.
 $$
Concerning  the third term in  \eqref{D},  let us compute $ \{p,v\cdot \xi\}$:
 \begin{align*}
 \{p,v\cdot \xi\}&= \nabla_\eta p \cdot \nabla_v (v\cdot \xi) - \nabla_v p\cdot \nabla_\eta(v\cdot \xi)= \langle v \rangle^\gamma (\nabla_\eta \lambda_v^{2s}) \cdot  \xi\\& = 2s \langle v  \rangle^\gamma  \lambda_v^{2s-2} (\eta\cdot\xi +(v\wedge \eta)\cdot(v\wedge \xi))\\
 &\leq 2 s  \langle v  \rangle^\gamma \lambda_x \lambda_v^{2s-1}
 ,\end{align*}
 where we used  the fact that $|\eta\cdot\xi +(v\wedge \eta)\cdot(v\wedge \xi)|\leq  \lambda_x \lambda_v$.
Hence,  for any $\varepsilon_{1}>0$, we obtain two non-negative terms
\begin{align*}
&Dt\sep {\{p,v\cdot \xi\}^{\rm Wick} \psi,\psi} \\
&\qquad \leq \underbrace{ 2s {\varepsilon_{1}}^{-1} D\sep{(\langle v  \rangle^\gamma\lambda_v^{2s})^{\rm Wick} \psi,\psi }}_{iii} +\underbrace{2s\varepsilon_{1}^s
D t^{1+s} \sep{ (\langle v  \rangle^\gamma\lambda_x^{s+1} \lambda_v^{s-1})^{\rm Wick} \psi,\psi}}_{iv}.
\end{align*}

Let us now consider \eqref{E}. Using the fact that $\omega \leq \langle v\rangle^\gamma \lambda_x^s \lambda_v^s $, we can bound the first term in \eqref{E}, for any $\varepsilon_{2}>0$, with two non-negative terms
 $$
E t^s \sep{\omega^{\rm Wick} \psi,\psi}  \leq \underbrace{\varepsilon_{2}^{-1}E \sep{ (\langle v\rangle^\gamma \lambda_v^{2s} )^{\rm Wick}\psi,\psi}}_{v}+ \underbrace{\varepsilon_{2} ^{1 /s} E t^{1+s} \sep{(\langle v\rangle^\gamma \lambda_x^{s+1} \lambda_v^{s-1} )^{\rm Wick}\psi,\psi}}_{vi}.
 $$
For the second term in \eqref{E},   Lemma \ref{lemma3bis} implies
 $$
 \sep{\mathcal {\it \Omega} \psi,\psi}\leq c_\omega  \sep{ (p^{3/2} q^{1/2})^\wick \psi, \psi}
 $$
and, for any $\varepsilon_{3}>0$, we have
 $$
 t^{1+s}p^{3/2} q^{1/2}\leq \varepsilon_{3}^{-1} t p^2 +\varepsilon_{3}t^{1+2s}pq.
 $$
Therefore, we can bound  the second term in \eqref{E}, using Lemma~\ref{lemma4}, for any $\varepsilon_{3}>0$, by
\begin{align*}
 &-Et ^{s+1}\sep{{\it \Omega} \psi,\psi}\\
 &\qquad \leq  c_\omega\varepsilon_{3}^{-1}E t  \sep{(p^2)^{\rm Wick} \psi,\psi }+ c_\omega\varepsilon_{3}E  t^{1+2s} \sep{ (pq)^{\rm Wick}\psi,\psi }\\
 &\qquad \leq \underbrace{2(1+K^2)c_\omega\varepsilon_{3}^{-1}E t  \sep{(\langle v \rangle^{2\gamma} \lambda_v^{4s})^{\rm Wick} \psi,\psi }}_{vii}\\
 &\qquad \quad + \underbrace{(1+K)^2c_\omega\varepsilon_{3}E  t^{1+2s} \sep{ (\langle v \rangle^{2\gamma} \lambda_v^{2s} \lambda_x^{2s})^{\rm Wick}\psi,\psi }}_{viii}
 \end{align*}
where $(vii)$ and $(viii)$ are non-negative.

\noindent Let us now observe that
$$
(\nabla_\eta \lambda_v^2)\cdot \xi=2 (\eta\cdot \xi+ (v\wedge \eta) \cdot( v\wedge \xi)),
$$
and
$$
\nabla_\eta(\eta\cdot \xi
 + (v\wedge \eta) \cdot (v\wedge \xi))\cdot \xi= \lambda_x^2- \langle v\rangle^2.
 $$
 We then compute
 \begin{align*}
&  \{\omega,v\cdot \xi\} \\
 &\quad=  \nabla_\eta \omega \cdot \nabla_v (v\cdot \xi) - \nabla_v \omega\cdot \nabla_\eta(v\cdot \xi)= \nabla_\eta \omega \cdot  \xi\\
 &\quad = - \langle v\rangle^{\gamma}\lambda_x^{s-1}\lambda_v^{s-1}\nabla_\eta(\eta\cdot \xi
 + (v\wedge \eta) \cdot (v\wedge \xi))\cdot \xi\\
 &\qquad -\langle v\rangle^{\gamma}\lambda_x^{s-1}(\eta\cdot \xi+ (v\wedge \eta) \cdot (v\wedge \xi))(\nabla_\eta\lambda_v^{s-1})\cdot \xi \\
 &\quad = - \langle v\rangle^{\gamma}\lambda_x^{s+1}\lambda_v^{s-1}
 +\langle v\rangle^{\gamma+2}\lambda_x^{s-1}\lambda_v^{s-1}
 - (s-1)\langle v\rangle^{\gamma}\lambda_x^{s-1}\lambda_v^{s-3}(\eta\cdot \xi + (v\wedge\eta)\cdot( v\wedge\xi))^2.
\end{align*}
In the last expression of $\{\omega,v\cdot \xi\}$, we first notice that since $s-1<0$ and $\min(\lambda_x,\lambda_v) \ge \langle v \rangle$, the second term is bounded as follows:
$$
\langle v\rangle^{\gamma+2}\lambda_x^{s-1}\lambda_v^{s-1} \le \langle v \rangle^{\gamma+2s}.
$$
Gathering the first and third terms, we use Cauchy-Schwarz inequality and  $s<1$ to find:
\begin{align*}
&- \langle v\rangle^{\gamma}\lambda_x^{s+1}\lambda_v^{s-1}
-(s-1)\langle v\rangle^{\gamma}\lambda_x^{s-1}\lambda_v^{s-3}(\eta\cdot \xi + (v\wedge\eta)\cdot( v\wedge\xi))^2 \\
&\quad \le - \langle v\rangle^{\gamma}\lambda_x^{s+1}\lambda_v^{s-1}
+ (1-s) \langle v\rangle^{\gamma}\lambda_x^{s-1}\lambda_v^{s-3} (\lambda_x^2-\langle v \rangle^2) (|\eta|^2 +|v \wedge \eta|^2) \\
&\quad =  - \langle v\rangle^{\gamma}\lambda_x^{s+1}\lambda_v^{s-1} + (1-s)  \langle v\rangle^{\gamma}\lambda_x^{s+1}\lambda_v^{s-3} (|\eta|^2 +|v \wedge \eta|^2) \\
&\qquad- (1-s) \langle v\rangle^{\gamma+2}\lambda_x^{s-1}\lambda_v^{s-3}(|\eta|^2 +|v \wedge \eta|^2) \\
&\quad \le  - \langle v\rangle^{\gamma}\lambda_x^{s+1}\lambda_v^{s-1} + (1-s)  \langle v\rangle^{\gamma}\lambda_x^{s+1}\lambda_v^{s-1} -
(1-s)  \langle v\rangle^{\gamma+2}\lambda_x^{s+1}\lambda_v^{s-3} \\
&\quad \le -s  \langle v\rangle^{\gamma}\lambda_x^{s+1}\lambda_v^{s-1}.
\end{align*}
Thus we have:
$$
 \{\omega,v\cdot \xi\}\le -s \langle v\rangle^{\gamma}\lambda_x^{s+1}\lambda_v^{s-1}+ \langle v\rangle^{\gamma+2s  }.
$$
Hence, the third term in  \eqref{E} can be estimated as
\begin{align*}
&E t^{s+1}\sep{\{\omega,v\cdot \xi\}^{\rm Wick} \psi,\psi} \\
&\quad \leq -\underbrace{ s Et^{s+1} \sep{(\langle v\rangle^{\gamma}\lambda_x^{s+1}\lambda_v^{s-1})^{\rm Wick} \psi,\psi})}_{IV}+\underbrace{ E t^{s+1} \sep{(\langle v\rangle^{\gamma+2s} )^{\rm Wick}\psi,\psi}}_{ix},
\end{align*}
  where $(-IV)$ is non-positive and $(ix)$ is non-negative.

  It remains to consider \eqref{F}. Observing that, for any $\varepsilon_4>0$,
$$
t^{2s}\langle v\rangle^\gamma \lambda_x^{2s}\leq \varepsilon_4 ^{-1} \langle v\rangle^\gamma \lambda_v^{2s} + \varepsilon_4^{\frac{1-s}{2s}}t^{1+s}\langle v\rangle^\gamma \lambda_v^{s-1}\lambda_x^{s+1},
$$
we have that the first term in \eqref{F} can be bounded for any $\varepsilon_4>0$, by
 \begin{align*}
 &(1+2s) t^{2s}\sep{q^{\rm Wick} \psi,\psi} \\
 &\quad \leq \underbrace{(1+2s)\varepsilon_4 ^{-1} \sep{(\langle v\rangle^\gamma \lambda_v^{2s})^{\rm Wick} \psi,\psi }}_{x} +\underbrace{(1+2s)\varepsilon_4^{\frac{1-s}{2s}}  t^{1+s}\sep{(\langle v\rangle^\gamma \lambda_v^{s-1}\lambda_x^{s+1}) ^{\rm Wick}\psi,\psi}}_{xi}\\
 &\qquad + \underbrace{K (1+2s)t^{2s}\sep{(\langle v\rangle ^{\gamma+2s})^{\rm Wick} \psi,\psi}}_{xii}
 \end{align*}
  where $(x),(xi),(xii)$ are non-negative terms.

 \noindent Moreover, using Lemma  \ref{lemma3} and Lemma \ref{lemma4}, the second term in \eqref{F} can be estimated as
 \begin{align*}
-t^{1+2s}\sep{\mathcal Q \psi,\psi}&\leq -c_q t^{1+2s}\sep{(pq)^{\rm Wick} \psi,\psi}
\le - \underbrace{c_q t^{1+2s}\sep{(\langle v \rangle^{2\gamma} \lambda_v^{2s} \lambda_x^{2s})^{\rm Wick} \psi,\psi}}_{V}
 \end{align*}
 where $(-V)$ is non-positive.
Finally, since $q$ does not depend on~$\eta$,  we deduce that the Poisson bracket~$\{q,v\cdot \xi\}$ vanishes, hence the third term in \eqref{F} is null.

We now show that with a good choice of the constants $C,D$ and $E$ the sum of the terms in \eqref{C}, \eqref{D}, \eqref{E} and \eqref{F} is non-positive. Indeed, we have to choose $C,D$ and $E$ so that:
 \begin{align*}
 - I + i+iii+v+ x &\leq -\frac{1}{10}I, \\
 -II + ii+ ix+ xii 
                    &\leq -\frac{1}{10} II, \\
 -III + vii &\leq -\frac{1}{10}III,\\
 -IV + iv+vi+xi  &\leq -\frac{1}{10}IV,\\
 -V +viii &\leq -\frac{1}{10}V.
 \end{align*}
Restricting the study to  $t\in(0,1]$, and thanks to the fundamental posivity preserving property \eqref{wick2} of the Wick quantization,  the above conditions are satisfied if
\begin{align*}
 D  +  2s {\varepsilon_{1}}^{-1} D+ \varepsilon_{2}^{-1}E +(1+2s)\varepsilon_4 ^{-1}& \leq \frac{9}{10} c_A C,\\
 D K+ E   + K (1+2s) 
& \leq \frac{9}{10} c_A C K,\\
2(1+K^2)c_\omega \varepsilon_{3}^{-1}E  
&\leq \frac{9}{10} c_p D ,\\
2s\varepsilon_{1}^s D + \varepsilon_{2} ^{1 /s} E +(1+2s)\varepsilon_4^{\frac{1-s}{2s}} & \leq \frac{9}{10} s E,\\
(1+K)^2c_\omega \varepsilon_{3}E &\leq \frac{9}{10} c_q .
\end{align*}
The above are satisfied  if the constants $C,D,E$ and $\varepsilon_1$, $\varepsilon_2$, $\varepsilon_3$ and $\varepsilon_4$ verify
\begin{align*}
 D \leq \frac{1}{10} c_A C,  \quad    2s {\varepsilon_{1}}^{-1} D \leq \frac{1}{10} c_A C,  \quad \varepsilon_{2}^{-1}E\leq \frac{1}{10} c_A C,  \quad (1+2s)\varepsilon_4 ^{-1} \leq \frac{1}{10} c_A C,\\
  E \leq  \frac{1}{10} c_A C K,   \quad (1+2s)\leq  \frac{1}{10} c_A C, \\
c_\omega \varepsilon_{3}^{-1}E  
                              \leq \frac{1}{10} c_p D ,\\
2s\varepsilon_{1}^s D  \leq \frac{1}{10} s E,\quad  \varepsilon_{2} ^{1 /s}
\leq \frac{1}{10} s, \quad  (1+2s)\varepsilon_4^{\frac{1-s}{2s}}  \leq \frac{1}{10} s E,\\
c_\omega \varepsilon_{3}E \leq \frac{1}{10} c_q.
\end{align*}
This is possible by choosing first $E$, then $\eps_4$, $\eps_3$ and $\eps_2$ small enough,  then $D$ large enough, then $\eps_1$ small enough and finally $C$ as large as needed. Once this choice is done we get that
\begin{equation*}
{\d\over \d t} \mathcal{H}(t) \leq -\frac{1}{10} (I + II + III + IV+ V)  \leq 0
\end{equation*}
and the proof is complete.
Note that $D$ and  $C$  can be taken arbitrarily large at the end of this procedure. 
\end{proof}

\smallskip
\subsubsection{Proof of Proposition~\ref{prop:duallambda1}}

We can now prove Proposition \ref{prop:duallambda1}. Consider $\varphi$ the solution of
$$
\partial_t \varphi = v \cdot \nabla_x \varphi - A \varphi,
$$
with initial data $\varphi_0$
and $\psi=\mathcal{F}_x \varphi$ to be the  solution of
$$
\partial_t \psi { -} i v\cdot \xi \psi + A \psi =0
$$
 with initial data $\psi_0= \mathcal{F}_x \varphi_0$. From Lemma \ref{derivB}, we know that
 $$
 \HH(t) \leq \HH(0) = C \norm{\psi_0}^2,
 $$
 and using Lemma \ref{hh1bis}, this gives for all $t\in (0,1]$
 \begin{equation} \label{majorpq}
 \sep{p ^{\rm Wick} \psi,\psi} \leq \frac{2C}{D} \frac{1}{t} \norm{\psi_0}^2 \qquad \textrm{ and }  \qquad \sep{q^{\rm Wick} \psi,\psi} \leq  \frac{2C}{t^{1+2s}} \norm{\psi_0}^2,
 \end{equation}
 where we used the fact that both left members are non-negative according to Proposition~\ref{mainpseudo}. Working in the class $S_K(p)$ again, gives through Proposition~\ref{mainpseudo} and Lemma~\ref{AHLinverse} (see there the definition of $H_R$)
 \begin{equation*}
 \begin{split}
 \norm{ \seq{v}^{\gamma/2} \seq{D_v}^s \psi}^2
& =  \norm{ \seq{v}^{\gamma/2} \seq{D_v}^s ({ (p^{1/2})}^w)^{-1} { (p^{1/2})}^w \psi}^2 \\
& = \Big\| \underbrace{\seq{v}^{\gamma/2} \seq{D_v}^s ({ (p^{1/2})}^{-1})^w}_{\textrm{bounded operator}} H_R \, { (p^{1/2})}^w \psi \Big\|^2 \\
& \lesssim \norm{  { (p^{1/2})}^w \psi}^2, \\
\end{split}
\end{equation*}
where we used that the  operator $ \seq{v}^{\gamma/2} \seq{D_v}^s $ has its Weyl symbol in $S_K(p^{1/2})$ (this Weyl symbol is $\seq{v}^{\gamma/2} \sharp \seq{\eta}^s$), and that $ {(p^{1/2})}^{-1} \in S_K(p^{-1/2})$ , so that $\seq{v}^{\gamma/2} \seq{D_v}^s ({ (p^{1/2})}^{-1})^w$ is a bounded operator.
Using then \eqref{mainnorm} and  \eqref{majorpq}, we get
\begin{equation*}
 \begin{split}
& \norm{ \seq{v}^{\gamma/2} \seq{D_v}^s \psi}^2  \lesssim \norm{  { (p^{1/2})}^w \psi}^2
 \simeq \sep {p^\wick \psi,\psi}
 \lesssim \frac{1}{t} \norm{\psi_0}^2.
\end{split}
\end{equation*}
Similarly,
$$
\norm{ \seq{v}^{\gamma/2+s} \psi}^2  \lesssim \frac{1}{t} \norm{\psi_0}^2,
$$
and working in $S_K(q)$ gives, in the same way,
$$
\norm{ \seq{v}^{\gamma/2}\seq{\xi}^s \psi}^2  \lesssim \frac{1}{t^{1+2s}} \norm{\psi_0}^2.
$$
Taking the inverse Fourier transform in the $x$ variable finally yields
\begin{multline*}
\norm{ \seq{v}^{\gamma/2} \seq{D_v}^s \varphi}^2 \lesssim \frac{1}{t} \norm{\varphi_0}^2,  \qquad \norm{ \seq{v}^{\gamma/2+s} \varphi}^2  \lesssim \frac{1}{t} \norm{\varphi_0}^2 \\
\textrm{ and } \qquad \norm{ \seq{v}^{\gamma/2}\seq{D_x}^s \varphi}^2  \lesssim \frac{1}{t^{1+2s}} \norm{\varphi_0}^2.
\end{multline*}
This is exactly the statement of Proposition \ref{prop:duallambda1}, the proof is thus complete. \qed

\medskip
\subsection{Adaptation of the proof for the primal result and generalization} \label{sec:general}

{\subsubsection{Adaptation of the proof for the primal result} \label{adaptprimal}
If we want to prove the ``primal'' regularization property in Theorem~\ref{thm:LIBthm}, as in Subsection~\ref{subsec:splitting}, we split $\Lambda$ into two parts:
\begin{equation} \label{decomptilde}
\begin{aligned}
\Lambda h &=\left(- K \langle v \rangle^{\gamma+2s}h - v \cdot \nabla_x h + \int_{\R^3 \times \mathbb{S}^2} \widetilde B_1(v-v_*,\sigma) \mu'_* (h'-h) \, \d\sigma \, \dv_* \right)\\
&\qquad + \Bigg(K \langle v \rangle^{\gamma+2s} h+ \int_{\R^3 \times \mathbb{S}^2} \widetilde B_1(v-v_*,\sigma) (\mu'_*-\mu_*) h\, \d\sigma \, \dv_*\\
&\hskip 4.5cm + \int_{\R^3\times \mathbb{S}^2} \widetilde B_1^c(v-v_*,\sigma) (\mu'_* h' - \mu_* h) \, \d\sigma \, \dv_* + Q(h,\mu) \Bigg) \\
&=: \widetilde{\Lambda}_1h + \widetilde{\Lambda}_2h,
\end{aligned}
\end{equation}
note that this splitting will also be used in Subsection~\ref{subsec:reg}.
Then, the study of $\widetilde{\Lambda}_1^m$ is totally similar to the one of $\Lambda_1^{*,m}$ (the only differences being in the fact that the roles of~$m$ and~$m^{-1}$ are inverted and the sign in front of the transport operator is opposite). We thus just have to adapt the signs in the Lyapunov functional: The sign of $\omega$ has to be changed in Paragraph~\ref{subsec:refweights}. The other part $\widetilde{\Lambda}_2$ is controlled as well as $\Lambda_2$. The proof is thus done in the same way and we do not enter into details.}

\smallskip
\subsubsection{Generalization to higher order estimates}
Theorem \ref{thm:LIBthm} deals with regularization in close to $L^2$ spaces: For example, it says that that the semigroup associated to $\Lambda$
goes from~$L^2$ to $H^{s}$ type spaces, with suitable weights and explicit norms. One can wonder if an higher order quantitative regularization is also available. This is the aim of the following Theorem, for which we give a condensed statement in the primal case and in~$H^{\ell s}$ spaces  (see notation \eqref{eq:Hnfrac} and below).

\begin{theo} \label{thm:LIBthmhigh}
 Let $\ell\in \N^*$, {$k' \ge 0$ and $k>\max(\gamma/2+3+2(\ell+1) s, k' + \gamma+5/2)$}. Consider also~{$h_0 \in H^{\ell s}_{x,v}(\langle v \rangle^k)$.} Then, there exists ${C_\ell}>0$ independent of~$h_0$ such that, we have:
 {$$
\forall \, t \in (0,1], \quad   \|S_\Lambda(t)h_0\|_{H^{\ell s}_{x,v}(\langle v \rangle^{k'})}\le \frac{C_{\ell}}{t^{1/2+s}} \|h_0\|_{H^{(\ell-1)s}_{x,v}(\langle v \rangle^{k})}.
$$}
%
\end{theo}

In this Section we shall not give the complete proof of this result, since this is very close to the  one of Theorem \ref{thm:LIBthm}, but only elements of it. The remaining of this Section is devoted to these elements.

As a first step we  split the operator $\Lambda$ into two parts following  \eqref{decomptilde}:
$
\Lambda = \widetilde{\Lambda}_1 + \widetilde{\Lambda}_2.
$
Adapting the proof of Lemma \ref{lem:lambda_2}, we have for suitable functions $h$
\begin{equation} \label{eq:lambda_2bis}
\|\widetilde{\Lambda}_2  h \|_{H^{(\ell-1) s}_{x,v}( \seq{v}^{k'})} \lesssim \|h\|_{H^{(\ell-1) s}_{x,v} (\langle v \rangle^{k} )}
\end{equation}
where $k$ and $k'$ are given in the statement of Theorem \ref{thm:LIBthmhigh}. We also have the following result:

\begin{prop} \label{resultHs}
We have for all $k \ge 0$ and all $t \in (0,1]$,
$$
\norm{S_{\widetilde{\Lambda}_1}(t) h_0}_{H^{\ell s}_{x,v}(\seq{v}^k)} \lesssim
\frac{1}{t^{1/2+s}} \|h_0\|_{H^{(\ell-1)s}_{x,v}(\langle v \rangle^{k})}.
$$
\end{prop}

\noindent {\it Elements of proof of Proposition \ref{resultHs}.} \ \
Similarly to the beginning of Paragraph \ref{subsec:regdual}, we define for $\ell \in \N$
(here in the primal case):
$$
\left\{
\begin{aligned}
&\widetilde F_\ell=H^{\ell s}_{x,v} \\
&\widetilde G_\ell =H^{\ell s,0}_{x,v}({ \langle v \rangle^{\ell \gamma/2}}) \\
&\widetilde H_\ell=H^{0,\ell s}_{x,v}( \langle v \rangle^{\ell \gamma/2}) \cap  L^2_{x,v}( \langle v \rangle^{\ell (\gamma+2s)/2})
\end{aligned}
\right.
$$
and $\widetilde{\Lambda}_1^{m^{-1}} = m^{-1} \widetilde{\Lambda}_1 m$.
We notice that it is sufficient to prove the following two estimates:
\begin{equation} \label{FGHl}
\norm{S_{\widetilde{\Lambda}_1^{m^{-1}} } {(t)} h_0}_{\widetilde G_\ell} \lesssim  \frac{1}{t^{1/2+s}} \norm{h_0}_{\widetilde F_{\ell-1}}, \qquad \norm{S_{\widetilde{\Lambda}_1^{m^{-1}} } {(t)}h_0}_{\widetilde H_\ell} \lesssim  \frac{1}{\sqrt{t}} \norm{h_0}_{\widetilde F_{\ell-1}}.
\end{equation}
In fact by interpolation, estimates \eqref{FGHl} are direct consequences of the following estimates:
\begin{equation} \label{FGHl2}
\norm{S_{\widetilde{\Lambda}_1^{m^{-1}} }{(t)} h_0}_{\widetilde G_\ell} \lesssim  \frac{1}{t^{\ell(1/2+s)}} \norm{h_0}_{\widetilde F_0}, \qquad \norm{S_{\widetilde{\Lambda}_1^{m^{-1}}}{(t)} h_0}_{\widetilde H_\ell} \lesssim  \frac{1}{t^{{\ell}/2}} \norm{h_0}_{\widetilde F_0}.
\end{equation}
The proof is very close to the one given in the dual case: As already mentioned, we essentially have to replace $m$ there by $m^{-1}$ here, change the sign in front of the transport term~$v\cdot\nabla_x$, we also have to work in $\widetilde G_\ell$ or $\widetilde H_\ell$ instead of~$\widetilde G (= \widetilde G_1)$ and $\widetilde H (=\widetilde H_1)$ introduced in Paragraph \ref{subsec:regdual} for getting Proposition \ref{prop:duallambda1}.
To be more precise, let us recall that a fundamental large parameter $K$ is involved there and enters here in the definition of $\widetilde{\Lambda}_1^{m^{-1}}$. Following the strategy of Subsection
\ref{subsec:quantization}, we get that
$$
\widetilde{\Lambda}_1^{m^{-1}} = -b^w - v\cdot \nabla_x
$$
where $b$ has exactly the same properties as $a$ in Subsection~\ref{subsec:quantization}. In particular as in Lemma~\ref{aposell}, $\Re b \geq 0$ and $\Re(b)$ is elliptic positive in the class $S_K(p)$ as there.
We then pose $B = b^w$  and recall the definitions of the symbols in Paragraph \ref{subsec:refweights}:
$$
p(v,\eta) = \seq{v}^\gamma \lambda_v^{2s} + K \seq{v}^{\gamma + 2s},
\quad
q(v,\eta) = \seq{v}^\gamma \lambda_x^{2s} + K \seq{v}^{\gamma + 2s},
$$
and
$$
\omega (v,\eta) = { - }\langle v\rangle ^\gamma \lambda_x^{s-1} \lambda_v^{s-1}(\eta\cdot \xi+ (v\wedge \eta) \cdot (v\wedge \xi)).
$$
Since we are in the primal and not dual case (the sign in front of the transport term is opposite), we have to take the opposite of $\omega$ that we call $\tilde{\omega}:=-\omega$.

The main point of the analysis is then to introduce, such as in Paragraph~\ref{subsec:propduallambda1}, a suitable functional which is here:
\begin{multline} \label{hhhl}
\HH_\ell(t) := C\norm{\psi}^2 +
\sum_{0 \leq \alpha+ \beta \leq  \ell-1}
D_{\alpha,\beta} t ^{1+\alpha + \beta (1+2s)}\sep{\sep{p^{1+\alpha}q^{\beta}  }^{\rm Wick} \psi,\psi} \\
+ E_{\alpha,\beta} t^{1/2+ \alpha + (1/2+\beta)(1+2s)}  \sep{ \sep{p^{\alpha}q^{\beta} \tilde{\omega}} ^{\rm Wick} \psi,  \psi} \\
+ F_{\alpha,\beta}t^{\alpha + (1+\beta)(1+2s)} \sep{\sep{p^{\alpha}q^{1+\beta}}^{\rm Wick} \psi,\psi}
\end{multline}
 for well chosen constants $C$, $D_{\alpha,\beta}$, $ E_{\alpha,\beta}$ and $F_{\alpha,\beta}$.
 We note that for $\ell=1$, we get $\HH_1 = \HH$ defined in \eqref{hhh1}. The computations exactly follow the ones done in Subsection~\ref{subsec:propduallambda1} using estimates
similar to the ones given in Paragraph~\ref{subsec:technical}, with the same roles of each term as there in the preceding decomposition. Note that we were note able to restrict the analysis to $\alpha+\beta = \ell-1$ due to too high order terms after time derivation, this explains that the full range of $\alpha$ and $\beta$ is needed to close the estimates and conclude that for a good choice of constants,
$$
\frac{\d}{ \d t} \HH_\ell(t) \leq 0.
$$
We omit the details of the computation as well as the last parts of the proof of \eqref{FGHl2} which leads to Proposition \ref{resultHs}, since it follows the end of Subsection \ref{subsec:quantization} . \qed

\bigskip
It is now straightforward to come back to the proof of Theorem \ref{thm:LIBthmhigh}.

\smallskip
\noindent {\it Elements of proof of Theorem \ref{thm:LIBthmhigh}.}
Taking this result into account and together with~\eqref{eq:lambda_2bis}  we can write
$$
 S_{\Lambda}(t) = S_{\widetilde{\Lambda}_1}(t) + \int_0^t \SS_\Lambda(t-\tau)  (\widetilde{\Lambda}_2 S_{\widetilde{\Lambda}_1})  (\tau) \, \d\tau
 $$
  for $t\in (0,1]$. Arguing as in the proof of Lemma \ref{lem:lambda_1/lambda} we easily get the Theorem  (this strongly uses $s<1/2$). We omit the details.
 \qed

\bigskip
\section{Exponential decay of the linearized semigroup} \label{sec:lin}
\setcounter{equation}{0}
\setcounter{theo}{0}

{

{
We recall here that $m$ is a polynomial weight $m(v) = \la v \ra^k$ and that we want to establish exponential decay of the semigroup $\SS_{\Lambda}(t)$ in various Lebesgue and Sobolev spaces $\EE$ introduced in \eqref{def:EE2}. For the reader convenience we recall their definition:
\begin{equation} \label{def:EE}
	\begin{gathered}
		\EE :=
		\left\{ \bal
			&H^n_x \HH^\ell_v(m), \, \, (n, \ell) \in \N^2, \; n \ge \ell \\
			&\HH^n_x \HH^\ell_v(m),  \, \,  (n, \ell) \in \N^2, \; n \ge \ell 		\eal \right.
			\quad \text{with}\quad
	{ k>{\gamma\over 2}+3+2( \max(1,n)+1)s .}
	\end{gathered}
	\end{equation}
}
\medskip

\subsection{Main result on the linearized operator}	
The main result on the linearized equation is a precise version of Theorem~\ref{theo:main2} and reads
\begin{theo}\label{theo:extension}
	Let us consider $\EE$ be one of the admissible spaces defined in~\eqref{def:EE} and introduce $E=H^{\max(1,n)}_{x,v}(\mu^{-1/2})$ where $n  \in \N$ is the order of $x$-derivatives in the definition of~$\EE$.
	{ Then, for any $\lambda < \lambda_0$,
	where we recall that $\lambda_0>0$ is the spectral gap of $\Lambda$ on~$E$ (see~\eqref{eq:spectralgapE})}, there is a constructive constant $C\ge 1$ such that the operator $\Lambda$ satisfies on~$\EE$:
		\begin{enumerate}[leftmargin=1.1cm]
			\item[(i)] $\Sigma (\Lambda) \subset \{ z \in \C \mid \Re  \, z \le - \lambda \} \cup \{ 0 \}$;
			\item[(ii)] The null-space $N(\Lambda)$ is given by \eqref{eq:kerlambda} and the projection $\Pi_0$ onto $N(\Lambda)$ by \eqref{eq:Pi0};
			\item[(iii)] $\Lambda$ is the generator of a strongly continuous semigroup $\SS_{\Lambda}(t)$ on $\EE$ that verifies
				$$
				\forall \, t \ge 0, \;   \forall\, h_0 \in \EE, \quad \| \SS_{\Lambda}(t) h_0 - \Pi_0 h_0 \|_{\EE} \le C \, e^{-\lambda t} \, \| h_0 - \Pi_0 h_0 \|_{\EE}.
				$$
		\end{enumerate}
\end{theo}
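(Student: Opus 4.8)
The plan is to deduce Theorem~\ref{theo:extension} from the abstract enlargement/factorization theory of Gualdani--Mischler--Mouhot \cite{GMM*} and Mischler--Mouhot \cite{MM-ARMA}, bootstrapping from the spectral gap on the small space $E=H^{\max(1,n)}_{x,v}(\mu^{-1/2})$ provided by Theorem~\ref{theo:gapE}. Fix an admissible $\EE$ as in~\eqref{def:EE}; then $E\hookrightarrow\EE$ densely. The abstract theorem, once its hypotheses are verified, will directly yield that the part of $\Sigma(\Lambda)$ in the half-plane $\{\Re e\, z>-\lambda\}$ is the same computed in $\EE$ as in $E$, namely the single eigenvalue $0$ with eigenspace $N(\Lambda)$ given by~\eqref{eq:kerlambda} and spectral projector $\Pi_0$ given by~\eqref{eq:Pi0} (both being built from Gaussians, hence lying in $E$), together with the semigroup estimate of assertion (iii) for any $\lambda<\lambda_0$. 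So the whole proof reduces to checking the structural hypotheses for a well-chosen splitting of $\Lambda$.

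The splitting I would use is $\Lambda=\AA+\BB$ with $\AA=\AA_{R}$ a large multiple of a smooth compactly-supported-in-$v$ truncation (suitably mollified), following the approach of the companion paper \cite{HTT2*} and of \cite{AHL*}, and $\BB=\LL-\AA-v\cdot\nabla_x$ carrying the fractional velocity diffusion and the transport part. Two families of estimates are needed. First, boundedness: $\AA\in\BBB(\EE)$ is immediate, and $\AA\in\BBB(\EE,E)$ holds because $\AA h$ is supported in a fixed ball in $v$, so a polynomial weight is equivalent to the Gaussian one on its range (regularity in $v$ and $x$ being supplied instead by $\SS_\BB$ below). Second, hypodissipativity: $\BB+\lambda'$ must be dissipative on $\EE$ for some $\lambda'>0$, so that $\|\SS_\BB(t)\|_{\BBB(\EE)}\lesssim e^{-\lambda' t}$ and $\Sigma(\BB)\subset\{\Re e\,z\le-\lambda'\}$. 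For the twisted norms~\eqref{eq:norm}--\eqref{eq:norm2} this is where the anisotropic coercivity of Lemma~\ref{lem:anis} (rather than a mere collision-frequency lower bound) enters: one commutes $\partial^\beta_x$ through $\BB$ (the transport term being skew-adjoint in $x$ and commuting with $\partial_x$, the collision part not involving $x$), and the weights $\la v\ra^{-2(|\alpha|+|\beta|)s}$ are arranged precisely so that the loss of weight in the $Q$-estimates (responsible for the condition $k>\gamma/2+3+2(\max(1,n)+1)s$) is absorbed; the estimate on $E$ with its Gaussian weight is classical and also follows from Theorem~\ref{theo:gapE}.

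The crucial hypothesis is the time-quantified regularization of $\SS_\BB$: I would prove that for some $\theta\in(0,1)$, $\|\AA\SS_\BB(t)\|_{\EE\to Z}\lesssim t^{-\theta}e^{-\lambda' t}$ where $Z$ gains one "step" towards $E$ — either velocity regularity (of fractional order $s$, coming from the smoothing of the fractional diffusion), or, only in the base case $n=0$ where one must pass from $L^2_{x,v}(m)$ to $H^1_{x,v}(\mu^{-1/2})$, space regularity, which is \emph{not} elliptic and must be extracted from the hypoelliptic commutator structure of $v\cdot\nabla_x$ with the velocity diffusion, at the cost of a worse power of $t$. This is exactly the pseudo-differential regularization analysis of \cite{HTT2*}. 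Iterating the time-convolution $(\AA\SS_\BB)^{*k}$ finitely many times (the singularities $t^{-\theta}$ being integrable and improving under convolution) then produces an operator mapping $\EE$ into $E$ with norm $\lesssim e^{-\lambda' t}$, possibly chaining the enlargement through a finite ladder of intermediate admissible spaces. Feeding these three facts ($\AA$ bounded, $\BB$ hypodissipative, some iterate of $\AA\SS_\BB$ regularizing $\EE$ into $E$ exponentially) into the theorem of \cite{GMM*,MM-ARMA} gives assertions (i)--(iii).

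I expect the main obstacle to be precisely that regularization step: simultaneously quantifying the velocity gain (of fractional order, with the weight upgraded from polynomial to Gaussian after truncation) and — in the $L^2_{x,v}(m)$ case — the space gain via hypoellipticity, all measured in the exact twisted norms~\eqref{eq:norm2} that the dissipativity of $\BB$ can afford, and verifying that finitely many convolution iterations close the gap between $\EE$ and $E$. A secondary technical point is the dissipativity of $\BB$ itself in the large polynomially-weighted spaces for the non-cutoff operator, which genuinely needs Lemma~\ref{lem:anis} and is more delicate than in the cut-off or Gaussian-weighted settings.
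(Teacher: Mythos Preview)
Your proposal is correct and matches the paper's approach: apply the abstract enlargement theorem of \cite{GMM*} with the splitting $\Lambda=\AA+\BB$, $\AA=M\chi_R$, verifying hypodissipativity of $\BB$ on $\EE$ (Lemmas~\ref{lem:BdissipL2}--\ref{lem:BdissipHk}, which rest on the coercivity Lemma~\ref{lem:coercive} rather than Lemma~\ref{lem:anis} as you wrote) and the regularization of some iterate $(\AA\SS_\BB)^{(*p)}$ from $\EE$ into $E$ (Corollary~\ref{cor:ASBreg}, obtained via Lemma~\ref{lem:regSB} and the pseudo-differential estimates of \cite{HTT2*}). One small imprecision: $\AA$ alone does not belong to $\BBB(\EE,E)$ since it gains no regularity --- only the convolution iterates do, after $\SS_\BB$ has supplied the $H^s$-gain step by step through the interpolation ladder $X_\varsigma$; but your parenthetical remark shows you understand this.
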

To prove this theorem, we exhibit a splitting of the linearized operator into two parts,
one which is regular and the second one which is dissipative. {We shall also study the regularization properties of the semigroup. The latter point is based on Section~\ref{sec:reg} in which a precise study of the short time regularization properties of the linearized operator is performed.}
We can then use the abstract theorem of enlargement of the functional space of the semigroup decay
from Gualdani et al.~\cite{GMM*} using the result of Mouhot and Neumann~\cite{MN-Nonlinearity} (Theorem~\ref{theo:gapE}) as a starting point.

\medskip
\subsection{Splitting of the linearized operator} 

We recall that $\chi \in \DD(\R)$ is a truncation function which satisfies $\mathds{1}_{[-1,1]} \le \chi \le \mathds{1}_{[-2,2]}$ and that we denote $\chi_{a} (\cdot) = \chi(\cdot/a)$ for $a>0$.
We then introduce
\begin{equation} \label{eq:A+B}
\AA h := M \chi_R h \quad \text{and} \quad \BB h := \Lambda h - \AA h = - v \cdot \nabla_x h + \LL h - \AA h
\end{equation}
for some positive constants $M$ and $R$ to be chosen later. In the next subsection, we are going to prove a coercivity-type inequality of the following form: For $\delta$ small enough,
$$
\langle \LL h, h \rangle_{L^2_v(m)} \le -c_{0,\delta} \|h\|^2_{*} + c_{1,\delta} \|h\|^2_{L^2_v}
$$
where $\|\cdot\|_{*}$ is a stronger norm than the $L^2_v(m)$-norm and $c_{0,\delta}$, $c_{1,\delta}$ are positive constants depending on~$\delta$. Then, choosing suitable constants $M$ and $R$, we will be able to deduce that our operator $\BB$ is indeed dissipative in $L^2_{x,v}(m)$ and that it provides us a gain of regularity coming from the term $-c_{0,\delta} \|h\|^2_{*}$.

}
%

\medskip

%
%

\medskip
\subsection{Dissipativity properties} \label{subsec:dissip}
In this subsection, we focus on dissipativity properties of some well chosen part of the linearized operator. Let us highlight the fact that the main difficulties are already here in the homogeneous case (Lemma~\ref{lem:coercive}). To go from there to the inhomogeneous case (Lemma~\ref{lem:BdissipHk}) just consists in introducing an equivalent norm to the usual one in inhomogeneous Sobolev spaces and is thus relatively simpler.
\begin{lem} \label{lem:coercive}
Let { $k>\gamma/2 + 3+2s$}.
For $\delta>0$ small enough, we have:
$$
\langle \LL h, h \rangle_{L^2_v(m)} \le -c_{0}\, \delta^{2-2s} \|h\|^2_{{\dot H}^{s,*}_v(m)} - c_0\,\delta^{-2s} \|h\|^2_{L^2_v(\langle v \rangle^{\gamma/2} m)} + C_\delta \|h\|^2_{L^2_v}.
$$
where $c_0$ is a universal positive constant and $C_\delta$ is a positive constant depending on $\delta$.
\end{lem}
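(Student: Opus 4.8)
The plan is to decompose the homogeneous linearized operator as $\LL h = Q(\mu,h) + Q(h,\mu)$ and, simultaneously, to split the kernel as $B = B_\delta + B^c_\delta$ into its grazing part $B_\delta$ (carrying the singularity, supported in $\{\theta\lesssim\delta\}$) and its non-grazing, bounded part $B^c_\delta$, so that $Q(\mu,\cdot) = Q_\delta(\mu,\cdot) + Q^c_\delta(\mu,\cdot)$, exactly as in the proof of Lemma~\ref{lem:nonlinearhom}(ii). The half $Q(\mu,\cdot)$ is the one producing coercivity; the half $Q(\cdot,\mu)$, having the smooth Maxwellian in the slot reached by the angular singularity, is only a perturbation. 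Thus $\langle\LL h,h\rangle_{L^2_v(m)} = \langle Q_\delta(\mu,h),h\rangle_{L^2_v(m)} + \langle Q^c_\delta(\mu,h),h\rangle_{L^2_v(m)} + \langle Q(h,\mu),h\rangle_{L^2_v(m)}$, and I treat the three terms in turn.

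\emph{The perturbation and the cut-off term.} For $\langle Q(h,\mu),h\rangle_{L^2_v(m)}$ I apply Lemma~\ref{lem:nonlinearhom}(i) with $f=h$, $g=\mu$, $\varsigma_1=2s$, $\varsigma_2=0$, $N_1=\gamma+2s$, $N_2=0$; since every norm of $\mu$ appearing is finite and one may take $\ell\le k$ (using $k>\gamma/2+3+2s>\gamma+5/2$), this yields $\langle Q(h,\mu),h\rangle_{L^2_v(m)}\lesssim\|h\|^2_{L^2_v(\la v\ra^{\gamma/2}m)}$ with an absolute constant. For the cut-off term, $Q^c_\delta(\mu,\cdot)$ has loss part $-\nu^c_\delta(v)h$, with $\nu^c_\delta(v):=\int B^c_\delta(v-v_*,\sigma)\mu_*\,d\sigma\,dv_*\approx\delta^{-2s}\la v\ra^\gamma$, the factor $\delta^{-2s}$ coming from $\int_\delta^{\pi/2}\sin\theta\,b(\cos\theta)\,d\theta\approx\delta^{-2s}$ by the lower bound in~\eqref{eq:angularsing} together with $s<1/2$. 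A Grad-type splitting of the gain $\langle Q^{c,+}_\delta(\mu,h),hm^2\rangle_{L^2_v}$ over $\{|v|\le R\}\cup\{|v|>R\}$ bounds it by $\Theta\langle\nu^c_\delta h,h\rangle_{L^2_v(m)}+C_{\delta,R}\|h\|^2_{L^2_v(\{|v|\le R\})}$ with $\Theta=\Theta(R,k)<1$ for $k$ large enough (the hypothesis $k>\gamma/2+3+2s$ being sufficient; this is the second place the condition on $k$ enters), so, choosing $R=R_\delta$ large, $\langle Q^c_\delta(\mu,h),h\rangle_{L^2_v(m)}\le-c_0\,\delta^{-2s}\|h\|^2_{L^2_v(\la v\ra^{\gamma/2}m)}+C_\delta\|h\|^2_{L^2_v}$.

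\emph{The grazing term.} Here I set $G:=hm$ and, performing the pre/post-collisional change of variables as in the proof of Lemma~\ref{lem:nonlinearhom}(ii), write $\langle Q_\delta(\mu,h),h\rangle_{L^2_v(m)} = \langle Q_\delta(\mu,G),G\rangle_{L^2_v} + \mathcal R_\delta$, where the commutator $\mathcal R_\delta$ coming from $m\ne m'$ is estimated via~\eqref{eq:m'-m} and the fast decay of $\mu$; the extra factor $\sin(\theta/2)$ in~\eqref{eq:m'-m} turns the relevant angular integral into $\int_0^{2\delta}\theta^{-2s}\,d\theta\approx\delta^{1-2s}$, so $\mathcal R_\delta = O(\delta^{1-2s})\|h\|^2_{L^2_v(\la v\ra^{\gamma/2}m)}$. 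Symmetrizing the main term,
$$\langle Q_\delta(\mu,G),G\rangle_{L^2_v} = -\tfrac12\int B_\delta\,\mu_*\,(G'-G)^2\,d\sigma\,dv_*\,dv + \tfrac12\int B_\delta\,\mu_*\,\bigl((G')^2-G^2\bigr)\,d\sigma\,dv_*\,dv;$$
the second integral is handled by the cancellation lemma~\cite[Lemma~1]{ADVW-ARMA} as in~\eqref{eq:Sdelta} and is $O(\delta^{2-2s})\|G\|^2_{L^2_v(\la v\ra^{\gamma/2})}$, while the first integral — which now carries the favourable sign — is bounded above by $-c_0\|G\|^2_{\dot H^{s,*}_v}+C\|G\|^2_{L^2_v(\la v\ra^{\gamma/2})}$ by adapting the proof of~\cite[Theorem~3.1]{He-JSP}, i.e. the very coercivity input already behind Lemma~\ref{lem:anis}. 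Since $\|G\|^2_{\dot H^{s,*}_v}=\|h\|^2_{\dot H^{s,*}_v(m)}$ and $\|G\|^2_{L^2_v(\la v\ra^{\gamma/2})}=\|h\|^2_{L^2_v(\la v\ra^{\gamma/2}m)}$, this closes the analysis of the grazing term.

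\emph{Conclusion and main obstacle.} Adding the three contributions gives
$$\langle\LL h,h\rangle_{L^2_v(m)}\le -c_0\|h\|^2_{\dot H^{s,*}_v(m)} - c_0\,\delta^{-2s}\|h\|^2_{L^2_v(\la v\ra^{\gamma/2}m)} + C\bigl(1+\delta^{1-2s}+\delta^{2-2s}\bigr)\|h\|^2_{L^2_v(\la v\ra^{\gamma/2}m)} + C_\delta\|h\|^2_{L^2_v},$$
and since $1,\delta^{1-2s},\delta^{2-2s} = o(\delta^{-2s})$ as $\delta\to0$, for $\delta$ small enough the positive $\|h\|^2_{L^2_v(\la v\ra^{\gamma/2}m)}$-terms are absorbed into half of the $-c_0\delta^{-2s}$-term; using finally $\|h\|^2_{\dot H^{s,*}_v(m)}\ge\delta^{2-2s}\|h\|^2_{\dot H^{s,*}_v(m)}$ for $\delta\le1$ and relabelling constants yields the stated inequality. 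The main difficulty lies in the coercivity lower bound for the truncated form $\int B_\delta\mu_*(G'-G)^2\,d\sigma\,dv_*\,dv$ in terms of $\|G\|^2_{\dot H^{s,*}_v}$: a naive pointwise comparison of $|v-v_*|^\gamma$ with the anisotropic weight $\la v\ra^{\gamma/2}\la v'\ra^{\gamma/2}\la v_*\ra^{-\gamma}$ fails precisely on $\{|v-v_*|\lesssim1\}$, which is the regime handled by the Fourier/Bobylev analysis of~\cite{He-JSP}; the only other delicate point is purely a matter of bookkeeping, namely that the $O(1)$ positive remainders above are absorbable only because removing the grazing collisions inflates the cut-off collision frequency to size $\delta^{-2s}$.
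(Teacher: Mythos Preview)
Your overall architecture is the same as the paper's: split $\LL=Q(\mu,\cdot)+Q(\cdot,\mu)$, treat $Q(h,\mu)$ as a harmless $O(1)$ perturbation via Lemma~\ref{lem:nonlinearhom}(i), split $Q(\mu,\cdot)$ into a grazing part $Q_\delta$ (producing the $\dot H^{s,*}$ coercivity through the symmetrized form and the cancellation lemma) and a cut-off part $Q^c_\delta$ (whose loss gives the dominant $-\delta^{-2s}$ term). Two steps, however, are handled differently and in your write-up are under-justified.

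\textbf{Grazing coercivity.} You claim that $\tfrac12\int B_\delta\,\mu_*(G'-G)^2\ge c_0\|G\|^2_{\dot H^{s,*}_v}-C\|G\|^2_{L^2_v(\langle v\rangle^{\gamma/2})}$ with $c_0,C$ \emph{independent of $\delta$}, then throw away strength at the end. But citing ``He--JSP / Lemma~\ref{lem:anis}'' does not give this: those inputs compare $\dot H^{s,*}_v$ to the \emph{isotropic} $H^s$ norm, with a $\delta^{2-2s}$ prefactor, not $I_1$ to $\dot H^{s,*}_v$. The obstruction you yourself flag --- $|v-v_*|^\gamma\ll\langle v_*\rangle^{-\gamma}$ on $\{|v-v_*|\lesssim1\}$ --- means the near-diagonal contribution to $\|G\|^2_{\dot H^{s,*}_v}$ is not controlled by $I_1$ plus $L^2$ alone; it costs an $H^s$ error. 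The paper closes this with an additional parameter: it writes $|v-v_*|^\gamma\ge\eps\langle v-v_*\rangle^\gamma-\eps\mathds 1_{|v-v_*|\lesssim1}$, obtaining $I_1\ge c_1\eps\|h\|^2_{\dot H^{s,*}_v(m)}-c_2\eps\|h\|^2_{H^s_v(\langle v\rangle^{\gamma/2}m)}-\ldots$, and then uses a second lower bound on $I_1$ (the genuine He--JSP adaptation) of the form $I_1\ge c_3\delta^{2-2s}\|h\|^2_{H^s_v(\langle v\rangle^{\gamma/2}m)}-\ldots$ to \emph{absorb} the bad $H^s$ term, forcing $\eps\sim\delta^{2-2s}$. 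This is why the final coercivity constant is $c_0\delta^{2-2s}$ and not $c_0$; your shortcut skips exactly the absorption step.

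\textbf{Cut-off gain.} Your Grad-type splitting $\{|v|\le R\}\cup\{|v|>R\}$ with a ratio $\Theta(R,k)<1$ independent of~$\delta$ is plausible but not argued, and it is not clear that $k>\gamma/2+3+2s$ is the correct threshold for it. The paper instead splits the $\sigma$-integral according to $|\widehat w\cdot\sigma|\gtrless1-\delta^3$ (with $w=v+v_*$): on $\{|\widehat w\cdot\sigma|\le1-\delta^3\}$ one has $|v'_*|^2\ge\tfrac{\delta^3}{2}(|v|^2+|v_*|^2)$, so $\mu'_*$ provides exponential decay and yields the $C_\delta\|h\|^2_{L^2_v}$ remainder; on $\{|\widehat w\cdot\sigma|\ge1-\delta^3\}$ the spherical measure is $O(\delta^3)$, which against $b^c_\delta\lesssim\delta^{-2-2s}$ gives a gain of size $\delta^{1/2-2s}\ll\delta^{-2s}$. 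This is an explicit, weight-independent mechanism, whereas your argument rests on an unproved Povzner-type comparison. A smaller structural difference: the paper commutes the weight $m$ \emph{once} on the full $Q(\mu,h)$ (producing a single remainder $R$ with the full kernel, bounded by an $O(1)$ constant), while you commute only on the grazing piece; either works, but your $O(\delta^{1-2s})$ bookkeeping is then specific to your ordering.
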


\begin{proof}
In what follows, we denote $H:=hm$. We start by spliting the scalar product~$\langle Q(\mu, h), h \rangle_{L^2_v(m)}$ into two parts:\begin{align*}
&\langle Q(\mu, h), h \rangle_{L^2_v(m)}
= \int_{\R^3 \times \R^3 \times \Sp^2} B(v-v_*,\sigma) \left[\mu'_*\,  h' \, - \mu_* \,  h \right] \, h\, m^2  \,\d \sigma \,\dv_* \,\dv  \\
&\qquad =  \int_{\R^3 \times \R^3 \times \Sp^2}B(v-v_*,\sigma) \left[\mu'_*\,  H' \, - \mu_* \,  H \right] \, H  \,\d \sigma \,\dv_* \,\dv \\
&\qquad \quad +  \int_{\R^3 \times \R^3 \times \Sp^2} B(v-v_*,\sigma)\,\mu'_* \, h' \, h\,m \, (m-m') \,\d \sigma \,\dv_* \,\dv \\
&\qquad =: \langle Q(\mu, H), H \rangle_{L^2_v} + R.
\end{align*}
We recall that for $\delta>0$, $b_\delta$ and $b_\delta^c$ are given by
	$$
	b_\delta (\cos \theta) = \chi_{\delta}(\theta) \, b(\cos \theta) \quad  \text{and} \quad b^c_\delta(\cos \theta) = (1-\chi_\delta(\theta)) \, b(\cos \theta)
	$$
and we denote $B_\delta$, $B_\delta^c$ (resp. $Q_\delta$, $Q_\delta^c$) the associated kernels (resp. operators).
We then write that
\beqn \label{eq:split}
\langle Q(\mu, h), h \rangle_{L^2_v(m)} =
 \langle Q_\delta(\mu, H), H \rangle_{L^2_v} +  \langle Q_\delta^c(\mu, H), H \rangle_{L^2_v} + R
\eeqn
and we are going to estimate each part of this decomposition. First, concerning grazing collisions, using the pre-post change of variables, we have:
\begin{align*}
& \langle Q_\delta(\mu, H), H \rangle_{L^2_v} =  \int_{\R^3 \times \R^3 \times \Sp^2} B_\delta(v-v_*,\sigma) \, \mu_* \, H \, (H'-H) \,\d \sigma \,\dv_* \,\dv \\
 & \qquad  = -{1 \over 2}  \int_{\R^3 \times \R^3 \times \Sp^2} B_\delta(v-v_*,\sigma)\, \mu_* \left( H' - H \right)^2 \,\d \sigma \,\dv_* \,\dv\\
&\qquad \quad +{1 \over 2} \int_{\R^3 \times \Sp^2}B_\delta(v-v_*,\sigma) \, \mu_*  \left( (H')^2 - H^2 \right)\,\d \sigma \,\dv_* \,\dv =: - I_{1} + I_{2}.
\end{align*}
Using the cancellation lemma~\cite[Lemma~1]{ADVW-ARMA}, we have that
$$
I_2 = {1 \over 2} \int_{\R^3} (S_\delta*H^2) \, \mu \,\dv
$$
with 	$S_\delta$ defined in~\eqref{eq:Sdelta} which satisfies $S_\delta(z) \lesssim \delta^{2-2s}|z|^\gamma$. We deduce that
\beqn \label{eq:I_2}
I_2 \lesssim \delta^{2-2s} \|h\|^2_{L^2_v(\la v \ra^{\gamma/2}m)}.
\eeqn
We now treat $I_1$. To do that, we first notice that for $\eps \in (0,1/2)$, we have
$$
|v-v_*|^\gamma \ge \eps \la v-v_* \ra^\gamma - \eps \, \mathds{1}_{|v-v_*|^\gamma\le \eps/(1-\eps)}.
$$
Together with the fact that
$$
\la v-v_* \ra^\gamma \gtrsim \la v'-v_* \ra^\gamma \gtrsim \la v_* \ra^{-\gamma} \la v' \ra^\gamma,
$$
we deduce that
\begin{align*}
I_1 &\ge \eps \int_{\R^3 \times \R^3 \times \Sp^2} b_\delta(\cos \theta) \, \la v-v_* \ra^\gamma \, \mu_* (H'-H)^2 \,\d \sigma \,\dv_* \,\dv \\
&\quad - \eps \int_{\R^3 \times \R^3 \times \Sp^2} b_\delta(\cos \theta) \,\mathds{1}_{|v-v_*|^\gamma\le \eps/(1-\eps)} \, \mu_* (H'-H)^2 \,\d \sigma \,\dv_* \,\dv \\
&\ge C \eps \int_{\R^3 \times \R^3 \times \Sp^2} b_\delta(\cos \theta) \, \mu_* \la v_* \ra^{-\gamma} (H'\la v' \ra^{\gamma/2}-H\la v' \ra^{\gamma/2})^2 \,\d \sigma \,\dv_* \,\dv \\
&\quad - \eps \int_{\R^3 \times \R^3 \times \Sp^2} b_\delta(\cos \theta) \,\mathds{1}_{|v-v_*|^\gamma\le \eps/(1-\eps)} \, \mu_* (H'-H)^2 \,\d \sigma \,\dv_* \,\dv \\
&\ge C {\eps \over 2} \int_{\R^3 \times \R^3 \times \Sp^2} b_\delta(\cos \theta) \, \mu_* \la v_* \ra^{-\gamma} (H'\la v' \ra^{\gamma/2}-H\la v \ra^{\gamma/2})^2 \,\d \sigma \,\dv_* \,\dv \\
&\quad - C \eps\int_{\R^3 \times \R^3 \times \Sp^2} b_\delta(\cos \theta) \, \mu_* \la v_* \ra^{-\gamma} \, H^2 \, (\la v \ra^{\gamma/2} - \la v' \ra^{\gamma/2})^2 \,\d \sigma \,\dv_* \,\dv \\
&\quad - \eps \int_{\R^3 \times \R^3 \times \Sp^2} b_\delta(\cos \theta) \,\mathds{1}_{|v-v_*|^\gamma\le \eps/(1-\eps)} \, \mu_* (H'-H)^2 \,\d \sigma \,\dv_* \,\dv =: I_{11}-I_{12}-I_{13}.
\end{align*}
First, we clearly have
$$
I_{11} \gtrsim \eps \|h\|^2_{\dot H^{s,*}_v(m)}.
$$
For $I_{12}$, we can use~\eqref{eq:vgamma/2} to get
$$
I_{12} \lesssim \eps \,\delta^{2-2s} \|h\|^2_{L^2_v(\la v \ra^{\gamma/2}m)}.
$$
Concerning $I_{13}$, we use that for $\eps \le 1/2$, $\mathds{1}_{|v-v_*|^\gamma\le \eps/(1-\eps)} \le \mathds{1}_{|v-v_*|\le1}$ so that
\begin{align*}
I_{13} &\lesssim \eps \int_{\R^3 \times \R^3 \times \Sp^2} b_\delta(\cos \theta) \, \mathds{1}_{|v-v_*|\le1} \, \mu_* (H'-H)^2\,\d \sigma \,\dv_* \,\dv \\
&\lesssim \eps \int_{\R^3 \times \R^3 \times \Sp^2} b(\cos \theta) \, \mathds{1}_{|v-v_*|\le1} \, \mu_* (H'-H)^2\,\d \sigma \,\dv_* \,\dv.
\end{align*}
From the proof of~\cite[Theorem~1.2]{CH-ARMA}, we get
$$
I_{13} \lesssim \eps \, \|h\|^2_{H^s_v(m)}.
$$
We thus have obtained
$$
{1 \over 2} I_1 \ge c_1 \,\eps \|h\|^2_{\dot H^{s,*}_v(m)} - c_2 \, \eps \|h\|^2_{H^s_v(\la v \ra^{\gamma/2}m)}, \quad c_1, \, c_2>0.
$$
On the other hand, as already mentioned in the proof of Lemma~\ref{lem:anis}, adapting the proof of~\cite[Theorem~3.1]{He-JSP}, we can get that
$$
{1 \over 2} I_1 \ge c_3 \, \delta^{2-2s} \|h\|^2_{H^s_v(\langle v \rangle^{\gamma/2}m)} - c_4 \, \delta^{2-2s} \|h\|^2_{L^2_v(\langle v \rangle^{\gamma/2}m)}, \quad c_3, \, c_4>0.
$$
Combining the two previous inequalities, we get that there exist positive constants $c_i$ for~$i =1, \dots, 4$ such that
\beqn \label{eq:I_1}
\bal
I_1 &\ge c_1\, \eps \,\|h\|^2_{\dot H^{s,*}_v(m)}
+ (c_3\,\delta^{2-2s} -  c_2 \,\eps) \|h\|^2_{H^s_v(\la v \ra^{\gamma/2}m)} \\
& \quad - c_4 \, \delta^{2-2s} \|h\|^2_{L^2_v(\la v \ra^{\gamma/2}m)}.
\eal
\eeqn
Gathering~\eqref{eq:I_2} and~\eqref{eq:I_1}, up to changing the value of $c_4$, we have obtained:
	\beqn \label{eq:Qdeltamu}
	\bal
	&\langle Q_\delta(\mu, H), H \rangle_{L^2_v} \\
	&\qquad \le - c_1 \, \eps \,\|h\|^2_{\dot H^{s,*}_v(m)}
- (c_3\,\delta^{2-2s} - c_2 \,\eps) \|h\|^2_{H^s_v(\la v \ra^{\gamma/2}m)} +c_4\, \delta^{2-2s} \,\|h\|^2_{L^2_{v}(\la v \ra^{\gamma/2} m)}.
	\eal
	\eeqn

We now deal with the cut-off part $\langle Q_\delta^c(\mu, H), H \rangle_{L^2_v}$. In this term, grazing collisions are removed, we can thus separate gain and loss terms:
\begin{align*}
\langle Q_\delta^c(\mu, H), H \rangle_{L^2_v}
&\le \int_{\R^3 \times \R^3 \times \Sp^2} B_\delta^c(v-v_*,\sigma) \,\mu_* \, |H'| \, |H|\,\d \sigma \,\dv_* \,\dv \\
&\quad- \int_{\R^3 \times \R^3 \times \Sp^2} B_\delta^c(v-v_*,\sigma) \,\mu_* \,\d \sigma \,\dv_* \, H^2 \,\dv.
\end{align*}
The loss term is multiplicative and can be rewritten as
	$$
	\int_{\R^3 \times \R^3 \times \Sp^2} B_\delta^c(v-v_*,\sigma) \,\mu_* \,\d \sigma \,\dv_* \, H^2 \,\dv
	= K_\delta \int_{\R^3} (\mu \ast |\cdot|^\gamma) \, H^2 \,\dv
	$$
with
	\beqn \label{eq:Kdelta}
 	K_\delta := \int_{\Sp^2} b^c_\delta(\cos \theta) \,\d \sigma \approx \int_\delta^{\pi/2} b(\cos \theta) \, \sin \theta \, d\theta \approx 	\delta^{-2s} - \left(\frac{\pi}{2}\right)^{-2s} \xrightarrow[\delta \rightarrow 0]{} +\infty
	\eeqn
using the spherical coordinates to get the second equality and~(\ref{eq:angularsing}) to get the final one. Since we also have
	$$
	(\mu \ast | \cdot|^\gamma)(v) \approx \langle v \rangle^\gamma,
	$$
we can deduce that there exists $\nu_0>0$ such that
	\beqn \label{eq:loss}
	-\int_{\R^3 \times \R^3 \times \Sp^2} B_\delta^c(v-v_*,\sigma) \,\mu_* \,\d \sigma \,\dv_* \, H^2 \,\dv
	\le - \nu_0 \, \delta^{-2s} \, \|h\|^2_{L^2_v(\la v \ra^{\gamma/2} m)}.
	\eeqn
Concerning the gain term, following ideas from~\cite{MischlerBook*}, we are going to split it into two parts. To do that, we denote $w:=v+v_*$ and $\widehat w := w/|w|$. We then have
	\begin{align*}
	 &\int_{\R^3 \times \R^3 \times \Sp^2} B_\delta^c(v-v_*,\sigma)\, \mu_* \, |H'| \, |H|\,\d \sigma \,\dv_* \,\dv \\
	 &\qquad =  \int_{\R^3 \times \R^3 \times \Sp^2} \mathds{1}_{|\widehat w \cdot \sigma| \ge 1-\delta^3} \, B_\delta^c(v-v_*,\sigma) \,\mu_* \, |H'| \, |H|\,\d \sigma \,\dv_* \,\dv \\
&\qquad \quad	 +  \int_{\R^3 \times \R^3 \times \Sp^2} \mathds{1}_{|\widehat w \cdot \sigma| \le 1-\delta^3}\, B_\delta^c(v-v_*,\sigma) \,\mu_* \, |H'| \, |H|\,\d \sigma \,\dv_* \,\dv =: J_1+J_2.
	\end{align*}
We first deal with $J_1$: Using Young inequality, we have
	\begin{align*}
	J_1 &\lesssim \delta^{-1/2} \int_{\R^3 \times \R^3 \times \Sp^2} \mathds{1}_{|\widehat w \cdot \sigma| \ge 1-\delta^3} \, B_\delta^c(v-v_*,\sigma)\, \mu_* \, H^2 \,\d \sigma \,\dv_* \,\dv\\
	&\quad + \delta^{1/2}  \int_{\R^3 \times \R^3 \times \Sp^2} \mathds{1}_{|\widehat w \cdot \sigma| \ge 1-\delta^3} \, B_\delta^c(v-v_*,\sigma)\, \mu'_* \,H^2\,\d \sigma \,\dv_* \,\dv =: J_{11} + J_{12}
	\end{align*}
where we have used the pre-post collisional change of variables noticing that $\widehat {w'} = \widehat w$ (with obvious notations).
Using that $b_\delta^c(\cos \theta) \lesssim \delta^{-2-2s}$ on the sphere and $(\mu * |\cdot|^\gamma)(v) \lesssim \langle v \rangle^\gamma$ , we get
	$$
	J_{11} \lesssim {\delta^{-5/2-2s}}  \int_{\R^3} \int_{\Sp^2} \mathds{1}_{|\widehat w \cdot \sigma| \ge 1-\delta^3} \,d \sigma \, H^2 \, \langle v \rangle^\gamma \,\dv.
	$$
Then, since for any $z \in\Sp^2$, we have $\int_{\Sp^2} \mathds{1}_{|z \cdot \sigma| \ge 1-\delta^3} \,d \sigma \lesssim \delta^3$, we obtain
	\beqn \label{eq:J11}
	J_{11} \lesssim {\delta^{1/2-2s}} \|h\|^2_{L^2_v(\la v \ra^{\gamma/2} m)}.
	\eeqn
As far as $J_{12}$ is concerned, we roughly bound it from above as:
	$$
	J_{12} \lesssim \delta^{1/2}  \int_{\R^3 \times \R^3 \times \Sp^2} B_\delta^c(v-v_*,\sigma)\, \mu'_* \,H^2 \,\d \sigma \,\dv_* \,\dv.
	$$
We then perform the regular change of variable $v_* \to v'_*$. { Note that due to the symmetry between
the roles played by $v$ and $v_*$, this change of variable is similar to the one $v \to v'$, shown in the proof of Lemma~\ref{lem:nonlinearhom}. Moreover} notice that $|v-v_*|^\gamma \lesssim |v-v'_*|^\gamma$ to obtain:
	\beqn \label{eq:J12}
	J_{12} \lesssim \delta^{1/2} \int_{\Sp^2} b_\delta^c(\cos \theta)\,d\sigma \int_{\R^3 \times \R^3}  \mu_* |v-v_*|^\gamma \,H^2 \, \dv_* \,\dv
	\lesssim  \delta^{1/2-2s} \|h\|^2_{L^2_v(\la v \ra^{\gamma/2} m)}.
	\eeqn
The analysis of $J_2$ starts similarly as the one of $J_1$ using Young inequality:
\begin{align*}
	J_2 &\lesssim {\delta^{1/2}} \int_{\R^3 \times \R^3 \times \Sp^2} \mathds{1}_{|\widehat w \cdot \sigma| \le 1-\delta^3} \, B_\delta^c(v-v_*,\sigma)\, \mu_* \, H^2 \,\d \sigma \,\dv_* \,\dv\\
	&\quad + {\delta^{-1/2}}  \int_{\R^3 \times \R^3 \times \Sp^2} \mathds{1}_{|\widehat w \cdot \sigma| \le 1-\delta^3} \, B_\delta^c(v-v_*,\sigma)\, \mu'_* \,H^2\,\d \sigma \,\dv_* \,\dv =: J_{21} + J_{22}.
	\end{align*}
The treatment of $J_{21}$ is simple and similar as the one of $J_{12}$, we get:
	\beqn \label{eq:J21}
	J_{21}
	\lesssim  \delta^{1/2-2s} \|h\|^2_{L^2_v(\la v \ra^{\gamma/2} m)}.
	\eeqn
For $J_{22}$, we are going to use the following computation: Denoting $u:=v-v_*$ the relative velocity, we have
	$$
	|v'_*|^2 = {1 \over 4} (|w|^2+|u|^2) - \frac{|w||u|}{2} \widehat w \cdot \sigma
	$$
so that if $|\widehat w \cdot \sigma| \le 1-\delta^3$, then
	$$
	|v'_*|^2 \ge  {1 \over 4} (|w|^2+|u|^2) - (1-\delta^3) \frac{|w||u|}{2} \ge {\delta^3 \over 4} (|w|^2+|u|^2)
	= {\delta^3 \over 2} (|v|^2 + |v_*|^2).
	$$
From this, we deduce that
	$$
	\mu'_* \le e^{- \delta^3 |v|^2/4} e^{- \delta^3 |v_*|^2/4}.
	$$
Consequently,
	\beqn \label{eq:J22}
	J_{22} \lesssim  {\delta^{-5/2-2s}} \int_{\R^3 \times \R^3} |v-v_*|^\gamma\, e^{- \delta^3 |v_*|^2/4} \,H^2 \, e^{- \delta^3 |v|^2/4} \,\dv_* \,\dv \lesssim  {C_\delta} \|h\|^2_{L^2_v}.
	\eeqn
Combining~\eqref{eq:loss},~\eqref{eq:J11},~\eqref{eq:J12},~\eqref{eq:J21} and~\eqref{eq:J22}, we obtain
	\beqn \label{eq:Qdeltamuc}
	\langle Q_\delta^c(\mu,H),H \rangle_{L^2_v} \le
	 \delta^{-2s} \left(c_5\delta^{1/2} - \nu_0\right) \|h\|^2_{L^2_v(\la v \ra^{\gamma/2} m)} + {C_\delta} \|h\|^2_{L^2_v}, \quad c_5>0.
	\eeqn
	
Coming back to~\eqref{eq:split}, it remains to analyse the rest term:
$$
R= \int_{\R^3 \times \R^3 \times \Sp^2} B(v-v_*,\sigma)\,\mu'_* \, h' \, h\,m \, (m-m') \,\d \sigma \,\dv_* \,\dv.
$$
First, let us remark that
		$$
		|m'-m| \leq \Bigg(\sup_{z \in \text{B}(v, |v'-v|)} \left|\nabla m\right|(z) \Bigg) \, |v'-v|,
		$$
	with
		$$
		|v'-v| \lesssim \, |v-v_*|\, \sin (\theta/2).
		$$
	Then, we use the fact that
		$$
		\sup_{z \in \text{B}(v, |v'-v|)} \left|\nabla m\right|(z)  \lesssim \langle v \rangle ^{k-1} + \langle v' \rangle^{k-1} \lesssim \langle v' \rangle^{k-1} \, \langle v'_* \rangle^{k-1},
		$$
	which implies that
		$$
		|m' -m| \lesssim \, \sin (\theta/2) \,|v-v_*| \,\langle v' \rangle^{k-1} \, \langle v'_* \rangle^{k-1}.
		$$
		Consequently, we have:
		\begin{align*}
		R &\lesssim
		 \int_{\R^3 \times \R^3 \times \Sp^2} b(\cos \theta) \, \sin(\theta/2) \,\mu'_* \, \langle v'_* \rangle^{k-1}\, |v-v_*|^{\gamma+1} \, |h'| \,\langle v' \rangle^{k-1} \, |h|\,m \,\d \sigma \,\dv_* \,\dv \\
		 &\lesssim \int_{\R^3 \times \R^3 \times \Sp^2} b(\cos \theta) \, \sin(\theta/2) \,\mu'_* \, \langle v'_* \rangle^{k-1}\,|v-v_*|^{\gamma+2} \, (h')^2 \langle v' \rangle^{2k-2} \,\d \sigma \,\dv_* \,\dv \\
		 &\quad + \int_{\R^3 \times \R^3 \times \Sp^2} b(\cos \theta) \, \sin(\theta/2) \,\mu'_* \, \langle v'_* \rangle^{k-1}\,|v-v_*|^{\gamma} \, h^2 \, m^2 \,\d \sigma \,\dv_* \,\dv.
		\end{align*}
For the first part, we use the pre-post collisional change of variables and for the second one, we use the regular change of variable $v_* \to v'_*$ explained in the proof of Lemma~\ref{lem:nonlinearhom}. It gives us
		\beqn \label{eq:R}
		R \le c_6 \|h\|^2_{L^2_v(\la v \ra^{\gamma/2} m)}, \quad c_6>0.
		\eeqn

Gathering~\eqref{eq:Qdeltamu},~\eqref{eq:Qdeltamuc} and~\eqref{eq:R} yields
	\begin{align*}
	&\langle Q(\mu,h),h \rangle_{L^2_v(m)} \le  - (c_3 \, \delta^{2-2s} - c_2  \, \eps) \|h\|^2_{{H}^{s}_{v}(\la v \ra^{\gamma/2} m )} - c_1 \, \eps \|h\|^2_{\dot H^{s,*}_v(m)}
	\\
	&\qquad \qquad \qquad + \left(c_6 + \delta^{-2s} \left(c_4 \,  \delta^2 + c_5 \,\delta^{1/2} - \nu_0\right)\right)  \|h\|^2_{L^2_v(\la v \ra^{\gamma/2} m)}+ {C_\delta} \|h\|^2_{L^2_v}.
	\end{align*}

We also have from Lemma~\ref{lem:nonlinearhom}-(i) applied with $\varsigma_1=2s$, $\varsigma_2=0$, $N_1=\gamma+2s$ and $N_2=0$:
	$$
	\langle Q(h,\mu),h \rangle_{L^2_v(m)} \le c_7 \|h\|^2_{L^2_v(\la v \ra^{\gamma/2} m)}, \quad c_7>0.
	$$

The two previous inequalities imply
	\begin{align*}
	&\langle \LL h, h \rangle_{L^2(m)}  \le  - (c_3 \, \delta^{2-2s} - c_2  \, \eps) \|h\|^2_{{H}^{s}_{v}(\la v \ra^{\gamma/2} m )} - c_1 \, \eps \|h\|^2_{\dot H^{s,*}_v(m)}
	\\
	&\qquad \qquad \qquad + \left(c_6 + c_7 + \delta^{-2s} \left(c_4 \,  \delta^2 + c_5 \,\delta^{1/2} - \nu_0\right)\right)  \|h\|^2_{L^2_v(\la v \ra^{\gamma/2} m)}+ {C_\delta} \|h\|^2_{L^2_v}.
	\end{align*}
	Taking $\delta$ small enough and then $\eps$ small enough of the order of $\delta^{2-2s}$, we obtain the wanted estimate:
	\begin{align*}
	&\langle \LL h, h \rangle_{L^2_v(m)}  \\
	&\quad \le  - c_0 \, \delta^{2-2s} \|h\|^2_{{\dot H}^{s,*}_{v}(m)} - c_0 \, \delta^{2-2s} \|h\|^2_{{H}^{s}_{v}(\la v \ra^{\gamma/2} m)} -c_0 \, \delta^{-2s}  \|h\|^2_{L^2_v(\langle v \rangle^{\gamma/2}m)} + C_\delta \|h\|^2_{L^2_v}
	\end{align*}
	for some $c_0>0$.
\end{proof}
We can now prove the dissipativity properties of $\BB = - v \cdot \nabla_x + \LL-M\chi_R$ in~$L^2_{x,v}(m)$.
\begin{lem} \label{lem:BdissipL2}
	Let us consider {$k> \gamma/2 + 3+2s$} and $a<0$. There exist $M$ and $R>0$ such that
	$\BB- a$ is dissipative in $L^2_{x,v}(m)$, namely
		$$
		\forall \, t \geq0, \quad \|\SS_{\BB}(t)h_0\|_{L^2_{x,v}(m)} \leq e^{at}\|h_0\|_{L^2_{x,v}(m)}.
		$$
	We even have the following estimate (which is better that simple dissipativity as stated above), for any ${h_0} \in L^2_{x,v}(m)$:
		$$
		\forall \, t \geq0, \quad {1 \over 2} {\d \over \d t} \|\SS_\BB(t)h_0\|^2_{L^2_{x,v}(m)} \le
		- c_1 \, \| \SS_{\BB}(t) h_0\|^2_{L^2_x {H}^{s,*}_v (m)} + a \, \| \SS_{\BB}(t) h_0\|^2_{L^2_{x,v} (\langle v \rangle^{\gamma /2} m)}
		$$
	for some constant $c_1>0$.
\end{lem}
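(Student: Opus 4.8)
The plan is to differentiate $t\mapsto\|\SS_\BB(t)h\|^2_{L^2_{x,v}(m)}$, evaluate $\langle\BB u,u\rangle_{L^2_{x,v}(m)}$ for $u=\SS_\BB(t)h$, and check that all the error terms produced by the coercivity estimate of Lemma~\ref{lem:coercive} are absorbed by the damping term $-M\chi_R$ and by the weighted gain already present in that estimate, provided $M$ and $R$ are chosen large enough (and $\delta$ small enough, depending on $a$). First I would split $\BB=-v\cdot\nabla_x+\LL-M\chi_R$ and test each piece against $u$ in $L^2_{x,v}(m)$. The transport term drops out: since neither $m$ nor $v$ depends on $x$, integration by parts on $\T^3$ gives $\langle -v\cdot\nabla_x u,u\rangle_{L^2_{x,v}(m)}=-\frac12\int_{\R^3}m^2\big(\int_{\T^3}v\cdot\nabla_x(u^2)\,dx\big)\,dv=0$. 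The damping term contributes $-M\int_{\T^3\times\R^3}\chi_R\,u^2\,m^2$. For the collision part, since $\LL$ acts only on $v$, I apply Lemma~\ref{lem:coercive} for each fixed $x$ and integrate over $\T^3$ (this is exactly where the hypothesis $k>\gamma/2+3+2s$ enters), obtaining, for $\delta$ small,
\[
\langle\LL u,u\rangle_{L^2_{x,v}(m)}\le -c_0\delta^{2-2s}\|u m\|^2_{L^2_x\dot H^{s,*}_v}-c_0\delta^{-2s}\|u\|^2_{L^2_{x,v}(\la v\ra^{\gamma/2}m)}+C_\delta\|u\|^2_{L^2_{x,v}}.
\]

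Next I would recombine using that, by~\eqref{eq:anis0}, $\|u\|^2_{L^2_x H^{s,*}_v(m)}=\|u\|^2_{L^2_{x,v}(\la v\ra^{\gamma/2}m)}+\|u m\|^2_{L^2_x\dot H^{s,*}_v}$: setting $c_1:=c_0\delta^{2-2s}$, the first two terms above are bounded by $-c_1\|u\|^2_{L^2_x H^{s,*}_v(m)}-c_0(\delta^{-2s}-\delta^{2-2s})\|u\|^2_{L^2_{x,v}(\la v\ra^{\gamma/2}m)}$. It then remains to dominate $C_\delta\|u\|^2_{L^2_{x,v}}-M\int\chi_R u^2 m^2$ by $\big(a+c_0(\delta^{-2s}-\delta^{2-2s})\big)\|u\|^2_{L^2_{x,v}(\la v\ra^{\gamma/2}m)}$. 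For this I split $\|u\|^2_{L^2_{x,v}}$ over $\{|v|\le R\}$ and $\{|v|>R\}$. On $\{|v|\le R\}$, since $\chi_R\ge\mathds{1}_{\{|v|\le R\}}$ and $m\ge1$, the term $-M\int\chi_R u^2m^2$ already controls $C_\delta\|u\mathds{1}_{\{|v|\le R\}}\|^2_{L^2_{x,v}}$ as soon as $M\ge C_\delta$. On $\{|v|>R\}$, since $\la v\ra^{\gamma}m^2\ge\la R\ra^{\gamma+2k}$ there (recall $\gamma>0$), one has $C_\delta\|u\mathds{1}_{\{|v|>R\}}\|^2_{L^2_{x,v}}\le C_\delta\la R\ra^{-\gamma-2k}\|u\|^2_{L^2_{x,v}(\la v\ra^{\gamma/2}m)}$, which is as small as we wish for $R$ large.

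Choosing then $\delta$ small enough (so that Lemma~\ref{lem:coercive} holds and $c_0(\delta^{-2s}-\delta^{2-2s})>|a|$), then $M\ge C_\delta$, then $R$ large enough that $C_\delta\la R\ra^{-\gamma-2k}\le a+c_0(\delta^{-2s}-\delta^{2-2s})$, I get
\[
\tfrac12\tfrac{d}{dt}\|\SS_\BB(t)h\|^2_{L^2_{x,v}(m)}=\langle\BB u,u\rangle_{L^2_{x,v}(m)}\le -c_1\|\SS_\BB(t)h\|^2_{L^2_x H^{s,*}_v(m)}+a\|\SS_\BB(t)h\|^2_{L^2_{x,v}(\la v\ra^{\gamma/2}m)},
\]
which is the refined estimate. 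The plain dissipativity bound $\|\SS_\BB(t)h\|_{L^2_{x,v}(m)}\le e^{at}\|h\|_{L^2_{x,v}(m)}$ then follows by discarding the first term on the right-hand side, using $\la v\ra^{\gamma/2}\ge1$ to replace $\|\cdot\|_{L^2_{x,v}(\la v\ra^{\gamma/2}m)}$ by $\|\cdot\|_{L^2_{x,v}(m)}$, and Grönwall's lemma.

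The computation is essentially bookkeeping. The one point that needs care is the order in which the constants are fixed ($\delta$, then $M$, then $R$) together with the two-regime splitting of the remainder $C_\delta\|u\|^2_{L^2_{x,v}}$: the artificial damping $M\chi_R$ only helps for bounded velocities, whereas the genuine weighted gain $\delta^{-2s}\la v\ra^{\gamma/2}m$ supplied by Lemma~\ref{lem:coercive} is precisely what absorbs the remainder at large velocities. Finally, a standard regularization argument (or working directly with the generator of $\SS_\BB$) justifies differentiating $t\mapsto\|\SS_\BB(t)h\|^2_{L^2_{x,v}(m)}$.
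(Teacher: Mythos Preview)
Your proof is correct and follows essentially the same route as the paper: the transport term vanishes, Lemma~\ref{lem:coercive} is applied in $v$ and integrated in $x$, and the residual $C_\delta\|u\|^2_{L^2_{x,v}}$ is absorbed by the damping $M\chi_R$ for bounded velocities and by the weighted gain $-c_0\delta^{-2s}\|u\|^2_{L^2_{x,v}(\langle v\rangle^{\gamma/2}m)}$ for large velocities. The paper phrases the last step slightly differently, writing all three terms as $\int u^2 m^2\langle v\rangle^\gamma\big(-c_0\delta^{-2s}+C_\delta\langle v\rangle^{-\gamma}-M\chi_R\big)$ and observing that the multiplier can be made $\le a$ pointwise, but your explicit two-regime splitting and the careful order ($\delta$, then $M$, then $R$) express exactly the same mechanism.
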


\begin{proof}
	Consider $a<0$ and $\delta>0$ small enough so that the conclusion of Lemma~\ref{lem:coercive} holds and such that $c_0 \, \delta^{-2s}>-a$. We are going to estimate the integral $\int_{\R^3 \times \R^3} (\BB h) \, h \, m^2 \,\dv \,\d x$. We first notice that the term coming from the transport operator gives no contribution:
	$$
	\int_{\T^3 \times \R^3} (v \cdot \nabla_x h) \, h \, m^2 \,\dv \,\d x = {1 \over 2} \int_{\T^3 \times \R^3} (v \cdot \nabla_x h^2)  \, m^2 \,\dv \,\d x =0.
	$$
	Then, using Lemma~\ref{lem:coercive} and integrating in $x$, we obtain
		$$
		\begin{aligned}
			&\int_{\T^3 \times \R^3} (\LL h) \, h \, m^2 \,\dv \,\d x \\
			& \qquad \le -c_{0}\, \delta^{2-2s} \|h\|^2_{L^2_x{H}^{s,*}_v(m)}
			- c_0\,\delta^{-2s} \|h\|^2_{L^2_{x,v}(\langle v \rangle^{\gamma/2} m)} + C_\delta \|h\|^2_{L^2_{x,v}}.
		\end{aligned}
		$$
In summary, we have obtained
	\begin{align}\label{mainq}
	&{ \int_{\T^3 \times \R^3}} (\BB h) \, h \, m^2 \,\dv \,\d x
	\le  -c_{0}\, \delta^{2-2s} \|h\|^2_{L^2_x{H}^{s,*}_v(m)} \\
	&\nonumber \qquad \qquad + \int_{\T^3 \times \R^3} h^2 \, m^2  \langle v \rangle^{\gamma} \left(-c_0 \, \delta^{-2s} + C_\delta \langle v \rangle^{-\gamma} - M \chi_R(v)\right) \,\dv \,\d x.
	\end{align}
Since $-c_0 \, \delta^{-2s} + C_\delta \langle v \rangle^{-\gamma}$ goes to $-c_0 \, \delta^{-2s}<a$ as $|v|$ goes to infinity, we can choose $M$ and~$R$ large enough so that for any $v \in \R^3$, $-c_0 \, \delta^{-2s} + C_\delta \langle v \rangle^{-\gamma} - M \chi_R \le a$, which concludes the proof.
\end{proof}

The goal of the next lemma is to generalize previous dissipativity results to higher order derivatives spaces of type $H^n_{x}\HH^\ell_{v}(m)$ and $\HH^n_x \HH^\ell_v(m)$
defined through their norms in~\eqref{eq:norm} and~\eqref{eq:norm2}.
Notice that,  in order to get our dissipativity result, it is necessary to have less weight on $v$-derivatives (which is induced by the weight $\la v \ra^{-2|\alpha|s}$ in the definitions
of the norms of $H^n_{x}\HH^\ell_{v}(m)$ and $\HH^n_x \HH^\ell_v(m)$).
However, the introduction of the weight $\la v \ra^{-2|\beta|s}$ in order to have less weight on the $x$-derivatives in the space $\HH^n_x \HH^\ell_v(m)$ is not needed at this point
but dissipativity results still hold true doing that and we will make use of it in the nonlinear study in Section~\ref{sec:nonlinear}.

\begin{lem} \label{lem:BdissipHk}
	Let us consider $(n,\ell) \in \N^2$ with $n \ge \ell$. In what follows, $\EE = H^n_{x}\HH^\ell_{v}(m)$ with~{$k> \gamma/2 +3+2(n+1) s$}
	or $\EE = \HH^n_x \HH^\ell_v(m)$ with {$k> \gamma/2 +3+2(n+1)s$}. Then for any $a<0$,
	there exist $M$, $R>0$ such that $\BB - a$ is hypodissipative in $\EE$ in the sense that 
		$$
		\forall\, t \ge0, \quad \| \SS_{\BB}(t) h_0\|_{\EE} \lesssim e^{a t} \|h_0\|_\EE.
		$$
\end{lem}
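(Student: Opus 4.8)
The plan is to run a weighted energy estimate on the $x$- and $v$-derivatives of $\SS_\BB(t)h$, reducing everything to the $L^2_{x,v}(m)$-estimate of Lemma~\ref{lem:BdissipL2}. First I would dispose of the case $\ell=0$ (so $\EE=H^n_xL^2_v(m)=\HH^n_xL^2_v(m)$): the operator $\partial^\beta_x$ commutes with the transport operator $v\cdot\nabla_x$, with $\LL$, and with the multiplication $M\chi_R(v)$, all of which act trivially in $x$; hence each $\partial^\beta_x h$ with $|\beta|\le n$ solves $\partial_t(\partial^\beta_x h)=\BB(\partial^\beta_x h)$, and Lemma~\ref{lem:BdissipL2} applied termwise and summed over $|\beta|\le n$ gives the claim. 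For $\ell\ge1$ genuine $v$-derivatives appear and must be handled by a weighted functional.

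For $\ell\ge1$ I would introduce, writing $h_{\alpha,\beta}:=\partial^\alpha_v\partial^\beta_x h$ and (using $n\ge\ell$, so $\max(\ell,n)=n$),
$$
F(h):=\sum_{\substack{|\alpha|\le\ell,\ |\beta|\le n\\ |\alpha|+|\beta|\le n}}\kappa^{|\alpha|}\,\|h_{\alpha,\beta}\|^2_{L^2_{x,v}(w_{\alpha,\beta})},
$$
with $w_{\alpha,\beta}=m\la v\ra^{-2|\alpha|s}$ in the case $\EE=H^n_x\HH^\ell_v(m)$ and $w_{\alpha,\beta}=m\la v\ra^{-2|\alpha|s-2|\beta|s}$ in the case $\EE=\HH^n_x\HH^\ell_v(m)$, and $\kappa\in(0,1)$ a small constant fixed at the end; $F$ is equivalent to $\|\cdot\|^2_\EE$ once $\kappa$ is fixed. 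Applying $\partial^\alpha_v\partial^\beta_x$ to $\partial_t h=\BB h$ and using that $\partial^\beta_x$ commutes with $\BB$, one gets $\partial_t h_{\alpha,\beta}=\BB h_{\alpha,\beta}+[\partial^\alpha_v,-v\cdot\nabla_x]\partial^\beta_x h+[\partial^\alpha_v,\LL]\partial^\beta_x h+[\partial^\alpha_v,-M\chi_R]\partial^\beta_x h$. For the diagonal term $\langle\BB h_{\alpha,\beta},h_{\alpha,\beta}\rangle_{L^2_{x,v}(w_{\alpha,\beta})}$, the transport part vanishes after integration by parts in $x$ (the weight depends only on $v$), and the rest is treated exactly as in Lemma~\ref{lem:BdissipL2}: since $w_{\alpha,\beta}=\la v\ra^{\tilde k}$ with $\tilde k\ge k-2ns>\gamma/2+3+2s$ — here the hypothesis $k>\gamma/2+3+2(n+1)s$ enters, using $|\alpha|\le\ell\le n$, resp. $|\alpha|+|\beta|\le n$ — Lemma~\ref{lem:coercive} applies with weight $w_{\alpha,\beta}$, and choosing $M,R$ large (uniformly, only finitely many weights occurring) yields
$$
\langle\BB h_{\alpha,\beta},h_{\alpha,\beta}\rangle_{L^2_{x,v}(w_{\alpha,\beta})}\le a\,\|h_{\alpha,\beta}\|^2_{L^2_{x,v}(\la v\ra^{\gamma/2}w_{\alpha,\beta})}-c_1\,\|h_{\alpha,\beta}\|^2_{L^2_xH^{s,*}_v(w_{\alpha,\beta})}.
$$

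The commutator terms are absorbed into these gains. The transport commutator $[\partial^\alpha_v,-v\cdot\nabla_x]\partial^\beta_x h=-\sum_{j:\alpha_j\ge1}\alpha_j\,h_{\alpha-e_j,\beta+e_j}$ is the key point: because $n\ge\ell$, the index $(\alpha-e_j,\beta+e_j)$ still lies in the index set of $F$ (one checks $|\beta|+1\le n$ from $|\alpha|+|\beta|\le n$ and $|\alpha|\ge1$), and in both weight conventions $w_{\alpha-e_j,\beta+e_j}\ge w_{\alpha,\beta}$; thus $|\langle h_{\alpha-e_j,\beta+e_j},h_{\alpha,\beta}\rangle_{L^2(w_{\alpha,\beta})}|\le\|h_{\alpha-e_j,\beta+e_j}\|_{L^2(w_{\alpha-e_j,\beta+e_j})}\|h_{\alpha,\beta}\|_{L^2(w_{\alpha,\beta})}$, and after weighting by $\kappa^{|\alpha|}$ and Young's inequality the factor at order $|\alpha|$ is absorbed into $\kappa^{|\alpha|}c_1\|h_{\alpha,\beta}\|^2_{H^{s,*}_v}$ (using $\gamma>0$, so $\|\cdot\|_{L^2(w)}\le\|\cdot\|_{L^2(\la v\ra^{\gamma/2}w)}\le\|\cdot\|_{H^{s,*}_v(w)}$), while the factor $\|h_{\alpha-e_j,\beta+e_j}\|^2$, which occurs in $F$ with the larger coefficient $\kappa^{|\alpha|-1}$, is absorbed into the corresponding lower-order coercive term provided $\kappa$ is small. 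The commutator $[\partial^\alpha_v,\LL]\partial^\beta_x h=\sum_{0<\alpha''\le\alpha}\binom{\alpha}{\alpha''}\big(Q(\partial^{\alpha''}_v\mu,\partial^{\alpha-\alpha''}_v\partial^\beta_x h)+Q(\partial^{\alpha-\alpha''}_v\partial^\beta_x h,\partial^{\alpha''}_v\mu)\big)$ is controlled by the homogeneous estimates of Lemma~\ref{lem:CH} and Lemma~\ref{lem:nonlinearhom}: since each $\partial^{\alpha''}_v\mu$ is a Gaussian-type function with all moments, one bounds $\langle\cdot,h_{\alpha,\beta}\rangle_{L^2(w_{\alpha,\beta})}$ by $C\|h_{\alpha-\alpha'',\beta}\|_{H^{s,*}_v(w)}\|h_{\alpha,\beta}\|_{H^{s,*}_v(w_{\alpha,\beta})}$ up to harmless weight adjustments — and the weight losses built into the definition of $\EE$ are precisely what make these estimates close — then absorbs by Young's inequality into the coercive terms at orders $|\alpha|$ and $|\alpha-\alpha''|<|\alpha|$, again using the smallness of $\kappa$. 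Finally $[\partial^\alpha_v,-M\chi_R]\partial^\beta_x h$ is a finite sum of terms $\partial^{\alpha''}_v\chi_R\cdot\partial^{\alpha-\alpha''}_v\partial^\beta_x h$ with $|\alpha''|\ge1$, supported in $\{|v|\le2R\}$, hence $\lesssim_{M,R}\sum_{|\alpha'|<|\alpha|}\|h_{\alpha',\beta}\|_{L^2}$, and is absorbed likewise.

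Collecting everything and fixing the constants in order ($\delta$ as in Lemma~\ref{lem:coercive}, then $M,R$, then the Young parameters, then $\kappa$), one obtains the differential inequality $\frac{d}{dt}F(\SS_\BB(t)h)\le 2a\,F(\SS_\BB(t)h)$ (all error terms being absorbed into the negative coercive contributions, and using $a<0$ together with $\la v\ra^{\gamma/2}w\ge w$); by Grönwall $F(\SS_\BB(t)h)\le e^{2at}F(h)$, and the equivalence $F\simeq\|\cdot\|^2_\EE$ gives $\|\SS_\BB(t)h\|_\EE\lesssim e^{at}\|h\|_\EE$, which is the statement (the generation of the semigroup on $\EE$ and the justification of the energy identities being classical, via the same splitting and a regularization argument). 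The main obstacle is the bookkeeping of the transport commutator $[\partial^\alpha_v,v\cdot\nabla_x]$: one must ensure that the term it produces still lands in the index set of $F$ — which forces $n\ge\ell$ — and is measured with the correct weight — which is the reason for the weight losses $\la v\ra^{-2|\alpha|s}$, and additionally $\la v\ra^{-2|\beta|s}$ in the twisted spaces — while simultaneously checking that the $\LL$-commutators, which carry the non-cut-off angular singularity, remain absorbable by the coercivity gain of Lemma~\ref{lem:coercive}.
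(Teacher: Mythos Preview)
Your proposal is correct and follows essentially the same approach as the paper: an equivalent norm with small parameters $\kappa^{|\alpha|}$ on the $v$-derivative blocks, the diagonal estimate from Lemma~\ref{lem:BdissipL2} applied with the shifted weights (which is where the hypothesis $k>\gamma/2+3+2(n+1)s$ enters), and the three commutator contributions (transport, $\LL$, and $M\chi_R$) absorbed by Young's inequality into the coercivity gains at the same and lower orders. The paper only writes out the case $n=\ell=1$ in detail and chooses the constants in a slightly different order (the small parameters $\eta,\zeta$ first, then $M,R$ large, so that part of the transport commutator is absorbed by $M\chi_R$ rather than by $\kappa$), but the structure and the key observation --- that the weight drop $\la v\ra^{-2|\alpha|s}$ exactly compensates the $\la v\ra^{2s}$ loss coming from Lemma~\ref{lem:nonlinearhom}(i) in the $\LL$-commutator --- are the same.
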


{\begin{proof}
The case $n=\ell=0$ is nothing but Lemma~\ref{lem:BdissipL2}. Let us notice that the operator~$\nabla_x$ commutes with the operator $\BB$,
	the treatment of $x$-derivatives is thus simple and one can always reduce to the case $n = \ell$. Moreover, we only handle the case $\EE=H^{n}_{x} \HH^{\ell}_{v}(m)$, the other case being similar.
	We now deal with the case $n=\ell=1$, the higher-order derivatives being treatable in the same way. To do that, we introduce the following norm on $H^1_x \HH^1_v(m)$:
		$$
		\Nt h\Nt^2_{H^1_x \HH^1_v(m)} := \|h\|^2_{L^2_{x,v}(m)} + \|\nabla_x h\|^2_{L^2_{x,v}(m)} +\zeta \, \|\nabla_v h\|^2_{L^2_{x,v}(m_0)}
		$$
	where $\zeta>0$ is a positive constant to be chosen later and $m_0(v) := \langle v \rangle^{-2s} \, m(v) = \la v \ra^{k_0}$ with $k_0:=-2s+k$.
	This norm is equivalent to the classical norm on $H^1_x \HH^1_v(m)$ defined through~\eqref{eq:norm}.
	
	 \noindent In the subsequent proof, $\eta$ is a positive constant that  will be fixed later on. Let us introduce~$h_t := \SS_{\BB}(t) h_0$ with $h_0 \in H^1_x \HH^1_v(m)$.

	Coming back to the proof of Lemma~\ref{lem:BdissipL2}, { thanks to \eqref{mainq}, }we have that
		\beqn \label{eq:ht}
		\bal
		\forall \, t \geq0, \quad &{1 \over 2} {\d \over \d t} \|h_t\|^2_{L^2_{x,v}(m)} \le
		- c_0 \, \delta^{2-2s} \,\| h_t\|^2_{L^2_x { H}^{s,*}_v (m)} \\
		&\quad +\int_{\T^3 \times \R^3} \left(-c_0 \, \delta^{-2s} +C_\delta \langle v \rangle^{-\gamma} - M \chi_R(v)\right) h_t^2 \, m^2 \langle v \rangle^\gamma \,\dv \,\d x.
		\eal
		\eeqn
	Moreover, since the $x$-derivatives commute with $\BB$,
		\beqn \label{eq:dxht}
		\bal
		\forall \, t \geq0, \quad &{1 \over 2} {\d \over \d t} \|\nabla_x h_t\|^2_{L^2_{x,v}(m)} \le
		- c_0 \, \delta^{2-2s} \,\| \nabla_x h_t\|^2_{L^2_x { H}^{s,*}_v (m)} \\
		&\quad +\int_{\T^3 \times \R^3} \left(-c_0 \, \delta^{-2s} +C_\delta \langle v \rangle^{-\gamma} - M \chi_R(v)\right) |\nabla_x h_t|^2 \, m^2 \langle v \rangle^\gamma \,\dv \,\d x.
		\eal
		\eeqn

	Therefore, it remains to consider the $v$-derivatives. In what follows $\partial_x$ and $\partial_v$ stand for $\partial_{x_1}$,$\partial_{x_2}$
	or $\partial_{x_3}$ and $\partial_{v_1}$,$\partial_{v_2}$ or $\partial_{v_3}$, respectively.
	
	\noindent We have
		$$
		\partial_t(\partial_vh_t)= \mathcal B (\partial_vh_t) -\partial_xh_t - M \left(\partial_v \chi_R\right) h_t + Q(h_t, \partial_v \mu)+ Q(\partial_v \mu, h_t),
		$$
	thus, we can split  ${1 \over 2} {\d \over \d t} \|\partial_v h_t\|^2_{L^2_{x,v}(m_0)}$ into five terms, according to the previous computation,
		$$
		 \quad {1 \over 2} {{\d \over \d t}} \|\partial_v h_t\|^2_{L^2_{x,v}(m_0)}:= I_1+\dots+I_5.
		$$
	For the first term we can use again { \eqref{mainq}}, obtaining
		\beqn \label{eq:I1}
		\bal
		\forall \, t \geq0, \quad &I_1 \le
		- c_0 \, \delta^{2-2s} \,\|\partial_v h_t\|^2_{L^2_x { H}^{s,*}_v (m_0)} \\
		&\qquad +\int_{\T^3 \times \R^3} \left(-c_0 \, \delta^{-2s} +C_\delta \langle v \rangle^{-\gamma}
		- M \chi_R(v)\right) |\partial_v h_t|^2 \, m_0^2 \, \langle v \rangle^\gamma \,\dv \,\d x.
		\eal
		\eeqn
	For the second term, we have
		\beqn \label{eq:I2}
		I_2= -  \int_{\T^3 \times \R^3}( \partial_x h_t)\,(\partial_v h_t )\,m_0^2\,\dv \,\d x
		\leq \frac 1 2  \|\partial_v h_t\|^2_{L^2_{x,v}(m_0)} + \frac 1 {2} \|\partial_x h_t\|^2_{L^2_{x,v}(m_0)}.
		\eeqn
	The term $I_3$ is simply handled as follows:
		\beqn \label{eq:I3}
		\begin{aligned}
		I_3 &\lesssim {M \over R} \int_{\T^3 \times \R^3} \mathds{1}_{R \le |v| \le 2R} \, h_t \, (\partial_v h_t) \, m_0^2  \,\dv \,\d x\\
		&\lesssim {M \over R} \int_{\T^3 \times \R^3} \mathds{1}_{R \le |v| \le 2R} \, h_t^2\, m_0^2  \,\dv\,\d x
		+ {M \over R} \int_{\T^3 \times \R^3} \mathds{1}_{R \le |v| \le 2R} \, (\partial_v h_t)^2 \, m_0^2  \,\dv\,\d x.
		\end{aligned}
		\eeqn
	Let us now consider $I_4$. Using Lemma~\ref{lem:nonlinearhom}-(i), we have
		\beqn \label{eq:I4}
		\begin{aligned}
		I_4 &= \int_{\T^3} \langle Q(h_t,\partial_v \mu), \partial_v h_t \rangle_{L^2_v(m_0)} \,\d x
		\lesssim \|h_t\|_{L^2_{x,v}(\langle v \rangle^{\gamma/2} m_0)} \|\partial_v h_t\|_{L^2_{x,v}(\langle v \rangle^{\gamma/2} m_0)} \\
		&\lesssim {1 \over \eta}  \|h_t\|_{L^2_{x,v}(\langle v \rangle^{\gamma/2} m_0)}^2 + \eta \|\partial_v h_t\|_{L^2_{x,v}(\langle v \rangle^{\gamma/2} m_0)}^2.
		\end{aligned}
		\eeqn
	Concerning $I_5$, still using Lemma~\ref{lem:nonlinearhom}-(i), we have:
		\beqn \label{eq:I5}
		\begin{aligned}
			I_5 &= \int_{\T^3} \langle Q(\partial_v \mu, h_t), \partial_v h_t \rangle_{L^2_v(m_0)} \,\d x
			\lesssim \|h_t \|_{L^2_xH^s_v(\langle v \rangle^{\gamma/2+2s} m_0)} \|\partial_v h_t\|_{L^2_xH^s_v(\langle v \rangle^{\gamma/2} m_0)} \\
			&\lesssim {1 \over \eta} \|h_t \|_{L^2_xH^s_v(\langle v \rangle^{\gamma/2} m)}^2 + \eta \|\partial_v h_t\|_{L^2_xH^s_v(\langle v \rangle^{\gamma/2} m_0)}^2.
		\end{aligned}
		\eeqn
	Before concluding, let us remark that from Lemma~\ref{lem:anis},
	$$
	\|h\|^2_{L^2_xH^{s,*}_v(m)} \gtrsim \delta^{2-2s} \|h\|^2_{L^2_xH^s_v(\la v \ra^{\gamma/2}m)}.
	$$
	Combining this fact with estimates~\eqref{eq:ht},~\eqref{eq:dxht} and~\eqref{eq:I1} to~\eqref{eq:I5}, we get:
		$$
		\bal
			&\frac 12\frac d {dt }\Nt h_t\Nt^2_{H^1_x \HH^1_v(m)} =
			\frac 12\frac d {dt } \|h_t\|^2_{L^2_{x,v}(m)} +\frac 12\frac d {dt } \|\nabla_x h_t\|^2_{L^2_{x,v}(m)} +\zeta \,\frac 12\frac d {dt } \|\nabla_v h_t\|^2_{L^2_{x,v}(m_0)}\\
			&\qquad \leq -{c_0 \over 2} \, \delta^{2-2s} \left( \|h_t\|^2_{L^2_xH^{s,*}_v(m)} + \|\nabla_x h_t\|^2_{L^2_xH^{s,*}_v(m)} + \zeta \|\nabla_v h_t\|^2_{L^2_xH^{s,*}_v(m_0)} \right) \\
			&\qquad \quad + \left(- {c_0 \over 2} \, \delta^{4-4s}+ \frac{C\zeta}{\eta} \right) \, \| h_t\|^2_{L^2_x {H}^s_v (\langle v \rangle^{\gamma /2} m)} \\
			 &\qquad \quad +\zeta \left(- {c_0 \over 2} \, \delta^{4-4s}+ C\eta \right) \, \| \nabla_vh_t\|^2_{L^2_x {H}^s_v (\langle v \rangle^{\gamma /2} m_0)} \\
			&\qquad \quad +\int_{\T^3 \times \R^3} \bigg(-c_0 \, \delta^{-2s} +C_\delta \langle v \rangle^{-\gamma} 
			\\
			&\qquad \qquad \qquad \qquad \qquad \qquad \qquad+ {C \zeta M \over R} \mathds{1}_{R \le |v| \le 2R} \langle v \rangle^{-\gamma-4s} - M \chi_R(v)\bigg) h_t^2 \, m^2 \langle v \rangle^\gamma \,\dv \,\d x \\
			&\qquad \quad +\int_{\T^3 \times \R^3} \bigg(-c_0 \, \delta^{-2s} +C_\delta \langle v \rangle^{-\gamma} + C \zeta \langle v \rangle^{-\gamma-4s} - M \chi_R(v)\bigg) |\nabla_x h_t|^2 \, m^2 \langle v \rangle^\gamma \,\dv \,\d x \\
			&\qquad \quad +\zeta \int_{\T^3 \times \R^3} \bigg(-c_0 \, \delta^{-2s} +C_\delta \langle v \rangle^{-\gamma} + C \langle v \rangle^{-\gamma}
			\\
			&\qquad \qquad \qquad \qquad \qquad \qquad \qquad+ {C M \over R} \mathds{1}_{R \le |v| \le 2R} \langle v \rangle^{-\gamma}- M \chi_R(v)\bigg) |\nabla_v h_t|^2 \, m_0^2 \, \langle v \rangle^\gamma \,\dv \,\d x
		\eal
		$$
	for a constant $C>0$. Consider now $a<0$ and $\delta$ small enough such that $c_0 \, \delta^{-2s} >-a$. We can then choose, in this order, $\eta$ and $\zeta$ small enough and then $M$ and $R$ large enough such that
		\begin{align*}
		\frac 12\frac d {dt }\Nt h_t\Nt^2_{H^1_x \HH^1_v(m)} &\le   a \, \| h_t\|^2_{L^2_{x,v} (\langle v \rangle^{\gamma /2} m)}
				   + a \, \| \nabla_x h_t\|^2_{L^2_{x,v} (\langle v \rangle^{\gamma /2} m)} + \zeta a \, \| \partial_v h_t\|^2_{L^2_{x,v} (\langle v \rangle^{\gamma /2} m_0)} \\
				   &\quad  -c_1  \left( \|h_t\|^2_{L^2_xH^{s,*}_v(m)} + \|\nabla_x h_t\|^2_{L^2_xH^{s,*}_v(m)} + \|\nabla_v h_t\|^2_{L^2_xH^{s,*}_v(m_0)} \right)
	     \end{align*}
	     for some $c_1>0$, which concludes the proof.
	\end{proof}}

{ \begin{rem}
Notice that if the constants $M$ and $R$ are chosen so that the conclusion of the lemma holds in $\EE = H^n_{x}\HH^\ell_{v}(m)$ or $\EE = \HH^n_x \HH^\ell_v(m)$, then  the conclusion also holds in the spaces $\EE' = H^{n'}_{x}\HH^{\ell'}_{v}(m)$ or $\EE = \HH^{n'}_x \HH^{\ell'}_v(m)$ for any $n'$, $\ell' \le \ell$ and $\ell' \le n'$ with the same constants $M$ and $R$.
\end{rem}}
\medskip
{\subsection{Regularization properties of $\AA S_\BB(t)$} \label{subsec:reg}

{Recall that $\AA$ and $\BB$ are defined in~\eqref{eq:A+B}. In this part, we focus on the regularization properties of the semigroup $\SS_\BB$ which are crucial in order to get a result on the linearized equation. To do that, we first introduce some notations and tools.

 \smallskip
We define the convolution of two semigroups $\SS_1 * \SS_2$ by
$$
(\SS_1 * \SS_2)(t) := \int_0^t \SS_1(\tau) \, \SS_2(t-\tau) \, \d\tau,
$$
and, for $p \in \N^*$, we define $\SS^{(*p)}$ by $ \SS^{(*p)} = \SS * \SS^{(*(p-1))}$ with $\SS^{(*1)} = \SS$. For $\varsigma \in \R^+$ and~$\nu$ a polynomial weight, we also introduce intermediate spaces
$$
X_\varsigma(\nu) := \left[ H^{\lfloor \varsigma \rfloor}_x \HH^{\lfloor \varsigma \rfloor}_v(\nu), H^{\lfloor \varsigma \rfloor+1}_x \HH^{\lfloor \varsigma \rfloor+1}_v(\nu)\right]_{\varsigma-\lfloor \varsigma \rfloor,2}.
$$
Notice that by standard results of interpolation, if $\BB-a$ is hypodissipative in both spaces~$H^{\lfloor \varsigma \rfloor}_x \HH^{\lfloor \varsigma \rfloor}_v(\nu)$ and $H^{\lfloor \varsigma \rfloor+1}_x \HH^{\lfloor \varsigma \rfloor+1}_v(\nu)$, it is also in $X_\varsigma(\nu)$. Notice also that we have the following continuous embeddings:
\beqn \label{eq:embedX}
X_\varsigma(\nu\langle v \rangle^{2(\lfloor \varsigma \rfloor+1)s}) \hookrightarrow H^\varsigma_{x,v}(\nu) \hookrightarrow X_\varsigma(\nu).
\eeqn

Let us now state a lemma on the regularization properties of the semigroup $S_\BB(t)$.
\begin{lem} \label{lem:regSB}
Let $r \in \N^*$, {$k'>(1-\gamma)/2$} and {$k>k'+\gamma+5/2+ 2(\lfloor(r-1)s\rfloor+2)s$}.
We consider $a<0$ and the operator $\BB$ is defined such that the conclusion of Lemma~\ref{lem:BdissipHk} is satisfied in $H^{\lfloor (r-1)s\rfloor+1}_x \HH^{\lfloor (r-1)s\rfloor+1}_v(\langle v \rangle^{k})$.
Then, we have:
$$
\|S_{\BB}(t) h_0\|_{X_{rs}(\langle v \rangle^{k'})} \lesssim {e^{at} \over { 1 \wedge t^{1/2+s}}} \|h_0\|_{X_{(r-1)s}(\la v \ra^{k})}, \quad \forall \, t \ge 0.
$$
\end{lem}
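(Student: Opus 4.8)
The plan is to establish a gain of $s$ derivatives (in both $x$ and $v$) over one "unit" of time-evolution, paying the price of a time-singular factor $t^{-(1/2+s)}$ coming from the anisotropic coercivity (one picks up $t^{-1/2}$ from the elliptic-in-velocity part and an extra $t^{-s}$ from the loss of weight $\langle v\rangle^{-2s}$ needed to close the velocity-derivative estimate, exactly as in Lemma~\ref{lem:BdissipHk}). First I would fix $h\in X_{(r-1)s}(\langle v\rangle^k)$ and set $h_t:=\SS_\BB(t)h$. The starting point is Lemma~\ref{lem:BdissipHk}: for the chosen $a<0$ the operator $\BB-a$ is hypodissipative in $H^{N}_x\HH^{N}_v(\langle v\rangle^k)$ with $N:=\lfloor(r-1)s\rfloor+1$ (this is why the hypothesis on $k$ is phrased in terms of $\lfloor(r-1)s\rfloor+2$), and the differential inequalities obtained in its proof in fact give, for each multi-index $\alpha$ with $|\alpha|\le N$,
$$
\tfrac12\tfrac{d}{dt}\|\partial^\alpha_v h_t\|^2_{L^2_{x,v}(\langle v\rangle^{k-2|\alpha|s})}\le -c_1\|\partial^\alpha_v h_t\|^2_{L^2_xH^{s,*}_v(\langle v\rangle^{k-2|\alpha|s})}+a\|\partial^\alpha_v h_t\|^2_{L^2_{x,v}(\langle v\rangle^{\gamma/2+k-2|\alpha|s})}+\dots,
$$
together with the analogous (cleaner) inequalities for the $x$-derivatives, which commute with $\BB$. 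By Lemma~\ref{lem:anis}, the $H^{s,*}_v$-coercive term controls $\delta^{2-2s}\|\cdot\|^2_{H^s_v(\langle v\rangle^{\gamma/2}\,\cdot)}$, i.e. a genuine gain of $s$ velocity-derivatives with $\langle v\rangle^{\gamma/2}$ of extra weight.

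The core of the argument is then a standard interpolation / time-integration bootstrap (in the spirit of the iterated-convolution estimates of~\cite{GMM*,MM-ARMA}, but carried out here by direct energy estimates). Integrating the inequality for $\partial^\alpha_x\partial^\beta_v h_t$ against the weight $t$ and using Grönwall with the hypodissipativity bound $\|h_t\|_{H^N_x\HH^N_v}\lesssim e^{at}\|h\|_{H^N_x\HH^N_v}$, one gets, for $t\le 1$,
$$
t\,\|h_t\|^2_{H^{N-1}_x\HH^{N-1}_v(\text{gain }H^s)}\lesssim \int_0^t\|h_\tau\|^2_{H^{N-1}_x\HH^{N-1}_v(\text{gain }H^s)}\,d\tau + \text{lower order}\lesssim \|h\|^2_{H^{N-1}_x\HH^{N-1}_v},
$$
after absorbing the commutator/transport cross terms ($I_2,\dots,I_5$ of Lemma~\ref{lem:BdissipHk}) into the coercive term using Young's inequality with small parameters. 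This yields $\|h_t\|_{H^{N-1+s}_{x,v}(\langle v\rangle^{k'})}\lesssim t^{-1/2}e^{at}\|h\|_{H^{N-1}_{x,v}(\langle v\rangle^{k})}$ once one keeps track of weights; the extra $t^{-s}$ appears because the velocity-derivative energy inequality is only coercive with respect to the \emph{weakened} weight $m_0=\langle v\rangle^{-2s}m$, so recovering the full weight on the gained fractional derivative costs another power of $t$. The loss $k\rightsquigarrow k'$ with $k-k'>\gamma+5/2+2(\lfloor(r-1)s\rfloor+2)s$ is precisely what is consumed: $\gamma/2$ per use of Lemma~\ref{lem:nonlinearhom}-(i) on the $Q(\cdot,\mu)$, $Q(\mu,\cdot)$ terms, plus the $\langle v\rangle^{-2|\alpha|s}$ absorbed at each derivative level, plus a bit of room; the condition $k'>(1-\gamma)/2$ guarantees $L^1_v\hookrightarrow L^2_v(\langle v\rangle^{\cdot})$ type estimates close. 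Passing from integer-order spaces $H^{N-1}_x\HH^{N-1}_v\to H^N_x\HH^N_v$ to the real-interpolation spaces $X_{(r-1)s}\to X_{rs}$ is then immediate: the estimate above, read at the two endpoints $\lfloor rs\rfloor$ and $\lfloor rs\rfloor+1$, interpolates by the reiteration theorem, using the embeddings~\eqref{eq:embedX} to match weights, and one checks $rs-\lfloor rs\rfloor$ and $(r-1)s-\lfloor(r-1)s\rfloor$ are compatible.

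I expect the main obstacle to be bookkeeping the weights through the interpolation step combined with the time-singularity: one must verify that the bound $e^{at}/(1\wedge t^{1/2+s})$ is stable under the real interpolation functor $[\cdot,\cdot]_{\theta,2}$ (it is, since $t^{-1/2-s}$ is a fixed multiplier independent of $\theta$), and that the gained weight $\langle v\rangle^{k'}$ on the $X_{rs}$-side is consistent with the weights appearing at \emph{both} interpolation endpoints after the $\langle v\rangle^{-2\lfloor rs\rfloor s}$ and $\langle v\rangle^{-2(\lfloor rs\rfloor+1)s}$ losses — this is exactly what forces the somewhat unwieldy hypothesis $k>k'+\gamma+5/2+2(\lfloor(r-1)s\rfloor+2)s$. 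The estimates on the source terms $Q(h_t,\partial^\gamma_v\mu)$ and $Q(\partial^\gamma_v\mu,h_t)$ arising when differentiating in $v$ are handled exactly as $I_4,I_5$ in Lemma~\ref{lem:BdissipHk} via Lemma~\ref{lem:nonlinearhom}-(i), and the transport cross-term as $I_2$ there; none of these is conceptually new, only the uniform-in-$t$ absorption requires the small-parameter ordering $\eta\ll\zeta\ll 1\ll M,R$ already used in that lemma.
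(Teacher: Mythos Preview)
Your plan has a genuine gap: the energy argument you sketch only produces a gain of $s$ derivatives in $v$, not in $x$. The coercive term in the $\BB$-dissipativity inequalities is $-c_1\|\cdot\|^2_{L^2_xH^{s,*}_v}$, and the fact that $\nabla_x$ commutes with $\BB$ lets you \emph{propagate} $x$-regularity, not \emph{gain} it. So integrating in time with a weight $t$ and invoking the $H^{s,*}_v$ coercivity would at best give
\[
\|h_t\|_{H^{N-1}_x H^{N-1+s}_v(\cdot)}\lesssim t^{-1/2}\|h\|_{H^{N-1}_x \HH^{N-1}_v(\cdot)},
\]
which is not enough: the target space $X_{rs}(\langle v\rangle^{k'})$ is an interpolation space between $H^{\lfloor rs\rfloor}_x\HH^{\lfloor rs\rfloor}_v$ and $H^{\lfloor rs\rfloor+1}_x\HH^{\lfloor rs\rfloor+1}_v$, so you need a genuine fractional gain in $x$ as well. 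That gain is hypoelliptic---it comes from the interplay between the transport $v\cdot\nabla_x$ and the diffusion in $v$---and is not visible in the straight energy inequalities of Lemma~\ref{lem:BdissipHk}. Your explanation of the exponent $1/2+s$ (an extra $t^{-s}$ coming from a weight loss $m\to m_0$) is also not right: losing polynomial weight does not by itself generate a power of $t$.

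The paper does not attempt a direct energy proof here. It imports the hypoelliptic smoothing from the companion paper~\cite{HTT2*}: one introduces a second splitting $\Lambda=\widetilde\Lambda_1+\widetilde\Lambda_2$ (different from $\AA+\BB$), where $\widetilde\Lambda_1$ carries the full non-cutoff singular part and, by pseudo-differential methods, its semigroup satisfies
\[
\|S_{\widetilde\Lambda_1}(t)h\|_{H^{rs}_{x,v}(\langle v\rangle^q)}\lesssim t^{-(1/2+s)}\|h\|_{H^{(r-1)s}_{x,v}(\langle v\rangle^q)},\qquad t\in(0,1],
\]
while $\widetilde\Lambda_2$ is bounded from $H^\varsigma_{x,v}(\langle v\rangle^q)$ to $H^\varsigma_{x,v}(\langle v\rangle^{q'})$ whenever $q>q'+\gamma+5/2$. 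Writing $\BB=\widetilde\Lambda_1+(\widetilde\Lambda_2-\AA)$ and applying Duhamel,
\[
S_\BB(t)=S_{\widetilde\Lambda_1}(t)+\big(S_{\widetilde\Lambda_1}*(\widetilde\Lambda_2-\AA)S_\BB\big)(t),
\]
one bounds the first piece directly and the convolution using the regularization of $S_{\widetilde\Lambda_1}$, the boundedness of $\widetilde\Lambda_2-\AA$ (this is where the loss $k\to k'$ with $k-k'>\gamma+5/2+\dots$ is spent), and the hypodissipativity of $S_\BB$ in $X_{(r-1)s}(\langle v\rangle^k)$ from Lemma~\ref{lem:BdissipHk}; the embeddings~\eqref{eq:embedX} mediate between $X_\varsigma$ and $H^\varsigma_{x,v}$. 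This gives the estimate on $(0,1]$, and for $t\ge 1$ one writes $S_\BB(t)=S_\BB(1)S_\BB(t-1)$ and uses hypodissipativity again. The point is that the $x$-smoothing and the precise exponent $1/2+s$ are entirely contained in the black-box estimate for $S_{\widetilde\Lambda_1}$ from~\cite{HTT2*}; nothing in the present paper's energy machinery produces them.
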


\begin{proof}
\noindent {\it Step 1.} In the first step, we focus on the short time regularization properties of~$S_{\BB}(t)$: We are going to prove that
$$
\|S_{\BB}(t) h_0\|_{X_{rs}(\langle v \rangle^{k'})} \lesssim {1 \over { t^{1/2+s}}} \|h_0\|_{X_{(r-1)s}(\la v \ra^{k})}, \quad \forall \, t \in(0,1].
$$
This estimate yields the conclusion of the lemma for short times $t \in (0,1]$. Recalling the decomposition~\eqref{decomptilde}, we have from Proposition~\ref{resultHs} that for $q \ge 0$,
\beqn \label{eq:lambdatilde1}
\|S_{\widetilde{\Lambda}_1}(t) h_0\|_{H^{rs}_{x,v}(\la v \ra^q)} \lesssim {1 \over t^{1/2+s}} \|h_0\|_{H^{(r-1)s}_{x,v}(\la v \ra^{q})}, \quad \forall \, t \in (0,1]
\eeqn
and for any $\varsigma \in \R^+$
\beqn \label{eq:lambdatilde2}
\|\widetilde{\Lambda}_2 h\|_{H^\varsigma_{x,v}(\la v \ra^{q'})} \lesssim \|h\|_{H^\varsigma_{x,v} (\langle v \rangle^{q})}, \quad q>q'+\gamma+5/2.
\eeqn
We now show how to propagate the regularization properties of $S_{\widetilde{\Lambda}_1}(t)$ to $S_\BB(t)$, using the Duhamel formula. We write:
$$
\BB= \widetilde{\Lambda}_1+(\widetilde{\Lambda}_2-\AA)
$$
so that we have:
$$
S_\BB(t) = S_{\widetilde{\Lambda}_1}(t) + \left(S_{\widetilde{\Lambda}_1}* (\widetilde{\Lambda}_2-\AA) S_\BB\right)(t).
$$
For the first term, using~\eqref{eq:embedX} and~\eqref{eq:lambdatilde1}, we have:
\begin{align*}
&\|S_{\widetilde{\Lambda}_1}(t) h_0\|_{X_{rs}(\langle v \rangle^{k'})}
\lesssim \|S_{\widetilde{\Lambda}_1}(t) h_0\|_{H^{rs}_{x,v}(\langle v \rangle^{k'})}
\lesssim {1 \over t^{1/2+s}} \|h_0\|_{H^{(r-1)s}_{x,v}(\la v \ra^{k'})}\\
&\quad \qquad \lesssim  {1 \over t^{1/2+s}} \|h_0\|_{X_{(r-1)s}(\langle v \rangle^{k'+ 2 (\lfloor (r-1)s \rfloor+1)s})} \lesssim {1 \over t^{1/2+s}} \|h_0\|_{X_{(r-1)s}(\langle v \rangle^k)}.
\end{align*}
For the second one, we introduce $k''$ such that
$$
k\ge k''+2(\lfloor (r-1)s \rfloor+1)s >k'+\gamma+5/2+2(\lfloor (r-1)s \rfloor+1)s
$$
and we use~\eqref{eq:embedX},~\eqref{eq:lambdatilde1} and~\eqref{eq:lambdatilde2}:
\begin{align*}
&\left\|\left(S_{\widetilde{\Lambda}_1}* (\widetilde{\Lambda}_2-\AA) S_\BB\right)(t) h_0\right\|_{X_{rs}(\langle v \rangle^{k'})}
\lesssim \int_0^t \left\|S_{\widetilde{\Lambda}_1}(t-\tau)(\widetilde{\Lambda}_2-\AA) S_\BB(\tau) h_0\right\|_{H^{rs}_{x,v}(\langle v \rangle^{k'})} \, \d\tau \\
&\qquad \lesssim \int_0^t {1 \over (t-\tau)^{1/2+s}} \left\|(\widetilde{\Lambda}_2-\AA) S_\BB(\tau) h_0\right\|_{H^{(r-1)s}_{x,v}(\langle v \rangle^{k'})} \, \d\tau \\
&\qquad \lesssim  \int_0^t {1 \over (t-\tau)^{1/2+s}} \left\|S_\BB(\tau) h_0\right\|_{H^{(r-1)s}_{x,v}(\langle v \rangle^{k''})} \, \d\tau \lesssim \|h_0\|_{X_{(r-1)s}(\langle v \rangle^k)}.
\end{align*}
\noindent {\it Step 2.} In this step, we use Lemma~\ref{lem:BdissipHk} and interpolation combined with the previous estimates for short times to prove the final estimate which holds for all times. If $t \ge 1$, we have
\begin{align*}
&\|S_\BB(t){h_0}\|_{X_{rs}(\la v \ra^{k'})} \\
&\qquad = \|S_\BB(1) S_\BB(t-1){h_0}\|_{X_{rs}(\la v \ra^{k'})}
\lesssim \|S_\BB(t-1) {h_0}\|_{X_{(r-1)s}(\langle v \rangle^k)} \lesssim e^{at} \|{h_0}\|_{X_{(r-1)s}(\langle v \rangle^k)},
\end{align*}
which concludes the proof.
\end{proof}

To apply Theorem~2.13 from~\cite{GMM*}, we study the regularization properties of $(\AA\SS_{\BB})^{(*p)}$ for~$p \in \N$ in the following corollary. We recall that the ``large'' space $\EE$ is given by~\eqref{def:EE} and the associated ``small'' one by $E=H^{\max(1,n)}_{x,v}(\mu^{-1/2})$.

Let $a<-\lambda_0$ where $\lambda_0>0$ is the spectral gap of $\Lambda$ on~$E$ (see \eqref{eq:spectralgapE}).
We then consider $\BB$ such that the conclusion of Lemma~\ref{lem:BdissipHk} is satisfied in $H^{\max(1,n)}_x\HH^{\max(1,n)}_v(m)$ (resp. $\HH^{\max(1,n)}_x\HH^{\max(1,n)}_v(m)$) if $\EE=H^{n}_x\HH^\ell_v(m)$ (resp. $\EE=\HH^{n}_x\HH^\ell_v(m)$). Let us mention that it in particular implies that the conclusion of Lemma~\ref{lem:BdissipHk} is also satisfied in $\EE$ and the one of Lemma~\ref{lem:BdissipL2} is also true in~$L^2_{x,v}(m)$.

\begin{cor} \label{cor:ASBreg}
There exists $p \in \N$ such that
$$
\|(\AA\SS_{\BB})^{(*p)}(t)h_0\|_E \lesssim e^{a t} \|h_0\|_\EE, \quad \forall \, t \ge 0.
$$
\end{cor}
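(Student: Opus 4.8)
The plan is to obtain the estimate by iterating the short-time regularization estimate of Lemma~\ref{lem:regSB} along a chain of function spaces, using the bounded operator $\AA=M\chi_R$ — multiplication by a smooth function of $v$ supported in $\{|v|\le 2R\}$ — in a double role: at each step it restores the polynomial weight that $\SS_\BB$ loses, and at the very last step it turns the polynomial weight into the Gaussian weight of the target space $E$. This works because on the compact set $\{|v|\le 2R\}$ all the weights $\langle v\rangle^{k'}$, $\langle v\rangle^{k}$, their ``twisted'' variants and $\mu^{-1/2}$ are comparable, so $\AA$ maps $X_\varsigma(\langle v\rangle^{k'})$ boundedly into $X_\varsigma(\langle v\rangle^{k})$ and also $H^{j}_x\HH^{j}_v(m)$ boundedly into $H^{j}_{x,v}(\mu^{-1/2})$. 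Throughout I treat $\EE=H^n_x\HH^\ell_v(m)$; the case $\EE=\HH^n_x\HH^\ell_v(m)$ is identical after replacing $H^j_x\HH^j_v$ by $\HH^j_x\HH^j_v$.

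I would fix an auxiliary rate $a_0\in(a,-\lambda_0)$, set $p_1:=\lceil \max(1,n)/s\rceil$ and take $p:=p_1+1$. Observe that $p_1 s\in[\max(1,n),\max(1,n)+1)$, so $\lfloor p_1 s\rfloor=\max(1,n)$, and that $(p_1-1)s<\max(1,n)$, so $\lfloor (r-1)s\rfloor+1\le \max(1,n)$ for every $1\le r\le p_1$. Choosing $k':=(1-\gamma)/2+\epsilon$ with $\epsilon>0$ small, the hypothesis $k>\gamma/2+3+2(\max(1,n)+1)s$ from~\eqref{def:EE} guarantees both $k'>(1-\gamma)/2$ and $k>k'+\gamma+5/2+2(\lfloor (r-1)s\rfloor+2)s$ for all $r\le p_1$; and the dissipativity hypothesis of the corollary (via Lemma~\ref{lem:BdissipHk}) provides hypodissipativity of $\BB-a_0$ in every space $H^{j}_x\HH^{j}_v(m)$ with $j\le\max(1,n)$. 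Hence Lemma~\ref{lem:regSB} is applicable at each of the first $p_1$ steps (indices $r=1,\dots,p_1$) and Lemma~\ref{lem:BdissipHk} is applicable in $H^{\max(1,n)}_x\HH^{\max(1,n)}_v(m)$ at the last step.

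Next I would expand the convolution over the simplex,
$$
(\AA\SS_\BB)^{(*p)}(t)h=\int_{\substack{\tau_1+\cdots+\tau_p=t\\ \tau_j>0}}\AA\SS_\BB(\tau_1)\cdots\AA\SS_\BB(\tau_p)\,h\;d\tau_1\cdots d\tau_{p-1},
$$
and follow the chain of spaces starting from $h\in\EE\hookrightarrow L^2_{x,v}(m)=X_0(m)$. The $p_1$ innermost factors $\AA\SS_\BB(\tau_p),\dots,\AA\SS_\BB(\tau_2)$ carry $X_0(\langle v\rangle^{k})$ successively through $X_s(\langle v\rangle^{k}),X_{2s}(\langle v\rangle^{k}),\dots$ up to $X_{p_1 s}(\langle v\rangle^{k})\hookrightarrow H^{\max(1,n)}_x\HH^{\max(1,n)}_v(m)$, each $\SS_\BB(\tau_j)$ contributing a factor $e^{a_0\tau_j}(1\wedge\tau_j^{1/2+s})^{-1}$ by Lemma~\ref{lem:regSB} and each following $\AA$ restoring the weight $\langle v\rangle^{k}$; then the outermost factor $\AA\SS_\BB(\tau_1)$ sends $H^{\max(1,n)}_x\HH^{\max(1,n)}_v(m)$ into $H^{\max(1,n)}_{x,v}(\mu^{-1/2})=E$, with $\SS_\BB(\tau_1)$ contributing only $e^{a_0\tau_1}$ by Lemma~\ref{lem:BdissipHk} and $\AA$ bounded. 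Writing $\phi(\sigma):=(1\wedge\sigma^{1/2+s})^{-1}$, multiplying these bounds and integrating over the simplex gives
$$
\|(\AA\SS_\BB)^{(*p)}(t)h\|_E\lesssim \|h\|_\EE\,e^{a_0 t}\int_0^t\phi^{*p_1}(\sigma)\,d\sigma .
$$
Since $\phi(\sigma)\sim\sigma^{-(1/2+s)}$ near $0$ with exponent $>-1$ (this is where $s<1/2$ is used), $\phi^{*p_1}$ is locally integrable, so $\int_0^t\phi^{*p_1}$ is bounded near $t=0$, while it grows at most polynomially for large $t$; hence $e^{a_0 t}\int_0^t\phi^{*p_1}\lesssim e^{a_0 t}(1+t^{p_1})\lesssim e^{at}$ because $a<a_0<0$, which is the claim.

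Most of the work is bookkeeping — tracking regularity indices and weights along the chain so that Lemmas~\ref{lem:regSB} and~\ref{lem:BdissipHk} genuinely apply — and it is precisely this that forces the condition on $k$ in~\eqref{def:EE}, which turns out to match exactly what is needed. The only structural subtlety, and the point I expect to require the most care, is the treatment of the time singularity at $t=0$: one application of $\SS_\BB$ produces the borderline-integrable factor $\sigma^{-(1/2+s)}$, and it is exactly the convolution structure of $(\AA\SS_\BB)^{(*p)}$ (one extra, non-regularizing factor is already enough) together with $s<1/2$ that renders the time kernel bounded. This is the same mechanism underlying the enlargement theorem~\cite[Theorem~2.13]{GMM*} that this corollary is designed to feed.
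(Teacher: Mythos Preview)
Your approach is essentially the paper's: iterate Lemma~\ref{lem:regSB} along the chain $X_0(m)\to X_s(m)\to\cdots\to X_{p_1s}(m)\hookrightarrow H^{\max(1,n)}_x\HH^{\max(1,n)}_v(m)$, using that $\AA$ is a compactly-supported multiplication to restore the weight at each step and to pass to $\mu^{-1/2}$ at the end. The paper simply packages the time-integral bookkeeping into a citation of \cite[Lemma~2.17]{GMM*}, whereas you unpack it by hand; this is a presentational difference, not a different route.

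There is, however, one genuine slip in your argument. You choose $a_0\in(a,-\lambda_0)$, i.e.\ $a<a_0<0$, and then claim $e^{a_0 t}(1+t^{p_1})\lesssim e^{at}$. This is false: since $a-a_0<0$, the right-hand side decays faster than the left and the polynomial factor is never absorbed. The inequality you want requires $a_0<a$, so that $e^{(a_0-a)t}(1+t^{p_1})$ is bounded. Concretely, fix any $a_0<a$, choose $\BB$ (i.e.\ $M,R$) via Lemma~\ref{lem:BdissipHk} for this stronger rate $a_0$ (which the corollary's setup permits), run your chain with factors $e^{a_0\tau_j}$, and then $e^{a_0 t}(1+t^{p_1})\lesssim e^{at}$ holds. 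With this correction, your argument is complete and matches the paper's proof.
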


\begin{proof}
Let us treat the case $\EE=L^2_{x,v}(m)$ and $E=H^1_{x,v}(\mu^{-1/2})$ which is indicative of all the difficulties since we need to regularize both in space and velocity variables. We consider~$r_0 \in \N^*$ the smallest positive integer such that $\lfloor r_0s \rfloor=1$. Using then the fact that~$\AA$ is a truncation operator, Lemma~\ref{lem:BdissipHk} and Lemma~\ref{lem:regSB}, we get that for any $1 \le r \le r_0$,
$$
\|(\AA S_\BB)(t)\|_{\BBB(X_{(r-1)s}(m) , X_{rs}(m))} \lesssim {e^{at} \over {t^{1/2+s}\wedge 1}}.
$$
To conclude, we use Lemmas~\ref{lem:BdissipL2},~\ref{lem:BdissipHk} combined with the last estimate. Indeed, all those results allow us to use the criterion given in~\cite[Lemma~2.17]{GMM*} and gives us the conclusion.
\end{proof}

}

\medskip

\subsection{End of the proof of Theorem~\ref{theo:extension}}
Thanks to the estimates proven in the previous subsections, we now turn to the proof of Theorem~\ref{theo:extension}.
Let $\EE$ be one of the admissible space~\eqref{def:EE} and $E=H^{\max(1,n)}_{x,v}(\mu^{-1/2})$ so that in all the cases, we have $E \subset \EE$ and we already have the decay of the semigroup~$\SS_\Lambda(t)$ in~$E$ from Theorem~\ref{theo:gapE}. We then apply Theorem~2.13 from~\cite{GMM*} whose assumptions are fulfilled thanks to Lemmas~\ref{lem:BdissipL2},~\ref{lem:BdissipHk} and Corollary~\ref{cor:ASBreg}. \qed}

\bigskip
\section{Cauchy theory for the Boltzmann equation} \label{sec:nonlinear}
\setcounter{equation}{0}
\setcounter{theo}{0}

This section is devoted to the proof of Theorem~\ref{main1}. 
The idea is to prove that, using suitable norms, there exists a neighborhood of the equilibrium in which the linear part of the equation is dominant and thus dictates the dynamic.
Consequently, taking an initial datum close enough to the equilibrium, one can construct solutions to the equation and prove exponential stability.

\medskip
\subsection{Functional spaces}

In what follows, we use notations of Subsection~\ref{subsec:nonhom}.
More precisely, we define the spaces $\mathbf X$, $\mathbf Y$, $\mathbf Y^*$, ${\bf \bar Y}$ and $\mathbf Y'$ as in~\eqref{eq:XY} and~\eqref{eq:Y'} with a weight
{
$$
m(v) = \la v \ra^k, \quad {k>{21 \over 2}+ \gamma+22s}.
$$}
Similarly, for $i=0,\dots,3$, we define the spaces $\mathbf X_i$, $\mathbf Y_i$, $   {\bf \bar Y}_i$ and $\mathbf Y'_i$ as in~\eqref{eq:XY} and~\eqref{eq:Y'} associated to the weights $m_i(v) = \langle v \rangle^{k_i}$.
The exponents $k_0$ and $k_1$ satisfy the following conditions:
{$$
k_0:=k-2s \quad \text{and} \quad 8+14s< k_1 < k_0-\gamma - {5\over 2}-6s.
$$}
Concerning $k_2$ and $k_3$, we set:
{$$
k_2 := k_1-2s \quad \text{and} \quad  4-\gamma+{3 \over 2} + 6s < k_3<k_2-\gamma - {5 \over 2} -6s.
$$}

\begin{rem} \label{rem:weights}
Notice first that
$$
k>k_0>k_1>k_2>k_3.
$$
Let us also comment briefly the conditions imposed on the weights and explain the introduction of so many spaces.
\begin{itemize}[leftmargin=*]
\item First, in the proof of Proposition~\ref{prop:conv}, we need to be able to apply the result from Proposition~\ref{prop:dissipative} in $\mathbf X_1$, this explains the introduction of the spaces $\mathbf X_2$ and $\mathbf X_3$.
\item The last condition
$$
k_3>4-\gamma+{3 \over 2} + 6s
$$
comes from the fact that we want to apply Theorem~\ref{theo:extension} and Lemma~\ref{lem:nonlinearnonhom} in $X_3$.
\item In our argument explained in the two next subsections, there are two levels in which we have a loss of weight. The first one comes from the regularization estimate~\eqref{eq:regv} ($m_0$ to $m_1$ and $m_2$ to $m_3$), which explains the conditions: $k_1 < k_0-\gamma - 5/2-6s$ and~$k_3<k_2-\gamma - 5/2-6s$. The second one comes from the nonlinear estimates in Lemma~\ref{lem:nonlinearnonhom} ($m$ to $m_0$ and $m_1$ to $m_2$),  which explains the conditions: $k_0:=k-2s$ and~$k_2 := k_1-2s$ (a key element is that we have $\| f \|_{\bf \bar{Y}_0} \lesssim \| f \|_{\mathbf Y}$ and $\|f\|_{\bf \bar Y_2} \lesssim \|f\|_{\mathbf Y_1}$).
\item The two first conditions
$$
k_1>8+14s
\quad \text{and} \quad
k>\gamma+{21 \over 2}+22s
$$
are then naturally induced.
\end{itemize}
\end{rem}

\medskip
\subsection{Dissipative norm for the whole linearized operator}
Before going into the proof of an a priori estimate which is going to be the cornerstone of our Cauchy theory, we introduce a norm which is (better than) dissipative for the whole linearized operator~$\Lambda$.
\begin{prop}\label{prop:dissipative}
	Define for any $\eta >0$ and {any $\lambda_1 < \lambda$ (where $\lambda >0$ is the optimal rate in Theorem~\ref{theo:extension})} the equivalent norm on $X$ for $\Pi_0 h =0$,
		\beqn\label{norm-diss}
			\Nt h \Nt_{\mathbf X}^2 := \eta \| h \|_{\mathbf X}^2 + \int_0^\infty \| \SS_{\Lambda}(\tau) e^{\lambda_1 \tau} h \|_{\mathbf X_1}^2 \, \d\tau.
		\eeqn
	Then there is $\eta>0$ small enough such that the solution $\SS_\Lambda(t) h$ to the linearized equation satisfies, for any $t \ge 0$ and some constant $K>0$,
		$$
		\frac12\frac{d}{dt} \Nt \SS_{\Lambda}(t) h_0 \Nt_{\mathbf X}^2
		\le - \lambda_1 \Nt \SS_{\Lambda}(t) h_0 \Nt_{\mathbf X}^2  - K \| \SS_{\Lambda}(t) h_0 \|_{\mathbf Y^*}^2, \quad \forall\, h_0 \in \mathbf X, \, \Pi_0 h_0=0.
		$$
\end{prop}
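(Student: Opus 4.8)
The plan is to verify directly that the quadratic form $\Nt \cdot \Nt_X$ defined in \eqref{norm-diss} is equivalent to the $X$-norm on the subspace $\{\Pi_0 h = 0\}$, and then to differentiate it along the linearized flow. Equivalence is immediate: the term $\eta \|h\|_X^2$ gives the lower bound, while for the upper bound one uses Theorem~\ref{theo:extension} applied in the space $X_1 = \HH^3_x L^2_v(m_1)$ (which is one of the admissible spaces in \eqref{def:EE}, since $k_1 > \gamma/2 + 3 + 2s$ and the order of $x$-derivatives is $3 \ge 1$), together with the continuous embedding $X = \HH^3_x L^2_v(m) \hookrightarrow X_1$ coming from $k > k_1$. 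Indeed, choosing $\lambda_1 < \lambda$ with $\lambda$ the rate of Theorem~\ref{theo:extension}, we get $\|\SS_\Lambda(\tau) e^{\lambda_1 \tau} h\|_{X_1} \le C e^{(\lambda_1 - \lambda)\tau} \|h\|_{X_1} \lesssim C e^{(\lambda_1-\lambda)\tau}\|h\|_X$ for $\Pi_0 h = 0$, so the integral in $\tau$ converges and is bounded by $C' \|h\|_X^2$.

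Next I would compute the time derivative. Write $h_t := \SS_\Lambda(t) h$, which still satisfies $\Pi_0 h_t = 0$ for all $t$ since $\Pi_0$ commutes with the flow. For the first term, $\frac12 \frac{d}{dt}(\eta \|h_t\|_X^2) = \eta \langle \Lambda h_t, h_t\rangle_X$; here I would invoke the dissipativity estimate for $\Lambda = \BB + \AA$ on $X = \HH^3_x L^2_v(m)$, i.e. combine Lemma~\ref{lem:BdissipHk} (applied with $\EE = \HH^3_x L^2_v(m)$, noting $\AA$ is a bounded truncation operator) with the coercivity gain $-c\|h_t\|_{Y^*}^2$ that is already visible in Lemmas~\ref{lem:coercive} and~\ref{lem:BdissipL2} (the $\|\cdot\|_{L^2_x H^{s,*}_v(m)}$ term, summed over $x$-derivatives of order up to $3$ with the appropriate weights $\la v\ra^{-2|\beta|s}$, gives exactly the $Y^* = \HH^{3,s,*}_{x,v}(m)$ norm). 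This produces $\eta\langle \Lambda h_t, h_t\rangle_X \le C\eta \|h_t\|_X^2 - c_0 \eta \|h_t\|_{Y^*}^2$, or more precisely a bound with a large constant times $\|h_t\|_X^2$ that will be absorbed. For the integral term, the key computation is
\[
\frac{d}{dt} \int_0^\infty \|\SS_\Lambda(\tau) e^{\lambda_1\tau} h_t\|_{X_1}^2 \, d\tau = \int_0^\infty \frac{d}{d\tau}\Big(e^{-2\lambda_1\tau}\Big)\Big|\dots\Big| \dots
\]
more usefully: since $\SS_\Lambda(\tau) h_t = \SS_\Lambda(\tau+t) h$, substituting $u = \tau + t$ and differentiating in $t$ gives
\[
\frac{d}{dt}\int_0^\infty \|\SS_\Lambda(\tau)e^{\lambda_1\tau} h_t\|_{X_1}^2\,d\tau = -\|h_t\|_{X_1}^2 e^{0} - 2\lambda_1 \int_0^\infty \|\SS_\Lambda(\tau)e^{\lambda_1\tau}h_t\|_{X_1}^2\,d\tau,
\]
using $e^{\lambda_1 \cdot 0} = 1$ at the lower endpoint. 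Thus this term contributes $-\|h_t\|_{X_1}^2 - 2\lambda_1 (\text{integral part of }\Nt h_t\Nt_X^2)$.

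Assembling these pieces, one obtains
\[
\frac12\frac{d}{dt}\Nt h_t\Nt_X^2 \le C\eta\|h_t\|_X^2 - c_0\eta\|h_t\|_{Y^*}^2 - \tfrac12\|h_t\|_{X_1}^2 - \lambda_1\int_0^\infty \|\SS_\Lambda(\tau)e^{\lambda_1\tau}h_t\|_{X_1}^2\,d\tau.
\]
The last term is $-\lambda_1$ times the integral part of $\Nt h_t\Nt_X^2$, so it remains to control $C\eta\|h_t\|_X^2 - \tfrac12\|h_t\|_{X_1}^2$ by $-\lambda_1\eta\|h_t\|_X^2$ plus a harmless multiple of $\|h_t\|_{Y^*}^2$. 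Here one uses that the $X$-norm is controlled by the $Y^*$-norm up to lower order: $\|h_t\|_X^2 = \|h_t\|_{\HH^3_x L^2_v(m)}^2 \lesssim \|h_t\|_{Y^*}^2$ trivially (since $Y^* = \HH^{3,s,*}_{x,v}(m)$ dominates $\HH^3_x L^2_v(m)$), so for $\eta$ small enough the term $C\eta\|h_t\|_X^2$ is absorbed by a fraction of $c_0\eta\|h_t\|_{Y^*}^2$; the remaining negative terms $-\tfrac{c_0\eta}{2}\|h_t\|_{Y^*}^2 - \tfrac12\|h_t\|_{X_1}^2 - \lambda_1(\text{integral})$ and one also bounds $-\lambda_1\eta\|h_t\|_X^2$ from below using $\|h_t\|_X \le \|h_t\|_{Y^*}$, giving the claimed inequality with $K$ a small positive constant. \textbf{The main obstacle} I anticipate is bookkeeping the weights carefully: one must check that the coercive negative term coming from $\langle \Lambda h_t, h_t\rangle_X$ genuinely controls the full $Y^* = \HH^{3,s,*}_{x,v}(m)$ norm (with the graded weights $\la v\ra^{-2|\beta|s}$ on the $|\beta|$-th $x$-derivative), rather than just the top-order or an $x$-homogeneous version, and that the ``bad'' terms from $Q(h,\mu)$ and the weight commutators in Lemma~\ref{lem:nonlinearhom}-(ii) stay lower-order relative to this gain; this is exactly where the hypothesis $k > 21/2 + \gamma + 22s$ and the chain $k > k_0 > k_1$ enter, ensuring enough room to trade weight for regularity at each of the three derivative levels.
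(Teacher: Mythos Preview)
Your overall architecture matches the paper's: prove norm equivalence via Theorem~\ref{theo:extension}, differentiate $\Nt h_t\Nt_X^2$, and compute the integral term by the change of variable $\tau\mapsto \tau+t$ to produce $-\tfrac12\|h_t\|_{X_1}^2 - \lambda_1\int_0^\infty\|\SS_\Lambda(\tau)e^{\lambda_1\tau}h_t\|_{X_1}^2\,d\tau$. That part is fine.

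The gap is in your absorption step. You bound $\eta\langle\Lambda h_t,h_t\rangle_X\le C\eta\|h_t\|_X^2 - c_0\eta\|h_t\|_{Y^*}^2$ and then assert that ``for $\eta$ small enough the term $C\eta\|h_t\|_X^2$ is absorbed by a fraction of $c_0\eta\|h_t\|_{Y^*}^2$''. But both terms carry the same factor $\eta$, so their ratio is independent of $\eta$; taking $\eta$ small accomplishes nothing here. Whether $C\|h_t\|_X^2\le \tfrac{c_0}{2}\|h_t\|_{Y^*}^2$ depends on the fixed constants $C$ (which contains the large truncation parameter $M$ from $\AA=M\chi_R$) and $c_0$, and there is no reason for this inequality to hold.

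The paper resolves this differently, and the mechanism is exactly why the weaker weight $m_1$ appears inside the integral in~\eqref{norm-diss}. One writes $\Lambda=\AA+\BB$ and bounds the two pieces separately: $\langle\BB h,h\rangle_X\le -\lambda\|h\|_X^2 - K\|h\|_{Y^*}^2$ (from Lemmas~\ref{lem:BdissipL2}--\ref{lem:BdissipHk}), while for $\AA=M\chi_R$ one exploits the \emph{compact support} of $\chi_R$ to get $\langle\AA h,h\rangle_X\le C\|h\|_{X_1}^2$, i.e.\ the bad term lands in the \emph{weaker} $X_1$-norm, not the $X$-norm. Now the bad contribution is $\eta C\|h_t\|_{X_1}^2$, which is absorbed by the $\eta$-independent term $-\tfrac12\|h_t\|_{X_1}^2$ coming from $I_2$, simply by taking $\eta<1/(2C)$. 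The remaining $-\lambda\eta\|h_t\|_X^2$ with $\lambda\ge\lambda_1$ then supplies $-\lambda_1\eta\|h_t\|_X^2$. Your final paragraph about the weight hierarchy $k>k_0>k_1$ is aimed at the wrong difficulty: those conditions matter later for the nonlinear estimates and the regularization lemma, not here; the only structural point needed in this proposition is that $\AA$ is compactly supported so that its contribution can be measured in $X_1$.
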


\begin{proof}
	First we remark that the norm $\Nt \cdot \Nt_{\HH^3_x L^2_v(m)}$ is equivalent to the norm $\| \cdot \|_{\HH^3_x L^2_v(m)}$ defined in~{\eqref{eq:HHnxL2v}}
	for any $\eta >0$ and {any $\lambda_1 < \lambda$}. Indeed, using Theorem~\ref{theo:extension}, we have
		$$
		\bal
			\eta \| h \|_{\HH^3_x L^2_v(m)}^2
			&\le \Nt h \Nt_{\HH^3_x L^2_v(m)}^2 = \eta \| h \|_{\HH^3_x L^2_v(m)}^2  + \int_0^\infty \| \SS_{\Lambda}(\tau) e^{\lambda_1 \tau} h \|_{\HH^3_x L^2_v(m_1)}^2 \, \d\tau \\
			&\le \eta \| h \|_{\HH^3_x L^2_v(m)}^2 + \int_0^\infty C^2 e^{- 2 (\lambda - \lambda_1) \tau}  \| h \|_{\HH^3_x L^2_v(m_1)}^2 \, \d\tau \le C \| h \|_{\HH^3_x L^2_v(m)}^2.
		\eal
		$$
	We now compute, denoting $h_t = \SS_\Lambda(t) h_0$,
		$$
		\bal
			\frac12\frac{d}{dt} \Nt h_t \Nt_{\HH^3_x L^2_v(m)}^2
			 = \eta \la \Lambda h_t \,  h_t \ra_{\HH^3_x L^2_v(m)}
			 + \frac12 \int_0^\infty \frac{\partial}{\partial t} \| \SS_\Lambda(\tau) e^{\lambda_1 { \tau}} h_{t} \|_{\HH^3_x L^2_v(m_1)}^2 \, \d\tau
			 =: I_1 + I_2.
		\eal
		$$
	For $I_1$ we write $\Lambda = \AA + \BB$.
	Using the fact that $\AA$ is a truncation operator, we first obtain that
		$$
		\la \AA h_t , h_t \ra_{\HH^3_x L^2_v(m)} \le C \| h_t \|^2_{_{\HH^3_x L^2_v(m_1)}}.
		$$
	Moreover, repeating the estimates for the hypodissipativity of $\BB$ in Lemmas~\ref{lem:BdissipL2} and \ref{lem:BdissipHk} we easily get that for some $K>0$,
		$$
		\bal
			\la \BB h_t , h_t \ra_{\HH^3_x L^2_v(m)}
			&\le - \lambda \| h_t \|_{\HH^3_x L^2_v(m)}^2 - K \| h_t \|_{\HH^{3,s,*}_{x,v} (m)}^2,
		\eal
		$$
	therefore it follows
		$$
		I_1 \le  - \lambda \eta \| h_t \|_{\HH^3_x L^2_v(m)}^2 - \eta K \| h_t \|_{\HH^{3,s,*}_{x,v}(m)}^2 + \eta C \| h_t \|_{\HH^3_x L^2_v(m_1)}^2 .
		$$
	The second term is computed exactly
		$$
		\bal
			I_2
			&= \frac12 \int_0^\infty \frac{\partial}{\partial t} \| \SS_{\Lambda}(\tau + t) e^{\lambda_1 \tau} h_0 \|_{\HH^3_x L^2_v(m_1)}^2 \, \d\tau  \\
			&= \frac12 \int_0^\infty \frac{\partial}{\partial \tau} \| \SS_{\Lambda}(\tau + t) e^{\lambda_1 \tau} h_0 \|_{\HH^3_x L^2_v(m_1)}^2 \, \d\tau
			- \lambda_1 \int_0^\infty \| \SS_\Lambda(\tau) e^{\lambda_1 \tau} h_{t} \|_{\HH^3_x L^2_v(m_1)}^2 \, \d\tau \\
			&= \frac12\left[  \| \SS_\Lambda(\tau) e^{\lambda_1 \tau} h_{t} \|_{\HH^3_x L^2_v(m_1)}^2  \right]_{\tau=0}^{\tau=+\infty}
			- \lambda_1 \int_0^\infty \| \SS_\Lambda(\tau+t) e^{\lambda_1 \tau} h_{t} \|_{\HH^3_x L^2_v(m_1)}^2 \, \d\tau \\
			&= -\frac12  \| h_{t} \|_{\HH^3_x L^2_v(m_1)}^2  - \lambda_1 \int_0^\infty \| \SS_\Lambda(\tau) e^{\lambda_1 \tau} h_{t} \|_{\HH^3_x L^2_v(m_1)}^2 \, \d\tau
		\eal
		$$
	where we have used the semigroup decay from Theorem~\ref{theo:extension}.

	Gathering previous estimates and using that {$\lambda \ge \lambda_1$}, we obtain
		$$
		\bal
			I_1 + I_2
			&\le -\lambda_1 \left\{ \eta \| h_t \|_{\HH^3_x L^2_v(m)}^2 + \int_0^\infty \| \SS_\Lambda(\tau) e^{\lambda_1 \tau} h_{t} \|_{\HH^3_x L^2_v(m_1)}^2 \, \d\tau  \right\} \\
			&\quad
 			- \eta K \| h_t \|_{\HH^{3,s,*}_{x,v}(m)}^2 + \eta C\| h_t \|_{\HH^3_x L^2_v(m_1)}^2
			- \frac12\| h_t \|_{\HH^3_x L^2_v(m_1)}^2.
		\eal
	$$
	We complete the proof choosing $\eta>0$ small enough.
\end{proof}

\medskip
\subsection{Regularization properties of $\SS_\Lambda$} \label{subsec:reg2}
In this subsection, we state a result on the regularization properties of $\SS_\Lambda$ which is a key point for having a priori estimates on the nonlinear problem in the next subsection.
\begin{lem} \label{lem:regSlambda}
We have the following estimate:
\beqn \label{eq:regv}
 \|\SS_{\Lambda}(t) h_0\|_{\mathbf X_1} \lesssim \frac{1}{t^{1/2}} \|h_0\|_{\mathbf Y'_0}, \quad \forall \, t \in (0,1].
\eeqn

\end{lem}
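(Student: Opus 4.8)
The plan is to deduce~\eqref{eq:regv} by duality from a short-time \emph{velocity}-regularization estimate for the adjoint semigroup, which is itself obtained exactly as in the proof of Lemma~\ref{lem:regSB}: through the splitting $\Lambda=\widetilde\Lambda_1+\widetilde\Lambda_2$ and the short-time estimates~\eqref{eq:lambdatilde1}--\eqref{eq:lambdatilde2} of~\cite{HTT2*}, combined with the Duhamel formula. First I would set up the duality. Let $\Lambda^*$ be the formal adjoint of $\Lambda$ for the inner product of the pivot space $X_0=\HH^3_xL^2_v(m_0)$; since all the weights defining $X_0$ depend only on $v$ and the torus has no boundary, the transport part is skew-symmetric, so $\Lambda^*$ has exactly the structure of $\Lambda$ with $v\cdot\nabla_x$ replaced by $-v\cdot\nabla_x$, and every estimate of Section~\ref{sec:lin} and of~\cite{HTT2*} applies to it verbatim (with $\AA^*=\AA$, a bounded multiplication operator). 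Identifying, from~\eqref{eq:Y'} and the definitions of $X_1$, $Y_0$, $Y'_0$, the dual $(X_1)'$ of $X_1$ with respect to $X_0$ as the heavily weighted space $\HH^3_xL^2_v(\langle v\rangle^{2k_0-k_1})$, one gets for $h$ in a dense subspace and every $t>0$
\[ \|\SS_\Lambda(t)h\|_{X_1}=\sup_{\|\phi\|_{(X_1)'}\le1}\langle h,\SS_{\Lambda^*}(t)\phi\rangle_{X_0}\le\|h\|_{Y'_0}\,\sup_{\|\phi\|_{(X_1)'}\le1}\|\SS_{\Lambda^*}(t)\phi\|_{Y_0}, \]
so it suffices to prove $\|\SS_{\Lambda^*}(t)\phi\|_{Y_0}\lesssim t^{-1/2}\|\phi\|_{\HH^3_xL^2_v(\langle v\rangle^{2k_0-k_1})}$ for $t\in(0,1]$, and then extend to all of $Y'_0$ by density. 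This is a gain of $s$ velocity derivatives at cost $t^{-1/2}$, tolerating a loss of weight from $\langle v\rangle^{2k_0-k_1}$ down to the weight $\langle v\rangle^{\gamma/2}m_0$ of $Y_0$; the loss is admissible precisely because $k_1<k_0-\gamma-5/2-6s$ (so $2k_0-k_1>k_0+\gamma/2+\gamma+5/2+\mathrm{twist}$), the first of the weight losses recorded in Remark~\ref{rem:weights}.

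Next I would establish the forward estimate for $\SS_{\Lambda^*}$ by the Duhamel scheme of Lemma~\ref{lem:regSB}. Using $\Lambda^*=\widetilde\Lambda_1^*+\widetilde\Lambda_2^*$ one writes $\SS_{\Lambda^*}=\SS_{\widetilde\Lambda_1^*}+\SS_{\widetilde\Lambda_1^*}*\widetilde\Lambda_2^*\SS_{\Lambda^*}$, iterating the term containing $\SS_{\Lambda^*}$ until no further velocity regularity is asked of it — after which only its boundedness on weighted $L^2$-based spaces is used, which holds since such spaces are admissible in~\eqref{def:EE} (Theorem~\ref{theo:extension}), or directly from Lemma~\ref{lem:BdissipHk} for $\BB^*$ together with $\Lambda^*=\AA^*+\BB^*$. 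The leading term $\SS_{\widetilde\Lambda_1^*}(t)\phi$ is controlled by the velocity-regularization estimate of~\cite{HTT2*} (estimate~\eqref{eq:lambdatilde1} with $r=1$, used blockwise in $\nabla^j_x$, $0\le j\le3$, with the twisted weights $\langle v\rangle^{-2js}$ and~\eqref{eq:embedX}), giving $\|\SS_{\widetilde\Lambda_1^*}(t)\phi\|_{Y_0}\lesssim t^{-1/2}\|\phi\|_{\HH^3_xL^2_v(\langle v\rangle^{2k_0-k_1})}$ for $t\in(0,1]$; each remaining convolution term is bounded using that $\widetilde\Lambda_2^*$ maps $\HH^3_xL^2_v(\langle v\rangle^q)$ into $\HH^3_xL^2_v(\langle v\rangle^{q'})$ whenever $q>q'+\gamma+5/2$ (the adjoint of~\eqref{eq:lambdatilde2}), that $\SS_{\widetilde\Lambda_1^*}$ and $\SS_{\Lambda^*}$ are bounded on these spaces for bounded times, and that the outermost $\SS_{\widetilde\Lambda_1^*}$ supplies the $s$-gain with an integrable $(t-\tau)^{-1/2}$ singularity (so $\int_0^t(t-\tau)^{-1/2}\,d\tau=2t^{1/2}\le2$ on $(0,1]$). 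The intermediate exponents $q,q'$ are chosen inside the gaps allowed by $k_0=k-2s$ and $8+14s<k_1<k_0-\gamma-5/2-6s$, exactly the bookkeeping of Remark~\ref{rem:weights}; summing the finitely many contributions yields the forward estimate, hence~\eqref{eq:regv}.

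The main obstacle is not the regularization mechanism — which is entirely imported from~\cite{HTT2*} — but the accounting of weight losses: weight is lost in the duality identification of $(X_1)'$ versus $Y'_0$, once more through each application of $\widetilde\Lambda_2$, and a little more through the twisting of the Sobolev weights in~\eqref{eq:embedX}, and it is precisely this that dictates the chain $k>k_0>k_1>k_2>k_3$. A secondary delicate point is that one needs here the \emph{velocity-only} short-time gain (gaining $s$ velocity derivatives at cost $t^{-1/2}$) rather than the full anisotropic gain at cost $t^{-1/2-s}$ used in Lemma~\ref{lem:regSB}: one must make sure the relevant estimate from~\cite{HTT2*} is available in the twisted spaces $\HH^{3,s}_{x,v}$, and that $\SS_{\Lambda^*}$ is bounded on the spaces appearing in the iteration — both of which follow from the coercivity and hypodissipativity estimates of Section~\ref{sec:lin} together with the pseudodifferential analysis of~\cite{HTT2*}.
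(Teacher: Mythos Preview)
Your approach differs substantially from the paper's. The paper does not redo any semigroup regularization here: it invokes Theorem~1.2 of~\cite{HTT2*} as a black box, which already provides the negative-regularity-to-$L^2$ smoothing
\[
\|\SS_\Lambda(t)h\|_{H^{3,0}_{x,v}(\langle v\rangle^{k_1})}\lesssim t^{-1/2}\,\|h\|_{(H^{3,s}_{x,v}(\langle v\rangle^{k'}))'},\qquad k'>k_1+\gamma+\tfrac52,
\]
in the \emph{untwisted} spaces. The entire content of the paper's proof is then pure weight bookkeeping: (i) since the twisted $X_1$-norm carries lighter weights than $H^{3,0}_{x,v}(\langle v\rangle^{k_1})$, one has $\|\SS_\Lambda(t)h\|_{X_1}\lesssim\|\SS_\Lambda(t)h\|_{H^{3,0}_{x,v}(\langle v\rangle^{k_1})}$; and (ii) a direct supremum computation, using only the interpolation embedding $H^\varsigma_v(\langle v\rangle^{q_2})\hookrightarrow H^\varsigma_v(\langle v\rangle^{q_1})$ for $q_2\ge q_1$, shows that the untwisted dual norm is controlled by the twisted one, $\|h\|_{(H^{3,s}_{x,v}(\langle v\rangle^{k'}))'}\lesssim\|h\|_{Y'_0}$, provided $k_0=k'+6s$. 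No adjoint semigroup and no Duhamel iteration appear.

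Your route --- duality to the adjoint semigroup, then a Duhamel scheme for $\SS_{\Lambda^*}$ --- is in spirit a way to \emph{re-prove} the cited result of~\cite{HTT2*}, but as written it has a genuine gap. You assert that $\Lambda^*$, the adjoint for the $X_0$ inner product, ``has exactly the structure of $\Lambda$'' with the sign of transport flipped, so that the estimates of Section~\ref{sec:lin} and of~\cite{HTT2*} apply ``verbatim''. The transport part is indeed skew-symmetric, but the collision part $\LL$ is self-adjoint only in $L^2_v(\mu^{-1/2})$, not in the polynomially weighted $L^2_v(m_0)$: the $X_0$-adjoint of $\LL$ is $m_0^{-2}\LL^\dagger(m_0^2\,\cdot)$ with $\LL^\dagger$ the flat $L^2$-adjoint (itself not equal to $\LL$), and in the twisted $\HH^3_x$-scale the extra factors $\langle v\rangle^{-2js}$ introduce further commutator terms. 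Consequently the operators $\widetilde\Lambda_1^*$, $\widetilde\Lambda_2^*$ are \emph{not} the ones for which~\eqref{eq:lambdatilde1}--\eqref{eq:lambdatilde2} are stated, and one cannot quote those bounds verbatim. You also need, as you yourself flag, a velocity-only gain with rate $t^{-1/2}$ rather than the full $t^{-1/2-s}$ of~\eqref{eq:lambdatilde1}; this is a separate statement from~\cite{HTT2*} that has to be identified and matched to the (modified) adjoint operator. All of this can presumably be carried out by redoing the pseudodifferential analysis of~\cite{HTT2*} for the conjugated operator, but that is nontrivial additional work --- precisely what the paper sidesteps by citing~\cite{HTT2*} directly and reducing the lemma to two elementary norm comparisons.
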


\begin{proof}
The result that we want to prove is a twisted version of Theorem~\ref{thm:LIBthm}, the only difference being in the weights. First, we notice that
$$
\|\SS_{\Lambda}(t) h_0\|_{\mathbf X_1} \lesssim \|\SS_{\Lambda}(t) h_0\|_{H^{3,0}_{x,v}(\langle v \rangle^{k_1})}.
$$
Theorem~\ref{thm:LIBthm} gives us that for $k'>k_1+\gamma+5/2$, we have:
$$
\|\SS_{\Lambda}(t) h_0\|_{H^{3,0}_{x,v}(\langle v \rangle^{k_1})} \lesssim {1 \over \sqrt{t}} \|h_0\|_{(H^{3,s}_{x,v}(\langle v \rangle^{k'}))'}, \quad \forall \, t \in (0,1].
$$
 It remains to show that if $k_0=k'+6s >k_1+\gamma+5/2+6s$, we have
 $$
 \|h_0\|_{(H^{3,s}_{x,v}(\langle v \rangle^{k'}))'} \lesssim \|h_0\|_{(\HH^{3,s}_{x,v}(\langle v \rangle^{k_0}))'}.
 $$
 Indeed,
\begin{align*}
 \|h_0\|_{(H^{3,s}_{x,v}(\langle v \rangle^{k'}))'} &= \sup_{\sum_{j=0}^3 \|\nabla^j_x( \varphi \langle v \rangle^{k'})\|_{H^{0,s}_{x,v}} \le 1}
 \sum_{j=0}^3 \big\langle \nabla^j_x h_0 \langle v \rangle^{k_0-2js}, \nabla^j_x \varphi \langle v \rangle^{2k'-(k_0-2js)} \big\rangle_{L^2_{x,v}} \\
 &= \sup_{\sum_{j=0}^3 \|\nabla^j_x(\psi \langle v \rangle^{2(k_0-2js)-k'})\|_{H^{0,s}_{x,v}} \le 1}
 \sum_{j=0}^3 \big\langle \nabla^j_x h_0\langle v \rangle^{k_0-2js}, \nabla^j_x \psi \langle v \rangle^{k_0-2js} \big\rangle_{L^2_{x,v}} \\
 &\le \sup_{\sum_{j=0}^3 \|\nabla^j_x(\psi \langle v \rangle^{k_0-2js})\|_{H^{0,s}_{x,v}} \le 1}
 \sum_{j=0}^3 \big\langle \nabla^j_x h_0  \langle v \rangle^{k_0-2js}, \nabla^j_x \psi \langle v \rangle^{k_0-2js} \big\rangle_{L^2_{x,v}} \\
 &= \sup_{\|\psi\|_{\HH^{3,s}_{x,v}(\langle v \rangle^{k_0})}\le 1} \langle h_0, \psi \rangle_{\HH^{3,0}_{x,v}(\langle v \rangle^{k_0})} \\
 &\le \|h_0\|_{(\HH^{3,s}_{x,v}(\langle v \rangle^{k_0}))'},
\end{align*}
where we used~\eqref{prop:embed} to obtain the third bound and this concludes the proof of~\eqref{eq:regv}.
\end{proof}

\medskip
\subsection{Proof of Theorem \ref{main1}} \label{subsec:conclusion}
We consider the Cauchy problem for the perturbation $h$ defined through $h= f - \mu$. The equation satisfied by $h=h(t,x,v)$ is
	\beqn\label{eq:h}
		\left\{
			\bal
			&\partial_t h  = \Lambda h + Q(h,h) \\
			&h_{|t=0} = h_0= f_0 - \mu.
			\eal
		\right.
	\eeqn
From the conservation laws (see \eqref{eq:conserv}), for all $t >0$,  $\Pi_0 { h(t,\cdot,\cdot) }= 0$ since $\Pi_0 h_0 = 0$, more precisely $\int_{\T^3 \times \R^3} { h(t,x,v)} \,\dv \,\d x = \int_{\T^3 \times \R^3} v_j { h(t,x,v)} \,\dv \,\d x = \int_{\T^3 \times \R^3} |v|^2 { h(t,x,v)} \,\dv \,\d x=0$ for~$j = 1,2,3$.
Note that we also have $\Pi_0 Q(h_t,h_t) = 0$.

\smallskip
\subsubsection{A priori estimates}

\begin{prop} \label{prop:stab}
	Any solution {$ h=h(t,\cdot,\cdot)$} to \eqref{eq:h} satisfies, at least formally, the following differential inequality: For any {$\lambda_1 < \lambda$
	(where $\lambda >0$ is one rate given by Theorem~\ref{theo:extension})}, there holds
		$$
		\frac12\frac{d}{dt} \Nt h \Nt_{\mathbf X}^2 \le - \lambda_1 \Nt h \Nt_{\mathbf X}^2 -\big( K - C\Nt h \Nt_{\mathbf X} \big) \| h \|_{\mathbf Y^*}^2,
		$$
	for some constants $K,C>0$ and where we recall that the norm $\Nt \cdot \Nt$ is defined in Proposition~\ref{prop:dissipative}.
\end{prop}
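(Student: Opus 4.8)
The plan is to differentiate $\frac12\Nt h\Nt_X^2$ along the flow of~\eqref{eq:h} and split the contribution of the right-hand side into a linear part and a nonlinear part. Writing $h_t$ for the solution, we have $\partial_t h = \Lambda h + Q(h,h)$ with $\Pi_0 h_t = 0$ and $\Pi_0 Q(h_t,h_t) = 0$ for all $t$, so both $h$ and $Q(h,h)$ stay in the range where the dissipative norm $\Nt\cdot\Nt_X$ of Proposition~\ref{prop:dissipative} is defined and equivalent to $\|\cdot\|_X$. The linear contribution is handled directly by Proposition~\ref{prop:dissipative}, which gives
$$
\frac12\frac{d}{dt}\Big|_{\text{lin}}\Nt h\Nt_X^2 \le -\lambda_1\Nt h\Nt_X^2 - K\|h\|_{Y^*}^2 .
$$
It therefore remains to estimate the nonlinear contribution, namely the bilinear term obtained by differentiating the two pieces of $\Nt h\Nt_X^2$ in the direction $Q(h,h)$:
$$
\eta\,\langle Q(h,h),h\rangle_X + \int_0^\infty \langle \SS_\Lambda(\tau)e^{\lambda_1\tau}Q(h,h),\ \SS_\Lambda(\tau)e^{\lambda_1\tau}h\rangle_{X_1}\,d\tau .
$$

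For the first piece I would apply Lemma~\ref{lem:nonlinearnonhom}-(iii), which gives $\langle Q(h,h),h\rangle_X \lesssim \|h\|_X\,\|h\|_{Y^*}^2$, hence a bound of the form $C\,\Nt h\Nt_X\,\|h\|_{Y^*}^2$ after using the equivalence of norms. For the integral piece, the idea is to peel off a factor $\|h\|_{Y^*}$ and absorb the remaining regularity cost using the smoothing estimate of Lemma~\ref{lem:regSlambda}: split the $\tau$-integral at $\tau=1$, on $(0,1]$ use $\|\SS_\Lambda(\tau)e^{\lambda_1\tau}Q(h,h)\|_{X_1}\lesssim \tau^{-1/2}\|Q(h,h)\|_{Y_0'}$ together with the dual nonlinear bound $\|Q(h,h)\|_{Y_0'}\lesssim \|h\|_X\|h\|_{\bar Y_0}+\|h\|_{Y_0}\|h\|_X \lesssim \|h\|_X\|h\|_Y\lesssim \Nt h\Nt_X\|h\|_{Y^*}$ coming from Lemma~\ref{lem:nonlinearnonhom}-(i) (using the weight relations recorded in Remark~\ref{rem:weights}, in particular $\|h\|_{\bar Y_0}\lesssim\|h\|_Y$); on $[1,\infty)$ use instead the exponential decay of $\SS_\Lambda$ from Theorem~\ref{theo:extension} (with rate strictly larger than $\lambda_1$, so $e^{\lambda_1\tau}\SS_\Lambda(\tau)$ is still integrable) to bound the integrand by $C e^{-c\tau}\|Q(h,h)\|_X^2$ and then $\|Q(h,h)\|_X\lesssim \|h\|_X\|h\|_{Y^*}$ again via Lemma~\ref{lem:nonlinearnonhom}. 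Either way the integral is controlled by $C\Nt h\Nt_X^2\|h\|_{Y^*}^2$, which is of strictly higher order and so, after using $\Nt h\Nt_X \le C$, is dominated by $C\Nt h\Nt_X\|h\|_{Y^*}^2$.

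Collecting the linear and nonlinear contributions yields
$$
\frac12\frac{d}{dt}\Nt h\Nt_X^2 \le -\lambda_1\Nt h\Nt_X^2 - \big(K - C\Nt h\Nt_X\big)\|h\|_{Y^*}^2 ,
$$
which is the claimed inequality. The main obstacle is the integral term in the dissipative norm: one must check carefully that the smoothing estimate~\eqref{eq:regv} is applicable, i.e. that the weight hierarchy $k>k_0>k_1$ (Remark~\ref{rem:weights}) makes $Q(h,h)$ land in $Y_0'$ with the right constant, and that near $\tau=0$ the singularity $\tau^{-1/2}$ is integrable while near $\tau=\infty$ the factor $e^{\lambda_1\tau}$ is beaten by the semigroup decay rate $\lambda>\lambda_1$; this is precisely where the extra derivatives in $x$ and the twisted spaces $X_1,Y_0^*$ are needed so that the nonlinear term can be split as (something bounded in $X$) times (the dissipated norm $\|h\|_{Y^*}$) rather than requiring two full copies of the gain norm.
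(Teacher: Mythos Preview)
Your approach follows the paper's essentially line for line: split into linear and nonlinear contributions, use Proposition~\ref{prop:dissipative} for the linear part, Lemma~\ref{lem:nonlinearnonhom}-(iii) for $\eta\langle Q(h,h),h\rangle_X$, and the smoothing of Lemma~\ref{lem:regSlambda} for the integral term. The paper does not split the $\tau$-integral at $\tau=1$; instead it combines Lemma~\ref{lem:regSlambda} with Theorem~\ref{theo:extension} into the single bound
$$
\|\SS_\Lambda(t) g\|_{X_1} \lesssim \frac{e^{-\lambda t}}{1\wedge\sqrt t}\,\|g\|_{Y_0'}, \qquad t>0,\ \Pi_0 g=0,
$$
and then controls the integral by Cauchy--Schwarz together with $\int_0^\infty \frac{e^{-2(\lambda-\lambda_1)\tau}}{1\wedge\sqrt\tau}\,d\tau<\infty$.

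There are, however, two genuine gaps in your write-up. First, on $[1,\infty)$ you invoke a bound on $\|Q(h,h)\|_X$, but no such estimate is available: Lemma~\ref{lem:nonlinearnonhom} only controls $\langle Q(h,h),\varphi\rangle_X$ for $\varphi\in Y$, i.e.\ it places $Q(h,h)$ in the negative-regularity space $Y'$ (or $Y_0'$), not in $X$ itself. The fix is precisely the unification above: write $\SS_\Lambda(\tau)=\SS_\Lambda(\tau-1)\SS_\Lambda(1)$ on $[1,\infty)$, use Lemma~\ref{lem:regSlambda} on the factor $\SS_\Lambda(1)$ to go from $Y_0'$ to $X_1$, and only then apply the decay of Theorem~\ref{theo:extension}. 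Second, your reduction ``after using $\Nt h\Nt_X\le C$'' is not legitimate, since the proposition is an unconditional differential inequality and you cannot assume smallness a priori. The honest route is the embedding $\|\cdot\|_{X_0}\lesssim\|\cdot\|_Y\lesssim\|\cdot\|_{Y^*}$ (valid because $\gamma>0$ and $k_0<k+\gamma/2$): the argument actually produces $\|h\|_{X_1}\|Q(h,h)\|_{Y_0'}\lesssim\|h\|_{X_1}\|h\|_{X_0}\|h\|_{\bar Y_0}$, and then one of the $X$-type norms is converted into a $Y$-norm via this embedding, yielding $\Nt h\Nt_X\|h\|_{Y^*}^2$ directly. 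This is exactly what the paper does in its last display for $I_4$.
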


\begin{proof}
	We compute the evolution of $\Nt h \Nt$ where ${ h}$ is solution of~\eqref{eq:h}:
		$$
		\bal
			\frac12\frac{d}{dt} \Nt h \Nt_{\mathbf X}^2
			&= \eta \la h , \Lambda h \ra_{\HH^3_x L^2_v(m)}  + \int_0^\infty \la \SS_\Lambda(\tau) e^{\lambda_1 \tau} h , \SS_\Lambda(\tau)e^{\lambda_1 \tau} \Lambda h \ra_{\HH^3_x L^2_v(m_1)} \, \d\tau  \\
			&\quad + \eta \la h , Q(h,h) \ra_{\HH^3_x L^2_v(m)} + \int_0^\infty \la \SS_\Lambda(\tau)e^{\lambda_1 \tau} h , \SS_\Lambda(\tau)e^{\lambda_1 \tau}  Q(h,h) \ra_{\HH^3_x L^2_v(m_1)} \, \d\tau \\
			&=: I_1+I_2 + I_3 + I_4.
		\eal
		$$
	For the linear part $I_1+I_2$, we already have from Proposition~\ref{prop:dissipative} that, for any $\lambda_1 < \lambda$,
		$$
		I_1+I_2 \le - \lambda_1 \Nt h \Nt_{\mathbf X}^2 - K \| h \|_{\mathbf Y^*}^2 .
		$$
	We now deal with the nonlinear part, using first Lemma~\ref{lem:nonlinearnonhom}:
		$$
		\bal
			I_3 &\lesssim \la Q(h,h),h \ra_{\mathbf X} \lesssim \| h \|_{\mathbf X} \, \| h \|_{\mathbf Y^*}^2  \lesssim \Nt h \Nt_{\mathbf X} \, \| h \|_{\mathbf Y^*}^2.
		\eal
		$$
	For the last term $I_4$, we use the fact that $\Pi_0 {h} = 0$ and $\Pi_0 Q({h},{h})=0$ for all $t\ge 0$, together with the estimate~\eqref{eq:regv} from Lemma~\ref{lem:regSlambda}.
	More precisely, if $\Pi_0 h = 0$, using Theorem~\ref{theo:extension} in $\mathbf X_1$, we have:
		$$
		\forall \, {\tau} \ge 0, \quad \|S_{\Lambda}({\tau}) h \|_{\mathbf X_1}\lesssim e^{-\lambda {\tau}}\| h \|_{\mathbf X_1}.
		$$
	Combined with the estimate~\eqref{eq:regv} from Lemma~\ref{lem:regSlambda}, we deduce that for $\Pi_0 h =0$,
		$$
		\forall \, {\tau} >0, \quad \|S_{\Lambda}({\tau})h\|_{\mathbf X_1} \lesssim \frac{e^{-\lambda {\tau}}}{1 \wedge \sqrt{{\tau}}}  \|h\|_{\mathbf Y'_0}.
		$$
	It implies
		$$
		\begin{aligned}
			&\int_0^\infty \la \SS_\Lambda (\tau) e^{\lambda_1 \tau} h , \SS_\Lambda (\tau) e^{\lambda_1 \tau} Q(h,h) \ra_{\mathbf X_1} \, \d\tau \\
			&\qquad \le \int_0^\infty \| \SS_\Lambda (\tau) e^{\lambda_1 \tau} h\|_{\mathbf X_1} \, \| \SS_\Lambda (\tau) e^{\lambda_1 \tau} Q(h,h) \|_{\mathbf X_1} \, \d\tau \\
			&\qquad \lesssim \| h \|_{\mathbf X_1} \, \| Q(h,h) \|_{\mathbf Y'_0} \int_0^\infty e^{-(\lambda - \lambda_1) \tau} \, \frac{e^{-(\lambda - \lambda_1) \tau}}{1 \wedge \sqrt{\tau}} \, \d\tau \\
			&\qquad \lesssim \| h \|_{\mathbf X_1} \, \| Q(h,h) \|_{\mathbf Y'_0}.
		\end{aligned}
		$$
	To conclude, we use Lemma~\ref{lem:nonlinearnonhom}:
		$$
		I_4 \lesssim \|h\|_{\mathbf X_1} \, \|h\|_{\mathbf X_0} \,  \|h\|_{{\bf \bar Y}_0} \lesssim  \Nt h \Nt_{\mathbf X} \, \| h \|_{\mathbf Y}^2 \lesssim \Nt h \Nt_{\mathbf X} \, \| h \|_{\mathbf Y^*}^2.
		$$
\end{proof}

We prove now an a priori estimate on the difference of two solutions to \eqref{eq:h}.
\begin{prop}\label{prop:conv}
	Consider two solutions $g$ and $h$ to \eqref{eq:h} associated to initial data $g_0$ and $h_0$, respectively. Then, at least formally, the difference $g-h$ satisfies the following differential inequality
		$$
		\bal
			\frac12\frac{d}{dt} \Nt g-h \Nt_{\mathbf X_1}^2
			&\le
			- K \| g-h \|_{\mathbf Y_1^*}^2  + C \big(\|g\|_{\mathbf X_1} + \| h \|_{\mathbf X_1}) \, \|g-h \|_{\mathbf Y_1^*}^2 \\
			&\quad + C \big( \| h \|_{\mathbf Y_1} + \| g \|_{\mathbf Y} \big)\, \| g-h \|_{\mathbf X_1} \, \| g-h \|_{\mathbf Y_1},
		\eal
		$$
	for some constants $K,C>0$ and where $\Nt \cdot \Nt_{\mathbf X_1}$ is defined as $\Nt \cdot \Nt_{\mathbf X}$ in~\eqref{norm-diss}:
	$$
	\Nt h \Nt_{\mathbf X_1}^2 := \eta \| h \|_{\mathbf X_1}^2 + \int_0^\infty \| \SS_{\Lambda}(\tau) e^{\lambda_1 \tau} h \|_{\mathbf X_3}^2 \, \d\tau.
	$$
	
\end{prop}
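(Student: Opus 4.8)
The plan is to mimic, for the \emph{difference} of two solutions, the computation already carried out in Proposition~\ref{prop:stab} for a single solution, with the only structural change being that the difference equation is \emph{bilinear inhomogeneous} rather than quadratic. First I would write the equation satisfied by $w := g-h$:
\[
\partial_t w = \Lambda w + Q(g,g) - Q(h,h) = \Lambda w + Q(w,g) + Q(h,w),
\]
using the bilinearity of $Q$ to split the nonlinear contribution into one term carrying $w$ in the first slot and one carrying it in the second. Note also that $\Pi_0 w = 0$ for all $t\ge 0$ since both $g$ and $h$ have the prescribed mass, momentum and energy, so that the dissipative norm $\Nt\cdot\Nt_{X_1}$ from~\eqref{norm-diss} (with the pair $(X_1,X_3)$ in the role of $(X,X_1)$) is well-defined and equivalent to $\|\cdot\|_{X_1}$, exactly as in Proposition~\ref{prop:dissipative}.

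Next I would differentiate $\Nt w\Nt_{X_1}^2$ in time and separate the linear and nonlinear pieces:
\[
\tfrac12\tfrac{d}{dt}\Nt w\Nt_{X_1}^2 = \Big(\eta\la w,\Lambda w\ra_{\HH^3_xL^2_v(m_1)} + \int_0^\infty\la \SS_\Lambda(\tau)e^{\lambda_1\tau}w, \SS_\Lambda(\tau)e^{\lambda_1\tau}\Lambda w\ra_{X_3}\,d\tau\Big) + (\text{nonlinear}).
\]
The linear bracket is handled verbatim by Proposition~\ref{prop:dissipative} applied in $X_1$ (whose functional-analytic hypotheses hold because $k_1 > 4-\gamma+3/2+6s$, so Theorem~\ref{theo:extension}, Lemma~\ref{lem:nonlinearnonhom} and the dissipativity lemmas all apply with weight $m_1$): it contributes $-\lambda_1\Nt w\Nt_{X_1}^2 - K\|w\|_{Y_1^*}^2$, and the $-\lambda_1\Nt w\Nt_{X_1}^2$ term can simply be dropped (it is $\le 0$) to match the stated inequality. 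For the nonlinear terms there are two of them. The term $\eta\la w, Q(w,g)+Q(h,w)\ra_{X_1}$ is estimated by Lemma~\ref{lem:nonlinearnonhom}-(i) in $X_1$: the $Q(h,w)$ piece gives $\lesssim \|h\|_{X_1}\|w\|_{\bar Y_1}\|w\|_{Y_1} + \|h\|_{Y_1}\|w\|_{X_1}\|w\|_{Y_1}$, and the $Q(w,g)$ piece gives $\lesssim \|w\|_{X_1}\|g\|_{\bar Y_1}\|w\|_{Y_1} + \|w\|_{Y_1}\|g\|_{X_1}\|w\|_{Y_1}$; absorbing $\|w\|_{\bar Y_1}\lesssim\|w\|_{Y_1^*}$ (Lemma~\ref{lem:anis}, via $k_2 = k_1-2s$) and $\|w\|_{Y_1}\lesssim\|w\|_{Y_1^*}$ one recovers precisely the two advertised error terms $(\|g\|_{X_1}+\|h\|_{X_1})\|w\|_{Y_1^*}^2$ and $(\|h\|_{Y_1}+\|g\|_{Y})\|w\|_{X_1}\|w\|_{Y_1}$, where the $\|g\|_Y$ rather than $\|g\|_{Y_1}$ comes from which slot $g$ sits in and the loss-of-weight bookkeeping $k\to k_0\to k_1$.

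The integral nonlinear term $\int_0^\infty\la \SS_\Lambda(\tau)e^{\lambda_1\tau}w,\SS_\Lambda(\tau)e^{\lambda_1\tau}(Q(w,g)+Q(h,w))\ra_{X_3}\,d\tau$ is treated exactly as the term $I_4$ in the proof of Proposition~\ref{prop:stab}: use $\Pi_0 w = \Pi_0(Q(w,g)+Q(h,w)) = 0$ together with the exponential decay of $\SS_\Lambda$ on $X_3$ (Theorem~\ref{theo:extension}, applicable since $k_3 > 4-\gamma+3/2+6s$) and the smoothing estimate $\|\SS_\Lambda(\tau)z\|_{X_3}\lesssim \tau^{-1/2}\|z\|_{Y_2'}$ for $\tau\in(0,1]$ (the $(X_2,X_3)$-version of Lemma~\ref{lem:regSlambda}, which is why $k_3 < k_2-\gamma-5/2-6s$ is required), to bound this by $\lesssim \|w\|_{X_3}\,\|Q(w,g)+Q(h,w)\|_{Y_2'}$; the time integral $\int_0^\infty e^{-(\lambda-\lambda_1)\tau}(1\wedge\sqrt\tau)^{-1}\,d\tau$ converges. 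Then applying the ``$Y'$'' half of Lemma~\ref{lem:nonlinearnonhom}-(i) in the scale $X_1\to X_2$ gives $\|Q(w,g)+Q(h,w)\|_{Y_2'}\lesssim \|w\|_{X_1}\|g\|_{\bar Y_1} + \|w\|_{Y_1}\|g\|_{X_1} + \|h\|_{X_1}\|w\|_{\bar Y_1} + \|h\|_{Y_1}\|w\|_{X_1}$, and combining with $\|w\|_{X_3}\lesssim\|w\|_{X_1}$ and the $Y_1^*$-domination of $\bar Y$- and $Y$-norms folds everything into the same two families of error terms. The main obstacle — and the reason all those auxiliary spaces $X_0,\dots,X_3$ with their precise weight gaps were introduced in Remark~\ref{rem:weights} — is the weight bookkeeping: one must check at each use of Lemma~\ref{lem:nonlinearnonhom} and Lemma~\ref{lem:regSlambda} that the available power of $\langle v\rangle$ is large enough, i.e. that the chain $k_1 > k_2 = k_1-2s$, $k_3 < k_2-\gamma-5/2-6s$, $k_3 > 4-\gamma+3/2+6s$ leaves enough room, and that the mismatched slot (producing $\|g\|_Y$ instead of $\|g\|_{Y_1}$) is harmless because $\|\cdot\|_{\bar Y_1}\lesssim\|\cdot\|_{Y}$ under $k_0 = k-2s$. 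Everything else is a routine transcription of Propositions~\ref{prop:dissipative} and~\ref{prop:stab}.
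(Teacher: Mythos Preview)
Your overall architecture is exactly that of the paper: write the equation for $w=g-h$, split into six terms, treat the linear part via Proposition~\ref{prop:dissipative} in the $(X_1,X_3)$ scale, and handle the integral nonlinear piece by the regularization estimate (now at level $(X_2,X_3)$) together with the $Y'$--bound from Lemma~\ref{lem:nonlinearnonhom}-(i). The weight bookkeeping you outline is correct.

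There is, however, one genuine slip. For the term $\eta\la w, Q(h,w)\ra_{X_1}$ you invoke Lemma~\ref{lem:nonlinearnonhom}-(i), which produces $\|h\|_{X_1}\|w\|_{\bar Y_1}\|w\|_{Y_1}$, and then you claim ``$\|w\|_{\bar Y_1}\lesssim\|w\|_{Y_1^*}$ (Lemma~\ref{lem:anis}, via $k_2=k_1-2s$)''. This inequality is false: Lemma~\ref{lem:anis} says $\|\cdot\|_{H^{s,*}_v(m_1)}\lesssim\|\cdot\|_{H^s_v(\langle v\rangle^{\gamma/2+s}m_1)}$ and $\|\cdot\|_{H^{s,*}_v(m_1)}\gtrsim\|\cdot\|_{H^s_v(\langle v\rangle^{\gamma/2}m_1)}$, so $Y_1^*$ only controls the $\HH^{3,s}$ norm with weight $\langle v\rangle^{\gamma/2}m_1$, while $\bar Y_1$ carries the strictly heavier weight $\langle v\rangle^{\gamma/2+2s}m_1$. (The relation $k_2=k_1-2s$ gives $\|\cdot\|_{\bar Y_2}\lesssim\|\cdot\|_{Y_1}$, which is what you need for the \emph{integral} term, but it does nothing for $\bar Y_1$ versus $Y_1^*$.) Since the second and third arguments of $\la Q(h,w),w\ra_{X_1}$ coincide, the correct tool here is Lemma~\ref{lem:nonlinearnonhom}-(ii), which directly yields $\|h\|_{X_1}\|w\|_{Y_1^*}^2+\|h\|_{Y_1}\|w\|_{X_1}\|w\|_{Y_1}$, exactly as in the paper's treatment of $T_3$. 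With this single correction your argument goes through.
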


\begin{proof}
	We write the equation safisfied by {$g_t-h_t$, denoting $g=g_t$ and $h=h_t$}:
		$$
		\left\{
			\bal
				&\partial_t (g-h) = \Lambda(g-h) + Q(h,g-h) + Q(g-h,g), \\
				&(g-h)_{|t=0} = g_0 - h_0.
			\eal
		\right.
		$$
	We compute
		$$
		\bal
			&\quad \frac12\frac{d}{dt} \Nt g - h \Nt_{\mathbf X_1}^2 \\
			&= \eta \la (g - h) , \Lambda (g - h) \ra_{\mathbf X_1} + \int_0^\infty \la \SS_\Lambda(\tau) e^{\lambda_1 \tau} (g - h) , \SS_\Lambda(\tau)e^{\lambda_1 \tau} \Lambda (g - h) \ra_{\mathbf X_3} \, \d\tau     \\
			&\quad + \eta \la (g - h) , Q(h,g - h) \ra_{\mathbf X_1} + \int_0^\infty \la \SS_\Lambda(\tau)e^{\lambda_1 \tau} (g - h) , \SS_\Lambda(\tau)e^{\lambda_1 \tau}  Q(h,g-h) \ra_{\mathbf X_3} \, \d\tau \\
			&\quad + \eta \la (g - h) , Q(g - h, g ) \ra_{\mathbf X_1} + \int_0^\infty \la \SS_\Lambda(\tau)e^{\lambda_1 \tau} (g - h) , \SS_\Lambda(\tau)e^{\lambda_1 \tau}  Q(g - h,g) \ra_{\mathbf X_3} \, \d\tau \\
			&=: T_1+T_2 + T_3 + T_4 + T_5 + T_6.
		\eal
		$$
	Since the proof follows closely the one of Proposition \ref{prop:stab}, we do not give too much details here (notice that the spaces indexed by $2$ are implicitly used in the following estimates as the spaces indexed by $0$ were used in Proposition~\ref{prop:stab}). We have:
		$$
		T_1+T_2 \le
		- K \| g-h \|_{\mathbf Y_1^*}^2,
		$$
	and also
		$$
		T_3+T_4 \lesssim \| h \|_{\mathbf X_1} \, \| g-h \|_{\mathbf Y_1^*}^2 +
		\| h \|_{\mathbf Y_1} \, \| g-h \|_{\mathbf X_1} \, \| g-h \|_{\mathbf Y_1}.
		$$
	Moreover, for the last part $T_5 + T_6$, using Lemma~\ref{lem:nonlinearnonhom}-$(i)$, we get
		\begin{align*}
		T_5 + T_6 &\lesssim \| g-h \|_{\mathbf X_1} \, \| g \|_{{\bf \bar Y}_1} \, \| g-h \|_{\mathbf Y_1} + \| g \|_{\mathbf X_1} \|g-h\|_{\mathbf Y_1}^2 \\
		&\lesssim \| g-h \|_{\mathbf X_1} \, \| g \|_{\mathbf Y} \, \| g-h \|_{\mathbf Y_1}+ \| g \|_{\mathbf X_1} \|g-h\|_{\mathbf Y_1}^2,
		\end{align*}
	which completes the proof.
\end{proof}

\smallskip
\subsubsection{End of the proof}
The end of the proof of Theorem~\ref{main1} is classical and we do not enter into details here. It follows a standard argument by introducing an iterative scheme whose convergence and stability is shown thanks to Propositions~\ref{prop:stab} and~\ref{prop:conv}. The framework being exactly the same, we refer to Subsections~3.4.2. and~3.4.3 from~\cite{CTW-ARMA} in which a more precise proof is given.

\appendix
\bigskip
\section{Proof of Lemma~\ref{lem:nonlinearnonhom}} \label{app:nonlinear}
\setcounter{equation}{0}
\setcounter{theo}{0}
In this proof, we use Lemma~\ref{lem:nonlinearhom}-$(i)$ and $(ii)$ together with the following inequalities when integrating in $x \in \T^3$,
		\beqn \label{eq:Sob_x}
			\| u \|_{L^\infty (\T^3_x)} \lesssim \| u \|_{H^2 (\T^3_x)},
			\quad \| u \|_{L^6 (\T^3_x)} \lesssim \| u \|_{H^1 (\T^3_x)},
			\quad \| u \|_{L^3 (\T^3_x)} \lesssim \|  u \|_{H^1 (\T^3_x)}.
		\eeqn
\noindent {\it Proof of (i).} We write
		$$
		\la  Q(f,g),h \ra_{\HH^3_x L^2_v(m)} = \la  Q(f,g),h \ra_{L^2_{x,v}(m)}
		+ \sum_{1 \le |\beta| \le 3} \la \partial^\beta_x Q(f,g), \partial^\beta_x h \ra_{L^2_{x,v}(m \la v \ra^{-2|\beta|s})},
		$$
	and
		$$
		\partial^\beta_x Q(f,g) = \sum_{\beta_1 + \beta_2 = \beta} C_{\beta_1,\beta_2} \, Q ( \partial^{\beta_1}_x f , \partial^{\beta_2}_x g).
		$$In the following steps we will always consider $\ell \in (\gamma+1+3/2,k-6s]$ which is possible since $k>\gamma/2+3+8s$, $\gamma\le 1$ and $s \ge 0$.

	\noindent {\it Step 1.}  Using Lemma~\ref{lem:nonlinearhom}-(i) applied with $\varsigma_1=\varsigma_2=s$, $N_1=\gamma/2+2s$, $N_2=\gamma/2$ and~\eqref{eq:Sob_x} we have
		$$
		\bal
 			&\quad \la  Q(f,g),h \ra_{L^2_{x,v}(m)} \\
			&\lesssim \int_{\T^3} \bigg(\|f\|_{L^2_v(\la v \ra^\ell)} \, \|g\|_{H^s_v(\la v \ra^{\gamma/2+2s}m)} \, \|h\|_{H^s_v(\la v \ra^{\gamma/2}m)} \\
			&\quad + \|f\|_{L^2_v (\la v \ra^{\gamma/2}m)} \, \|g\|_{L^2_v(\la v \ra^\ell)} \, \|h\|_{L^2_v (\la v \ra^{\gamma/2}m)}\bigg) \\
			&\lesssim \|f\|_{H^2_xL^2_v(\la v \ra^\ell)} \, \|g\|_{L^2_xH^s_v(\la v \ra^{\gamma/2+2s}m)} \, \|h\|_{L^2_xH^s_v(\la v \ra^{\gamma/2}m)} \\
			&\quad +  \|f\|_{L^2_{x,v} (\la v \ra^{\gamma/2}m)}\, \|g\|_{H^2_xL^2_v(\la v \ra^\ell)} \,  \|h\|_{L^2_{x,v} (\la v \ra^{\gamma/2}m)} \\
			&\lesssim \|f\|_{\mathbf X} \, \|g\|_{{\bf \bar Y}} \, \|h\|_{\mathbf Y} + \|f\|_{\mathbf Y} \,  \|g\|_{\mathbf X} \, \|h\|_{\mathbf Y}.
		\eal
		$$
	\noindent {\it Step 2.} {\it Case $|\beta|=1$.} Arguing as in the previous step,
		$$
		\bal
 			&\quad \la  Q(f,\partial^\beta_xg),\partial^\beta_x h \ra_{L^2_{x,v}(\la v \ra^{-2s} m)} \\
			&\lesssim \int_{\T^3} \bigg(\|f\|_{L^2_v(\la v \ra^\ell)} \, \|\nabla_xg\|_{H^s_v(\la v \ra^{\gamma/2}m)} \, \|\nabla_xh\|_{H^s_v(\la v \ra^{\gamma/2-2s}m)} \\
			&\qquad + \|f\|_{L^2_v (\la v \ra^{\gamma/2-2s}m)} \, \|\nabla_xg\|_{L^2_v(\la v \ra^\ell)} \, \|\nabla_xh\|_{L^2_v (\la v \ra^{\gamma/2-2s}m)}\bigg) \\
			&\lesssim \|f\|_{H^2_xL^2_v(\la v \ra^\ell)} \,\|\nabla_xg\|_{L^2_xH^s_v(\la v \ra^{\gamma/2}m)} \, \|\nabla_xh\|_{L^2_xH^s_v(\la v \ra^{\gamma/2-2s}m)} \\
			&\qquad +  \|f\|_{L^2_{x,v} (\la v \ra^{\gamma/2-2s}m)}\, \|\nabla_xg\|_{H^2_xL^2_v(\la v \ra^\ell)} \,  \|\nabla_x h\|_{L^2_{x,v} (\la v \ra^{\gamma/2-2s}m)} \\
			&\lesssim \|f\|_{\mathbf X} \, \|g\|_{{\bf \bar Y}} \, \|h\|_{\mathbf Y} + \|f\|_{\mathbf Y} \,  \|g\|_{\mathbf X} \, \|h\|_{\mathbf Y}.
		\eal
		$$
	Moreover,
		$$
		\bal
			&\quad \la Q(\partial^\beta_x f, g) , \partial^\beta_x h \ra_{L^2_{x,v}(\la v \ra^{-2s} m)} \\
			&\lesssim \int_{\T^3} \bigg(\|\nabla_x f\|_{L^2_v(\la v \ra^\ell)} \, \|g\|_{H^s_v(\la v \ra^{\gamma/2} m)} \, \|\nabla_x h\|_{H^s_v(\la v \ra^{\gamma/2-2s} m)} \\
			&\quad + \|\nabla_x f\|_{L^2_v (\la v \ra^{\gamma/2-2s}m)} \, \|g\|_{L^2_v(\la v \ra^\ell)}  \,  \|\nabla_x h\|_{L^2_v (\la v \ra^{\gamma/2-2s}m)} \bigg) \\
			&\lesssim \|\nabla_x f\|_{H^2_xL^2_v(\la v \ra^\ell)}\, \|g\|_{L^2_xH^s_v(\la v \ra^{\gamma/2} m)} \, \|\nabla_x h\|_{L^2_xH^s_v(\la v \ra^{\gamma/2-2s} m)} \\
			&\quad +  \|\nabla_x f\|_{L^2_{x,v} (\la v \ra^{\gamma/2-2s}m)} \,\|g\|_{H^2_xL^2_v(\la v \ra^\ell)} \,   \|\nabla_x h\|_{L^2_{x,v} (\la v \ra^{\gamma/2-2s}m)} \\
			&\lesssim \|f\|_{\mathbf X} \, \|g\|_{{\bf \bar Y}} \, \|h\|_{\mathbf Y} + \|f\|_{\mathbf Y} \,  \|g\|_{\mathbf X} \, \|h\|_{\mathbf Y}.
		\eal
		$$
	\noindent {\it Step 3.} {\it Case $|\beta|=2$.} When $\beta_2=\beta$, we have
		$$
		\bal
 			&\quad \la  Q(f,\partial^\beta_xg),\partial^\beta_x h \ra_{L^2_{x,v}(\la v \ra^{-4s} m)} \\
			&\lesssim \int_{\T^3} \bigg(\|f\|_{L^2_v(\la v \ra^\ell)} \, \|\nabla^2_xg\|_{H^s_v(\la v \ra^{\gamma/2-2s}m)} \,
			\|\nabla^2_xh\|_{H^s_v(\la v \ra^{\gamma/2-4s}m)}\\
			&\quad + \|f\|_{L^2_v (\la v \ra^{\gamma/2-4s}m)} \, \|\nabla_x^2g\|_{L^2_v(\la v \ra^\ell)} \, \|\nabla^2_xh\|_{L^2_v (\la v \ra^{\gamma/2-4s}m)}\bigg) \\
			&\lesssim \|f\|_{H^2_xL^2_v(\la v \ra^\ell)} \, \|\nabla^2_xg\|_{L^2_{x}H^s_{v}(\la v \ra^{\gamma/2-2s}m)} \, \|\nabla^2_xh\|_{L^2_x H^s_v (\la v \ra^{\gamma/2-4s}m)}\\
			&\quad +   \|f\|_{H^2_x L^2_v (\la v \ra^{\gamma/2-4s}m)}\, \|\nabla^2_xg\|_{L^2_{x,v}(\la v \ra^\ell)} \,  \|\nabla^2_x h\|_{L^2_{x,v} (\la v \ra^{\gamma/2-4s}m)} \\
			&\lesssim \|f\|_{\mathbf X} \, \|g\|_{{\bf \bar Y}} \, \|h\|_{\mathbf Y} + \|f\|_{\mathbf Y} \,  \|g\|_{\mathbf X} \, \|h\|_{\mathbf Y}.
		\eal
		$$
	When $\beta_1=\beta$, we have
		$$
		\bal
			&\quad \la Q(\partial^\beta_x f, g) , \partial^\beta_x h \ra_{L^2_{x,v}(\la v \ra^{-4s} m)} \\
			&\lesssim \int_{\T^3} \bigg(\|\nabla^2_x f\|_{L^2_v(\la v \ra^\ell)} \, \|g\|_{H^s_v(\la v \ra^{\gamma/2-2s} m)} \, \|\nabla^2_x h\|_{H^s_v(\la v \ra^{\gamma/2-4s} m)} \\
			&\quad + \|\nabla^2_x f\|_{L^2_v (\la v \ra^{\gamma/2-4s}m)} \, \|g\|_{L^2_v(\la v \ra^\ell)}  \,  \|\nabla^2_x h\|_{L^2_v (\la v \ra^{\gamma/2-4s}m)} \bigg) \\
			&\lesssim \|\nabla^2_x f\|_{L^2_{x,v}(\la v \ra^\ell)}\, \|g\|_{H^{2,s}_{x,v}(\la v \ra^{\gamma/2-2s} m)} \, \|\nabla^2_x h\|_{L^2_xH^s_v(\la v \ra^{\gamma/2-4s} m)} \\
			&\quad +  \|\nabla^2_x f\|_{L^2_{x,v} (\la v \ra^{\gamma/2-4s}m)} \,\|g\|_{H^2_xL^2_v(\la v \ra^\ell)} \,   \|\nabla^2_x g\|_{L^2_{x,v} (\la v \ra^{\gamma/2-4s}m)} \\
			&\lesssim \|f\|_{\mathbf X} \, \|g\|_{{\bf \bar Y}} \, \|h\|_{\mathbf Y} + \|f\|_{\mathbf Y} \,  \|g\|_{\mathbf X} \, \|h\|_{\mathbf Y}.
		\eal
		$$
	Finally, when $|\beta_1|=|\beta_2|=1$, we obtain
		$$
		\bal
			&\quad \la Q( \partial^{\beta_1}_x f, \partial^{\beta_2}_x g ), \partial^{\beta}_x h \ra_{L^2_x L^2_v(\la v \ra^{-4s}m)} \\
			&\lesssim \int_{\T^3}\bigg(\|\nabla_x f\|_{L^2_v(\la v \ra^\ell)} \, \|\nabla_x g\|_{H^s_v(\la v \ra^{\gamma/2-2s} m)} \, \|\nabla^2_x h\|_{H^s_v(\la v \ra^{\gamma/2-4s} m)} \\
			&\quad + \|\nabla_x f\|_{L^2_v (\la v \ra^{\gamma/2-4s}m)} \, \|\nabla_xg\|_{L^2_v(\la v \ra^\ell)}  \,  \|\nabla^2_x h\|_{L^2_v (\la v \ra^{\gamma/2-4s}m)} \bigg) \\
			&\lesssim  \|\nabla_x f\|_{H^2_xL^2_v(\la v \ra^\ell)}\, \|\nabla_xg\|_{L^2_xH^s_v(\la v \ra^{\gamma/2-2s} m)} \, \|\nabla^2_x h\|_{L^2_xH^s_v(\la v \ra^{\gamma/2-4s} m)} \\
			&\quad +\|\nabla_x f\|_{L^2_{x,v} (\la v \ra^{\gamma/2-4s}m)} \,  \|\nabla_xg\|_{H^2_xL^2_v(\la v \ra^\ell)} \,   \|\nabla^2_x h\|_{L^2_{x,v} (\la v \ra^{\gamma/2-4s}m)} \\
			&\lesssim \|f\|_{\mathbf X} \, \|g\|_{{\bf \bar Y}} \, \|h\|_{\mathbf Y} + \|f\|_{\mathbf Y} \,  \|g\|_{\mathbf X} \, \|h\|_{\mathbf Y}.
		\eal
		$$
		
	\noindent {\it Step 4. Case $|\beta|=3$.} When $\beta_2=\beta$ we obtain
		$$
		\bal
 			&\quad \la  Q(f,\partial^\beta_xg),\partial^\beta_x h \ra_{L^2_{x,v}(\la v \ra^{-6s} m)} \\
			&\lesssim \int_{\T^3} \bigg(\|f\|_{L^2_v(\la v \ra^\ell)} \,
			\|\nabla^3_x g\|_{H^s_v(\la v \ra^{\gamma/2-4s} m)} \, \|\nabla^3_x h\|_{H^s_v(\la v \ra^{\gamma/2-6s} m)} \\
			&\quad + \|f\|_{L^2_v (\la v \ra^{\gamma/2-6s}m)} \, \|\nabla_x^3g\|_{L^2_v(\la v \ra^\ell)} \, \|\nabla^3_xh\|_{L^2_v (\la v \ra^{\gamma/2-6s}m)}\bigg) \\
			&\lesssim \|f\|_{H^2_xL^2_v(\la v \ra^\ell)} \, \|\nabla^3_x g\|_{L^2_xH^s_v(\la v \ra^{\gamma/2-4s} m)} \, \|\nabla^3_x h\|_{L^2_xH^s_v(\la v \ra^{\gamma/2-6s} m)} \\
			&\quad +    \|f\|_{H^2_xL^2_{v} (\la v \ra^{\gamma/2-6s}m)}\, \|\nabla^3_xg\|_{L^2_{x,v}(\la v \ra^\ell)} \,\|\nabla^3_x h\|_{L^2_{x,v} (\la v \ra^{\gamma/2-6s}m)} \\
			&\lesssim \|f\|_{\mathbf X} \, \|g\|_{{\bf \bar Y}} \, \|h\|_{\mathbf Y} + \|f\|_{\mathbf Y} \,  \|g\|_{\mathbf X} \, \|h\|_{\mathbf Y}.
		\eal
		$$
	If $|\beta_1|=1$ and $|\beta_2|=2$ then
		$$
		\bal
			&\quad \la  Q( \partial^{\beta_1}_x f, \partial^{\beta_2}_x g ), \partial^{\beta}_x h \ra_{L^2_x L^2_v( \la v \ra^{-6s}m)} \\
			&\lesssim \int_{\T^3}\bigg(\|\nabla_x f\|_{L^2_v(\la v \ra^\ell)} \, \|\nabla^2_x g\|_{H^s_v(\la v \ra^{\gamma/2-4s} m)} \, \|\nabla^3_x h\|_{H^s_v(\la v \ra^{\gamma/2-6s} m)} \\
			&\quad + \|\nabla_x f\|_{L^2_v (\la v \ra^{\gamma/2-6s}m)} \, \|\nabla^2_xg\|_{L^2_v(\la v \ra^\ell)}  \,  \|\nabla^3_x h\|_{L^2_v (\la v \ra^{\gamma/2-6s}m)} \bigg) \\
			&\lesssim \|\nabla_x f\|_{H^2_xL^2_v(\la v \ra^\ell)}\, \|\nabla^2_xg\|_{L^2_xH^s_v(\la v \ra^{\gamma/2-4s} m)} \, \|\nabla^3_x h\|_{L^2_xH^s_v(\la v \ra^{\gamma/2-6s} m)} \\
			&\quad +  \|\nabla_x f\|_{H^2_xL^2_{v} (\la v \ra^{\gamma/2-6s}m)} \, \|\nabla^2_xg\|_{L^2_{x,v}(\la v \ra^\ell)} \,   \|\nabla^3_x h\|_{L^2_{x,v} (\la v \ra^{\gamma/2-6s}m)}\\
			&\lesssim \|f\|_{\mathbf X} \, \|g\|_{{\bf \bar Y}} \, \|h\|_{\mathbf Y} + \|f\|_{\mathbf Y} \,  \|g\|_{\mathbf X} \, \|h\|_{\mathbf Y}.
		\eal
		$$
	When $|\beta_1|=2$ and $|\beta_2|=1$ then we get
		$$
		\bal
			&\quad \la  Q( \partial^{\beta_1}_x f, \partial^{\beta_2}_x g ), \partial^{\beta}_x h \ra_{L^2_x L^2_v(m \la v \ra^{-6s})} \\
			&\lesssim \int_{\T^3}\bigg(\|\nabla^2_x f\|_{L^2_v(\la v \ra^\ell)} \, \|\nabla_x g\|_{H^s_v(\la v \ra^{\gamma/2-4s} m)} \, \|\nabla^3_x h\|_{H^s_v(\la v \ra^{\gamma/2-6s} m)} \\
			&\quad + \|\nabla^2_x f\|_{L^2_v (\la v \ra^{\gamma/2-6s}m)} \, \|\nabla_xg\|_{L^2_v(\la v \ra^\ell)}  \,  \|\nabla^3_x h\|_{L^2_v (\la v \ra^{\gamma/2-6s}m)} \bigg) \\
			&\lesssim \|\nabla^2_x f\|_{L^2_{x,v}(\la v \ra^\ell)}\, \|\nabla_xg\|_{H^{2,s}_{x,v}(\la v \ra^{\gamma/2-4s} m)} \, \|\nabla^3_x h\|_{L^2_xH^s_v(\la v \ra^{\gamma/2-6s} m)} \\
			&\quad + \|\nabla^2_x f\|_{L^2_{x,v} (\la v \ra^{\gamma/2-6s}m)} \,  \|\nabla_xg\|_{H^2_xL^2_v(\la v \ra^\ell)} \,  \|\nabla^3_x h\|_{L^2_{x,v} (\la v \ra^{\gamma/2-6s}m)} \\
			&\lesssim \|f\|_{\mathbf X} \, \|g\|_{{\bf \bar Y}} \, \|h\|_{\mathbf Y} + \|f\|_{\mathbf Y} \,  \|g\|_{\mathbf X} \, \|h\|_{\mathbf Y}.
		\eal
		$$
	Finally, when $\beta_1=\beta$, it follows
		$$
		\bal
			&\quad \la  Q( \partial^{\beta}_x f, g ), \partial^{\beta}_x h \ra_{L^2_x L^2_v(m \la v \ra^{-6s})} \\
			&\lesssim \int_{\T^3} \bigg(\|\nabla^3_x f\|_{L^2_v(\la v \ra^\ell)} \, \|g\|_{H^s_v(\la v \ra^{\gamma/2-4s} m)} \, \|\nabla^3_x h\|_{H^s_v(\la v \ra^{\gamma/2-6s} m)} \\
			&\quad + \|\nabla^3_x f\|_{L^2_v (\la v \ra^{\gamma/2-6s}m)} \, \|g\|_{L^2_v(\la v \ra^\ell)}  \,  \|\nabla^3_x h\|_{L^2_v (\la v \ra^{\gamma/2-6s}m)} \bigg) \\
			&\lesssim \|\nabla^3_x f\|_{L^2_{x,v}(\la v \ra^\ell)}\, \|g\|_{H^{2,s}_{x,v}(\la v \ra^{\gamma/2-4s} m)} \, \|\nabla^3_x h\|_{L^2_xH^s_v(\la v \ra^{\gamma/2-6s} m)} \\
			&\quad +  \|\nabla^3_x f\|_{L^2_{x,v} (\la v \ra^{\gamma/2-6s}m)} \,  \|g\|_{H^2_xL^2_v(\la v \ra^\ell)} \, \|\nabla^3_x h\|_{L^2_{x,v} (\la v \ra^{\gamma/2-6s}m)} \\
			&\lesssim \|f\|_{\mathbf X} \, \|g\|_{{\bf \bar Y}} \, \|h\|_{\mathbf Y} + \|f\|_{\mathbf Y} \,  \|g\|_{\mathbf X} \, \|h\|_{\mathbf Y}.
		\eal
		$$
\smallskip

{ \noindent {\it Proof of (ii).}
	As in the proof of~(i), we write
		$$
		\la  Q(f,g),g \ra_{\HH^3_x L^2_v(m)} = \la  Q(f,g),g \ra_{L^2_{x,v}(m)}
		+ \sum_{1 \le |\beta| \le 3} \la \partial^\beta_x Q(f,g), \partial^\beta_x g \ra_{L^2_{x,v}(m \la v \ra^{-2|\beta|s})},
		$$
	and
		$$
		\partial^\beta_x Q(f,g) = \sum_{\beta_1 + \beta_2 = \beta} C_{\beta_1,\beta_2} \, Q ( \partial^{\beta_1}_x f , \partial^{\beta_2}_x g).
		$$
In the following steps, we will always consider $\ell\in (4-\gamma+3/2,k-6s]$. Notice that since $\gamma \le 1$ and $s \le 1/2$, the condition $k>4-\gamma+3/2+6s$ implies $k>\gamma/2+3+8s$ so that  we can apply results from Lemma~\ref{lem:nonlinearhom}.

	\noindent {\it Step 1.}  Using Lemma~\ref{lem:nonlinearhom}-(ii) and~\eqref{eq:Sob_x}, we have
		$$
		\bal
 			&\quad \la  Q(f,g),g \ra_{L^2_{x,v}(m)} \\
			&\lesssim \int_{\T^3} \left(\|f\|_{L^2_v(\la v \ra^\ell)} \, \|g\|^2_{H^{s,*}_v(m)} + \|f\|_{L^2_v (\la v \ra^{\gamma/2}m)} \, \|g\|_{L^2_v(\la v \ra^\ell)} \, \|g\|_{L^2_v (\la v \ra^{\gamma/2}m)}\right) \\
			&\lesssim \|f\|_{H^2_xL^2_v(\la v \ra^\ell)} \, \|g\|^2_{L^2_{x}H^{s,*}_v(m)} +  \|f\|_{L^2_{x,v} (\la v \ra^{\gamma/2}m)}\, \|g\|_{H^2_xL^2_v(\la v \ra^\ell)} \,  \|g\|_{L^2_{x,v} (\la v \ra^{\gamma/2}m)} \\
			&\lesssim \|f\|_{\mathbf X} \, \|g\|^2_{\mathbf Y^*} + \|f\|_{\mathbf Y} \,  \|g\|_{\mathbf X} \, \|g\|_{\mathbf Y}.
		\eal
		$$
	\noindent {\it Step 2.} {\it Case $|\beta|=1$.} Arguing as in the previous step,
		$$
		\bal
 			&\quad \la  Q(f,\partial^\beta_xg),\partial^\beta_x g \ra_{L^2_{x,v}(\la v \ra^{-2s} m)} \lesssim \int_{\T^3} \bigg(\|f\|_{L^2_v(\la v \ra^\ell)} \, \|\nabla_xg\|^2_{H^{s,*}_v(\la v \ra^{-2s} m)} \\
			&\qquad + \|f\|_{L^2_v (\la v \ra^{\gamma/2-2s}m)} \, \|\nabla_xg\|_{L^2_v(\la v \ra^\ell)} \, \|\nabla_xg\|_{L^2_v (\la v \ra^{\gamma/2-2s}m)}\bigg) \\
			&\lesssim \|f\|_{H^2_xL^2_v(\la v \ra^\ell)} \, \|\nabla_xg\|^2_{L^2_xH^{s,*}_v(\la v \ra^{-2s} m)} \\
			&\qquad +\|f\|_{L^2_{x,v} (\la v \ra^{\gamma/2-2s}m)}\,   \|\nabla_xg\|_{H^2_xL^2_v(\la v \ra^\ell)} \,  \|\nabla_x g\|_{L^2_{x,v} (\la v \ra^{\gamma/2-2s}m)} \\
			&\lesssim \|f\|_{\mathbf X} \, \|g\|^2_{\mathbf Y^*} + \|f\|_{\mathbf Y} \,  \|g\|_{\mathbf X} \, \|g\|_{\mathbf Y}.
		\eal
		$$
	Moreover, we also have using Lemma~\ref{lem:nonlinearhom}-(i),
		$$
		\bal
			&\quad \la Q(\partial^\beta_x f, g) , \partial^\beta_x g \ra_{L^2_{x,v}(\la v \ra^{-2s} m)} \\
			&\lesssim \int_{\T^3} \bigg(\|\nabla_x f\|_{L^2_v(\la v \ra^\ell)} \, \|g\|_{H^s_v(\la v \ra^{\gamma/2} m)} \, \|\nabla_x g\|_{H^s_v(\la v \ra^{\gamma/2-2s} m)} \\
			&\quad + \|\nabla_x f\|_{L^2_v (\la v \ra^{\gamma/2-2s}m)} \, \|g\|_{L^2_v(\la v \ra^\ell)}  \,  \|\nabla_x g\|_{L^2_v (\la v \ra^{\gamma/2-2s}m)} \bigg) \\
			&\lesssim \|\nabla_x f\|_{H^2_xL^2_v(\la v \ra^\ell)}\, \|g\|_{L^2_xH^s_v(\la v \ra^{\gamma/2} m)} \, \|\nabla_x g\|_{L^2_xH^s_v(\la v \ra^{\gamma/2-2s} m)} \\
			&\quad + \|\nabla_x f\|_{L^2_{x,v} (\la v \ra^{\gamma/2-2s}m)} \, \|g\|_{H^2_xL^2_v(\la v \ra^\ell)} \,   \|\nabla_x g\|_{L^2_{x,v} (\la v \ra^{\gamma/2-2s}m)} \\
			&\lesssim \|f\|_{\mathbf X} \, \|g\|^2_{\mathbf Y} + \|f\|_{\mathbf Y} \,  \|g\|_{\mathbf X} \, \|g\|_{\mathbf Y}.
		\eal
		$$
	\noindent {\it Step 3.} {\it Case $|\beta|=2$.} When $\beta_2=\beta$, we have
		$$
		\bal
 			&\quad \la  Q(f,\partial^\beta_xg),\partial^\beta_x g \ra_{L^2_{x,v}(\la v \ra^{-4s} m)} \lesssim \int_{\T^3} \bigg(\|f\|_{L^2_v(\la v \ra^\ell)} \, \|\nabla^2_xg\|^2_{H^{s,*}_v(\la v \ra^{-4s} m)} \\
			&\qquad + \|f\|_{L^2_v (\la v \ra^{\gamma/2-4s}m)} \, \|\nabla_x^2g\|_{L^2_v(\la v \ra^\ell)} \, \|\nabla^2_xg\|_{L^2_v (\la v \ra^{\gamma/2-4s}m)}\bigg) \\
			&\lesssim \|f\|_{H^2_xL^2_v(\la v \ra^\ell)} \, \|\nabla^2_xg\|^2_{L^2_xH^{s,*}_v(\la v \ra^{-4s} m)} \\
			&\qquad +    \|f\|_{H^2_{x}L^2_v (\la v \ra^{\gamma/2-4s}m)}\,\|\nabla^2_xg\|_{L^2_{x,v}(\la v \ra^\ell)} \, \|\nabla^2_x g\|_{L^2_{x,v} (\la v \ra^{\gamma/2-4s}m)} \\
			&\lesssim \|f\|_{\mathbf X} \, \|g\|^2_{\mathbf Y^*} +  \|f\|_{\mathbf Y} \,  \|g\|_{\mathbf X} \, \|g\|_{\mathbf Y}.
		\eal
		$$
	When $\beta_1=\beta$, we have
		$$
		\bal
			&\quad \la Q(\partial^\beta_x f, g) , \partial^\beta_x g \ra_{L^2_{x,v}(\la v \ra^{-4s} m)} \\
			&\lesssim \int_{\T^3} \bigg(\|\nabla^2_x f\|_{L^2_v(\la v \ra^\ell)} \, \|g\|_{H^s_v(\la v \ra^{\gamma/2-2s} m)} \, \|\nabla^2_x g\|_{H^s_v(\la v \ra^{\gamma/2-4s} m)} \\
			&\quad + \|\nabla^2_x f\|_{L^2_v (\la v \ra^{\gamma/2-4s}m)} \, \|g\|_{L^2_v(\la v \ra^\ell)}  \,  \|\nabla^2_x g\|_{L^2_v (\la v \ra^{\gamma/2-4s}m)} \bigg) \\
			&\lesssim \|\nabla^2_x f\|_{L^6_xL^2_v(\la v \ra^\ell)}\, \|g\|_{L^3_x H^s_v(\la v \ra^{\gamma/2-2s} m)} \, \|\nabla^2_x g\|_{L^2_x H^s_v(\la v \ra^{\gamma/2-4s} m)} \\
			&\quad +   \|\nabla^2_x f\|_{L^2_{x,v} (\la v \ra^{\gamma/2-4s}m)} \,\|g\|_{H^2_xL^2_v(\la v \ra^\ell)} \,  \|\nabla^2_x g\|_{L^2_{x,v} (\la v \ra^{\gamma/2-4s}m)} \\
			&\lesssim \|\nabla^2_x f\|_{H^1_xL^2_v(\la v \ra^\ell)}\, \|g\|_{H^{1,s}_{x,v}(\la v \ra^{\gamma/2-2s} m)}
			\, \|\nabla^2_x g\|_{L^2_xH^s_v(\la v \ra^{\gamma/2-4s} m)} \\
			&\quad + \|\nabla^2_x f\|_{L^2_{x,v} (\la v \ra^{\gamma/2-4s}m)} \,  \|g\|_{H^2_xL^2_v(\la v \ra^\ell)} \,  \|\nabla^2_x g\|_{L^2_{x,v} (\la v \ra^{\gamma/2-4s}m)} \\
			&\lesssim \|f\|_{\mathbf X} \, \|g\|^2_{\mathbf Y} + \|f\|_{\mathbf Y} \,  \|g\|_{\mathbf X} \, \|g\|_{\mathbf Y}.
		\eal
		$$
	Finally, when $|\beta_1|=|\beta_2|=1$, we obtain
		$$
		\bal
			&\quad \la Q( \partial^{\beta_1}_x f, \partial^{\beta_2}_x g ), \partial^{\beta}_x g \ra_{L^2_x L^2_v(\la v \ra^{-4s}m)} \\
			&\lesssim \int_{\T^3}\bigg(\|\nabla_x f\|_{L^2_v(\la v \ra^\ell)} \, \|\nabla_x g\|_{H^s_v(\la v \ra^{\gamma/2-2s} m)} \, \|\nabla^2_x g\|_{H^s_v(\la v \ra^{\gamma/2-4s} m)} \\
			&\quad + \|\nabla_x f\|_{L^2_v (\la v \ra^{\gamma/2-4s}m)} \, \|\nabla_xg\|_{L^2_v(\la v \ra^\ell)}  \,  \|\nabla^2_x g\|_{L^2_v (\la v \ra^{\gamma/2-4s}m)} \bigg) \\
			&\lesssim \|\nabla_x f\|_{H^2_xL^2_v(\la v \ra^\ell)}\, \|\nabla_xg\|_{L^2_x H^s_v(\la v \ra^{\gamma/2-2s} m)} \, \|\nabla^2_x g\|_{L^2_x H^s_v(\la v \ra^{\gamma/2-4s} m)} \\
			&\quad + \|\nabla_x f\|_{L^2_{x,v} (\la v \ra^{\gamma/2-4s}m)} \,   \|\nabla_xg\|_{H^2_xL^2_v(\la v \ra^\ell)} \, \|\nabla^2_x g\|_{L^2_{x,v} (\la v \ra^{\gamma/2-4s}m)} \\
			&\lesssim \|f\|_{\mathbf X} \, \|g\|^2_{\mathbf Y} + \|f\|_{\mathbf Y} \,  \|g\|_{\mathbf X} \, \|g\|_{\mathbf Y}.
		\eal
		$$
		
	\noindent {\it Step 4. Case $|\beta|=3$.} When $\beta_2=\beta$ we obtain
		$$
		\bal
 			&\quad \la  Q(f,\partial^\beta_xg),\partial^\beta_x g \ra_{L^2_{x,v}(\la v \ra^{-6s} m)} \lesssim \int_{\T^3} \bigg(\|f\|_{L^2_v(\la v \ra^\ell)} \, \|\nabla^3_xg\|^2_{H^{s,*}_v(\la v \ra^{-6s}m)} \\
			&\qquad + \|f\|_{L^2_v (\la v \ra^{\gamma/2-6s}m)} \, \|\nabla_x^3g\|_{L^2_v(\la v \ra^\ell)} \, \|\nabla^3_xg\|_{L^2_v (\la v \ra^{\gamma/2-6s}m)}\bigg) \\
			&\lesssim \|f\|_{H^2_xL^2_v(\la v \ra^\ell)} \, \|\nabla^3_xg\|^2_{L^2_xH^{s,*}_v(\la v \ra^{-6s}m)} \\
			&\qquad +  \|f\|_{H^2_{x} L^2_{v} (\la v \ra^{\gamma/2-6s}m)}\, \|\nabla^3_xg\|_{L^2_{x,v}(\la v \ra^\ell)} \,  \|\nabla^3_x g\|_{L^2_{x,v} (\la v \ra^{\gamma/2-6s}m)} \\
			&\lesssim \|f\|_{\mathbf X} \, \|g\|^2_{\mathbf Y^*} + \|f\|_{\mathbf Y} \,  \|g\|_{\mathbf X} \, \|g\|_{\mathbf Y}.
		\eal
		$$
	If $|\beta_1|=1$ and $|\beta_2|=2$ then
		$$
		\bal
			&\quad \la  Q( \partial^{\beta_1}_x f, \partial^{\beta_2}_x g ), \partial^{\beta}_x g \ra_{L^2_x L^2_v(m \la v \ra^{-6s})} \\
			&\lesssim \int_{\T^3}\bigg(\|\nabla_x f\|_{L^2_v(\la v \ra^\ell)} \, \|\nabla^2_x g\|_{H^s_v(\la v \ra^{\gamma/2-4s} m)} \, \|\nabla^3_x g\|_{H^s_v(\la v \ra^{\gamma/2-6s} m)} \\
			&\quad + \|\nabla_x f\|_{L^2_v (\la v \ra^{\gamma/2-6s}m)} \, \|\nabla^2_xg\|_{L^2_v(\la v \ra^\ell)}  \,  \|\nabla^3_x g\|_{L^2_v (\la v \ra^{\gamma/2-6s}m)} \bigg) \\
			&\lesssim \|\nabla_x f\|_{H^2_xL^2_v(\la v \ra^\ell)}\, \|\nabla^2_xg\|_{L^2_xH^s_v(\la v \ra^{\gamma/2-4s} m)} \, \|\nabla^3_x g\|_{L^2_xH^s_v(\la v \ra^{\gamma/2-6s} m)} \\
			&\quad +  \|\nabla_x f\|_{H^2_xL^2_{v} (\la v \ra^{\gamma/2-6s}m)} \,  \|\nabla^2_xg\|_{L^2_{x,v}(\la v \ra^\ell)} \, \|\nabla^3_x g\|_{L^2_{x,v} (\la v \ra^{\gamma/2-6s}m)}\\
			&\lesssim \|f\|_{\mathbf X} \, \|g\|^2_{\mathbf Y} + \|f\|_{\mathbf Y} \,  \|g\|_{\mathbf X} \, \|g\|_{\mathbf Y}.
		\eal
		$$
	When $|\beta_1|=2$ and $|\beta_2|=1$, we get
		$$
		\bal
			&\quad \la  Q( \partial^{\beta_1}_x f, \partial^{\beta_2}_x g ), \partial^{\beta}_x g \ra_{L^2_x L^2_v(m \la v \ra^{-6s})} \\
			&\lesssim \int_{\T^3}\bigg(\|\nabla^2_x f\|_{L^2_v(\la v \ra^\ell)} \, \|\nabla_x g\|_{H^s_v(\la v \ra^{\gamma/2-4s} m)} \, \|\nabla^3_x g\|_{H^s_v(\la v \ra^{\gamma/2-6s} m)} \\
			&\quad + \|\nabla^2_x f\|_{L^2_v (\la v \ra^{\gamma/2-6s}m)} \, \|\nabla_xg\|_{L^2_v(\la v \ra^\ell)}  \,  \|\nabla^3_x g\|_{L^2_v (\la v \ra^{\gamma/2-6s}m)} \bigg) \\
			&\lesssim \|\nabla^2_x f\|_{L^6_xL^2_v(\la v \ra^\ell)}\, \|\nabla_xg\|_{L^3_xH^s_v(\la v \ra^{\gamma/2-4s} m)} \, \|\nabla^3_x g\|_{L^2_xH^s_v(\la v \ra^{\gamma/2-6s} m)} \\
			&\quad + \|\nabla^2_x f\|_{L^2_{x,v} (\la v \ra^{\gamma/2-6s}m)} \,  \|\nabla_xg\|_{H^2_xL^2_v(\la v \ra^\ell)} \,   \|\nabla^3_x g\|_{L^2_{x,v} (\la v \ra^{\gamma/2-6s}m)} \\
			&\lesssim \|\nabla^2_x f\|_{H^1_xL^2_v(\la v \ra^\ell)}\, \|\nabla_xg\|_{H^{1,s}_{x,v}(\la v \ra^{\gamma/2-4s} m)}  \,
			 \|\nabla^3_x g\|_{L^2_xH^s_v(\la v \ra^{\gamma/2-6s} m)} \\
			&\quad + \|\nabla^2_x f\|_{L^2_{x,v} (\la v \ra^{\gamma/2-6s}m)} \, \|\nabla_xg\|_{H^2_xL^2_v(\la v \ra^\ell)} \,   \|\nabla^3_x g\|_{L^2_{x,v} (\la v \ra^{\gamma/2-6s}m)} \\
			&\lesssim \|f\|_{\mathbf X} \, \|g\|^2_{\mathbf Y} + \|f\|_{\mathbf Y} \,  \|g\|_{\mathbf X} \, \|g\|_{\mathbf Y}.
		\eal
		$$
	Finally, when $\beta_1=\beta$, it follows
		$$
		\bal
			&\quad \la  Q( \partial^{\beta}_x f, g ), \partial^{\beta}_x g \ra_{L^2_x L^2_v(m \la v \ra^{-6s})} \\
			&\lesssim \int_{\T^3} \bigg(\|\nabla^3_x f\|_{L^2_v(\la v \ra^\ell)} \, \|g\|_{H^s_v(\la v \ra^{\gamma/2-4s} m)} \, \|\nabla^3_x g\|_{H^s_v(\la v \ra^{\gamma/2-6s} m)} \\
			&\quad + \|\nabla^3_x f\|_{L^2_v (\la v \ra^{\gamma/2-6s}m)} \, \|g\|_{L^2_v(\la v \ra^\ell)}  \,  \|\nabla^2_x g\|_{L^2_v (\la v \ra^{\gamma/2-6s}m)} \bigg) \\
			&\lesssim \|\nabla^3_x f\|_{L^2_{x,v}(\la v \ra^\ell)}\, \|g\|_{H^{2,s}_{x,v}(\la v \ra^{\gamma/2-4s} m))} \, \|\nabla^3_x g\|_{L^2_xH^s_v(\la v \ra^{\gamma/2-6s} m)} \\
			&\quad +  \|\nabla^3_x f\|_{L^2_{x,v} (\la v \ra^{\gamma/2-6s}m)} \, \|g\|_{H^2_xL^2_v(\la v \ra^\ell)} \,  \|\nabla^3_x g\|_{L^2_{x,v} (\la v \ra^{\gamma/2-6s}m)} \\
			&\lesssim \|f\|_{\mathbf X} \, \|g\|^2_{\mathbf Y} + \|f\|_{\mathbf Y} \,  \|g\|_{\mathbf X} \, \|g\|_{\mathbf Y}.
		\eal
		$$
	We conclude noticing that $\|g\|^2_{\mathbf Y} \lesssim \|g\|^2_{\mathbf Y^*}$ from Lemma~\ref{lem:anis}.

\noindent {\it Proof of (iii).} The result is immediate from {\it(ii)} and the fact that $\|f\|^2_{\mathbf Y} \lesssim \|f\|^2_{\mathbf Y^*}$. }

\bigskip
\section{{Cancellation lemma} and Carleman representation} \label{app:canc+Carl}
\setcounter{equation}{0}
\setcounter{theo}{0}

{ We  state here two classical tools in the analysis of Boltzmann operator, the cancellation lemma and the Carleman representation. The cancellation lemma comes from~\cite[Lemma~1]{ADVW-ARMA}, we here state it for the kernel $B(v-v_*, \sigma) = b(\cos \theta) \, |v-v_*|^\gamma$ but it can be generalized to other kernels very easily (for example, we us it with $B_\delta(v-v_*, \sigma) = b_\delta(\cos \theta) \, |v-v_*|^\gamma$ in Subsections~\ref{subsec:hom} and~\ref{subsec:dissip} of with $\widetilde B_1 (v-v_*,\sigma) = \chi(|v'-v|) \, b(\cos \theta) \, |v-v_*|^\gamma$ in Subsection~\ref{subsec:estimlin}).
\begin{lem}[Cancellation lemma]
Let $f$ be a measurable function defined on $\R^3$. For almost every $v \in \R^3$, we have:
$$
\int_{\R^3 \times \Sp^2}  B(v-v_*,\sigma) (f'_*-f_*) \, \dv_* \, \d\sigma = (f *S)(v)
$$
where
$$
S(z) := 2 \pi \, \int_0^{\pi/2} \sin\theta \, b(\cos \theta) \left( \frac{|z|^\gamma}{\cos^{\gamma+3} (\theta/2)} - |z|^\gamma \right) \, \d\theta .
$$
\end{lem}}

For the Carleman representation, we refer to~\cite{AHL*} for more details on the version that we state here.
\begin{lem}[Carleman representation] \label{lem:Carleman}
	Let $F$ be a measurable function defined on~$(\R^3)^4$. For any vector $\vartheta \in \R^3$, we denote by $E_{0,\vartheta}$ the (hyper)vector plane orthogonal to $\vartheta$. Then, when all sides are well defined, we have the following equality :
		\begin{align*}
		&\int_{\R^3 \times \S^2} b(\cos \theta) |v-v_*|^\gamma F(v,v_*,v',v'_*) \, \dv_* \, \d\sigma \\
		&\quad = \int_{\R^3_\vartheta} \d \vartheta \int_{E_{0,\vartheta}} \d\alpha \, \tilde{b}(\alpha,\vartheta)\,\mathds{1}_{|\alpha| \ge |\vartheta|} \frac{|\alpha+\vartheta|^{\gamma+1+2s} }{|\vartheta|^{3+2s}} \,F(v,v+\alpha-\vartheta,v-\vartheta,v+\alpha)
		\end{align*}
	where $\tilde{b}(\alpha,\vartheta)$ is bounded from above and below by positive constants and $\tilde{b}(\alpha,\vartheta)=\tilde{b}(\pm\alpha,\pm \vartheta)$.
\end{lem}

\bigskip
\section{Pseudodifferential tools} \label{app:pseudo}
\setcounter{equation}{0}
\setcounter{theo}{0}
\subsection{Pseudodifferential calculus} \label{pseud}


We first recall the definitions of the quantizations we shall use in the following.
Let us consider a temperate symbol~${\sigma} \in \SS$, we define its standard quantization~${\sigma}(v,D_v)$ for ~$f\in L^2(\R^d)$ by
$$
{\sigma}(v,D_v) f (v) := \frac{1}{(2\pi)^d} \int e^{iv \cdot\eta} {\sigma}(v,\eta) \hat{f}(\eta) \, \d \eta.
$$
The Weyl quantization is defined by
$$
{\sigma}^w f (v) := \frac{1}{(2\pi)^d} \iint e^{i(v-w) \cdot\eta} {\sigma}\left(\frac{v+w}{2},\eta\right) f(w) \, \d \eta \, \d w.
$$
We recall that  for two symbols ${\sigma}$ and $\tau$ we have
\begin{equation} \label{defsharp}
{{\sigma}^w \tau^w = ({\sigma} \sharp \tau)^w, \quad {\sigma} \sharp \tau = {\sigma}\tau + \int_0^1 (\D_\eta {\sigma} \sharp_\theta \D_v \tau - \D_v {\sigma} \sharp_\theta \D_\eta \tau ) \, \d \theta}
\end{equation}
where for $V=(v,\eta)$ we have $\sharp = \sharp_1$ and for $\theta \in (0,1]$,
$$
{\sigma} \sharp_\theta \tau (V):= \frac{1}{2i} \iint e^{-2i [V-V_1,V-V_2]/\theta} {\sigma}(V_1) \tau(V_2)\, \d V_1 \, \d V_2 /(\pi \theta)^d
$$
with $[V_1,V_2] = v_2\cdot\eta_1 - v_1\cdot\eta_2$ the canonical symplectic form on $\R^{2d}$.
We shall also use the Wick quantization, which has very nice properties concerning positivity of operators (see~\cite{Lerner-CME,LernerBook1,LernerBook2} for more details on the subject). For this, we first introduce the Gaussian in  phase variables
\begin{equation} \label{wickN}
N(v,\eta) := (2\pi)^{-d} \exp^{-(|v|^2+|\eta|^2)/2}.
\end{equation}
The Wick quantization is then defined by
\begin{equation} \label{wickN2}
{\sigma}^\wick f(v) := ({\sigma} \star N )^w f(v),
\end{equation}
where $\star$ denotes the usual convolution in $(v,\eta)$ variables.
Recall that one of the main property of Wick quantization is its positivity:
\begin{equation} \label{wick2}
\forall \, (v,\eta) \in \R^6, \; {\sigma}(v,\eta)\geq 0 \Rightarrow {\sigma}^{\rm Wick}\geq 0,
\end{equation}
and that the following relation holds (see e.g. \cite[Proposition 3.4]{Lerner-CME}):
\begin{equation}\label{wick}
[ g^{\rm Wick},iv\cdot \xi]=\{  g, v\cdot \xi\}^{\rm Wick}.
\end{equation}
The previous definitions extend to symbols in $\SS'$ by duality.

\medskip
\subsection{The weak semiclassical class $S_K(g)$}

 Let $\Gamma := |\dv|^2 + |\d \eta|^2$ be the flat metric on~$\R^6_{v,\eta}$. The first point is to verify that the introduced symbols and weights are indeed   in a suitable symbolic calculus with large parameter $K$ uniformly in the parameter $\xi$.
For this, we first recall  that a weight $1 \leq g$ is said to be temperate with respect to $\Gamma$ if there exist $N \geq 1$ and $C_{N}$ such that  for all $(v, \eta)$, $(v',\eta') \in \R^6$
$$
g(v',\eta') \leq C_N \, g(v,\eta) ( 1 + |v'-v| + |\eta'-\eta|)^N
$$
We now introduce adapted classes of symbols.

\begin{defin}
Let  $g$ be a temperate weight.  We denote by $S(g)$ the symbol class of all smooth functions ${\sigma}(v,\eta)$ (possibly depending on parameters $K$ and $\xi$) such that
$$
\abs{\D_v^\alpha \D_\eta^\beta {\sigma}(v,\eta) } \leq C_{\alpha,\beta} g(v,\eta)
$$
where for any multiindex $\alpha$ and $\beta$, $C_{\alpha,\beta}$ is uniform in $K$ and $\xi$. We denote also $S_K(g)$ the symbol class of all smooth functions ${\sigma}(v,\eta)$ (possibly depending on $K$ and $\xi$ again) such that
$$
\abs{{\sigma}} \leq C_{0,0} g \quad \text{and} \quad \forall \, |\beta| \geq 1, \quad \abs{\D_v^\alpha \D_\eta^\beta {\sigma} } \leq
 C_{\alpha,\beta} K^{-1/2} g
$$
uniformly in $K$ and $\xi$. Note that $S_K(g) \subset S(g)$ and that these definitions are with respect to the flat metric.

Eventually, we shall say that a symbol ${\sigma}$ is elliptic positive in $S(g)$ or $S_K(g)$ if in addition~${\sigma} \geq 1$ and if there exists a constant $C$ uniform in parameters such that we have~$C^{-1} g \leq {\sigma} \leq C g$.
\end{defin}
Before focusing on the class $S_K(g)$, we first recall one of the main results concerning the class without parameter (and without weight) $S(1)$:

\begin{lem}[Calderon Vaillancourt Theorem]
Let ${\sigma} \in S(1)$, then ${\sigma}^w$ is a bounded operator with norm depending only on a finite number of semi-norms of ${\sigma}$ in $S(1)$.
\end{lem}
The classes $S_K$ and $S$ have standard internal properties:

\begin{lem} \label{stab} For $K$ sufficiently large, we have the following:
\begin{enumerate}[leftmargin=1.1cm]
\item[{\it (i)}] Let $g$ be a temperate weight and consider ${\sigma}$ an elliptic positive symbol in $S_K(g)$ then for all $\nu \in \R$, ${\sigma}^\nu \in S_K(g^\nu)$;
\item[{\it (ii)}] Let $g$, $h$ be temperate weights and consider ${\sigma}$ in $S_K(g)$, $\tau$ in $S_K(h)$, then ${\sigma}\tau$ is in~$ S_K(g h)$.
\end{enumerate}
\end{lem}

\begin{proof} For point \it a)\rm, just notice that if ${\sigma}$ is an elliptic positive symbol in $S_K(g)$, then ${\sigma} \simeq g$ so that ${\sigma}^\nu \simeq g^\nu$. We also have directly for $\beta$ a multiindex of length 1
$$
\abs{ \D_\eta^\beta {\sigma}^\nu } = |\nu| {\sigma}^{\nu-1} \abs{\D_\eta^\beta {\sigma}} \leq C g^{\nu-1} K^{-1/2} g = C K^{-1/2} g^\nu
$$
using ${\sigma} \simeq g$. Estimates on  higher order derivatives are straightforward.

For point \it b)\rm, the computation is also straightforward using the Leibniz rule.
\end{proof}

Now we can quantize the previously introduced symbols. The main \it semiclassical \rm idea behind the introduction of the class $S_K$ for $K$ large is that invertibility and powers of operators associated to symbols are direct consequences of similar properties of symbols, essentially independently of the quantization.

We first check that the class $S_K$ is essentially stable by change of quantization.

\begin{lem} \label{allsk}
Let $g$ be a temperate weight and consider $\tilde{{\sigma}}$  a positive elliptic symbol in~$S_K(g)$. We denote ${{\sigma}}$ the Weyl symbol of the operator  $\tilde{{\sigma}}(v,D_v)$ so that
${\sigma}^w = \tilde{{\sigma}}(v,D_v)$  and recall that the Weyl symbol of ${\sigma}^\wick$ is ${\sigma} \star N$.  Then
 ${\sigma}$ and  $ {\sigma}\star N$ are both in $S_K(g)$. If in addition $\tilde{{\sigma}}$ is elliptic positive, then $\Re {\sigma}$ and $\Re {\sigma}\star N$ are elliptic positive.
\end{lem}

\begin{proof}
We first prove the result for ${\sigma}$ supposing that $\tilde{{\sigma}}$ is elliptic positive. From for e.g.~\cite{LernerBook2} and an adaptation of Lemma 4.4 in~\cite{AHL*}, we know that
\begin{equation} \label{diff}
{\sigma} - \tilde{{\sigma}} \in K^{-1/2} S(g).
\end{equation}
Since $K^{-1/2} S(g) \subset S_K(g)$, this gives that ${{\sigma}} \in S_K(g)$.
If in addition $\tilde{{\sigma}}$ is elliptic positive, then let us prove that   $\Re {\sigma}$ also is.  There exist constants $C$, $C'$ uniform in $K$ large such that
$$
C^{-1} g - C'K^{-1/2} g \leq \Re {{\sigma}} \leq  C g + C'K^{-1/2} g
$$
if $C^{-1} g \leq {\sigma}  \leq  C g$. Taking $K$ sufficiently large then gives the result.

We now deal with ${\sigma}\star N$, supposing that ${\sigma}$  is in $S_K(g)$. For $V=(v,\eta)$ we have
$$
{\sigma}\star N (V)
 = \iint {\sigma}(V-W) N(W) \d W
 $$
 and using the temperance property of $g$, we get uniformly in all other possible parameters (including $K$)
 $$
 \abs{{\sigma}\star N(V)} \leq \iint C g(V) (1+|W|)^N N(W) \, \d W \leq C' g(V).
 $$
 For the derivatives, we get similarly for multiindex $\alpha$ and $\beta$ with $|\beta| \geq 1$
 \begin{equation}
 \begin{split}
 \abs{\D^\alpha_v \D^\beta_\eta {\sigma}\star N (V)}
 &  \leq  \iint \abs{\D^\alpha_v \D^\beta_\eta {\sigma}(V-W)} N(W) \, \d W\\
 & \leq C K^{-1/2} \iint g(V-W) N(W) \, \d W \\
 & \leq C' K^{-1/2} \iint g(V) (1+|W|)^N N(W) \, \d W \\
 & \leq C'' K^{-1/2} g(V).
 \end{split}
 \end{equation}
Suppose now that in addition
 $\tilde{{\sigma}}$  is elliptic positive, then $\Re {\sigma}$ is {elliptic positive} and $C^{-1} g(V) \leq \Re {\sigma}(V) \leq C g(V)$ for a constant $C>0$. Since $\Re {\sigma}\star N$ is positive, this implies with the temperance of $g$ that
\begin{multline}
c' g(V) { \le} \iint C^{-1} C_N^{-1} g(V) (1+|W|)^{-N} N(W) \d W \\ \leq \Re {\sigma}\star N(V)  \leq \iint C C_N g(V) (1+|W|)^N N(W) \d W = C' g(V)
\end{multline}
for some positive constants $c'$ and $C'$, so that $\Re {\sigma}\star N$ is indeed elliptic positive.
 \end{proof}

\begin{rem} \label{allskrem}
Note that using exactly the same argument as in the proof before, we also get that if $\tau$ is a given  elliptic positive symbol in $S_K(g)$, with $g$ a temperate weight,  then~$\tau \star N$  is also an elliptic positive symbol in $S_K(g)$.
\end{rem}

\bigskip
The next technical lemma is also proven in~\cite{AHL*}:

\begin{lem}[Lemma 4.2 in \cite{AHL*}] \label{AHLinverse} Let $g$ be a temperate weight and ${\sigma} \in S_K(g)$. Then for~$K$ sufficiently large (depending on a finite number of semi-norms of ${\sigma}$), the operator~${\sigma}^w$ is invertible and there exists $H_L$ and $H_R$ bounded invertible operators that are close to identity as well as their inverse such that
$$
({\sigma}^w)^{-1} = H_L ({\sigma}^{-1})^w = ({\sigma}^{-1})^w H_R.
$$
The norms of operators $H_L$ and
$H_R$ and their inverse can be bounded uniformly in parameters (including $K$).
\end{lem}
Note that by ``close to  identity uniformly in parameters'', we mean that
$$
\norm{H_L f} \simeq \norm{H_R f} \simeq \norm{f}.
$$
with constants uniform in parameters (including $K$ sufficiently large).

\begin{proof} The proof follows exactly the lines of the one given in \cite[Lemma 4.2. i)]{AHL*}.
\end{proof}

We now give the main Proposition that will be used in the proof of the technical Lemmas in Subsection \ref{subsec:technical}.

\begin{prop} \label{mainpseudo} Let $g$ be a temperate  weight and consider ${\sigma}$ an elliptic positive symbol in $S_K(g)$. Then for $K$ sufficiently large, we have the following
\begin{equation} \label{maingamma}
 \norm{({\sigma}^w)^{1/2} f} \simeq  \norm{({\sigma}^{1/2})^w f} \qquad \textrm{ and } \qquad
  \norm{({\sigma}^w)^{-1} f} \simeq  \norm{({\sigma}^{-1})^w f}.
\end{equation}
In addition, suppose that $\tau$ is another elliptic positive symbol in $S_K(g)$ then
\begin{equation} \label{mainequiv}
\norm{{\sigma}^w f} \simeq \norm{\tau^w f}.
\end{equation}
In particular, we have
\begin{equation} \label{mainnorm}
\norm{{\sigma}^w f}^2 
\simeq \norm{{\sigma}^\wick f}^2 \simeq \sep{ ({\sigma}^2)^\wick f,f}
\end{equation}
and
\begin{equation} \label{mainscalar}
\sep{{\sigma}^w f,f} 
\simeq \sep{{\sigma}^\wick f,f}
\end{equation}
uniformly in parameters (in particular $K$).
\end{prop}

\begin{proof}
We first prove \eqref{maingamma}. For the second almost equality, we just have to notice that from Lemma \ref{AHLinverse}, we have
$$
 \norm{({\sigma}^w)^{-1} f} =  \norm{H_L({\sigma}^{-1})^w f} \simeq \norm{({\sigma}^{-1})^w f}
 $$
 since $H_L$ is close to identity (uniformly in parameters). For the first part of~\eqref{maingamma}, we write that
\begin{equation} \label{eqnormprod}
\begin{split}
\norm{{\sigma}^w f}^2 & =  ( ({\sigma} \sharp {\sigma})^w f,f) =
 ( ({\sigma}^2)^w f,f) + ( r^w f,f)
\end{split}
\end{equation}
where $r = {\sigma}\sharp {\sigma} - {\sigma}^2 \in K^{-1/2} S(g^2)$ by standard symbolic calculus. More precisely, we can write from \eqref{defsharp}
$$
r = \int_0^1 (\D_v {\sigma} \sharp_\theta \D_\eta{\sigma}  -\D_\eta{\sigma} \sharp_\theta \D_v {\sigma}) \, \d\theta
$$
and using that $\D_v {\sigma} \in S(g)$ and $\D_\eta {\sigma} \in K^{-1/2} S(g)$ gives the result by stability of the flat symbol class $S(g)$. We therefore get that
\begin{equation*}
\begin{split}
 |( r^w f,f)| & = \big| \big(  ({\sigma}^w)^{-1}r^w ({\sigma}^w)^{-1} {\sigma}^w f, {\sigma}^w f ) \big| \\
 & = \big| \big( H_L ({\sigma}^{-1})^w r^w ({\sigma}^{-1})^w H_R {\sigma}^w f, {\sigma}^w f ) \big|.
 \end{split}
 \end{equation*}
Now ${\sigma}^{-1} \sharp r \sharp {\sigma}^{-1} \in K^{-1/2} S(1)$ since ${\sigma}^{-1} \in S(g)$, so that $({\sigma}^{-1})^w r^w ({\sigma}^{-1})^w$ is a bounded operator with norm controlled by a constant times $K^{-1/2}$. Since $H_L$ and $H_R$ are bounded operators independently of $K$,  there exists a constant such that
 $$
 |( r^w f,f)| \leq C K^{-1/2}\norm{{\sigma}^w f}^2.
 $$
This estimate and \eqref{eqnormprod}, gives that for $K$ sufficiently large,
 \begin{equation} \label{eqnormprodbis}
\begin{split}
\frac{1}{2} \norm{{\sigma}^w f}^2 \leq
 ( ({\sigma}^2)^w f,f) \leq 2\norm{{\sigma}^w f}^2.
\end{split}
\end{equation}
Taking ${\sigma}^{1/2} \in S_K(g^{1/2})$ (by Lemma \ref{stab})  instead of ${\sigma}$, we obtain
 \begin{equation*}
\begin{split}
 \norm{({\sigma}^{1/2})^w f}^2 \simeq
 ( {\sigma}^w f,f) & = \norm{({\sigma}^w)^{1/2}}^2
\end{split}
\end{equation*}
and the proof of \eqref{maingamma} is complete.

Concerning \eqref{mainequiv}, we just have to prove one inequality since the result is symmetric in $\tau$ and ${\sigma}$. For $K$ sufficiently large, we have
$$
\norm{\tau^w f} = \norm{\tau^w ({\sigma}^w)^{-1} {\sigma}^w f} = \norm{\tau^w ({\sigma}^{-1})^w H_R {\sigma}^w f} = \norm{( \tau \sharp ({\sigma}^{-1}))^w H_R {\sigma}^w f} \leq C \norm{ {\sigma}^w f}
$$
 since $\tau \sharp ({\sigma}^{-1}) \in S(1)$, so that $( \tau \sharp ({\sigma}^{-1}))^w$ is bounded (with bound independent of $K$). By symmetry, this proves \eqref{mainequiv}.

We then prove \eqref{mainnorm}. We first recall that ${\sigma}^\wick = ({\sigma}\star N)^w$ and that ${\sigma}\star N$ is elliptic positive in $S_K(g)$ by Lemma \ref{allsk}.
 From \eqref{mainequiv}, this directly yields
 $$
 \norm{ {\sigma}^w f} \simeq \norm{ ({\sigma} \star N)^w f} = \norm{ {\sigma}^\wick f}.
 $$
 By direct computation $({\sigma}^2\star N)^{1/2}$ is also in $S_K(g)$ by
 point b) of Lemma \ref{stab} with  $\nu=2$ and $\nu=1/2$, respectively, and Lemma \ref{allsk}. Using again
 \eqref{mainequiv} and  \eqref{maingamma}, yields
 that
 $$
  \norm{ {\sigma}^w f} \simeq  \norm{ (({\sigma}^2\star N)^{1/2})^w f} \simeq \norm{ (({\sigma}^2\star N)^w)^{1/2} f} = (({\sigma}^2\star N)^w f,f) = (({\sigma}^2)^\wick f,f).
  $$

The proof of the last point \eqref{mainscalar} follows exactly the same lines and we skip it.
\end{proof}

\bigskip
\bibliographystyle{acm}
\bibliography{biblioBoltzmann}

\end{document}